\documentclass[leqno,12pt]{article}
\usepackage{graphicx}
\usepackage{fancyhdr}

\usepackage[T1]{fontenc}
\usepackage{mathtools}  
\usepackage{amsmath}
\usepackage{amsthm}
\usepackage{amsfonts}
\usepackage{amssymb} 
\usepackage[useregional]{datetime2}
\newtheorem{theorem}{Theorem}[section]
\newtheorem{lemma}[theorem]{Lemma}
\newtheorem{corollary}[theorem]{Corollary}
\theoremstyle{definition}
\newtheorem{definition}[theorem]{Definition}
\newtheorem{remark}[theorem]{Remark}

\def\address#1{{\center{#1}}}
\date{}

\newcommand{\eqal}[1]{\begin{aligned}#1\end{aligned}}
\newcommand{\R}{\mathbb{R}}

\newcommand{\N}{\mathbb{N}}

\DeclareMathOperator{\divv}{div}
\DeclareMathOperator{\rot}{rot}
\DeclareMathOperator{\supp}{supp}

\numberwithin{equation}{section}

\title{On regularity estimates for the axially symmetric Navier–Stokes Cauchy problem and the critical-wedge occurrence problem}
\author{Wies\l aw J. Grygierzec$^{(1)}$, Wojciech M. Zaj\c{a}czkowski$^{(2,3)}$}

\begin{document}
\maketitle
%%%%%%%%%%%%%%%%%%%%%%%%%%%%%%%%%%%%%%%%%%%%%%%%%%%%%%%%%%%%%%%%%%%%%%%%%
\thispagestyle{fancy}
%%%%%%%%%%%%%%%%%%%%%%%%%%%%%%%%%%%%%%%%%%%%%%%%%%%%%%%%%%%%%%%%%%%%%%%%%

\address{$^1$Department of Statistics and Social Policy,\\
  University of Agriculture in Krak\'ow, Al. Mickiewicza 21,\\
  31-120 Krak\'ow, Poland.\\
  e-mail: wieslaw.grygierzec@urk.edu.pl\\
  $^2$Institute of Mathematics, Polish Academy of Sciences,\\
  \'Sniadeckich 8, 00-656 Warsaw, Poland\\
  e-mail: wz@impan.pl\\
  $^3$Institute of Mathematics and Cryptology, %\\
  Cybernetics Faculty, \\
  Military University of Technology,\\% Warsaw\\
  S. Kaliskiego 2, 00-908 Warsaw, Poland\\}

\begin{abstract}
  We consider the Cauchy problem for the axially symmetric Navier--Stokes equations in $\R^3$.
  Our aim is to derive estimates for the scaled vorticity components \(\Phi = \omega_r/r\) and \(\Gamma = \omega_\varphi/r\), measured in the energy norm:
  $$
    X(t) := \|\Phi\|_{V(\bar\R_t^3)} + \|\Gamma\|_{V(\bar\R_t^3)}.
  $$
  The original closure mechanism depends on the relation between the $L^s$ norm and the $L^\infty$ norm of the angular velocity component $v_\varphi$. We identify a critical wedge in the corresponding phase geometry, namely the localized regime
  $$
    W_{A,c_0}^{\rm loc} := \left\{
    t\in(0,T):
    L_s(t)>A,\quad
    \frac{L_s(t)}{M(t)}<c_0
    \right\},
  $$
  where $L_s(t)=\|\widetilde v_\varphi(t)\|_{L^s(\R^3)}$, $M(t)=\|\widetilde v_\varphi(t)\|_{L^\infty(\R^3)}$ for a smoothly localized velocity profile $\widetilde v_\varphi = \zeta_1 v_\varphi$ near the axis of symmetry. We use the convention that the ratio is $+\infty$ when $M(t)=0$.

  The main result is a conditional a priori estimate in which the possible loss of control is measured by the part of the nonlinear interaction accumulated during the times belonging to $W_{A,c_0}^{\rm loc}$. More precisely, denoting this nonnegative wedge contribution by $E_{W,s}$, we obtain
  $$
    X(t)
    \leq
    \Psi_{s,A,c_0}\left(
    \mathrm{data},
    \int_0^t E_{W,s}(\tau)\,d\tau
    \right),
    \qquad 0<t<T,
  $$
  for an increasing positive function $\Psi_{s,A,c_0}$. In particular, if the critical-wedge contribution vanishes, for instance when the trajectory does not enter $W_{A,c_0}^{\rm loc}$, the residual-free a priori estimate depending only on the data is recovered. Under additional regularity assumptions on the force and the initial velocity, we derive the corresponding higher-order Sobolev estimate on every finite time interval on which the wedge residual remains controlled.
  The result does not provide an unconditional global regularity theorem; rather, it isolates the only concentration regime not controlled by the two original closure mechanisms.
\end{abstract}

\noindent
\textbf{Keywords:} Navier--Stokes equations, axially symmetric solutions,
conditional regularity, localized critical wedge, a priori estimates

\noindent
\textbf{MSC2020:} 35A01, 35B65, 35Q30, 76D03, 76D05

\section{Introduction}\label{s1}

We study a priori estimates for regular axially symmetric solutions of the Navier--Stokes Cauchy problem
\begin{equation}\eqal{
    &v_{,t}+v\cdot\nabla v-\nu\Delta v+\nabla p=f\quad &{\rm in}\ \ \R^3\times(0,T)\equiv\R_T^3,\cr
    &\divv v=0\quad &{\rm in}\ \ \R^3\times(0,T)\equiv\R_T^3,\cr
    &v|_{t=0}=v(0)\quad &{\rm in}\ \ \R^3,\cr}
  \label{1.1}
\end{equation}
where $v=(v_1(x,t),v_2(x,t),v_3(x,t))\in\R^3$ is the velocity of the fluid, $p=p(x,t)\in\R$ is the pressure, $f=(f_1(x,t),f_2(x,t),f_3(x,t))\in\R^3$ is the external force field, $\nu>0$ is the constant viscosity coefficient.

The main issue is the closure of the energy estimate for the scaled vorticity components $\Phi$ and $\Gamma$. The previously used two-regime argument does not cover a localized concentration regime characterized by large $L^s$-amplitude and a small $L^s/L^\infty$ ratio. We isolate this regime as a localized critical wedge and formulate the resulting estimate conditionally through a wedge residual.

More specifically, for fixed $s>6$, $A>0$, and $c_0>0$, we divide the time interval into the three disjoint localized regimes
\[
  \mathcal R_1
  =
  \bigl\{t\in(0,T): L_s(t)\le A\bigr\},
\]
\[
  \mathcal R_2
  =
  \left\{t\in(0,T):
  L_s(t)>A,\quad \frac{L_s(t)}{M(t)}\ge c_0
  \right\},
\]
and
\[
  \mathcal R_3
  =
  \left\{t\in(0,T):
  L_s(t)>A,\quad \frac{L_s(t)}{M(t)}<c_0
  \right\}
  =
  W_{A,c_0}^{\rm loc}.
\]
The first two regimes correspond to the two available closure mechanisms. The third regime is the localized critical wedge and gives rise to the residual term appearing in the main estimate.

The analysis relies on $H^2$--$H^3$ elliptic estimates for the modified stream function, the maximum principle and energy estimates for the swirl, a local reduction estimate near the axis, and complementary estimates away from the axis. These ingredients yield a conditional estimate for the scaled vorticity variables and, under additional assumptions, a conditional higher-order Sobolev estimate.

We emphasize that the Liu--Wang expansions \cite{LW} of the radial and angular velocity components and of the stream function near the axis of symmetry are essential in the present argument. The small-data continuation result is treated separately in Section~8 and is not used to claim an unconditional global regularity theorem for arbitrary data.

By $x=(x_1,x_2,x_3)$ we denote Cartesian coordinates in $\R^3$. To describe axially-symmetric solutions to (\ref{1.1}) we introduce cylindrical coordinates $r$, $\varphi$, $z$ by the relations
\begin{equation}
  x_1=r\cos\varphi,\quad x_2=r\sin\varphi,\quad x_3=z.
  \label{1.2}
\end{equation}
The three unit vectors along the radial, the angular and the axial directions have the form
\begin{equation}
  \bar e_r=(\cos\varphi,\sin\varphi,0),\quad \bar e_\varphi=(-\sin\varphi,\cos\varphi,0),\quad \bar e_z=(0,0,1).
  \label{1.3}
\end{equation}
Any vector $u$ for the axially symmetric motions can be decomposed as follows
\begin{equation}
  u=u_r(r,z,t)\bar e_r+u_\varphi(r,z,t)\bar e_\varphi+u_z(r,z,t)\bar e_z,
  \label{1.4}
\end{equation}
where $u_r$, $u_\varphi$, $u_z$ are cylindrical coordinates of $u$.

Therefore, velocity $v$ and vorticity $\omega=\rot v$ can be decomposed in the form
\begin{equation}
  v=v_r(r,z,t)\bar e_r+v_\varphi(r,z,t)\bar e_\varphi+v_z(r,z,t)\bar e_z
  \label{1.5}
\end{equation}
and
\begin{equation}
  \omega=\omega_r(r,z,t)\bar e_r+\omega_\varphi(r,z,t)\bar e_\varphi+\omega_z(r,z,t)\bar e_z.
  \label{1.6}
\end{equation}

Problem (\ref{1.1}) in cylindrical coordinates becomes
\begin{equation}\eqal{
  &v_{r,t}+v\cdot\nabla v_r-{v_\varphi^2\over r}-\nu\Delta v_r+\nu{v_r\over r^2}=-p_{,r}+f_r,\cr
  &v_{\varphi,t}+v\cdot\nabla v_\varphi+{v_r\over r}v_\varphi-\nu\Delta v_\varphi+\nu{v_\varphi\over r^2}=f_\varphi,\cr
  &v_{z,t}+v\cdot\nabla v_z-\nu\Delta v_z=-p_{,z}+f_z,\cr
  &(rv_r)_{,r}+(rv_z)_{,z}=0,\cr
  &v_r|_{t=0}=v_r(0),\quad v_\varphi|_{t=0}=v_\varphi(0),\quad v_z|_{t=0}=v_z(0),\cr}
  \label{1.7}
\end{equation}
where
\begin{equation}\eqal{
  &v\cdot\nabla=(v_r\bar e_r+v_z\bar e_z)\cdot\nabla=v_r\partial_r+v_z\partial_z,\cr
  &\Delta u={1\over r}(ru_{,r})_{,r}+u_{,zz}.\cr}
  \label{1.8}
\end{equation}
On the other hand, the vorticity formulation becomes
\begin{equation}\eqal{
  &\omega_{r,t}+v\cdot\nabla\omega_r-\nu\Delta\omega_r+\nu{\omega_r\over r^2}=\omega_rv_{r,r}+\omega_zv_{r,z}+F_r,\cr
  &\omega_{\varphi,t}+v\cdot\nabla\omega_\varphi-{v_r\over r}\omega_\varphi-\nu\Delta\omega_\varphi+\nu{\omega_\varphi\over r^2}={2\over r}v_\varphi v_{\varphi,z}+F_\varphi,\cr
  &\omega_{z,t}+v\cdot\nabla\omega_z-\nu\Delta\omega_z=\omega_rv_{z,r}+\omega_zv_{z,z}+F_z,\cr
  &\omega_r|_{t=0}=\omega_r(0),\quad \omega_\varphi|_{t=0}=\omega_\varphi(0),\quad \omega_z|_{t=0}=\omega_z(0),\cr}
  \label{1.9}
\end{equation}
where $F=\rot f$ and
\begin{equation}
  F=F_r(r,z,t)\bar e_r+F_\varphi(r,z,t)\bar e_\varphi+F_z(r,z,t)\bar e_z.
  \label{1.10}
\end{equation}
The swirl
\begin{equation}
  u=rv_\varphi
  \label{1.11}
\end{equation}
is a solution to the problem
\begin{equation}\eqal{
  &u_{,t}+v\cdot\nabla u-\nu\Delta u+{2\nu\over r}u_{,r}=rf_\varphi\equiv f_0,\cr
  &u|_{t=0}=u(0).\cr}
  \label{1.12}
\end{equation}
Note that
\begin{equation}\eqal{
  &\omega_r=-v_{\varphi,z}=-{1\over r}u_{,z},\cr
  &\omega_\varphi=v_{r,z}-v_{z,r},\cr
  &\omega_z={1\over r}(rv_\varphi)_{,r}=v_{\varphi,r}+{v_\varphi\over r}={1\over r}u_{,r}.\cr}
  \label{1.13}
\end{equation}
Equation $(\ref{1.7})_4$ implies the existence of the stream function $\psi$ which solves the problem
\begin{equation}\eqal{
    &-\Delta\psi+{\psi\over r^2}=\omega_\varphi,\cr
    &\psi\to 0\quad {\rm as}\ \ r+|z|\to\infty.\cr}
  \label{1.14}
\end{equation}
The cylindrical coordinates of velocity can be expressed in terms of the stream function,
\begin{equation}\eqal{
  &v_r=-\psi_{,z},\quad &v_z={1\over r}(r\psi)_{,r}=\psi_{,r}+{\psi\over r},\cr
  &v_{r,r}=-\psi_{,zr},\quad &v_{r,z}=-\psi_{,zz},\cr
  &v_{z,z}=\psi_{,rz}+{\psi_{,z}\over r},\quad &v_{z,r}=\psi_{,rr}+{1\over r}\psi_{,r}-{\psi\over r^2}.\cr}
  \label{1.15}
\end{equation}
Introduce the modified stream function
\begin{equation}
  \psi_1=\psi/r.
  \label{1.16}
\end{equation}
Expressing the cylindrical coordinates of velocity in terms of $\psi_1$ yields
\begin{equation}\eqal{
    &v_r=-r\psi_{1,z},\quad &v_z=(r\psi_1)_{,r}+\psi_1=r\psi_{1,r}+2\psi_1,\cr
    &v_{r,r}=-\psi_{1,z}-r\psi_{1,rz},\quad &v_{r,z}=-r\psi_{1,zz},\cr
    &v_{z,z}=r\psi_{1,rz}+2\psi_{1,z},\quad &v_{z,r}=3\psi_{1,r}+r\psi_{1,rr}.\cr}
  \label{1.17}
\end{equation}
Introduce the notation
\begin{equation}
  (\Phi,\Gamma)=(\omega_r/r,\omega_\varphi/r).
  \label{1.18}
\end{equation}
From \cite[(1.6)]{CFZ} we have
\begin{equation}
  \Phi_{,t}+v\cdot\nabla\Phi-\nu\bigg(\Delta+{2\over r}\partial_r\bigg)\Phi-(\omega_r\partial_r+\omega_z\partial_z){v_r\over r}=F_r/r=F'_r,
  \label{1.19}
\end{equation}
\begin{equation}
  \Gamma_{,t}+v\cdot\nabla\Gamma-\nu\bigg(\Delta+{2\over r}\partial_r\bigg)\Gamma+2{v_\varphi\over r}\Phi=F_\varphi/r\equiv F'_\varphi,
  \label{1.20}
\end{equation}
\begin{equation}
  \Gamma|_{t=0}=\Gamma(0),\quad \Phi|_{t=0}=\Phi(0),
  \label{1.21}
\end{equation}
\begin{equation}
  \Phi\to 0,\ \ \Gamma\to 0\quad {\rm for}\ \ r+|z|\to\infty
  \label{1.22}
\end{equation}
Using (\ref{1.16}) in (\ref{1.14}) yields
\begin{equation}\eqal{
  &-\Delta\psi_1-{2\over r}\psi_{1,r}=\Gamma\cr
  &\psi_1\to 0\ \ {\rm as}\ \ r+|z|\to\infty.\cr}
  \label{1.23}
\end{equation}
Problem (\ref{1.1}) in a cylindrical domain was considered by Zaj\c{a}czkowski \cite{Z1, Z2}, O\.za\'nski and Zaj\c{a}czkowski \cite{OZ}, and Grygierzec and Zaj\c{a}czkowski \cite{GZ1, GZ2, GZ3}.

In these cases the following boundary conditions are assumed: the normal component of velocity and angular component of vorticity vanish on the boundary, and the angular component of velocity on the lateral part and its normal derivative on the top and bottom must vanish also.

In \cite{Z1, Z2} the global a priori estimate for regular solutions is proved under the following Serrin type conditions:
$$
  \psi_1|_{r=0}=0
  \leqno(*)
$$
and there exists a positive constant $c_*$ such that
$$
  {|v_\varphi|_{s,\infty,\Omega^t}\over|v_\varphi|_{\infty,\Omega^t}}\ge c_*\quad {\rm for\ any}\ s.
  \leqno(**)
$$
In \cite{OZ} condition $(*)$ is dropped but the authors still needed $(**)$. Condition $(**)$ is very restrictive. To address the complementary regime in bounded domains, \cite{GZ2} considered the case with the opposite condition. However, these global alternatives do not by themselves describe the pointwise, localized concentration geometry relevant in $\R^3$. Rather than imposing either alternative as a global hypothesis, we introduce a partition of unity $\zeta(r)$, $\vartheta(r)$ near and away from the axis of symmetry and perform a localized analysis. This isolates the unresolved concentration regime as the critical wedge $W_{A,c_0}^{\rm loc}$ and leads to a conditional a priori estimate depending on the residual interaction accumulated inside the wedge.

It was demonstrated by \cite{CFZ} that the solution $v$ is controlled by the energy norms of $\Phi$, $\Gamma$,
$$
  X(t)=\|\Phi\|_{V(\bar\R_t^3)}+\|\Gamma\|_{V(\bar\R_t^3)},
$$
where
$$
  \|\omega\|_{V(\bar\R_t^3)}=\|\omega\|_{L_\infty(0,t;L_2(\bar\R^3))}+\|\nabla\omega\|_{L_2(\bar\R_t^3)}
$$
and
$$\eqal{
    &\bar\R_t^3=\bar\R^3\times(0,t),\quad \bar\R^3=\{x\in\R^3\colon r\in(0,\infty),\ z\in\R^1\},\cr
    &\|u\|_{L_p(\bar\R^3)}=\bigg(\intop_{\bar\R^3}|u(r,z)|^prdrdz\bigg)^{1/p}.\cr}
$$
To derive an estimate for $X$ we have to find independently estimates for
$$
  \tilde X(t)=\|\tilde\Phi\|_{V(\bar\R_t^3)}+\|\tilde\Gamma\|_{V(\bar\R_t^3)}
$$
and also for
$$
  \hat X(t)=\|\hat\Phi\|_{V(\bar\R_t^3)}+\|\hat\Gamma\|_{V(\bar\R_t^3)},
$$
where $\tilde u=u\zeta$, $\hat u=u\vartheta$
$$\eqal{
    &{\rm and}\ \ \zeta(r)=1\ \ &{\rm for}\ \ r\le r_0,\cr
    &{\rm and}\ \ \zeta(r)=0\ \ &{\rm for}\ \ r\ge 2r_0.\cr
    &{\rm Then}\ \ \vartheta(r)=0\ \ &{\rm for}\ \ r\le r_0,\cr
    &{\rm and}\ \ \vartheta(r)=1\ \ &{\rm for}\ \ r\ge 2r_0.\cr}
$$
By data we mean all quantities appeared in Notation \ref{s2.2}. By $\phi$, $\phi_i$, $i=1,\dots$, we mean an increasing positive function.

\textit{Structure of the paper.}

The paper is organized as follows. Section 2 introduces notation and auxiliary results.
Section 3 establishes $H^2$ and $H^3$ estimates for the modified stream function $\psi_1$.
Section 4 contains the core local analysis near the axis with the splitting technique.
Section 5 provides complementary estimates for $\omega_r$ and $\omega_z$.
Section 6 derives bounds for the swirl component $v_\varphi$.
Section 7 shows how to increase regularity to obtain the higher-order Sobolev bounds.
Section 8 discusses the continuation consequences of the conditional estimates and treats separately the small-data regime.

Throughout the paper, the Liu--Wang expansions \cite{LW} for regular solutions near the
axis of symmetry play an essential role.

\vspace{6pt}
\textit{Statement of main results.}

To state the main result, let $\zeta_1=\zeta_1(r)$ be the cutoff function
introduced in Section~\ref{s6}, equal to $1$ on a neighborhood containing
$\operatorname{supp}\zeta$ and vanishing for sufficiently large $r$. We set
\begin{equation}
  v_{\varphi}^{\rm loc}(x,t):=\zeta_1(r)v_\varphi(x,t)
  \label{def:vphi_loc}
\end{equation}
and, for fixed $s>6$, define the instantaneous spatial norms
\begin{equation}
  L_s(t):=\|v_{\varphi}^{\rm loc}(t)\|_{L^s(\bar{\mathbb R}^3)},
  \qquad
  M(t):=\|v_{\varphi}^{\rm loc}(t)\|_{L^\infty(\bar{\mathbb R}^3)}.
  \label{def:Ls_M}
\end{equation}
Here and below the reduced axisymmetric measure is $dx=r\,dr\,dz$.
Equivalently, one may use the corresponding norms on $\mathbb R^3$,
up to the fixed factor $2\pi$. We adopt the convention
\[
  \frac{L_s(t)}{M(t)}=+\infty
  \qquad\text{when }M(t)=0.
\]

Let $A>0$ and $c_0>0$. The interval $(0,T)$ is decomposed into the
three disjoint sets
\begin{equation}
  \mathcal R_1
  :=
  \{t\in(0,T):L_s(t)\leq A\},
  \label{def:R1}
\end{equation}
\begin{equation}
  \mathcal R_2
  :=
  \left\{
  t\in(0,T):
  L_s(t)>A,\quad
  \frac{L_s(t)}{M(t)}\geq c_0
  \right\},
  \label{def:R2}
\end{equation}
and
\begin{equation}
  \mathcal R_3
  :=
  W_{A,c_0}^{\rm loc}
  :=
  \left\{
  t\in(0,T):
  L_s(t)>A,\quad
  \frac{L_s(t)}{M(t)}<c_0
  \right\}.
  \label{def:wedge}
\end{equation}
Thus
\[
  (0,T)=\mathcal R_1\mathbin{\dot\cup}\mathcal R_2
  \mathbin{\dot\cup}W_{A,c_0}^{\rm loc}.
\]

Recall that
\[
  \widetilde\Phi=\Phi\zeta,
  \qquad
  \widetilde\Gamma=\Gamma\zeta,
\]
where $\zeta$ is the cutoff used in the localized vorticity equations.
For almost every $t\in(0,T)$ define the signed instantaneous interaction
\begin{equation}
  i(t)
  :=
  \int_{\bar{\mathbb R}^3}
  \frac{v_\varphi(x,t)}{r}\,
  \widetilde\Phi(x,t)\widetilde\Gamma(x,t)\,dx .
  \label{def:interaction_i}
\end{equation}
The wedge residual is defined by the full nonnegative contribution of this
interaction on the localized critical wedge:
\begin{equation}
  E_{W,s}(t)
  :=
  \mathbf 1_{W_{A,c_0}^{\rm loc}}(t)\,|i(t)|.
  \label{def:residual_E}
\end{equation}
This definition is deliberately conservative. A sharper excess residual
may be introduced after an explicit closable pointwise majorant has been
proved.

\begin{theorem}\label{t1.1}
  Assume that the quantities from Notation~\ref{s2.2} are finite on $(0,T)$.
  Let $v$ be a regular solution to problem~\eqref{1.1} for which
  expansions~\eqref{1.63}--\eqref{1.64} are valid. Fix $s>6$, $A>0$, and
  $c_0>0$. Then there exists an increasing positive function
  $\Psi_{s,A,c_0}$ such that
  \begin{equation}
    X(t)
    \leq
    \Psi_{s,A,c_0}
    \left(
    {\rm data},
    \int_0^t E_{W,s}(\tau)\,d\tau
    \right),
    \qquad 0<t<T,
    \label{1.24}
  \end{equation}
  where data denote the quantities listed in Notation~\ref{s2.2}.
\end{theorem}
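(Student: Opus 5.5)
We will prove \eqref{1.24} by running the energy estimates for the localized quantities $\tilde\Phi,\tilde\Gamma$ and $\hat\Phi,\hat\Gamma$ separately, then combining them through $X\le c(\tilde X+\hat X)$. The far-from-axis part $\hat X$ is handled first. On $\supp\vartheta$ we have $r\ge r_0$, so $\hat\Phi,\hat\Gamma$ are comparable to $\omega_r,\omega_\varphi$. The complementary estimates for $\omega_r,\omega_z$ of Section~5, together with the swirl bounds of Section~6 (maximum principle for $u$ and energy bounds for $v_\varphi$), should give
\[
\hat X(t)\le \phi(\mathrm{data})+\phi(\mathrm{data})\,\tilde X(t)^{\theta}
\]
for some $\theta<1$. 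The cutoff commutators living on $r_0\le r\le 2r_0$ are where $\tilde X$ enters.

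The core step is near the axis. We multiply \eqref{1.19} localized by $\zeta$ by $\tilde\Phi$, and \eqref{1.20} localized by $\zeta$ by $\tilde\Gamma$, and integrate with $dx=r\,dr\,dz$. The operator $\Delta+\frac2r\partial_r$ is the five-dimensional Laplacian acting on radial functions, so it gives the dissipation $\nu\|\nabla\tilde\Phi\|_2^2+\nu\|\nabla\tilde\Gamma\|_2^2$. The boundary terms at $r=0$ vanish thanks to the Liu--Wang expansions \eqref{1.63}--\eqref{1.64}, and the transport terms vanish by $\divv v=0$, up to commutator terms with $\nabla\zeta$, which are controlled by $\hat X$ and data. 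Two terms remain.

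The first is the stretching term $\int(\omega_r\partial_r+\omega_z\partial_z)\frac{v_r}{r}\,\tilde\Phi$. Here $v_r/r=-\psi_{1,z}$, and we use the $H^2$--$H^3$ estimates of Section~3 for $\psi_1$ in terms of $\Gamma$. We write $\omega_z=u_{,r}/r$, integrate by parts onto $u$, and bound $u$ in $L^\infty$ by the maximum principle. This should reduce the term to $\varepsilon\|\nabla\tilde\Phi\|_2^2+\varepsilon\|\nabla\tilde\Gamma\|_2^2+\phi(\mathrm{data})(1+\tilde X^2)\|\tilde\Gamma\|_2^2$-type quantities. After a Gronwall step these are harmless.

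The second is the term $2\int\frac{v_\varphi}{r}\tilde\Phi\tilde\Gamma$, which is exactly $2i(t)$. We split its time integral over $\mathcal R_1$, $\mathcal R_2$ and $W_{A,c_0}^{\rm loc}$.
\begin{itemize}
\item On $\mathcal R_1$ we apply H\"older with $L_s(t)\le A$ and $s>6$. The factor $1/r$ is absorbed through a Hardy inequality in the five-dimensional radial variable, which gives $\|\tilde\Gamma/r\|_2\lesssim\|\nabla\tilde\Gamma\|_2$. Interpolation then yields $|i|\le\varepsilon\|\nabla(\tilde\Phi,\tilde\Gamma)\|_2^2+c(A)\|(\tilde\Phi,\tilde\Gamma)\|_2^2$.
\item On $\mathcal R_2$ we use $M\le L_s/c_0$. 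The $L^\infty$ bound for $v_\varphi^{\rm loc}$ is then controlled by $L_s$, and $L_s$ is bounded via Section~6 by $\phi(\mathrm{data})$ times a sublinear power of $\tilde X$. The $L^\infty$ version of the previous estimate then closes in the same way.
\item On $W_{A,c_0}^{\rm loc}$ we bound $2|i(t)|$ by $2E_{W,s}(t)$ by definition and keep it as is.
\end{itemize}

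Summing these contributions gives a differential inequality of the form
\[
\frac{d}{dt}\|(\tilde\Phi,\tilde\Gamma)\|_2^2+\nu\|\nabla(\tilde\Phi,\tilde\Gamma)\|_2^2\le g(t)\bigl(1+\|(\tilde\Phi,\tilde\Gamma)\|_2^2\bigr)+\phi(\mathrm{data})\,\hat X^2+2E_{W,s}(t),
\]
with $\int_0^t g\le\phi(\mathrm{data},\tilde X^\theta)$. Gronwall's inequality then gives $\tilde X^2\le\phi\bigl(\mathrm{data},\int_0^tE_{W,s}\bigr)\exp\phi(\tilde X^\theta)$ combined with the $\hat X$ bound.

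\textbf{Main obstacle.} The delicate point is closing this nonlinear inequality with $\theta<1$ and without an exponential of a power of $X$ on the right. This requires sharp bookkeeping of the exponents in the Section~3 estimates and of the $\mathcal R_2$ bound for $M$. We will first aim to obtain a bound that is linear in $\tilde X$ inside the Gronwall integral, using the energy bound for $v_\varphi$ from Section~6. If that fails, we will restrict to a continuity argument and absorb the power via Young's inequality. The resulting increasing function is $\Psi_{s,A,c_0}$.
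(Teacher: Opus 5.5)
There is a genuine gap, and it is the one you flag as the main obstacle: this estimate cannot be closed through Gronwall, and neither of your fallbacks helps. Even a Gronwall integral linear in $\tilde X$ leaves $\tilde X^2\le a\,e^{c\tilde X}$, and no Young or continuity argument absorbs an exponential.

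The problem starts in your stretching-term step. Bounding $u=rv_\varphi$ in $L_\infty$ leaves $\tilde\psi_{1,rz}/r$ and $\tilde\psi_{1,zz}/r$. The first gives a bilinear term $D_2\|\nabla\tilde\Gamma\|_2\|\nabla\tilde\Phi\|_2$ with a non-small coefficient. The second is not even in $L_2$, since $\psi_{1,zz}|_{r=0}=d_{1,zz}$. Controlling it forces a power of $\|v_\varphi\|_\infty$ into the coefficients: your form $\varepsilon\|\nabla(\tilde\Phi,\tilde\Gamma)\|_2^2+g\|\tilde\Gamma\|_2^2$ needs $g\sim\|v_\varphi\|_{L_\infty}^2\lesssim\tilde X^{3/2}+1$ by Lemma~\ref{l6.1}. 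In a Gronwall scheme any such power ends up in the exponent.

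The paper never uses Gronwall. It writes $|v_\varphi|=|rv_\varphi|^{1-\mu}|v_\varphi|^{\mu}r^{\mu-1}$ and uses Hardy with the $H^3$ bounds of Lemmas~\ref{l3.5}--\ref{l3.6}, so only $\|v_\varphi\|_\infty^{\mu}$ appears. The bilinear gradient term is absorbed by weighting the $\tilde\Gamma$-estimate, which gives \eqref{4.7} with $H_\mu=1+\|v_\varphi\|_{L_\infty(\bar\R^3_t)}^{2\mu}$ in front of $|I|$. Everything is then estimated in time-integrated norms by a strictly sub-quadratic power of $\tilde X$:
\begin{itemize}
\item Off the wedge, H\"older in time plus the Section~\ref{s5} bound $\|\tilde\Phi\|_{L_2(\bar\R^3_t)}^2\le\phi(\mathrm{data})(1+\|v_\varphi\|_\infty^{2\delta})(\tilde X+1)$ (from $\nu|\tilde\omega_r/r|_{2,\bar\R^3_t}^2$ in \eqref{5.5}) give $|I|\lesssim(1+\|v_\varphi\|_\infty^{2\delta})^{\alpha_0/2}(\tilde X^{2-\alpha_0/2}+1)$.
\item The far-field input is $X^2\le C(\tilde X^2+D_{15}^2)$ from Lemmas~\ref{l4.5}--\ref{l4.6}, not a $\tilde X^\theta$ bound from Sections~5--6. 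With it and Lemma~\ref{l6.1}, the losses $\tilde X^{3\mu/2+3\delta\alpha_0/4}$ are beaten by the gain $\tilde X^{\alpha_0/2}$ for small $\mu,\delta$.
\item The wedge enters as $(1+\tilde X^{3\mu/2})\int_0^tE_{W,s}\,d\tau$, and Young's inequality closes.
\end{itemize}

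Two further steps fail as written. On $\mathcal R_1$, the inequality $\|\tilde\Gamma/r\|_2\lesssim\|\nabla\tilde\Gamma\|_2$ in the measure $r\,dr\,dz$ defining $X$ is the critical two-dimensional Hardy inequality in the $(x_1,x_2)$-plane, and it is false. Indeed $\|\tilde\Gamma/r\|_2=\infty$ whenever $\Gamma|_{r=0}\neq0$, which is generic (likewise $\Phi|_{r=0}=-b_{1,z}$). The five-dimensional Hardy inequality is an inequality for the measure $r^3\,dr\,dz$, not for the norm in $X$. The paper instead borrows $r^b$, $b<1$, from $|rv_\varphi|\le D_2$, i.e.\ $|v_\varphi|/r\le D_2^b|v_\varphi|^{1-b}r^{-(1+b)}$. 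It then applies the weighted interpolation of Lemma~\ref{l2.7} to $r^{-(1+b)/2}\tilde\Phi$ and $r^{-(1+b)/2}\tilde\Gamma$; this is the source of the gain $\alpha_0=\frac{(s-3)(1-b)}{2s}>0$. With this fix your pointwise form $|i|\le\varepsilon\|\nabla(\tilde\Phi,\tilde\Gamma)\|_2^2+c(A)\|(\tilde\Phi,\tilde\Gamma)\|_2^2$ does hold off the wedge. But the coupling multiplies $i$ by $H_\mu$, so $\varepsilon\sim H_\mu^{-1}$ and $c(A)$ again becomes a power of $\tilde X$ in your Gronwall exponent.

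On $\mathcal R_2$, Section~\ref{s6} is of no use pointwise in time: Lemma~\ref{l6.2} integrates a differential inequality and needs the ratio bound on all of $(0,t)$. What works is elementary. Combining $L_s^s\le M^{s-2}\|v_\varphi^{\rm loc}\|_2^2$ with $M\le L_s/c_0$ and the energy bound gives $L_s\le c_0^{-(s-2)/2}D_1$ (Lemma~\ref{l4.lower.ratio}, Corollary~\ref{c4.good.times}). So $\mathcal R_2$ is just $\mathcal R_1$ with $A$ replaced by $A_\sharp=\max\{A,c_0^{-(s-2)/2}D_1\}$, and Lemma~\ref{l4.4} covers the whole complement of the wedge.
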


\begin{proof}[Proof of Theorem~1.1]

  First we obtain an energy-type estimate for $\widetilde{\Phi}$ and
  $\widetilde{\Gamma}$.
  Localizing~(1.19) and~(1.20) to a neighborhood of the support of $\zeta $,
  multiplying the resulting equations by $\widetilde{\Phi}$ and
  $\widetilde{\Gamma}$, respectively, and integrating over $\bar\R_t^3$,
  we obtain the inequality (see Lemma~4.1)
  \begin{equation}\label{1.25}
    \begin{aligned}
      \widetilde X(t)^2
      \leq {} &
      C_0(\mu,\mathrm{data})H_\mu \cdot \\
              & \cdot \left[
        |I|+\delta_1X(t)^2+
        C_1(\mu,\mathrm{data})(1+\delta_1^{-1})
        \right],
    \end{aligned}
  \end{equation}
  where $\mu>0$ is small, $H_\mu(t) = 1+\|v_\varphi\|_{L^\infty(\bar\R_t^3)}^{2\mu}$, and it is assumed that the integral of the interaction term
  \begin{equation}\label{1.26}
    I = \int_0^t i(\tau)\,d\tau = \int_{\mathbb{R}_{t}^{3}} \frac{v_{\varphi}}{r}
    \,\widetilde{\Phi}\,\widetilde{\Gamma}\,dx\,dt'
  \end{equation}
  is finite.
  Moreover, Lemma~6.1 yields the estimate
  \begin{equation}\label{1.27}
    |v_{\varphi}|_{\infty,\bar{\mathbb{R}}_{t}^{3}}^2
    \le \phi(\text{data})\bigl(X^{3/2}+1\bigr).
  \end{equation}

  Recall that on the left-hand side of~\eqref{1.25} we have $\widetilde{X}^{2}$
  and that $X^2 \le C(\widetilde{X}^2+\widehat{X}^2)$.
  Lemmas~\ref{l4.5} and~\ref{l4.6} imply
  \begin{equation}\label{1.28}
    \widehat{X}^2 \le c\delta_2 D_1^2 X^2 + D_{15}^2.
  \end{equation}
  Choosing $\delta_2$ sufficiently small such that $c\delta_2 D_1^2 \le 1/2$, we can absorb the $\widehat X^2$ component to deduce $X^2 \le C(\widetilde X^2 + D_{15}^2)$.
  Using this bound in~\eqref{1.27} yields
  \begin{equation}\label{1.29}
    \|v_{\varphi}\|_{L_{\infty}(\bar{\mathbb{R}}_{t}^{3})}^2
    \le \phi(\text{data})\bigl(\widetilde{X}^{3/2}+1\bigr).
  \end{equation}

  Now we estimate $I$.
  We abandon the global dichotomy in time and use the pointwise partition of the evolution $(0,t)$. Consequently, the interaction integral $I$ splits into two components:
  \begin{equation}\label{1.30}
    |I| \le \int_{(0,t)\setminus W_{A,c_0}^{\rm loc}} |i(\tau)|\,d\tau + \int_{W_{A,c_0}^{\rm loc}\cap(0,t)} |i(\tau)|\,d\tau.
  \end{equation}

  First, we consider the contribution from the good times $\tau \in (0,t)\setminus W_{A,c_0}^{\rm loc}$. By Corollary~\ref{c4.good.times}, for almost every $\tau$ in this set we have the pointwise bound
  \[
    \|v_\varphi^{\rm loc}(\tau)\|_{L_s(\bar\R^3)} \le A_\sharp := \max\left\{A,\, c_0^{-(s-2)/2}D_1\right\}.
  \]
  This means we can apply Lemma~\ref{l4.4} directly with $E = (0,t)\setminus W_{A,c_0}^{\rm loc}$ and the upper bound $A_\sharp$. This yields
  \begin{equation}\label{1.31}
    \begin{aligned}
      \int_{(0,t)\setminus W_{A,c_0}^{\rm loc}} |i(\tau)|\,d\tau
      \leq {} &
      \phi(\mathrm{data},A_\sharp)
      \left(
      1+|v_\varphi|_{L^\infty(\bar{\mathbb R}^3_t)}^{2\delta}
      \right)^{\alpha_0/2}
      \cdot           \\
              & \cdot
      \left(
      \widetilde X(t)^{2-\alpha_0/2}
      +
      \widetilde X(t)^{2-\alpha_0}
      +1
      \right),
    \end{aligned}
  \end{equation}
  where $\alpha_{0}=\frac{(s-3)(1-b)}{2s} > 0$. This gives a closable bound for the interaction outside the critical wedge.

  Next, for the remaining times $\tau \in W_{A,c_0}^{\rm loc}$, the pointwise interaction may exceed this closable bound. The entire interaction contribution accumulated during the wedge times is recorded in $E_{W,s}(\tau)$. By definition~\eqref{def:residual_E}, we have
  \begin{equation}\label{1.32}
    \int_{W_{A,c_0}^{\rm loc}\cap(0,t)} |i(\tau)|\,d\tau = \int_0^t E_{W,s}(\tau)\,d\tau.
  \end{equation}

  On the other hand, the estimates away from the axis imply
  \[
    X(t)^2\leq C_2\left(\widetilde X(t)^2+D_{15}^2\right).
  \]
  In view of~\eqref{1.25}, we choose
  \[
    \delta_1(t)=\min\left\{\delta_*,\,\frac{1}{4C_0(\mu,\mathrm{data})C_2H_\mu(t)}\right\},
  \]
  where $\delta_*>0$ is the admissible smallness threshold in Lemma~\ref{l4.1}. Consequently,
  \[
    C_0(\mu,\mathrm{data})H_\mu(t)\delta_1(t)X(t)^2
    \leq
    \frac14\widetilde X(t)^2+C(\mathrm{data}),
  \]
  and the first term on the right-hand side can be absorbed.
  Moreover,
  \[
    H_\mu(t)(1+\delta_1(t)^{-1})
    \leq C(\mathrm{data})(1+H_\mu^2)
    \leq C(\mathrm{data})
    \left(1+\widetilde X(t)^{3\mu}\right).
  \]
  Choosing $\mu>0$ so small that $3\mu<2$, this term is
  subquadratic and can be absorbed by Young's inequality.

  Combining the bounds~\eqref{1.31} and~\eqref{1.32}, substituting them into~\eqref{1.25}, and using~\eqref{1.29}, we find that the leading exponent of $\widetilde X$ is
  \[
    \beta = 2 - \frac{\alpha_0}{2} + \frac{3\mu}{2} + \frac{3\delta\alpha_0}{4}.
  \]
  We choose $\mu, \delta > 0$ sufficiently small so that $\frac{3\mu}{2} + \frac{3\delta\alpha_0}{4} < \frac{\alpha_0}{2}$, which ensures $\beta < 2$.
  Let $R_W(t) = \int_0^t E_{W,s}(\tau)\,d\tau$. The wedge residual contribution is bounded by $C(1+\widetilde X^{3\mu/2})R_W(t)$. Applying Young's inequality, we have
  \[
    \widetilde X^{3\mu/2}R_W(t) \le \varepsilon\widetilde X^2 + C_\varepsilon R_W(t)^{2/(2-3\mu/2)}.
  \]
  After absorbing the $\widetilde X^2$ terms into the left-hand side, we deduce
  \[
    \widetilde X(t) \le \Psi_{s,A,c_0}(\mathrm{data}, R_W(t)).
  \]
  The absorbed estimate obtained from~\eqref{1.28} gives
  \[
    X(t)^2
    \leq
    C\left(\widetilde X(t)^2+D_{15}^2\right).
  \]
  Therefore, the conditional bound for $\widetilde X(t)$ implies
  \begin{equation}\label{1.34}
    X(t) \le \Psi_{s,A,c_0}\left(\mathrm{data}, \int_0^t E_{W,s}(\tau)\,d\tau\right).
  \end{equation}
  The assumption $s>6$ in particular guarantees $\alpha_0 > 0$.
  This concludes the proof.
\end{proof}

\begin{theorem}\label{t1.2}
  Let the assumptions of Theorem~\ref{t1.1} hold.
  Assume that
  \[
    f\in W^{1,1/2}_{3}(\mathbb{R}_{t}^{3}),
    \qquad
    v(0)\in B^{7/3}_{3,3}(\mathbb{R}^{3}),
    \qquad
    \operatorname{div}v(0)=0.
  \]
  Then
  \[
    v\in W^{3,3/2}_{3}(\mathbb{R}_{t}^{3}),
    \qquad
    \nabla p\in W^{1,1/2}_{3}(\mathbb{R}_{t}^{3}),
  \]
  and
  \begin{equation}\label{1.53}
    \begin{aligned}
       & \|v\|_{W^{3,3/2}_{3}(\mathbb{R}_{t}^{3})}
      +\|\nabla p\|_{W^{1,1/2}_{3}(\mathbb{R}_{t}^{3})} \\
       &
      \le
      \Psi_{s,A,c_0}\negthinspace\left(
      \text{data},
      \|f\|_{W^{1,1/2}_{3}(\mathbb{R}_{t}^{3})},
      \|v(0)\|_{B^{7/3}_{3,3}(\mathbb{R}^{3})},
      \int_0^t E_{W,s}(\tau)\,d\tau
      \right).
    \end{aligned}
  \end{equation}
\end{theorem}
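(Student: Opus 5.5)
We derive Theorem~\ref{t1.2} from Theorem~\ref{t1.1} by a bootstrap through the Stokes system in anisotropic Sobolev spaces $W^{2k,k}_p$. Theorem~\ref{t1.1} is the only place where the wedge residual enters. Every later step is linear maximal-regularity theory plus interpolation, with constants depending on the quantity $R_W(t)=\int_0^tE_{W,s}\,d\tau$ only through the function $\Psi_{s,A,c_0}$.

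\emph{Step 1: lower-order integrability.} By the result of \cite{CFZ} quoted in the introduction, $v$ is controlled by $X(t)$. Using Theorem~\ref{t1.1}, we obtain bounds on $v$ that depend only on data and $R_W(t)$. Concretely, we aim for a bound on $\omega$ in $V(\mathbb{R}^3_t)$, hence on $v\in L_\infty(0,t;H^1)\cap L_2(0,t;H^2)$. We also need $v_\varphi\in L_\infty$ from Lemma~6.1 combined with \eqref{1.29}. The goal is
\[
  \|v\|_{L_{10}(\mathbb{R}^3_t)}+\|\nabla v\|_{L_{10/3}(\mathbb{R}^3_t)}\le \Psi\bigl(\mathrm{data},R_W(t)\bigr),
\]
obtained through the parabolic embedding $V\subset L_{10/3}$ applied to $\nabla v$.

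\emph{Step 2: bootstrap in the Stokes system.} We write \eqref{1.1} as the Stokes problem
\[
  v_t-\nu\Delta v+\nabla p=f-v\cdot\nabla v,
\]
and apply the Solonnikov maximal-regularity estimate in $W^{2,1}_p(\mathbb{R}^3_t)$, which requires $v(0)\in B^{2-2/p}_{p,p}$. Starting from Step~1, we use Hölder's inequality to get $v\cdot\nabla v\in L_{5/2}$, which gives $v\in W^{2,1}_{5/2}$. Embedding then improves $\nabla v$, and repeating the argument we reach $v\in W^{2,1}_3$ with $\nabla p\in L_3$. Since $B^{7/3}_{3,3}\subset B^{4/3}_{3,3}$, the initial datum is admissible at each stage. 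Every constant stays of the form $\Psi(\mathrm{data},\|f\|,\|v(0)\|,R_W)$.

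\emph{Step 3: higher regularity.} We apply the $W^{3,3/2}_3$ Stokes estimate. It needs $f-v\cdot\nabla v\in W^{1,1/2}_3$ and $v(0)\in B^{7/3}_{3,3}=B^{3-2/3}_{3,3}$; the latter is the stated assumption together with $\operatorname{div}v(0)=0$. For the spatial derivative, $\nabla(v\cdot\nabla v)=\nabla v\nabla v+v\nabla^2v$. We put $v\in L_\infty$, using Step~2 and the embedding of $W^{2,1}_3$, and put $\nabla v\in L_6$; both terms then lie in $L_3$. The half time derivative of $v\cdot\nabla v$ is handled by the Leibniz-type estimate in Sobolev--Slobodetskii spaces, combined with interpolation. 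We interpolate between the known $W^{2,1}_3$ bound and the unknown $W^{3,3/2}_3$ norm, which gives
\[
  \|v\cdot\nabla v\|_{W^{1,1/2}_3}\le \varepsilon\|v\|_{W^{3,3/2}_3}+C_\varepsilon\Psi .
\]
The $\varepsilon$ term is absorbed into the left-hand side. If absorption is not available on the whole interval, we instead work on short subintervals whose length depends only on $\Psi$, and concatenate the estimates. This yields \eqref{1.53}, with $\nabla p$ controlled by the same Stokes estimate.

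The main obstacle is the fractional time-derivative estimate of the nonlinear term in $W^{1,1/2}_3$. The product rule there is not elementary: we must control the half time derivative of $v\nabla v$ using only the bounds from Step~2, without creating a superlinear term in the unknown norm. This is what forces the interpolation-with-$\varepsilon$ or the short-time-step argument.
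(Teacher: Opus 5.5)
The overall plan parallels the paper's: Theorem~\ref{t1.1}, then lower-order bounds, then a Stokes bootstrap to $W^{2,1}_3$, then the higher-order Stokes estimate of Lemma~\ref{l7.3}. However, the step you yourself call the main obstacle is asserted rather than proved. Step~3 says only that the half time derivative of $v\cdot\nabla v$ is ``handled by the Leibniz-type estimate'' combined with interpolation against the unknown $W^{3,3/2}_3$ norm, with time-slicing as a fallback. No such estimate is stated. The naive splitting $(v(\tau)-v(\tau'))\cdot\nabla v(\tau)+v(\tau')\cdot(\nabla v(\tau)-\nabla v(\tau'))$ does not close if you only use the time-Hölder continuity of $v$ coming from $W^{2,1}_3$: the first term then diverges against the kernel $|\tau-\tau'|^{-5/2}$.

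The missing ingredient is a product estimate that is quadratic in the already controlled $W^{2,1}_3$ norm. With it, absorption and slicing are unnecessary. The paper writes $v\cdot\nabla v=\divv(v\otimes v)$ and uses two facts:
\begin{itemize}
\item $W^{2,1}_3(\R^3_t)$ is a multiplication algebra, since $2-5/3>0$ (Lemma~\ref{l:parabolic-algebra}). Hence $\|v\otimes v\|_{W^{2,1}_3}\le C_t\|v\|^2_{W^{2,1}_3}$.
\item The mixed-derivative embedding $W^1_3(0,t;L_3)\cap L_3(0,t;W^2_3)\hookrightarrow W^{1/2}_3(0,t;W^1_3)$ holds (Lemma~\ref{l:mixed-derivative} with $m=0$).
\end{itemize}
Together these give $\divv(v\otimes v)\in W^{1/2}_3(0,t;L_3)\cap L_3(0,t;W^1_3)=W^{1,1/2}_3$, with norm at most $C_t\|v\|^2_{W^{2,1}_3}$. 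Lemma~\ref{l7.3} then finishes. Your Leibniz splitting can also be closed, for example by bounding $v(\tau)-v(\tau')$ in $L_p$ (large finite $p$) through the same embedding, and $\nabla v(\tau)$ through the trace bound $\nabla v\in L_\infty(0,t;B^{1/3}_{3,3})$.

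Two earlier steps also need different justification.

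\emph{Step 2.} At the first stage $p=5/2$, Lemma~\ref{l2.18} requires $f\in L_{5/2}(\R^3_t)$ and $v(0)\in B^{6/5}_{5/2,5/2}(\R^3)$. The inclusion $B^{7/3}_{3,3}\subset B^{4/3}_{3,3}$ only serves $p=3$, and on $\R^3$ you cannot lower the integrability exponent by embedding. Both requirements do hold, but only after interpolating with $D_1$:
\begin{itemize}
\item for the force, $\|f\|^{5/2}_{L_{5/2}(\R^3_t)}\le\|f\|^{3/2}_{L_\infty(0,t;L_3)}\|f\|_{L_1(0,t;L_2)}$, using $W^{1/2}_3(0,t;L_3)\subset C([0,t];L_3)$;
\item for the initial datum, $v(0)\in[L_2,B^{7/3}_{3,3}]_{3/5}=B^{7/5}_{5/2,5/2}$.
\end{itemize}
The paper avoids the issue by splitting $v=w+z$. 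Here $w$ carries $f$ and $v(0)$ and is estimated once in $W^{3,3/2}_3$. Only $z$, which has zero initial datum and right-hand side $-v\cdot\nabla v$, is bootstrapped, starting from $v\cdot\nabla v\in L_2(0,t;L_{3/2})$. That starting point needs only $v\in L_\infty(0,t;L_6)$ plus the energy bound. The $L_6$ bound comes from \eqref{7.20} for $(v_r,v_z)$, and from $\|v_\varphi\|_6\le\|v_\varphi\|_\infty^{2/3}\|v_\varphi\|_2^{1/3}$ together with \eqref{1.27} for $v_\varphi$.

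\emph{Step 1.} You ask for the stronger $\omega\in V(\R^3_t)$, and this does not follow from the citation alone. Here it would have to be assembled piece by piece:
\begin{itemize}
\item near the axis, from Lemma~\ref{l5.1} and $\omega_\varphi=r\Gamma$;
\item away from the axis, from Lemma~\ref{l2.9} (via $\omega_r=-u_{,z}/r$ and $\omega_z=u_{,r}/r$ with $r\ge r_0$) and \eqref{7.17}.
\end{itemize}
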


\begin{proof}[Proof of Theorem~1.2]
  From Theorem~\ref{t1.1} we have the conditional estimate for $X(t)$. We define
  \begin{equation}\label{1.54}
    \mathcal{K}(t) := \Psi_{s,A,c_0}\left(\mathrm{data}, \int_0^t E_{W,s}(\tau)\,d\tau\right).
  \end{equation}
  Since $\|\Phi\|_{V(\bar{\mathbb{R}}_{t}^{3})} + \|\Gamma\|_{V(\bar{\mathbb{R}}_{t}^{3})} \le C X(t)$, we obtain
  \begin{equation}\label{1.45}
    \|\Phi\|_{V(\bar{\mathbb{R}}_{t}^{3})} + \|\Gamma\|_{V(\bar{\mathbb{R}}_{t}^{3})} \le C\mathcal{K}(t).
  \end{equation}
  Then~\eqref{7.20} yields
  \begin{equation}\label{1.55}
    \|v'\|_{L_{\infty}(0,t;L_{6}(\bar{\mathbb{R}}^{3}))}
    \le c\mathcal K(t),
  \end{equation}
  where $v'=(v_r,v_z)$. For the swirl component, interpolation with the energy norm gives
  \[
    \|v_\varphi(t)\|_{L_6(\bar\R^3)}
    \leq
    \|v_\varphi(t)\|_{L^\infty(\bar\R^3)}^{2/3}
    \|v_\varphi(t)\|_{L_2(\bar\R^3)}^{1/3}.
  \]
  Hence
  \[
    \|v_\varphi\|_{L^\infty(0,t;L_6(\bar\R^3))}
    \leq
    D_1^{1/3}\|v_\varphi\|_{L^\infty(\bar\R_t^3)}^{2/3}
    \leq
    \phi(\mathcal K(t),\mathrm{data}).
  \]
  Defining $K_1(t):=\phi(\mathcal K(t),\mathrm{data})$, we conclude that
  \begin{equation}\label{1.55b}
    \|v\|_{L_{\infty}(0,t;L_{6}(\bar{\mathbb{R}}^{3}))}
    \le K_1(t).
  \end{equation}
  Moreover, Lemma~\ref{l2.1} yields
  \begin{equation}\label{1.56}
    \|\nabla v\|_{L_{2}(\bar{\mathbb{R}}_{t}^{3})}\le D_{1}.
  \end{equation}
  The structural decomposition $v=w+z$ and the complete exponent-loss bootstrap are carried out in Lemma~\ref{l7.2}. Applying that lemma with the bounds~\eqref{1.45}, \eqref{1.55b}, and~\eqref{1.56}, the estimate~\eqref{1.53} follows.
\end{proof}
\begin{remark}\label{r1.3}
  Theorem~\ref{t1.1} is conditional. It separates the times belonging to
  $W_{A,c_0}^{\rm loc}$ and records the corresponding interaction through
  $E_{W,s}$. If the trajectory avoids the localized critical wedge, then
  $E_{W,s}=0$ almost everywhere and the residual-free estimate depending
  only on the data is recovered. This does not constitute an unconditional
  global regularity theorem, because avoidance of the wedge is itself an
  additional property of the trajectory.
\end{remark}

\begin{remark}\label{r1.2}
  We emphasize that estimates (\ref{1.25}), (\ref{1.26}), (\ref{1.29}) and (\ref{1.30}) are valid for regular solutions.

  In particular, the estimates need the fact that a smooth $v$ satisfies the following expansions near the axis
  \begin{equation}\eqal{
      &v_r(r,z,t)=a_1(z,t)r+a_2(z,t)r^3+\cdots,\cr
      &v_\varphi(r,z,t)=b_1(z,t)r+b_2(z,t)r^3+\cdots,\cr
      &\psi(r,z,t)=d_1(z,t)r+d_2(z,t)r^3+\cdots,\cr}
    \label{1.63}
  \end{equation}
  where $a_i(z,t)=-d_{i,z}(z,t)$, $i=1,2$, and
  \begin{equation}
    \psi_1(r,z,t)=d_1(z,t)+d_2(z,t)r^2+\cdots
    \label{1.64}
  \end{equation}
  which were shown by Liu and Wang \cite{LW}. The expansions are crucial for the proofs in \cite{Z1, Z2, OZ, GZ1, GZ2,GZ3}.

  To satisfy (\ref{1.63})--(\ref{1.64}) we need that $v\in W_3^{3,3/2}(\mathbb R^3_t)$, $\nabla p\in W_3^{1,1/2}(\mathbb R^3_t)$. We show in Section \ref{s7} that this is true as long as $X(t)$ remains bounded.

  Moreover, (\ref{1.63}) and (\ref{1.64}) are necessary in the proofs of Lemmas \ref{l3.1}--\ref{l3.5}, where $H^2$ and $H^3$ elliptic estimates of $\psi_1$, $\tilde\psi_1$ are derived in terms of~$X$.

  Results of Lemmas \ref{l3.1}--\ref{l3.5} are used in proofs of (\ref{1.25}), (\ref{1.26}), (\ref{1.29}).
\end{remark}

\begin{remark}\label{r1.5}
  However, to prove the main result of this paper, the estimate (\ref{1.24}), we need to use the expansions (\ref{1.63}), (\ref{1.64}), due to Liu and Wang \cite{LW}, so a sufficient regularity of solutions to (\ref{1.1}) near the axis of symmetry must be assumed.

  According to the partial regularity theory of Caffarelli, Kohn and Nirenberg \cite{CKN} any singularity of axisymmetric solutions to (\ref{1.1}) must occur on the axis of symmetry. The methods presented in this paper make use of regular solutions, for which there are no singularities at the axis. Whether it is possible to control $X(t)$ without exploiting these expansions, remains an interesting open problem.
\end{remark}

\section{Notation and auxiliary results}\label{s2}

First we introduce some notations
\subsection{Notation}\label{s2.1}

We use the following notation for Lebesgue and Sobolev spaces
$$
  \|u\|_{L_p(\R^3)}=:|u|_{p,\R^3},\quad \|u\|_{L_p(\R_t^3)}=:|u|_{p,\R_t^3},
$$
$$\eqal{
    \R_t^3&=\R^3\times(0,t),\quad \|u\|_{L_{p,q}(\R_t^3)}=\|u\|_{L_q(0,t;L_p(\R^3))}=:|u|_{p,q,\R_t^3},\cr
    &{\rm where}\quad p,q\in[1,\infty].\cr}
$$
Let $W_p^s(\R^3)$, $s\in\N$ be the Sobolev space with the finite norm
$$
  \|u\|_{W_p^s(\R^3)}=\bigg(\sum_{|\alpha|\le s}\intop_{\R^3}|D_x^\alpha u(x)|^pdx\bigg)^{1/p},
$$
where $D_x^\alpha=\partial_{x_1}^{\alpha_1}\partial_{x_2}^{\alpha_2}\partial_{x_3}^{\alpha_3}$, $|\alpha|=\alpha_1+\alpha_2+\alpha_3$, $\alpha_i\in\N_0=\N\cup\{0\}$, $i=1,2,3$ and $p\in[1,\infty]$.

Let $H^s(\R^3)=W_2^s(\R^3)$. Then we denote
$$\eqal{
    &\|u\|_{H^s(\R^3)}=:\|u\|_{s,\R^3},\quad &\|u\|_{W_p^s(\R^3)}=:\|u\|_{s,p,\R^3},\cr
    &\|u\|_{L_q(0,t;W_p^k(\R^3))}=:\|u\|_{k,p,q,\R_t^3},\quad &\|u\|_{k,p,p,\R_t^3}=:\|u\|_{k,p,\R_t^3},\cr}
$$
where $s,k\in\N_0$.

We need the energy type space $V(\R_t^3)$ appropriate for a description of weak solutions to the Navier-Stokes equations
$$
  \|u\|_{V(\R_t^3)}=|u|_{2,\infty,\R_t^3}+|\nabla u|_{L_2(\R_t^3)}.
$$
We use the notation
$$\eqal{
    \R^3&=\{x=(x_1,x_2,x_3),x_i\in\R^1,i=1,2,3\}\cr
    &=\{(r,\varphi,z)\colon r\in\R_+,\varphi\in[0,2\pi],z\in\R^1\}\cr
    \bar\R^3&=\{(r,z)\colon r\in\R_+,z\in\R^1\}\cr}
$$
and
$$\eqal{
    &\intop_{\R^3}fd\bar x=\intop_{\R^3}fdx_1dx_2dx_3=\intop_{\R^3}frdrdzd\varphi,\cr
    &\intop_{\bar\R^3}fdx=\intop_{\bar\R^3}frdrdz.\cr}
$$
For axially symmetric $f$,
$$
  \intop_{\R^3}fd\bar x=2\pi\intop_{\bar\R^3}fdx.
$$

\subsection{Notation of constants}\label{s2.2}

$$\eqal{
    &D_1^2=3\|f\|_{L_{2,1}(\bar\R_t^3)}^2+2\|v(0)\|_{L_2(\bar\R^3)}\quad ({\rm see}\ (\ref{2.1}),\cr
    &D_2=|f_0|_{\infty,1,\bar\R_t^3}+|u(0)|_{\infty,\bar\R^3}\quad &({\rm see}\ (\ref{2.9})\cr}
$$
$$
  f_0=rf_\varphi,\quad u=rv_\varphi,
$$
\begin{equation*}
  \begin{aligned}
     & D_4 = D_1 D_2 + |u_{,z}(0)|_{2,\bar{\mathbb{R}}^3} + |f_0|_{2,\bar{\mathbb{R}}_t^3}                                            &  & (\text{see } (\ref{2.22})), \\
     & D_5 = D_1 + D_1 D_2 + |u_{,r}(0)|_{2,\bar{\mathbb{R}}^3} + |f_0|_{2,\bar{\mathbb{R}}_t^3}                                      &  & (\text{see } (\ref{2.23})), \\
     & D_6^2 = D_1^2 (D_2 + 1) + D_1 D_2 D_4 + |\tilde{F}_r|_{6/5,2,\bar{\mathbb{R}}_t^3} + |\tilde{\Phi}(0)|_{2,\bar{\mathbb{R}}^3},                                  \\
     & F'_r = F_r / r,\quad \tilde{F}'_r = F'_r \zeta,\quad \tilde{\Phi} = \Phi \zeta,                                                                                 \\
     & F'_\varphi = F_\varphi / r,\quad \tilde{F}'_\varphi = F'_\varphi \zeta,\quad \tilde{\Gamma} = \Gamma \zeta,                                                     \\
     & D_{10}(s) = |f_0|_{\frac{5s}{2s+3},\bar{\mathbb{R}}_t^3} + |u(0)|_{s,\bar{\mathbb{R}}^3}                                       &  & (\text{see } (\ref{2.28})).
  \end{aligned}
\end{equation*}
$$\eqal{
  &\phi_1(s)\ \ {\rm is\ defined\ below}\ (\ref{6.8}),\cr
  &\phi_2(s)\ \ {\rm is\ defined\ below}\ (\ref{6.9}),\cr}
$$
where $s\le{2\over{3\over 2}\beta-1}$, $\beta\in\big({2\over 3},1\big)$,

$$\left\{\eqal{\left.\eqal{
  &D_{11}=(D_1+D_4+D_5)D_2,\quad D_{12}=(D_1+D_4)D_1^{1-\varepsilon_0}{r_0^{\varepsilon_0}\over\varepsilon_0},\cr
  &D_8=D_1D_{11}+D_1(D_1+D_4+D_5)(D_2+1)+|\tilde F_r|_{6/5,2,\bar\R_t^3}^2\cr
  &+|\tilde F_z|_{6/5,2,\bar\R_t^3}^2+|\tilde\omega_r(0)|_{2,\bar\R^3}^2+|\tilde\omega_z(0)|_{2,\bar\R^3}^2\quad {\rm see\ the\ }\cr
  &\hskip5.0cm{\rm assumptions\ of\ Lemma\ \ref{l5.1}}\cr}\right\}\ &{\rm see}\ (\ref{5.37})\cr
  &\hskip-10,8cm D_9=D_1D_2^{2(1-\varepsilon_0)}{r_0^{2\varepsilon_0}\over\varepsilon_0^2}\ &\hskip-2.0cm{\rm see}\ (\ref{5.40})\cr}\right.
$$
$$\eqal{
    &D_{13}=D_{11}+D_{12}^2+(1+D_9)\quad &({\rm see}\ (\ref{5.41})),\cr
    &D_{14}=D_1^2D_{12}^2+(1+D_1D_9)\quad &({\rm see}\ (\ref{5.41})),\cr
    &D_{15}^2=D_1^2D_2^2+D_1^2D_{10}^2(3)+D_1D_4+c(1/\delta_2)D_1^2+D_{10}^2(6/5)\cr
    &+D_2D_1^2+|\hat F'_r|_{6/5,2,\bar\R_t^3}^2+|\hat F_\varphi|_{6/5,2,\bar\R_t^3}^2+|\hat\Phi(0)|_{2,\bar\R^3}^2+|\hat\Gamma(0)|_{2,\bar\R^3}^2,\cr
    &D_{16}^2=2D_2|f_\varphi/r|_{\infty,1,\bar\R_t^3}+|v_\varphi(0)|_{\infty,\bar\R^3}^2,\cr}
$$

$$\eqal{
  &D_{17}={r_0^{2\delta}\over\delta^2}D_2^{4-2\delta},\quad D_{18}=(4-2\delta)[|\tilde f'_\varphi|_{10/(1+6\delta),\bar\R_t^3}D_1^{3-2\delta}+D_1^2],\cr
  &D_{19}=|\tilde v'_\varphi(0)|_{s,\bar\R^3}^{4-2\delta},\quad \tilde f'_\varphi=f_\varphi\zeta_1,\quad \tilde v'_\varphi=v_\varphi\zeta_1,\quad \zeta_1=\begin{cases}1&r\le 2r_0\cr 0&r\ge 4r_0\cr\end{cases}\cr}
$$

\subsection{Basic estimates}\label{s2.3}

\begin{lemma}\label{l2.1}
  Let $f\in L_{2,1}(\bar\R^3\times(0,t))$, $v(0)\in L_2(\bar\R^3)$, $\bar\R^3=\{(r,z)\colon r\in\R_+,z\in\R\}$. Let solutions to (\ref{1.7}) be sufficiently regular.\\
  Then solutions to (\ref{1.7}) vanishing sufficiently fast as $r+|z|$ tends to infinity satisfy the estimate
  \begin{equation}\eqal{
      &\|v(t)\|_{L_2(\bar\R^3)}^2+\nu\intop_0^tdt'\intop_{\bar\R^3}(|\nabla v_r|^2+|\nabla v_\varphi|^2+|\nabla v_z|^2)dx\cr
      &\quad+\nu\intop_0^tdt'\intop_{\bar\R^3}\bigg({v_r^2\over r^2}+{v_\varphi^2\over r^2}\bigg)dx\le 3\|f\|_{L_{2,1}(\bar\R_t^3)}^2+2\|v(0)\|_{L_2(\bar\R^3)}^2\equiv D_1^2,\cr}
    \label{2.1}
  \end{equation}
  where $\bar\R_t^3=\bar\R^3\times(0,t)$, $dx=rdrdz$.
\end{lemma}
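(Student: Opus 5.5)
The statement to prove is Lemma~\ref{l2.1}, the basic energy inequality. I would proceed by the standard energy method in cylindrical coordinates. I would multiply $(\ref{1.7})_1$ by $v_r$, $(\ref{1.7})_2$ by $v_\varphi$ and $(\ref{1.7})_3$ by $v_z$, add the three identities, and integrate over $\bar\R^3$ with the measure $dx=r\,dr\,dz$.

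First, the time derivative terms give ${1\over 2}{d\over dt}\|v\|_{L_2(\bar\R^3)}^2$. Next I treat the convective terms. Since $v\cdot\nabla=v_r\partial_r+v_z\partial_z$ and $(rv_r)_{,r}+(rv_z)_{,z}=0$, integration by parts gives $\int_{\bar\R^3} v\cdot\nabla w\, w\,dx=-{1\over2}\int_{\bar\R^3}{1\over r}\big((rv_r)_{,r}+(rv_z)_{,z}\big)w^2\,dx=0$ for $w=v_r,v_\varphi,v_z$. The boundary terms vanish because of the decay at infinity and the factor $r$ at $r=0$. The remaining coupling terms are $-{v_\varphi^2\over r}v_r$ from the radial equation and ${v_r\over r}v_\varphi^2$ from the angular equation, and they cancel exactly.

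Then I handle the pressure and viscous terms. The pressure term is $\int(p_{,r}v_r+p_{,z}v_z)\,dx=-\int p\,{1\over r}\big((rv_r)_{,r}+(rv_z)_{,z}\big)\,dx=0$. The viscous terms give $\nu\int(|\nabla v_r|^2+|\nabla v_\varphi|^2+|\nabla v_z|^2)\,dx+\nu\int\big({v_r^2\over r^2}+{v_\varphi^2\over r^2}\big)\,dx$. Here I use $-\int\Delta w\,w\,r\,dr\,dz=\int(w_{,r}^2+w_{,z}^2)\,r\,dr\,dz$. Its boundary term $-rw_{,r}w|_{r=0}$ vanishes for regular axisymmetric fields: $v_r$ and $v_\varphi$ are $O(r)$ by (\ref{1.63}), and $v_z$ is smooth, so the factor $r$ suffices.

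This yields
\[
{1\over2}{d\over dt}\|v\|_{L_2(\bar\R^3)}^2+\nu\Big(\|\nabla v\|_{L_2(\bar\R^3)}^2+\Big\|{v_r\over r}\Big\|_{L_2(\bar\R^3)}^2+\Big\|{v_\varphi\over r}\Big\|_{L_2(\bar\R^3)}^2\Big)\le\|f\|_{L_2(\bar\R^3)}\|v\|_{L_2(\bar\R^3)}.
\]
For the force term, I drop the dissipation and divide by $\|v\|_{L_2}$ (regularizing with $(\|v\|_{L_2}^2+\epsilon)^{1/2}$ if needed). This gives ${d\over dt}\|v\|_{L_2}\le\|f\|_{L_2}$, hence $\sup_{t'\le t}\|v(t')\|_{L_2}\le\|v(0)\|_{L_2}+\|f\|_{L_{2,1}(\bar\R_t^3)}$.

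Finally I integrate the full inequality in time. This bounds $\|v(t)\|_{L_2}^2$ plus twice the dissipation by $\|v(0)\|_{L_2}^2+2\|f\|_{L_{2,1}}\sup\|v\|_{L_2}$. Young's inequality applied to $2\|f\|_{L_{2,1}}\big(\|v(0)\|_{L_2}+\|f\|_{L_{2,1}}\big)$ gives at most $3\|f\|_{L_{2,1}}^2+2\|v(0)\|_{L_2}^2$ on the right-hand side. Keeping only one copy of the dissipation then gives exactly (\ref{2.1}).

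The only delicate point is justifying the integrations by parts at $r=0$ and at infinity. The axis terms are handled by the Liu--Wang expansions (\ref{1.63}) together with the regularity assumption, and the terms at infinity by the assumed decay. Everything else is routine.
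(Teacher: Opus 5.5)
Your proposal is correct and is essentially the paper's proof: the same energy identity from testing the three component equations with $v_r,v_\varphi,v_z$, the cancellation of the convective, coupling and pressure terms via $(rv_r)_{,r}+(rv_z)_{,z}=0$, a bound on $\sup_t\|v(t)\|_{L_2(\bar\R^3)}$, and reinsertion into the time-integrated identity. The differences are only technical. The paper justifies the computation near the axis by replacing $r$ with the truncation $r_\varepsilon$ and letting $\varepsilon\to0$, rather than using (\ref{1.63}) directly. It also bounds the supremum by absorption, getting $\sup_t\|v\|_{L_2}^2\le 4\|f\|_{L_{2,1}}^2+2\|v(0)\|_{L_2}^2$, whereas your bound $\sup_t\|v\|_{L_2}\le\|v(0)\|_{L_2}+\|f\|_{L_{2,1}}$ together with $2ab\le a^2+b^2$ reproduces the stated constant $3\|f\|_{L_{2,1}}^2+2\|v(0)\|_{L_2}^2$ more transparently than the paper's step (\ref{2.7}).
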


\begin{proof}
  Introduce the notation
  $$
    r_\varepsilon=\left\{\eqal{
      &r\ \ &r\ge\varepsilon\cr
      &\varepsilon\ \ &r<\varepsilon\cr}\right.
  $$
  Then (\ref{1.7}) for $v_r^\varepsilon$, $v_\varphi^\varepsilon$, $v_z^\varepsilon$, $p^\varepsilon$ has the form

  \begin{equation}
    \begin{aligned}
       & v_{r,t}^\varepsilon + v^\varepsilon \cdot \nabla v_r^\varepsilon - \frac{(v_\varphi^\varepsilon)^2}{r_\varepsilon} - \nu\Delta v_r^\varepsilon + \nu\frac{v_r^\varepsilon}{r_\varepsilon^2} = -p_{,r}^\varepsilon + f_r
       & \text{in } \mathbb{R}^3 \times (0,t),                                                                                                                                                                                                      \\
       & v_{\varphi,t}^\varepsilon + v^\varepsilon \cdot \nabla v_\varphi^\varepsilon + \frac{v_r^\varepsilon}{r_\varepsilon}v_\varphi^\varepsilon - \nu\Delta v_\varphi^\varepsilon + \nu\frac{v_\varphi^\varepsilon}{r_\varepsilon^2} = f_\varphi
       & \qquad \text{in } \mathbb{R}^3 \times (0,t),                                                                                                                                                                                               \\
       & v_{z,t}^\varepsilon + v^\varepsilon \cdot \nabla v_z^\varepsilon - \nu\Delta v_z^\varepsilon = -p_{,z}^\varepsilon + f_z
       & \qquad \text{in } \mathbb{R}^3 \times (0,t),                                                                                                                                                                                               \\
       & (rv_r^\varepsilon)_{,r} + (rv_z^\varepsilon)_{,z} = 0
       & \qquad \text{in } \mathbb{R}^3 \times (0,t),                                                                                                                                                                                               \\
       & v_r^\varepsilon|_{t=0} = v_r^\varepsilon(0),\ \ v_\varphi^\varepsilon|_{t=0} = v_\varphi^\varepsilon(0),\ \ v_z^\varepsilon|_{t=0} = v_z^\varepsilon(0)
       & \qquad \text{in } \mathbb{R}^3.
    \end{aligned}
    \label{2.2}
  \end{equation}

  Multiply $(\ref{2.2})_1$ by $v_r^\varepsilon$, $(\ref{2.2})_2$ by $v_\varphi^\varepsilon$, $(\ref{2.2})_3$ by $v_z^\varepsilon$, add the results and integrate over $\R^3$. Then we get
  \begin{equation}\eqal{
      &{1\over 2}{d\over dt}\intop_{\R^3}(|v_r^\varepsilon|^2+|v_\varphi^\varepsilon|^2+|v_z^\varepsilon|^2)d\bar x+\nu\intop_{\R^3}(|\nabla v_r^\varepsilon|^2+|\nabla v_\varphi^\varepsilon|^2+|\nabla v_z^\varepsilon|^2)d\bar x\cr
      &\quad+\nu\intop_{\R^3}\bigg({|v_r^\varepsilon|^2\over r_\varepsilon^2}+{|v_\varphi^\varepsilon|^2\over r_\varepsilon^2}\bigg)d\bar x+\intop_{\R^3}(p_{,r}^\varepsilon v_r^\varepsilon+p_{,z}^\varepsilon v_z^\varepsilon)d\bar x\cr
      &=\intop_{\R^3}(f_rv_r^\varepsilon+f_\varphi v_\varphi^\varepsilon+f_zv_z^\varepsilon)d\bar x,\cr}
    \label{2.3}
  \end{equation}
  where $d\bar x=rdrd\varphi dz=dxd\varphi$.

  Integrating (\ref{2.3}) with respect to time and passing with $\varepsilon\to 0$, we obtain
  \begin{equation}\eqal{
    &{1\over 2}|v(t)|_{2,\R^3}^2+\nu\intop_{\R_t^3}(|\nabla v_r|^2+|\nabla v_\varphi|^2+|\nabla v_z|^2)d\bar xdt'\cr
    &\quad+\nu\intop_{\R_t^3}\bigg({v_r^2\over r^2}+{v_\varphi^2\over r^2}\bigg)d\bar xdt'+\intop_{\R_t^3}(p_{,r}v_r+p_{,z}v_z)d\bar xdt'\cr
    &=\intop_{\R_t^3}(f_rv_r+f_\varphi v_\varphi+f_zv_z)d\bar xdt'+{1\over 2}|v(0)|_{2,\R^3}^2.\cr}
    \label{2.4}
  \end{equation}
  The last term on the l.h.s. denoted by $I$, equals
  $$\eqal{
    I&=\intop_{\R_t^3}(pv_rr)_{,r}drdzd\varphi dt'+\intop_{\R_t^3}(pv_zr)_{,z}drdzd\varphi dt'\cr
    &\quad-\intop_{\R_t^3}p[(v_rr)_{,r}+(v_zr)_{,z}]drdzd\varphi dt'.\cr}
  $$
  Since $\lim_{r+|z|\to\infty}v_r=0$, $\lim_{r+|z|\to\infty}v_z=0$, the second term in $I$ vanishes and the first equals
  $$
    \lim_{r\to\infty}|p(r)|_{2,\R_t^2}|v_r(r)|_{2,\R_t^2}r=0,
  $$
  where
  $$
    |u(r)|_{2,\R^2}=\bigg(\intop_{\R^2}|u(r)|^2dzd\varphi\bigg)^{1/2}
  $$
  and we used that $p$ and $v$ are sufficiently regular.

  The last term in $I$ vanishes in view of $(\ref{1.7})_4$.

  Hence (\ref{2.4}) implies
  \begin{equation}
    {1\over 2}\sup_t|v(t)|_{2,\R^3}^2\le|f|_{2,1,\R_t^3}\sup_t|v(t)|_{2,\R^3}+{1\over 2}|v(0)|_{2,\R^3}^2.
    \label{2.5}
  \end{equation}
  The above inequality implies
  \begin{equation}
    \sup_t|v(t)|_{2,\R^3}^2\le 4|f|_{2,1,\R_t^3}^2+2|v(0)|_{2,\R^3}^2.
    \label{2.6}
  \end{equation}
  Using the estimate in (\ref{2.4}), we obtain
  \begin{equation}\eqal{
      &{1\over 2}\sup_t|v(t)|_{2,\R^3}^2+2\nu\intop_{\R_t^3}(|\nabla v_r|^2+|\nabla v_\varphi|^2+|\nabla v_z|^2)d\bar xdt'\cr
      &\quad+2\nu\intop_{\R_t^3}\bigg({v_r^2\over r^2}+{v_\varphi^2\over r^2}\bigg)d\bar xdt'\le|f|_{2,1,\R_t^3}(2|f|_{2,1,\R_t^3}+2|v(0)|_{2,\R^3}^2)\cr
      &\quad+{1\over 2}|v(0)|_{2,\R^3}^2.\cr}
    \label{2.7}
  \end{equation}
  Dropping the integration with respect to $\varphi$ yields (\ref{2.1}). This ends the proof.
\end{proof}

\begin{remark}\label{r2.2}
  The last term on the l.h.s. of (\ref{2.1}) implies
  \begin{equation}
    v_r|_{r=0}=v_\varphi|_{r=0}=0.
    \label{2.8}
  \end{equation}
\end{remark}

\begin{lemma}\label{l2.3}
  Consider problem (\ref{1.12}). Assume that $f_0\in L_{\infty,1}(\bar\R_t^3)$ and $u(0)\in L_\infty(\bar\R^3)$. Using that $v_\varphi|_{r=0}=0$ and that $u$ vanishes sufficiently fast as $r+|z|\to\infty$ we obtain
  \begin{equation}
    |u(t)|_{\infty,\bar\R^3}\le|f_0|_{\infty,1,\bar\R_t^3}+|u(0)|_{\infty,\bar\R^3}\equiv D_2.
    \label{2.9}
  \end{equation}
\end{lemma}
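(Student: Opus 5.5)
The estimate \eqref{2.9} follows from a maximum principle for the swirl equation \eqref{1.12}. The plan is to apply a Stampacchia-type $L_p$ argument uniformly in $p$ and then let $p\to\infty$. I work with the smooth solution on $\R^3$: since $u=rv_\varphi$ is axially symmetric, $\Delta u-\frac{2}{r}u_{,r}$ in cylindrical coordinates is the five-dimensional-type operator $u_{,rr}-\frac1r u_{,r}+u_{,zz}$. Equivalently, I multiply \eqref{1.12} by $|u|^{p-2}u$ and integrate over $\bar\R^3$ with the measure $dx=r\,dr\,dz$.

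The key steps are the following.

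\emph{Convective term.} We have $\int v\cdot\nabla u\,|u|^{p-2}u\,dx=\frac1p\int v\cdot\nabla|u|^p\,dx$. Integrating by parts and using $(rv_r)_{,r}+(rv_z)_{,z}=0$ from \eqref{1.7}, this term vanishes. The boundary terms at infinity vanish by the assumed decay. The boundary term at $r=0$ carries the factor $r$ and vanishes because $v_r|_{r=0}=0$ (Remark~\ref{r2.2}).

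\emph{Viscous terms.} Integrating by parts, $-\nu\int\Delta u\,|u|^{p-2}u\,r\,dr\,dz=\nu(p-1)\int|\nabla u|^2|u|^{p-2}\,dx$, plus a boundary term at $r=0$ of the form $-\nu\,r u_{,r}|u|^{p-2}u|_{r=0}$. That boundary term is zero because $u=rv_\varphi$ vanishes on the axis. The drift term is handled by the identity
$$\frac{2\nu}{r}u_{,r}|u|^{p-2}u\,r=\frac{2\nu}{p}\,\partial_r|u|^p .$$
Its $dr\,dz$ integral therefore equals $-\frac{2\nu}{p}\int|u|^p|_{r=0}\,dz=0$, again because $u|_{r=0}=0$; this is the point where $v_\varphi|_{r=0}=0$ is used essentially. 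Hence all dissipative contributions are nonnegative.

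\emph{Gronwall step and limit.} The previous steps give
$$\frac{d}{dt}|u|_{p,\bar\R^3}\le|f_0|_{p,\bar\R^3},$$
hence
$$|u(t)|_{p}\le|u(0)|_p+\int_0^t|f_0|_p\,dt'.$$
Letting $p\to\infty$ gives \eqref{2.9}, provided $u(0)$ and $f_0$ lie in $L_p$ for large $p$. The main obstacle is justifying this passage, because only $L_\infty$ bounds are assumed. To avoid it, I would instead use a truncation argument. Set $k(t)=|u(0)|_\infty+\int_0^t|f_0|_\infty\,dt'$ and test with $(u-k(t))_+$; the condition $k'=|f_0|_\infty\ge f_0$ makes the forcing nonpositive on the set where $u>k$. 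The same integrations by parts, with the same vanishing boundary terms on the axis and at infinity, then show that $\|(u-k)_+\|_2$ is nonincreasing and zero at $t=0$. Applying the same argument to $-u$ yields $|u(t)|_\infty\le D_2$.
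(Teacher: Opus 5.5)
Your proposal is correct. Its $L_p$ part is exactly the paper's proof: multiply \eqref{1.12} by $u|u|^{s-2}$ to get \eqref{2.10}, remove the axis term $\int(|u|^s)_{,r}\,dr\,dz$ using $u|_{r=0}=0$, derive $\frac{d}{dt}|u|_{s,\bar\R^3}\le|f_0|_{s,\bar\R^3}$, integrate in time and let $s\to\infty$.

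The genuinely different step is that you replace this limit by a Stampacchia comparison with the time-dependent level $k(t)=|u(0)|_{\infty,\bar\R^3}+\int_0^t|f_0|_{\infty,\bar\R^3}dt'$. This gains rigor under the hypotheses actually stated. The paper's limit tacitly requires $\int_0^t|f_0|_{s,\bar\R^3}dt'$ and $|u(0)|_{s,\bar\R^3}$ to be finite for large $s$, together with a convergence argument for these norms as $s\to\infty$. None of this follows from $f_0\in L_{\infty,1}$ and $u(0)\in L_\infty$ alone; compare Lemma~\ref{l2.11}, where the corresponding $L_s$-type assumptions on $f_0$ and $u(0)$ are imposed explicitly. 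Your test function $w=(u-k)_+$, by contrast, has bounded support once $k>0$, by the decay of $u$.

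The cancellations are the same as in the paper. The drift term reduces to $-\nu\int_{\R}w^2|_{r=0}\,dz$, which vanishes because $u|_{r=0}=0$ \emph{and} $k\ge 0$. You should say this explicitly, since it is where the sign of the level matters. To cover times where $k(t)=0$, use the level $k+\varepsilon$ and let $\varepsilon\to0$.

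In exchange, the paper's route yields the identity \eqref{2.10} and the $L_s$ bounds themselves, which are reused in Lemma~\ref{l2.11}.

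A minor point: $\partial_r^2-\frac1r\partial_r+\partial_z^2$ is not the five-dimensional Laplacian; that is $\partial_r^2+\frac3r\partial_r+\partial_z^2$, which governs $\psi_1$, $\Phi$, $\Gamma$. You never use that label, so nothing breaks.
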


\begin{proof}
  Multiplying $(\ref{1.12})_1$ by $u|u|^{s-2}$, $s>2$, integrating over $\bar\R^3$ and by parts, we obtain
  \begin{equation}\eqal{
    &{1\over s}{d\over dt}|u|_{s,\bar\R^3}^s+{4\nu(s-1)\over s^2}|\nabla|u|^{s/2}|_{2,\bar\R^3}^2+{\nu\over s}\intop_{\bar\R^3}(|u|^s)_{,r}drdz\cr
    &=\intop_{\bar\R^3}f_0u|u|^{s-2}dx.\cr}
    \label{2.10}
  \end{equation}
  In view of Remark \ref{r2.2} and that $\lim_{r\to\infty}u=0$ it follows that the last term on the l.h.s. of (\ref{2.10}) vanishes. Simplifying (\ref{2.10}) yields
  \begin{equation}
    {d\over dt}|u|_{s,\bar\R^3}\le|f_0|_{s,\bar\R^3}.
    \label{2.11}
  \end{equation}
  Integrating with respect to time and passing with $s\to\infty$ we derive (\ref{2.9}). This ends the proof.
\end{proof}

\begin{lemma}\label{l2.4}
  Let estimates (\ref{2.1}) and (\ref{2.9}) hold. Then
  \begin{equation}
    |v_\varphi|_{4,\bar\R_t^3}\le D_1^{1/2}D_2^{1/2}.
    \label{2.12}
  \end{equation}
\end{lemma}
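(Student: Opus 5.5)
The estimate \eqref{2.12} should follow from Hölder's inequality in the reduced measure $dx=r\,dr\,dz$, once $v_\varphi^4$ is split so that one factor is controlled by the swirl bound \eqref{2.9} and the rest by the energy bound \eqref{2.1}. The starting point is the identity $v_\varphi=u/r$. It lets me write, pointwise in $(r,z,t)$,
\[
  |v_\varphi|^4=|v_\varphi|^2\cdot\frac{|u|}{r}\cdot|v_\varphi|
  \le |u|\,\frac{|v_\varphi|^2}{r}\,|v_\varphi| .
\]
Hardy-type control near the axis is provided by the term $\nu\int_0^t\int_{\bar\R^3}v_\varphi^2/r^2\,dx\,dt'$ on the left-hand side of \eqref{2.1}. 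So I would aim for the splitting
\[
  |v_\varphi|^4 = |u|^2\cdot\frac{|v_\varphi|^2}{r^2}.
\]
This uses $v_\varphi^2=u^2/r^2$ exactly, and it is the cleanest choice.

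With this splitting, integrating over $\bar\R^3_t$ gives
\[
  \int_0^t\!\!\int_{\bar\R^3}|v_\varphi|^4\,dx\,dt'
  \le |u|_{\infty,\bar\R^3_t}^2\int_0^t\!\!\int_{\bar\R^3}\frac{v_\varphi^2}{r^2}\,dx\,dt'
  \le D_2^2\,\frac{D_1^2}{\nu}.
\]
The first inequality uses Lemma \ref{l2.3}, which gives $|u|_{\infty}\le D_2$ uniformly in time. The second uses Lemma \ref{l2.1}. Taking fourth roots yields $|v_\varphi|_{4,\bar\R^3_t}\le \nu^{-1/4}D_1^{1/2}D_2^{1/2}$. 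This is \eqref{2.12}, with the harmless factor $\nu^{-1/4}$ either absorbed into the constant or removed under the normalization $\nu=1$, as the statement implicitly does.

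The only point requiring care is justifying the use of the weighted term $\int v_\varphi^2/r^2$. It is finite by \eqref{2.1}, and $v_\varphi|_{r=0}=0$ by Remark \ref{r2.2}, so no boundary contributions arise. If one prefers to avoid the $1/r^2$ weight, an alternative is to interpolate $|v_\varphi|_4^4\le|v_\varphi|_\infty\,|v_\varphi|_3^3$, but that requires $L_\infty$ control of $v_\varphi$, which is not available. The splitting through $u=rv_\varphi$ is therefore the route I would take.
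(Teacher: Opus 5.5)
Your argument is correct and is essentially the one the paper relies on. The paper gives no proof itself and only cites \cite{Z1}, where the estimate follows from $v_\varphi^4=u^2\,(v_\varphi/r)^2$, bounding $|u|_{\infty,\bar\R_t^3}\le D_2$ by (\ref{2.9}) and the weighted term $\int_{\bar\R_t^3}v_\varphi^2/r^2\,dx\,dt'$ by (\ref{2.1}). Your remark about the factor $\nu^{-1/4}$ is also accurate: as stated, (\ref{2.12}) silently normalizes $\nu$ or absorbs it into the constant.
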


\begin{proof}
  See the proof of Lemma \ref{l2.4} in \cite{Z1}.
\end{proof}

\begin{lemma}[see Lemma 2.16 in \cite{BIN}]\label{l2.5}
  Let $1\le p\le\infty$, $\beta\not=1/p$.\\
  Let $F(x)=\intop_0^xf(y)dy$ for $\beta>{1\over p}$, $F(x)=\intop_x^\infty f(y)dy$ for $\beta<{1\over p}$.\\
  Then
  \begin{equation}
    |x^{-\beta}F(x)|_{p,\R_+}\le{1\over|\beta-{1\over p}|}|x^{-\beta+1}f(x)|_{p,\R_+}.
    \label{2.13}
  \end{equation}
\end{lemma}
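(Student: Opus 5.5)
This is a one-dimensional weighted Hardy inequality, so I would prove it directly. Fix $\beta\neq 1/p$ and treat the two cases $\beta>1/p$ (where $F(x)=\int_0^x f$) and $\beta<1/p$ (where $F(x)=\int_x^\infty f$) in parallel.

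The key step is to rewrite $x^{-\beta}F(x)$ as an average of a dilated copy of $g(y):=y^{-\beta+1}f(y)$ and then apply Minkowski's integral inequality. In the case $\beta>1/p$, I substitute $y=\lambda x$ with $\lambda\in(0,1)$. This gives
$$
x^{-\beta}F(x)=x^{1-\beta}\int_0^1 f(\lambda x)\,d\lambda=\int_0^1 \lambda^{\beta-1} g(\lambda x)\,d\lambda .
$$
Taking the $L_p(\R_+)$ norm in $x$ and using Minkowski's integral inequality yields
$$
|x^{-\beta}F|_{p,\R_+}\le \int_0^1 \lambda^{\beta-1}|g(\lambda\,\cdot)|_{p,\R_+}\,d\lambda .
$$
The dilation scales the norm as $|g(\lambda\,\cdot)|_{p}=\lambda^{-1/p}|g|_p$. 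The remaining integral is $\int_0^1\lambda^{\beta-1-1/p}\,d\lambda=1/(\beta-1/p)$, which is finite precisely because $\beta>1/p$.

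In the case $\beta<1/p$, I substitute $y=\lambda x$ with $\lambda\in(1,\infty)$. The same computation gives the bound
$$
\int_1^\infty\lambda^{\beta-1-1/p}|g|_p\,d\lambda=\frac{1}{1/p-\beta}|g|_p ,
$$
which converges because $\beta<1/p$. In both cases the constant is $1/|\beta-1/p|$, as claimed in (2.13).

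The endpoint $p=\infty$ needs no separate argument: the dilation invariance $|g(\lambda\,\cdot)|_\infty=|g|_\infty$ matches the formula with $1/p=0$. I would first prove the inequality for nonnegative $f$ with compact support in $(0,\infty)$, so that every integral is finite and Fubini's theorem applies. The general case then follows from $|F|\le\int|f|$ together with monotone convergence.

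The only point needing care is justifying Minkowski's integral inequality and the measurability of $(x,\lambda)\mapsto g(\lambda x)$. Both are standard once $f$ is reduced to $|f|\ge0$ and Tonelli's theorem is used.
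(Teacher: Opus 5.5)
Your proof is correct. Three points check out:
\begin{itemize}
\item The substitution $y=\lambda x$ gives exactly $x^{-\beta}F(x)=\int_0^1\lambda^{\beta-1}g(\lambda x)\,d\lambda$ for $\beta>1/p$, and $x^{-\beta}F(x)=\int_1^\infty\lambda^{\beta-1}g(\lambda x)\,d\lambda$ for $\beta<1/p$.
\item The dilation identity $|g(\lambda\,\cdot)|_{p,\R_+}=\lambda^{-1/p}|g|_{p,\R_+}$ holds for all $1\le p\le\infty$.
\item The integral $\int\lambda^{\beta-1-1/p}\,d\lambda$ over $(0,1)$, resp.\ $(1,\infty)$, converges precisely in the corresponding case and equals $1/|\beta-1/p|$.
\end{itemize}

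The paper does not prove this lemma; it only quotes it from \cite{BIN}, so there is no competing argument to compare against. Your proof is the classical one for Hardy's inequality (dilation plus the generalized Minkowski inequality). It makes the lemma self-contained, yields the sharp constant, and shows clearly why the direction of integration defining $F$ has to switch at $\beta=1/p$.

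Two small points:
\begin{itemize}
\item The preliminary reduction to compactly supported $f$ is not needed. Tonelli's theorem and Minkowski's integral inequality already hold for nonnegative measurable integrands with values in $[0,\infty]$.
\item It is worth recording that the integral defining $F$ converges absolutely whenever the right-hand side of (\ref{2.13}) is finite. This follows from H\"older's inequality with the weight $y^{\beta-1}\in L_{p'}(0,x)$, resp.\ $L_{p'}(x,\infty)$, exactly under the stated sign condition on $\beta-1/p$. So the implicit hypothesis that $F$ is well defined is automatic.
\end{itemize}
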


\begin{lemma}\label{l2.6}
  Let $\psi$ be a solution to (\ref{1.14}) and $\psi_1$ be the modified stream function (see (\ref{1.16})). Let $\psi|_{r=0}=0$. Then the following estimates hold
  \begin{equation}
    \sup_t(\|\psi\|_{1,\bar\R^3}^2+|\psi_1|_{2,\bar\R^3}^2)\le cD_1^2
    \label{2.14}
  \end{equation}
  and
  \begin{equation}
    \intop_0^t(\|\psi_{,z}\|_{1,\bar\R^3}^2+|\psi_{1,z}|_{2,\bar\R^3}^2)dt'\le cD_1^2.
    \label{2.15}
  \end{equation}
\end{lemma}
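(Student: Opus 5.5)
We need to prove Lemma 2.6: $\sup_t(\|\psi\|_{1}^2+|\psi_1|_2^2)\le cD_1^2$ and $\int_0^t(\|\psi_{,z}\|_1^2+|\psi_{1,z}|_2^2)\le cD_1^2$.

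The estimates \eqref{2.14}--\eqref{2.15} will follow from the energy bound of Lemma~\ref{l2.1} together with the relations \eqref{1.15} and \eqref{1.17}, which express the velocity through the stream function, and the Hardy inequality of Lemma~\ref{l2.5}. Throughout, norms are taken with respect to the reduced measure $dx=r\,dr\,dz$.

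\emph{Step 1: gradient bounds for $\psi$.} By \eqref{1.15}, $v_r=-\psi_{,z}$ and $v_z=\psi_{,r}+\psi/r$. Hence
\[
|v_r|_{2,\bar\R^3}^2+|v_z|_{2,\bar\R^3}^2=\int_{\bar\R^3}\Bigl(\psi_{,z}^2+\psi_{,r}^2+\frac{\psi^2}{r^2}+\frac{2\psi\psi_{,r}}{r}\Bigr)r\,dr\,dz .
\]
The cross term equals $\int(\psi^2)_{,r}\,dr\,dz$. Since $\psi|_{r=0}=0$ and $\psi\to0$ at infinity, this term vanishes. Therefore
\[
|\nabla\psi|_{2}^2+|\psi/r|_{2}^2=|v_r|_2^2+|v_z|_2^2\le D_1^2
\]
for every $t$, by \eqref{2.1}.

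\emph{Step 2: bounds for $\psi_1$.} Note that $\psi_1=\psi/r$, so Step~1 already gives $\sup_t|\psi_1|_2^2\le D_1^2$. This also supplies the weighted part of the $H^1$ norm, which is the natural one for the operator in \eqref{1.14}.

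The zeroth-order term $|\psi|_2$ is the delicate point, because it is not controlled by $\nabla\psi$ on an unbounded domain. I would interpret $\|\psi\|_1$ here as the homogeneous norm $|\nabla\psi|_2+|\psi/r|_2$, which is the natural energy norm for \eqref{1.14}. If a genuine $L_2$ bound is required instead, one can use the Hardy inequality of Lemma~\ref{l2.5} in $r$ (with $\beta\ne1/p$) to control $\psi$ by $r\psi_{,r}$. Alternatively, one can multiply \eqref{1.14} by $\psi$ and use $\omega_\varphi$. Either route gives only local-in-$r$ control, so I expect this step to be the main obstacle.

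\emph{Step 3: time-integrated bounds.} Apply the same identity as in Step~1 to $\psi_{,z}$, which also vanishes on the axis:
\[
|\nabla\psi_{,z}|_2^2+|\psi_{,z}/r|_2^2=|v_{r,z}|_2^2+|v_{z,z}|_2^2\le|\nabla v_r|_2^2+|\nabla v_z|_2^2 .
\]
Integrating in time and using the dissipation part of \eqref{2.1} gives a bound of order $D_1^2/\nu$.

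For the remaining terms:
\begin{itemize}
\item $|\psi_{1,z}|_2=|\psi_{,z}/r|_2=|v_r/r|_2$, whose time integral is controlled directly by the last term of \eqref{2.1}.
\item $|\psi_{,z}|_2=|v_r|_2$, whose time integral is bounded by $t\,D_1^2$.
\end{itemize}
The constant in \eqref{2.15} should therefore be allowed to depend on $\nu$, and possibly on $T$, unless $\|\cdot\|_1$ is read in the homogeneous sense as in Step~2.
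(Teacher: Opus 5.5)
Your argument is correct and establishes exactly what the paper's proof establishes, but by a different route.

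\textbf{The paper's route.} It works from the elliptic problem \eqref{1.14}. It tests the equation with $\psi$ and integrates by parts using $\psi|_{r=0}=0$, which gives $|\nabla\psi|_2^2+|\psi_1|_2^2=\int\omega_\varphi\psi\,dx$. It then moves the derivatives back onto the velocity via $\omega_\varphi=v_{r,z}-v_{z,r}$ and closes with Young's inequality and absorption, obtaining $|\nabla\psi|_2^2+|\psi_1|_2^2\le|v_r|_2^2+2|v_z|_2^2$. For \eqref{2.15} it tests the $z$-differentiated equation with $\psi_{,z}$ and ends with the bound $|\omega_\varphi|_{2,\bar\R^3_t}^2$.

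\textbf{Your route.} You bypass \eqref{1.14} and use only the kinematic relations \eqref{1.15}. This gives the exact identity $|v_r|_2^2+|v_z|_2^2=|\nabla\psi|_2^2+|\psi/r|_2^2$, because the cross term $\int(\psi^2)_{,r}\,dr\,dz$ vanishes under the same axis condition and decay; the same holds for $\psi_{,z}$.

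\textbf{What each buys.} Your version is an equality with constant $1$ and needs no absorption. It is just $|\nabla(\psi\bar e_\varphi)|_2=|\rot(\psi\bar e_\varphi)|_2$ for a divergence-free potential. The paper's test-the-equation computation, on the other hand, is the template it reuses for the $H^2$, $H^3$ and localized estimates of $\psi_1$ in Section 3.

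\textbf{Zeroth-order terms.} Your worry here is well founded. The paper's proof also controls only $|\nabla\psi|_2$, $|\psi_1|_2$ and $\int_0^t(|\nabla\psi_{,z}|_2^2+|\psi_{1,z}|_2^2)\,dt'$, with $\nu$ absorbed into $c$. So $\|\cdot\|_1$ in \eqref{2.14}--\eqref{2.15} has to be read in the homogeneous sense, exactly as you propose. In fact an inhomogeneous bound cannot hold: $\psi=r/(1+r^2+z^2)$ gives a smooth finite-energy $v$ with $\psi\notin L_2$. The Hardy-type detours in your Step 2 are therefore unnecessary and can be dropped.
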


\begin{proof}
  Multiply (\ref{1.14}) by $\psi$ and integrate over $\R^3$. Integrating by parts yields
  $$\eqal{
    &|\nabla\psi|_{2,\R^3}^2+|\psi_1|_{2,\R^3}^2=\intop_{\R^3}\omega_\varphi\psi d\bar x=\intop_{\R^3}(v_{r,z}-v_{z,r})\psi d\bar x\cr
    &=\intop_\Omega(v_z\psi_{,r}-v_r\psi_{,z})d\bar x+\intop_\Omega v_z{\psi\over r}d\bar x\cr
    &\le{1\over 2}\bigg(|\psi_{,r}|_{2,\R^3}^2+|\psi_{,z}|_{2,\R^3}^2+\bigg|{\psi\over r}\bigg|_{2,\R^3}^2\bigg)+{1\over 2}(|v_r|_{2,\R^3}^2+2|v_z|_{2,\R^3}^2),\cr}
  $$
  where we used that $\psi|_{r=0}=0$.

  Dropping the integration with respect to $\varphi$ yields (\ref{2.14}).

  Differentiate (\ref{1.14}) with respect to $z$, multiply by $\psi_{,z}$ and integrate over $\R_t^3$. Integrating by parts implies
  $$\eqal{
    &\intop_{\R_t^3}|\nabla\psi_{,z}|^2d\bar xdt'+\intop_{\R_t^3}|\psi_{1,z}|^2d\bar xdt'=\intop_{\R_t^3}\omega_{\varphi,z}\psi_{,z}d\bar xdt'\cr
    &=-\intop_{\R_t^3}\omega_\varphi\psi_{,zz}d\bar xdt'\le{1\over 2}|\psi_{,zz}|_{2,\R_t^3}^2+{1\over 2}|\omega_\varphi|_{2,\R_t^3}^2.\cr}
  $$
  Dropping the integration with respect to $\varphi$ gives (\ref{2.15}). This ends the proof.
\end{proof}

Lemma 2.4 from \cite{CFZ} yields

\begin{lemma}\label{l2.7}
  Let $f\in C_0^\infty(\bar\R^3)$. Let $1<p<3$, $0\le s\le p$, $s\le 2$, $q\in\big[p,{p(3-s)\over 3-p}\big]$. Then there exists a positive constant $c=c(p,s)$ such that
  \begin{equation}
    \bigg(\intop_{\bar\R^3}{|f|^q\over r^s}dx\bigg)^{1/q}\le c|f|_{p,\bar\R^3}^{{3-s\over q}-{3\over p}+1}|\nabla f|_{p,\bar\R^3}^{{3\over p}-{3-s\over q}},
    \label{2.16}
  \end{equation}
  where $f$ does not depend on $\varphi$.
\end{lemma}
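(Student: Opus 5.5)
We need to prove Lemma 2.7: the weighted interpolation inequality
\[
\Big(\int_{\bar\R^3}\frac{|f|^q}{r^s}\,dx\Big)^{1/q}\le c\,|f|_p^{\theta}\,|\nabla f|_p^{1-\theta}
\]
for axisymmetric $f\in C_0^\infty$, with $\theta=\frac{3-s}{q}-\frac3p+1$. Here $dx=r\,dr\,dz$, so all norms are the three-dimensional norms up to the factor $2\pi$. The plan is to reduce to two known inequalities in $\R^3$ and then interpolate.

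\textbf{Endpoint $q=p$.} The required inequality reads $\big(\int |f|^p r^{-s}dx\big)^{1/p}\le c|f|_p^{1-s/p}|\nabla f|_p^{s/p}$. I would obtain it by Hölder interpolation between $s=0$, which is trivial, and the Hardy-type inequality
\[
\int_{\R^3}\frac{|f|^p}{r^p}\,d\bar x\le c\int_{\R^3}|\nabla f|^p\,d\bar x .
\]
This Hardy inequality holds in the cylindrical distance $r$ because $r$ is the distance to a line in $\R^3$, which has codimension $2$, and $1<p<3$ but $p\neq2$. Writing $\int |f|^p r^{-s}=\int |f|^{p(1-s/p)}(|f|^p r^{-p})^{s/p}$ and applying Hölder with exponents $p/(p-s)$ and $p/s$ gives the endpoint.

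\textbf{The Hardy inequality and the case $p=2$.} I expect the main obstacle to be proving the Hardy inequality itself, in particular the degenerate case $p=2$. For $p\neq2$ I would derive it from Lemma \ref{l2.5} applied in the variable $r$ for fixed $z$, with measure $r\,dr$. Set $\beta=1$ and take the exponent $p$ relative to the weight $r\,dr$; the condition $\beta\neq 1/p$ becomes a condition on $p-2$, so this argument works only for $p\neq2$. For $p=2$ the constant blows up, and Hardy with weight $r^{-2}$ fails in $2$D. I would therefore handle the range $s<p$ differently, avoiding the full weight $r^{-p}$. Consider the Hardy--Sobolev (Caffarelli--Kohn--Nirenberg) inequality
\[
\Big(\int\frac{|f|^{q^*}}{r^{s}}\Big)^{1/q^*}\le c|\nabla f|_p,\qquad q^*=\frac{p(3-s)}{3-p}.
\]
For the cylindrical weight this follows from the one-dimensional Hardy estimate in $r$ combined with Sobolev embedding in the remaining variables, valid when $0\le s<2$. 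The restriction $s\le 2$ in the lemma reflects exactly this codimension limitation; at $s=p=2$ the endpoint must be excluded or treated by the Gagliardo--Nirenberg route.

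\textbf{Intermediate $q$.} For $q$ between $p$ and $q^*$, I would interpolate by Hölder between the endpoint $q=p$ bound and the Hardy--Sobolev bound at $q^*$, using the same weight $r^{-s}$. Writing $1/q=\lambda/p+(1-\lambda)/q^*$ gives
\[
\Big(\int\frac{|f|^q}{r^s}\Big)^{1/q}\le\Big(\int\frac{|f|^p}{r^s}\Big)^{\lambda/p}\Big(\int\frac{|f|^{q^*}}{r^s}\Big)^{(1-\lambda)/q^*}.
\]
Combining this with the two endpoint bounds and computing the exponent of $|f|_p$ yields $\lambda(1-s/p)$. This should equal $\frac{3-s}{q}-\frac3p+1$, which one can also confirm by scaling $f(\lambda x)$, since both sides must have the same homogeneity. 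Finally, dropping the factor $2\pi$ returns the statement in $\bar\R^3$.
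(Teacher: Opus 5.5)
\paragraph{Comparison with the paper.} The paper gives no argument for this lemma; it quotes it from \cite{CFZ}. Your skeleton is the standard one: an endpoint at $q=p$, a Hardy--Sobolev endpoint at $q^*=p(3-s)/(3-p)$, and H\"older in between. Your bookkeeping $\lambda(1-s/p)=\frac{3-s}{q}-\frac3p+1$ is right.

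\paragraph{The gap: the $q=p$ endpoint for $p\ge 2$.} This endpoint is not established for $2\le p<3$, and $p=2$ is exactly the case used in Lemma~\ref{l4.4}.
\begin{itemize}
\item The Hardy inequality $\int_{\R^3}|f|^pr^{-p}\,d\bar x\le c\int_{\R^3}|\nabla f|^p\,d\bar x$ holds for all $f\in C_0^\infty(\R^3)$ only when $p<2$. For $p>2$ it fails as soon as $f|_{r=0}\neq0$, since $\int_0 r^{1-p}\,dr=\infty$.
\item In Lemma~\ref{l2.5} the weight $r^{-p}\,r\,dr$ corresponds to $\beta=1-1/p$. For $p>2$ you are in the branch $\beta>1/p$, where $F(r)=\int_0^r f_{,r}\,dr'$, so the estimate presupposes $f|_{r=0}=0$.
\item ``Codimension $2$ and $p\neq2$'' is the criterion for functions vanishing near the axis. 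That does not help here, because $\tilde\Phi,\tilde\Gamma$ do not vanish on the axis.
\item For $p=2$ you see the failure, but your substitute only treats the $q^*$ endpoint. The interpolation step still uses $\big(\int|f|^pr^{-s}dx\big)^{1/p}\le c|f|_p^{1-s/p}|\nabla f|_p^{s/p}$, which therefore remains unproved for every $p\ge2$.
\item Your sketch of the $q^*$ endpoint has the same defect for $p\ge2$, since interpolating Hardy against Sobolev needs $p<2$. For $p\ge2$, apply the case $\tilde p<2$ to $|f|^\gamma$, or quote Badiale--Tarantello, whose theorem needs $s<2$.
\end{itemize}

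\paragraph{How to supply the endpoint.} Slice in $z$: for fixed $z$ let $g=f(\cdot,z)$ on $\R^2$.
\begin{itemize}
\item The outer part satisfies $\int_{|y|>R}|g|^p|y|^{-s}dy\le R^{-s}\|g\|_p^p$.
\item For the inner part, use H\"older with $sa'<2$ and the two-dimensional Gagliardo--Nirenberg inequality. This needs $a'>1$ if $p=2$, and $a'\ge2/p$ if $p<2$. You get $\int_{|y|<R}|g|^p|y|^{-s}dy\le\|g\|_{L^{pa}}^p\big\||y|^{-s}\big\|_{L^{a'}(B_R)}\le C\|g\|_p^{p-2/a'}\|\nabla g\|_p^{2/a'}R^{2/a'-s}$.
\item Taking $R=\|g\|_p/\|\nabla g\|_p$ gives the bound $C\|g\|_p^{p-s}\|\nabla g\|_p^{s}$.
\item H\"older in $z$ with exponents $p/(p-s)$ and $p/s$ then yields the endpoint for every $s<2$. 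When $p<2$ this requires $s<p$; the case $s=p<2$ is Hardy.
\end{itemize}
For $p=2$ you can skip the $q^*$ endpoint altogether. Interpolate this bound, with weight $\sigma=4s/(6-q)<2$, against $|f|_{6,\bar\R^3}\le c|\nabla f|_{2,\bar\R^3}$. This gives the lemma for all $q<6-2s$, which is all Lemma~\ref{l4.4} needs.

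\paragraph{The case $s=2$.} The case $s=p=2$ cannot be ``treated by the Gagliardo--Nirenberg route'', because the inequality is false there. A logarithmic cut-off near the axis makes the left side blow up while the right side stays bounded. For $s=2$ the left side is infinite whenever $f|_{r=0}\not\equiv0$, so the hypothesis should be $s<2$.
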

\goodbreak

\begin{remark}\label{r2.8}
  Let $(v,p)$ be a solution to (\ref{1.1}) such that $v\in W_2^{3,3/2}(\R_t^3)$, $\nabla p\in W_2^{1,1/2}(\R_t^3)$.

  Then \cite{LW} shows the following expansions near the axis of symmetry
  \begin{equation}
    v_r(r,z,t)=a_1(z,t)r+a_2(z,t)r^3+\cdots,
    \label{2.17}
  \end{equation}
  \begin{equation}
    v_\varphi(r,z,t)=b_1(z,t)r+b_2(z,t)r^3+\cdots,
    \label{2.18}
  \end{equation}
  \begin{equation}
    \psi(r,z,t)=d_1(z,t)r+d_2(z,t)r^3+\cdots,.
    \label{2.19}
  \end{equation}
  Then
  \begin{equation}
    \psi_1(r,z,t)=d_1(z,t)+d_2(z,t)r^2+\cdots,
    \label{2.20}
  \end{equation}
  \begin{equation}
    \psi_{1,r}(r,z,t)=2d_2(z,t)r+\cdots,
    \label{2.21}
  \end{equation}
  where $a_1=-d_{1,z}$, $a_2=-d_{2,z}$.
\end{remark}

\begin{lemma}\label{l2.9}
  For solutions to problem (\ref{1.12}) the following estimates hold
  \begin{equation}
    \|u_{,z}\|_{V(\bar\R_t^3)}\le c(D_1D_2+|u_{,z}(0)|_{2,\bar\R^3}+|f_0|_{2,\bar\R_t^3})\equiv cD_4
    \label{2.22}
  \end{equation}
  and
  \begin{equation}
    \|u_{,r}\|_{V(\bar\R_t^3)}\le c(D_1+D_1D_2+|u_{,r}(0)|_{2,\bar\R^3}+|f_0|_{2,\bar\R_t^3})\equiv cD_5.
    \label{2.23}
  \end{equation}
\end{lemma}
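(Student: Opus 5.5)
The plan is to differentiate the swirl equation \eqref{1.12} with respect to $z$ and to $r$, and then run the standard energy method in the weighted space $L_2(\bar\R^3)$ with measure $dx=r\,dr\,dz$.

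\textbf{Estimate \eqref{2.22}.} Setting $g=u_{,z}$, we get
\[
g_{,t}+v\cdot\nabla g-\nu\Delta g+\frac{2\nu}{r}g_{,r}=f_{0,z}-v_{r,z}u_{,r}-v_{z,z}u_{,z}.
\]
We multiply by $g$ and integrate over $\bar\R^3$.

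\begin{itemize}
\item The convection term vanishes because $\divv v=0$.
\item The term $\frac{2\nu}{r}g_{,r}g$ becomes $\nu\int (g^2)_{,r}\,dr\,dz$. This is a boundary term at $r=0$. It has a favorable sign, or it vanishes because $u_{,z}=rv_{\varphi,z}\to 0$ at the axis (Remark~\ref{r2.2} and the expansion \eqref{2.18}).
\item The force term is handled by moving the $z$-derivative onto $g$: $\int f_{0,z}g=-\int f_0 g_{,z}$, which is bounded by $\varepsilon|\nabla g|_2^2+c|f_0|_2^2$. This explains why only $|f_0|_{2,\bar\R^3_t}$ appears in $D_4$.
\end{itemize}

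The nonlinear terms are where the key step lies. We do not estimate $v_{r,z}u_{,r}g+v_{z,z}u_{,z}g$ directly. Instead we return to the non-differentiated form: the contribution of the convection to $\frac{d}{dt}|u_{,z}|^2$ equals $-\int (v\cdot\nabla u)\,u_{,zz}$. Integrating by parts in $z$ and using $|u|\le D_2$ (Lemma~\ref{l2.3}), we bound it by
\[
\int |u|\,|\nabla v|\,|\nabla u_{,z}|\le D_2|\nabla v|_2|\nabla g|_2 ,
\]
or, after one more integration by parts, by $D_2|v|\,|\nabla u_{,z}|$ together with terms of type $\frac{v_r}{r}$. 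Absorbing $\varepsilon|\nabla g|_2^2$ and using $\int_0^t|\nabla v|_2^2\le D_1^2$ from Lemma~\ref{l2.1} gives the contribution $D_1^2D_2^2$.

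Integrating in time and taking the supremum then yields $\|u_{,z}\|_V\le c(D_1D_2+|u_{,z}(0)|_2+|f_0|_{2,\bar\R^3_t})$.

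\textbf{Estimate \eqref{2.23}.} For $h=u_{,r}$ the commutators are worse, because differentiating in $r$ produces terms from $\frac{2\nu}{r}\partial_r$ and from $\Delta$ in cylindrical coordinates. These are $-\frac{\nu}{r^2}u_{,r}$-type terms, and $+\frac{2\nu}{r^2}h$ appears. The plan is as follows.

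\begin{itemize}
\item Work with the equation tested by $-\partial_r(r\,\partial_r u)/r$ or, equivalently, test \eqref{1.12} by $-\Delta u$-type multipliers.
\item Exploit that the $\frac{1}{r^2}$ terms have a good sign.
\item Treat the convection terms as before, via $|u|\le D_2$ and the energy bound $D_1$.
\item The extra $D_1$ in $D_5$ comes from terms such as $\int \frac{v_r}{r}u\,h$, controlled by $\left|\frac{v_r}{r}\right|_{2,\bar\R^3_t}\le D_1$ from \eqref{2.1}.
\end{itemize}

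\textbf{Main obstacle.} The delicate step is the boundary terms at $r=0$ from integrating $\frac{1}{r}\partial_r$ against $r\,dr$. For these we rely on the Liu–Wang expansions \eqref{2.17}--\eqref{2.18}. By \eqref{2.18}, $u=b_1r^2+\cdots$, so $u_{,r}=2b_1r+\cdots$ and $u_{,z}=O(r^2)$. This makes all axis boundary terms vanish, while their signs take care of the rest.
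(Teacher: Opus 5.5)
Your argument for \eqref{2.22} is essentially the paper's. You differentiate \eqref{1.12} in $z$, test with $u_{,z}$, turn the commutator into $\int u\,v_{,z}\cdot\nabla u_{,z}\,dx$ (using $\divv v_{,z}=0$), and bound it by $D_2|\nabla v|_{2,\bar\R^3}|\nabla u_{,z}|_{2,\bar\R^3}$ together with Lemma~\ref{l2.1}. One small correction: the axis term is $-\nu\int_{\R}u_{,z}^2|_{r=0}\,dz$ on the left-hand side, so its sign is unfavorable. It is harmless only because it vanishes, since $u_{,z}=O(r^2)$ by \eqref{2.18}.

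The gap is in \eqref{2.23}. Your plan rests on the $1/r^2$ terms having a good sign, and they do not. Differentiate \eqref{1.12} in $r$, test with $u_{,r}$, and use $(\Delta u)_{,r}=\Delta u_{,r}-\frac{1}{r^2}u_{,r}$. The viscous terms then give
\[
-\nu\int_{\bar\R^3}(\Delta u)_{,r}u_{,r}\,dx+2\nu\int_{\bar\R^3}\Big(\frac{1}{r}u_{,r}\Big)_{,r}u_{,r}\,dx
=\nu|\nabla u_{,r}|_{2,\bar\R^3}^2-\nu\int_{\bar\R^3}\frac{u_{,r}^2}{r^2}\,dx-\nu\int_{\R}u_{,r}^2\Big|_{r=0}\,dz .
\]
Only the axis term vanishes by \eqref{2.18}. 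The $+\nu|u_{,r}/r|_{2,\bar\R^3}^2$ from the Laplacian is outweighed by the $-2\nu|u_{,r}/r|_{2,\bar\R^3}^2$ from the drift $\frac{2\nu}{r}\partial_r$. Testing with $-\frac1r(ru_{,r})_{,r}$, as you suggest, gives the same net form $\nu\big(|u_{,rr}|_{2,\bar\R^3}^2+|u_{,rz}|_{2,\bar\R^3}^2-|u_{,r}/r|_{2,\bar\R^3}^2\big)$.

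The missing idea is that this term is controlled by the energy, not by coercivity. Since $\frac1r u_{,r}=\omega_z=v_{\varphi,r}+\frac{v_\varphi}{r}$, \eqref{2.1} gives $|\frac1r u_{,r}|_{2,\bar\R^3_t}\le cD_1$. This is exactly the source of the bare $D_1$ in $D_5$, which has no counterpart in $D_4$.

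Your attribution of that $D_1$ to a term like $\int\frac{v_r}{r}u\,h$ cannot be right, because every convective term contains $u$ and hence a factor $D_2$. Nor is the convection handled ``as before.'' Since $\divv v_{,r}=\frac{v_r}{r^2}\neq 0$, moving the derivative onto $u_{,r}$ leaves the remainder $\int\frac{v_r}{r^2}u\,u_{,r}\,dx$. This is bounded by $D_2|\frac{v_r}{r}|_{2,\bar\R^3_t}|\frac1r u_{,r}|_{2,\bar\R^3_t}\le cD_2D_1^2$, again through the energy bound on $u_{,r}/r$. With these two corrections your outline closes as in the paper.
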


\begin{proof}
  Differentiate (\ref{1.12}) with respect to $z$, multiply by $u_{,z}$ and integrate over $\bar\R^3$. Then we get
  \begin{equation}\eqal{
      &{1\over 2}{d\over dt}|u_{,z}|_{2,\bar\R^3}^2+\intop_{\bar\R^3}v\cdot\nabla u_{,z}u_{,z}dx+\intop_{\bar\R^3}v_{,z}\cdot\nabla uu_{,z}dx\cr
      &\quad+\nu|\nabla u_{,z}|_{2,\bar\R^3}^2+2\nu\intop_{\bar\R^3}{1\over r}u_{,zr}u_{,z}dx=\intop_{\bar\R^3}f_{0,z}u_{,z}dx.\cr}
    \label{2.24}
  \end{equation}
  The second integral vanishes after integration by parts. The third term, after integration by parts, equals
  $$
    \intop_{\bar\R^3}v_{,z}\cdot\nabla u_{,z}udx.
  $$
  Using the H\"older and Young inequalities the integral is bounded by
  $$
    {\nu\over 4}|\nabla u_{,z}|_{2,\bar\R^3}^2+c|v_{,z}|_{2,\bar\R^3}^2D_2^2.
  $$
  The last integral on the l.h.s. of (\ref{2.24}) equals
  $$
    -\nu\intop_{\R^1}u_{,z}^2|_{r=0}dz.
  $$
  It vanishes in view of the expansion (\ref{2.18}).

  Integrating by parts in the term on the r.h.s. of (\ref{2.24}) and applying the H\"older and Young inequalities to obtain the bound
  $$
    {\nu\over 4}|u_{,zz}|_{2,\bar\R^3}^2+c|f_0|_{2,\bar\R^3}^2.
  $$
  Using the results in (\ref{2.24}) yields
  \begin{equation}
    {d\over dt}|u_{,z}|_{2,\bar\R^3}^2+\nu|\nabla u_{,z}|_{2,\bar\R^3}^2\le c|v_{,z}|_{2,\bar\R^3}^2D_2^2+c|f_0|_{2,\bar\R^3}^2.
    \label{2.25}
  \end{equation}
  Integrating (\ref{2.25}) with respect to time yields (\ref{2.22}).

  Differentiating (\ref{1.12}) with respect to $r$, multiplying the result by $u_{,r}$ and integrating over $\bar\R^3$, we obtain
  \begin{equation}\eqal{
      &{1\over 2}{d\over dt}|u_{,r}|_{2,\bar\R^3}^2+\intop_{\bar\R^3}v\cdot\nabla u_{,r}u_{,r}dx+\intop_{\bar\R^3}v_{,r}\cdot\nabla uu_{,r}dx\cr
      &\quad-\nu\intop_{\bar\R^3}(\Delta u)_{,r}u_{,r}dx+2\nu\intop_{\bar\R^3}{1\over r}u_{,rr}u_{,r}dx-2\nu\intop_{\bar\R^3}{u_{,r}^2\over r^2}dx\cr
      &=\intop_{\bar\R^3}f_{0,r}u_{,r}dx.\cr}
    \label{2.26}
  \end{equation}
  After integration by parts the second term in (\ref{2.26}) vanishes. The third term equals
  $$
    \intop_{\bar\R^3}v_{,r}\cdot\nabla(uu_{,r})dx-\intop_{\bar\R^3}v_{,r}\cdot\nabla u_{,r}udx\equiv I_1+I_2,
  $$
  where
  $$
    I_1=\intop_{\bar\R^3}\divv[v_{,r}u\cdot u_{,r}]dx-\intop_{\bar\R^3}\divv v_{,r}uu_{,r}dx,
  $$
  where the first term vanishes and the second equals
  $$
    \intop_{\bar\R^3}{v_r\over r^2}u_{,r}udx
  $$
  because $\divv v_{,r}=(\divv v)_{,r}+{v_r\over r^2}={v_r\over r^2}$.

  Next,
  $$
    |I_2|\le{\nu\over 4}|\nabla u_{,r}|_{2,\bar\R^3}^2+cD_2^2|v_{,r}|_{2,\bar\R^3}^2.
  $$
  Since, $(\Delta u)_{,r}=\Delta u_{,r}-{1\over r^2}u_{,r}$, we have
  $$
    -\intop_{\bar\R^3}(\Delta u)_{,r}u_{,r}=\intop_{\bar\R^3}|\nabla u_{,r}|^2+\intop_{\bar\R^3}{1\over r^2}u_{,r}^2dx.
  $$
  In virtue of the above results, (\ref{2.26}) takes the form
  \begin{equation}\eqal{
    &{1\over 2}{d\over dt}|u_{,r}|_{2,\bar\R^3}^2+{3\nu\over 4}|\nabla u_{,r}|_{2,\bar\R^3}^2+\nu\intop_{\bar\R^3}(u_{,r}^2)_{,r}drdz\cr
    &\le 5\nu\intop_{\bar\R^3}{u_{,r}^2\over r^2}dx+{\nu\over 4}\intop_{\bar\R^3}|u_{,rr}|^2dx+c|f_0|_{2,\bar\R^3}^2\cr
    &\quad+cD_2^2|v_{,r}|_{2,\bar\R^3}^2+\bigg|\intop_{\bar\R^3}{v_r\over r^2}u_{,r}udx\bigg|,\cr}
    \label{2.27}
  \end{equation}
  where we applied the H\"older and Young inequalities to the term on the r.h.s. of (\ref{2.26}).

  The last term on the l.h.s. of (\ref{2.27}) equals
  $$
    -
    \intop_{\R^1}u_{,r}^2|_{r=0}dz
  $$
  which vanishes in view of (\ref{2.18}).

  Integrating (\ref{2.27}) with respect to time and using (\ref{2.1}) and (\ref{2.9}), we obtain
  $$
    \|u_{,r}\|_{V(\bar\R_t^3)}^2\le c(D_1^2+D_2^2D_1^2+D_2D_1^2+|f_0|_{2,\bar\R_t^3}^2+|u_{,r}(0)|_{2,\bar\R^3}^2).
  $$
  The above inequality implies (\ref{2.23}) and concludes the proof.
\end{proof}

\begin{lemma}\label{l2.11}
  Let $u$ be a solution to problem (\ref{1.12}). Assume $s>1$, $f_0\in L_{5s\over 2s+3}(\bar\R_t^3)$, $u(0)\in L_s(\bar\R^3)$, $\nu_0=\min\big\{1,{4\nu(s-1)\over s}\big\}>0$.\\
  Then for $s$ finite,
  \begin{equation}
    |u|_{{5\over 3}s,\bar\R_t^3}\le cs|f_0|_{{5s\over 2s+3},\bar\R_t^3}+c|u(0)|_{s,\bar\R^3}\equiv cD_{10}(s).
    \label{2.28}
  \end{equation}
\end{lemma}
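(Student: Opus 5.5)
We need to prove Lemma \ref{l2.11}: the estimate $|u|_{5s/3,\bar\R_t^3}\le cD_{10}(s)$ for the swirl equation (\ref{1.12}). The plan is the standard $L_s$-energy estimate for $|u|^{s/2}$, followed by the parabolic embedding $V\subset L_{10/3}$ applied to $w:=|u|^{s/2}$.

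The first step is the energy identity. I multiply $(\ref{1.12})_1$ by $u|u|^{s-2}$ and integrate over $\bar\R^3$ exactly as in the proof of Lemma \ref{l2.3}. The convective term vanishes because $\divv v=0$ and $v$ decays. The term $\frac{2\nu}{r}u_{,r}$ produces $\frac{\nu}{s}\int(|u|^s)_{,r}\,dr\,dz$. This reduces to a boundary value at $r=0$, which vanishes by Remark \ref{r2.2} and the expansion (\ref{2.18}), since $u=rv_\varphi=O(r^2)$. The result is
\[
\frac1s\frac{d}{dt}|w|_{2,\bar\R^3}^2+\frac{4\nu(s-1)}{s^2}|\nabla w|_{2,\bar\R^3}^2\le\int_{\bar\R^3}|f_0|\,|u|^{s-1}dx .
\]
I then multiply by $s$ and integrate in time. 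The left-hand side is bounded below by $\nu_0\|w\|_{V(\bar\R_t^3)}^2$, up to the sup-in-time formulation. This is where the constant $\nu_0=\min\{1,4\nu(s-1)/s\}$ enters.

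The second step treats the forcing term. Since $|u|^{s-1}=w^{2(s-1)/s}$, Hölder's inequality in space-time gives
\[
s\int_0^t\!\!\int|f_0||u|^{s-1}\le s|f_0|_{p,\bar\R_t^3}\,\big|w^{2(s-1)/s}\big|_{p',\bar\R_t^3}.
\]
I choose $p'$ so that $w^{2(s-1)/s}\in L_{p'}$ is controlled by $|w|_{10/3,\bar\R_t^3}$. This requires $p'\cdot 2(s-1)/s=10/3$, i.e. $p'=\frac{5s}{3(s-1)}$. Then $p=\frac{5s}{2s+3}$, matching the hypothesis on $f_0$. Hence the right-hand side is at most $s|f_0|_{p}\,|w|_{10/3}^{2(s-1)/s}$.

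The third step closes the estimate. The parabolic embedding $|w|_{10/3,\bar\R_t^3}\le c\|w\|_{V(\bar\R_t^3)}$ holds for axisymmetric functions with the measure $r\,dr\,dz$, since this is just the $\R^3$ embedding; alternatively one can use Lemma \ref{l2.7} with $s=0$ and interpolate. Writing $Y=\|w\|_V$, this yields
\[
\nu_0 Y^2\le c\,s\,|f_0|_p\,Y^{2(s-1)/s}+|u(0)|_s^s .
\]
Young's inequality with exponents $\frac{s}{s-1}$ and $s$ gives $Y^2\le c(s|f_0|_p)^s+c|u(0)|_s^s$. Finally, $|u|_{5s/3,\bar\R_t^3}=|w|_{10/3}^{2/s}\le cY^{2/s}$, and taking $s$-th roots yields $cs|f_0|_p+c|u(0)|_s$, which is (\ref{2.28}). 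Some care is needed with powers: $Y^{2/s}$ behaves like $(Y^2)^{1/s}$, which matches the $s$-th powers on the right.

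The main obstacle I expect is the justification of the boundary term at $r=0$, together with the weighted measure. The term $\frac{\nu}{s}\int(|u|^s)_{,r}\,dr\,dz$ is taken with measure $dr\,dz$, not $r\,dr\,dz$. Its vanishing therefore needs $u|_{r=0}=0$ and decay at infinity; near the axis this relies on the Liu–Wang expansion (\ref{2.18}). If regularity is only partial, I would instead use the regularization by $r_\varepsilon$ as in Lemma \ref{l2.1}. A secondary point is tracking how the constant depends on $s$, so that the factor $s$ appears only linearly after taking roots. For this I need $\nu_0^{-1/s}$ and $c^{1/s}$ to be bounded uniformly in $s$, which they are.
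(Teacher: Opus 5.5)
Your proposal is correct and follows essentially the same route as the paper. The paper integrates the $L_s$ identity (\ref{2.10}) in time to get (\ref{2.29}), bounds the left side from below by $\nu_0|u|_{\frac53 s,\bar\R_t^3}^s$ (the $V\subset L_{10/3}$ embedding applied to $|u|^{s/2}$, left implicit there), estimates the forcing by H\"older with exponents $\frac{5s}{2s+3}$ and $\frac{5s}{3(s-1)}$, and closes with Young's inequality; you have spelled out the embedding step and the $s$-dependence of the constants, which the paper does not track.
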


\begin{proof}
  Integrating (\ref{2.10}) with respect to time yields
  \begin{equation}
    |u(t)|_{s,\bar\R^3}^s+{4\nu(s-1)\over s}|\nabla|u|^{s/2}|_{2,\bar\R_t^3}^2\le cs\intop_{\bar\R_t^3}|f_0|\,|u|^{s-1}dxdt+c|u(0)|_{s,\bar\R^3}^s.
    \label{2.29}
  \end{equation}
  Let $\nu_0=\min\big\{1,{4\nu(s-1)\over s}\big\}$, $s>1$.

  Then (\ref{2.29}) implies
  $$
    \nu_0|u|_{{5\over 3}s,\bar\R_t^3}^s\le sc|f_0|_{{5s\over 2s+3},\bar\R_t^3}|u|_{{5\over 3}s,\bar\R_t^3}^{s-1}+c|u(0)|_{s,\bar\R^3}^s.
  $$
  Applying the Young inequality, we get
  \begin{equation}
    |u|_{{5\over 3}s,\bar\R_t^3}^s\le cs^s|f_0|_{{5s\over 2s+3},\bar\R_t^3}^s+c|u(0)|_{s,\bar\R^3}^s.
    \label{2.30}
  \end{equation}
  The above inequality implies (\ref{2.29}). This concludes the proof.
\end{proof}

For the reader convenience we recall the interpolation (see \cite[Sect. 15]{BIN})

\begin{lemma}\label{l2.12}
  Let $\theta$ satisfy the equality
  \begin{equation}
    {n\over p}-r=(1-\theta){n\over p_1}+\theta\bigg({n\over p_2}-l\bigg),\quad {r\over l}\le\theta\le 1,
    \label{2.31}
  \end{equation}
  where $1\le p_i\le\infty$, $i=1,2$, $0\le r<l$.

  Then the interpolation holds
  \begin{equation}
    \sum_{|\alpha|=r}|D_x^\alpha f|_{p,\Omega}\le c|f|_{p_1,\Omega}^{1-\theta}\|f\|_{W_{p_2}^l(\Omega)}^\theta,
    \label{2.32}
  \end{equation}
  where $\Omega\subset\R^n$ and $D_x^\alpha f=\partial_{x_1}^{\alpha_1}\dots\partial_{x_n}^{\alpha_n}f$, $|\alpha|=\alpha_1+\alpha_2+\dots+\alpha_n$.
\end{lemma}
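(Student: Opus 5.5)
The statement just above is the interpolation inequality of Lemma~\ref{l2.12}, quoted from \cite[Sect.~15]{BIN}. I would prove it by reduction to two classical ingredients: the Gagliardo--Nirenberg inequality on $\R^n$, and a bounded extension operator $E$ that maps $L_{p_1}(\Omega)$ and $W^l_{p_2}(\Omega)$ into the corresponding spaces on $\R^n$ simultaneously. The extension requires $\Omega$ to be regular enough, for instance Lipschitz or satisfying the cone condition; this is implicit in the citation. In our applications $\Omega$ is $\R^3$ or a half-space/cylinder-type region, so a Stein-type extension operator is available.

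\textbf{Step 1: reduction to $\R^n$.} Set $g=Ef$. Then $|g|_{p_1,\R^n}\le c|f|_{p_1,\Omega}$ and $\|g\|_{W^l_{p_2}(\R^n)}\le c\|f\|_{W^l_{p_2}(\Omega)}$. Since $D^\alpha g=D^\alpha f$ on $\Omega$, it suffices to show
\[
\sum_{|\alpha|=r}|D^\alpha g|_{p,\R^n}\le c|g|_{p_1}^{1-\theta}\|g\|_{W^l_{p_2}}^\theta .
\]

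\textbf{Step 2: the homogeneous estimate on $\R^n$.} I would prove
\[
\sum_{|\alpha|=r}|D^\alpha g|_{p}\le c|g|_{p_1}^{1-\theta}\Big(\sum_{|\beta|=l}|D^\beta g|_{p_2}\Big)^\theta .
\]
The route is a Littlewood--Paley or mollifier splitting $g=g_{<N}+g_{\ge N}$ at frequency $2^N$.
\begin{itemize}
\item The low frequencies are handled by Bernstein's inequality: $|D^rg_{<N}|_p\lesssim 2^{N(r+n/p_1-n/p)}|g|_{p_1}$.
\item The high frequencies are handled by summing Bernstein bounds over dyadic pieces: $|D^rg_{\ge N}|_p\lesssim\sum_{k\ge N}2^{k(r-l+n/p_2-n/p)}|D^lg|_{p_2}$.
\end{itemize}
Condition \eqref{2.31} is exactly the scaling balance. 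It makes the two exponents have opposite signs in the non-endpoint range $r/l<\theta<1$, so the geometric series converges. Optimizing over $N$ then yields the exponents $1-\theta$ and $\theta$.

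\textbf{Step 3: passing to the full norm.} Finally I would replace the homogeneous seminorm by the full norm $\|g\|_{W^l_{p_2}}$, which only enlarges the right-hand side.

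\textbf{Main obstacle: the endpoints.} The delicate cases are $\theta=r/l$ and $\theta=1$. These are the cases where the dyadic sums become logarithmically divergent, or where $p$ or $p_i$ equals $1$ or $\infty$, so that Bernstein/Calder\'on--Zygmund fails.

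For $\theta=1$, and for $p_1=p_2$ with $\theta=r/l$, I would argue directly. I would use the Sobolev embedding (the case $\theta=1$), or the classical one-dimensional integration-by-parts interpolation $|g'|\lesssim|g|^{1/2}|g''|^{1/2}$ iterated along coordinate directions, in the style of Nirenberg.

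I would also note the known exception: $1<p_2<\infty$ with $l-r-n/p_2$ a nonnegative integer, at $\theta=1$. There the estimate may fail, and the lemma should be read as excluding it, in accordance with \cite{BIN}. Since the paper uses the lemma only in non-exceptional, non-endpoint configurations, I would state this restriction explicitly rather than prove the borderline cases in full.
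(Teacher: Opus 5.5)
The paper gives no proof of this lemma; it only quotes \cite[Sect.~15]{BIN}. Your argument therefore has to stand on its own. The extension step (with the regularity caveat on $\Omega$ you rightly add) and Step~3 are fine, but Step~2 has a genuine gap.

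Bernstein's inequality only \emph{raises} Lebesgue exponents. Your low-frequency bound $|D^rg_{<N}|_p\lesssim 2^{N(r+n/p_1-n/p)}|g|_{p_1}$ needs $p\ge p_1$, and your high-frequency bound in terms of $|D^lg|_{p_2}$ needs $p\ge p_2$. Condition \eqref{2.31}, however, only gives
\[
\frac1p=\frac{1-\theta}{p_1}+\frac{\theta}{p_2}-\frac{\theta l-r}{n}\le\frac{1-\theta}{p_1}+\frac{\theta}{p_2},
\]
i.e.\ $p\ge\min\{p_1,p_2\}$. Whenever $p<\max\{p_1,p_2\}$, one of your two pieces would have to be bounded by a norm that does not control it. A bounded frequency-localized function such as $\cos x_1$ lies in no $L_p$ with $p<\infty$, and no choice of $N$ repairs this. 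This regime is not marginal:
\begin{itemize}
\item It contains every case $\theta=r/l$ with $p_1\neq p_2$, where \eqref{2.31} forces $1/p=(1-\theta)/p_1+\theta/p_2$; for example $|u'|_2^2\le c|u|_\infty|u''|_1$ on $\R$.
\item It also meets the interior of the $\theta$-range. Take $n=1$, $r=0$, $l=1$, $p_1=1$, $p_2=\infty$, $\theta=1/4$, $p=2$: the inequality $|u|_2\le c|u|_1^{3/4}|u'|_\infty^{1/4}$ is true, but your high-frequency step would have to pass from $L_\infty$ down to $L_2$.
\end{itemize}

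Your endpoint diagnosis is correspondingly off. With $\kappa=l+n/p_1-n/p_2$, \eqref{2.31} makes your two dyadic exponents equal to $\theta\kappa$ and $-(1-\theta)\kappa$. So, for $\kappa\neq0$, the geometric series degenerates only at $\theta=1$, not at $\theta=r/l$. The case $p_1=p_2$, $\theta=r/l$ that you single out is already covered by your dyadic argument. The hard case $p_1\neq p_2$, $\theta=r/l$ is the one left open. Also, Bernstein's inequalities hold for all exponents in $[1,\infty]$; the obstruction is their direction, not $p\in\{1,\infty\}$.

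The missing ingredient is Nirenberg's integration-by-parts estimate at $\theta=r/l$. In its simplest form, $\int_\R|u'|^p\,dx=-(p-1)\int_\R u\,u''|u'|^{p-2}\,dx$ together with H\"older for $2/p=1/p_1+1/p_2$ gives $|u'|_p^2\le(p-1)|u|_{p_1}|u''|_{p_2}$. This is then iterated in the order of differentiation and along coordinate lines. Away from exceptional cases, combine it with the Sobolev embedding at $\theta=1$, or, if $l-r>n/p_2$, with your dyadic bound at the value of $\theta$ where $p=\infty$ (there $p\ge\max\{p_1,p_2\}$). Every intermediate $\theta$ then follows from H\"older's inequality applied to $D^\alpha f$, because $1/p$ and the exponent of $\|f\|_{W^l_{p_2}}$ both depend affinely on $\theta$.

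Finally, your list of exceptions is incomplete. For $n=2$, $r=0$, $l=1$, $p_1=\infty$, $p_2=2$, condition \eqref{2.31} gives $p=\infty$ for every $\theta$. Then \eqref{2.32} with any $\theta\in(0,1)$ reduces to the false embedding $W^1_2(\R^2)\subset L_\infty(\R^2)$. For the configurations the paper actually uses ($p_1=p_2=2\le p$), your dyadic argument does suffice, but it does not prove the lemma as stated.
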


\begin{definition}\label{d2.12}
  (Anisotropic Sobolev and Sobolev-Slobodetskii spaces). We denote by

  \begin{itemize}
    \item[1.] $W_{p,p_0}^{k,k/2}(\Omega^T)$, $k,k/2\in\N\cup\{0\}$, $p,p_0\in[1,\infty]$ -- the anisotropic Sobolev space with a mixed norm, which is a completion of $C^\infty(\Omega^T)$-functions under the norm
          $$
            \|u\|_{W_{p,p_0}^{k,k/2}(\Omega^T)}=\bigg(\intop_0^T\bigg(\sum_{|\alpha|+2a\le k}\intop_\Omega|D_x^\alpha\partial_t^au|^p\bigg)^{p_0/p}dt\bigg)^{1/p_0}.
          $$
    \item[2.] $W_{p,p_0}^{s,s/2}(\Omega^T)$, $s\in\R_+$, $p,p_0\in[1,\infty]$ -- the Sobolev-Slobodetskii space with the finite norm
          $$\eqal{
            &\|u\|_{W_{p,p_0}^{s,s/2}(\Omega^T)}=\sum_{|\alpha|+2a\le [s]}\|D_x^\alpha\partial_t^au\|_{L_{p,p_0}(\Omega^T)}\cr
            &+\bigg[\intop_0^T\bigg(\intop_\Omega\intop_\Omega\sum_{|\alpha|+2a=[s]}\!\!\! {|D_x^\alpha\partial_t^au(x,t)-D_{x'}^\alpha\partial_t^au(x',t)|^p\over|x-x'|^{n+p(s-[s])}}dxdx'\bigg)^{p_0/p}\!\! dt\bigg]^{1/p_0}\cr
            &+\bigg[\intop_\Omega\bigg(\intop_0^T\intop_0^T\sum_{|\alpha|+2a=2[s/2]}\!\!\! {|D_x^\alpha\partial_t^au(x,t)-D_x^\alpha\partial_{t'}^au(x,t')|^{p_0}\over|t-t'|^{1+p_0({s\over 2}-[{s\over 2}])}}dtdt'\bigg)^{p/p_0}\!\!dx\bigg]^{1/p},\cr}
          $$
          where $a\in\N\cup\{0\}$, $[s]$ is the integer part of $s$ and $D_x^\alpha$ denotes the partial derivative in the spatial variable $x$ corresponding to multiindex $\alpha$. The second term in the above norm vanishes if $s\in\mathbb N$, and the third term vanishes if $s/2\in\mathbb N$. We also use notation $L_p(\Omega^T)=L_{p,p}(\Omega^T)$, $W_p^{s,s/2}(\Omega^T)=W_{p,p}^{s,s/2}(\Omega^T)$.

    \item[3.] $B_{p,p_0}^l(\Omega)$, $l\in\R_+$, $p,p_0\in[1,\infty)$ -- the Besov space with the finite norm
          $$
            \|u\|_{B_{p,p_0}^l(\Omega)}=\|u\|_{L_p(\Omega)}+\bigg(\sum_{i=1}^n\intop_0^\infty {\|\Delta_i^m(h,\Omega)\partial_{x_i}^ku\|_{L_p(\Omega)}^{p_0}\over h^{1+(l-k)p_0}}dh\bigg)^{1/p_0},
          $$
          where $k\in\N\cup\{0\}$, $m\in\N$, $m>l-k>0$, $\Delta_i^j(h,\Omega)u$, $j\in\N$, $h\in\R_+$ is the finite difference of the order $j$ of the function $u(x)$ with respect to $x_i$ with
          $$\eqal{
              &\Delta_i^1(h,\Omega)u=\Delta_i(h,\Omega)\cr
              &\quad=u(x_1,\dots,x_{i-1},x_i+h,x_{i+1},\dots,x_n)-u(x_1,\dots,x_n),\cr
              &\Delta_i^j(h,\Omega)=\Delta_i(h,\Omega)\Delta_i^{j-1}(h,\Omega)u\quad {\rm and}\ \ \Delta_i^j(h,\Omega)u=0\cr
              &{\rm for}\ \ x+jh\not\in\Omega.\cr}
          $$
          In has been proved in \cite{G} that the norms of the Besov space $B_{p,p_0}^l(\Omega)$ are equivalent for different $m$ and $k$ satisfying the condition\break $m>l-k>0$.
  \end{itemize}
\end{definition}

We need the following interpolation lemma.

\begin{lemma}[Anisotropic interpolation, see {\cite[Ch. 4, Sect. 18]{BIN}} ]\label{l2.13}
  Let $u\in W_{p,p_0}^{s,s/2}(\Omega^T)$, $s\in\R_+$, $p,p_0\in[1,\infty]$, $\Omega\subset\R^3$. Let $\sigma\in\R_+\cup\{0\}$, and
  $$
    \varkappa={3\over p}+{2\over p_0}-{3\over q}-{2\over q_0}+|\alpha|+2a+\sigma<s.
  $$
  Then $D_x^\alpha\partial_t^au\in W_{q,q_0}^{\sigma,\sigma/2}(\Omega^T)$, $q\ge p$, $q_0\ge p_0$ and there exists $\varepsilon\in(0,1)$ such that
  $$
    \|D_x^\alpha\partial_t^au\|_{W_{q,q_0}^{\sigma,\sigma/2}(\Omega^T)}\le\varepsilon^{s-\varkappa} \|u\|_{W_{p,p_0}^{s,s/2}(\Omega^t)}+c\varepsilon^{-\varkappa}\|u\|_{L_{p,p_0}(\Omega^t)}.
  $$
  We recall from \cite{B} the trace and the inverse trace theorems for Sobolev spaces with a mixed norm.
\end{lemma}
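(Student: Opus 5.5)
The statement to prove is Lemma~\ref{l2.13}, the anisotropic interpolation inequality quoted from \cite[Ch.~4, Sect.~18]{BIN}. My plan is to reduce it to the case $\Omega^T=\R^3\times\R$ and prove it there by Littlewood--Paley analysis adapted to the parabolic scaling, with a small parameter that trades high against low norms.

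\textbf{Step 1: extension.} First I extend $u$ from $\Omega^T$ to a function $Eu$ on $\R^3\times\R$. I will use a bounded extension operator that is simultaneously bounded on $L_{p,p_0}$ and on $W^{s,s/2}_{p,p_0}$. In space I take Stein's universal extension for Lipschitz domains. In time I take reflection across $t=0$ and $t=T$, using higher-order reflection when $s/2>1$. For such an operator
\[
\|Eu\|_{W^{s,s/2}_{p,p_0}(\R^4)}\le c\|u\|_{W^{s,s/2}_{p,p_0}(\Omega^T)},\qquad \|Eu\|_{L_{p,p_0}(\R^4)}\le c\|u\|_{L_{p,p_0}(\Omega^T)}.
\]
Restriction back to $\Omega^T$ is trivially bounded, so it suffices to prove the inequality on the whole space.

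\textbf{Step 2: parabolic decomposition.} On $\R^4$ I introduce a dyadic decomposition $u=\sum_{j\ge0}\Delta_j u$ with respect to the parabolic weight $|\xi|+|\tau|^{1/2}$. The Sobolev--Slobodetskii norm $W^{s,s/2}_{p,p_0}$ is equivalent, up to the usual Besov/Triebel endpoint caveats, to the weighted norm $\sup_j 2^{js}\|\Delta_j u\|_{L_{p,p_0}}$, or to its $\ell^{p}$-type variant.

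I then use three properties of each dyadic block.
\begin{itemize}
\item Derivatives act as multiplication by the frequency scale: $\|D^\alpha_x\partial^a_t\Delta_j u\|_{L_{q,q_0}}\lesssim 2^{j(|\alpha|+2a)}\|\Delta_j u\|_{L_{q,q_0}}$.
\item Mixed-norm Bernstein inequalities hold, applied separately in $x\in\R^3$ and $t\in\R$. They give $\|\Delta_j u\|_{L_{q,q_0}}\lesssim 2^{j(3/p-3/q+2/p_0-2/q_0)}\|\Delta_j u\|_{L_{p,p_0}}$. The time factor picks up $2/p_0$ because time frequency scales as $2^{2j}$. This is exactly where the requirements $q\ge p$ and $q_0\ge p_0$ are used.
\item The extra smoothness $\sigma$ in the target space contributes a further factor $2^{j\sigma}$.
\end{itemize}
Combining these, each block of the target norm is bounded by $2^{j\varkappa}\|\Delta_j u\|_{L_{p,p_0}}$.

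\textbf{Step 3: splitting and summation.} Fix $\varepsilon\in(0,1)$ and let $N$ be such that $2^{-N}\approx\varepsilon$. I split the dyadic sum at $j=N$.
\begin{itemize}
\item For the low frequencies $j\le N$, I bound $\|\Delta_j u\|_{L_{p,p_0}}\le c\|u\|_{L_{p,p_0}}$. Summing the geometric series gives $c\,2^{N\varkappa}\|u\|_{L_{p,p_0}}\approx c\,\varepsilon^{-\varkappa}\|u\|_{L_{p,p_0}}$.
\item For the high frequencies $j>N$, I write $2^{j\varkappa}\|\Delta_j u\|_{L_{p,p_0}}=2^{-j(s-\varkappa)}\,2^{js}\|\Delta_j u\|_{L_{p,p_0}}$. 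Because $\varkappa<s$ strictly, the series converges and is bounded by $c\,2^{-N(s-\varkappa)}\|u\|_{W^{s,s/2}_{p,p_0}}\approx c\,\varepsilon^{s-\varkappa}\|u\|_{W^{s,s/2}_{p,p_0}}$.
\end{itemize}
Adding the two parts gives the stated inequality. The summability-index mismatch between the Besov and Slobodetskii norms is absorbed by the strict inequality $\varkappa<s$, which leaves a geometric margin in the sum. The constants are independent of $T$ and $\varepsilon$, provided the extension constants are uniform.

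\textbf{Main obstacle.} I expect the hardest step to be Step~1 together with the norm equivalence in Step~2 for a genuinely mixed norm $L_{p,p_0}$ with $p\ne p_0$. The space-time extension must preserve the fractional time seminorm, which is the $x$-outer, $t$-inner term in Definition~\ref{d2.12}. The Littlewood--Paley characterization must also hold with the order of integration fixed as in that definition.

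I would first attempt this directly with finite differences, following the BIN integral-representation approach instead of Fourier multipliers. In that approach $u$ is represented through averaged kernels, and the scale $\varepsilon$ appears as the kernel support radius $\varepsilon$ in space and $\varepsilon^2$ in time. This avoids the need for vector-valued multiplier theorems.
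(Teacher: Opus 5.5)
\paragraph{Comparison with the paper.} The paper does not prove this lemma; it only cites \cite{BIN}, where such estimates come from integral representations with kernels supported at scale $\varepsilon$ in $x$ and $\varepsilon^2$ in $t$. Your Littlewood--Paley argument is therefore a different route. It works, and is arguably more transparent, in the form in which the lemma is actually used in Section~\ref{s7}. In \eqref{7.26}, \eqref{7.29} and \eqref{7.34} one has $s=2$ and $\sigma=0$, so no fractional seminorm occurs on either side. The ingredients are then all standard:
\begin{itemize}
\item the bound $\|\Delta_j u\|_{L_{p,p_0}}\lesssim 2^{-2j}\|u\|_{W^{2,1}_{p,p_0}}$ follows from a multiplier with an $L_1$ kernel;
\item mixed-norm Bernstein follows from Young's inequality in $x$ and in $t$ separately, which is where $q\ge p$ and $q_0\ge p_0$ enter;
\item the target norm is handled by the triangle inequality.
\end{itemize}
You should not assert the two-sided norm equivalence in Step~2; it is false, and you do not need it. You only use $\sup_j 2^{js}\|\Delta_j u\|_{L_{p,p_0}}\lesssim\|u\|_{W^{s,s/2}_{p,p_0}}$ and $\|g\|_{W^{\sigma,\sigma/2}_{q,q_0}}\lesssim\sum_j 2^{j\sigma}\|\Delta_j g\|_{L_{q,q_0}}$, with the margin $s-\varkappa>0$ absorbing the summability mismatch. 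At $\varkappa=0$ your low-frequency sum gives $\log(1/\varepsilon)$ rather than a constant. That case forces $q=p$, $q_0=p_0$, $\alpha=0$, $a=0$, $\sigma=0$, and is trivial.

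\paragraph{The gap.} The genuine gap is the point you list as the main obstacle, and it cannot be closed for general parameters. In Definition~\ref{d2.12} the fractional time seminorm is taken in $L_q(x;L_{q_0}(t))$, with $x$ outside. Your blocks are measured in $L_{q_0}(t;L_q(x))$. By Minkowski the former is dominated by the latter only if $q\ge q_0$. Hence your third bullet in Step~2 fails when $q<q_0$ and $\sigma/2\notin\N\cup\{0\}$.

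In that regime the lemma itself is false. Take $\Omega=\R^3$, $T=1$, $s=2$, $p=q=2$, $p_0=q_0=4$, $\sigma=1.8$, $\alpha=0$, $a=0$, so that $\varkappa=1.8<2$. Set
\[
u(x,t)=\sum_{k=1}^{N}\chi\Big(\frac{x-x_k}{\sqrt\tau}\Big)\eta\Big(\frac{t-(k-1)\tau}{\tau}\Big),\qquad N=\lfloor 1/\tau\rfloor ,
\]
with $\chi\in C_0^\infty(\R^3)$ and $\eta\in C_0^\infty((0,1))$ nonzero, and the $x_k$ far enough apart that the spatial supports are disjoint.
\begin{itemize}
\item At each time only one summand is nonzero. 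Hence $\|u\|_{W^{2,1}_{2,4}}\sim\tau^{-1/4}$, dominated by $\nabla^2u$ and $u_t$.
\item At each point $x$ only one summand is nonzero. Hence the order-$0.9$ time seminorm in $L_2(x;L_4(t))$ is $\gtrsim N^{1/2}\tau^{3/4}\tau^{-0.65}\sim\tau^{-0.4}$.
\end{itemize}
Letting $\tau\to0$ shows that even the embedding $W^{2,1}_{2,4}\subset W^{1.8,0.9}_{2,4}$ fails, so the inequality fails for every $\varepsilon$.

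The same ordering issue affects your source side. When $s/2\notin\N$ the time seminorm of $u$ is also $x$-outer, and it controls your $t$-outer blocks only if $p_0\ge p$.

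\paragraph{What your argument proves.} Your argument proves the lemma under additional hypotheses, which you should state explicitly. On the target side you need $\sigma/2\in\N\cup\{0\}$ or $q\ge q_0$. On the source side you need $s/2\in\N$ or $p_0\ge p$. Alternatively, all (semi)norms can be taken with one fixed order of integration, which is the natural setting for the integral-representation argument of \cite{BIN}. Switching to finite differences will not remove the obstacle, since no method can.

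Your Lipschitz assumption on $\Omega$ is likewise a necessary correction: the statement fails for arbitrary $\Omega\subset\R^3$, for instance domains with outward cusps. In the paper $\Omega=\R^3$, so only the time extension is needed. Its cut-off makes the constant depend on $T$, which is harmless.
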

\goodbreak

\begin{lemma}[traces in $W_{p,p_0}^{s,s/2}(\Omega^T)$, see\cite{B}]\label{l2.14}\ \\
  \begin{itemize}
    \item[(i)] Let $u\in W_{p,p_0}^{s,s/2}(\Omega^t)$, $s\in\R_+$, $p,p_0\in(1,\infty)$. Then $u(x,t_0)=u(x,t)|_{t=t_0}$ for $t_0\in[0,T]$ belongs to $B_{p,p_0}^{s-2/p_0}(\Omega)$, and
          $$
            \|u(\cdot,t_0)\|_{B_{p,p_0}^{s-2/p_0}(\Omega)}\le c\|u\|_{W_{p,p_0}^{s,s/2}(\Omega^T)},
          $$
          where $c$ does not depend on $u$.
    \item[(ii)] For given $\bar u\in B_{p,p_0}^{s-2/p_0}(\Omega)$, $s\in\R_+$, $s>2/p_0$, $p,p_0\in(1,\infty)$, there exists a function $u\in W_{p,p_0}^{s,s/2}(\Omega^T)$ such that $u|_{t=0}=\bar u$ and
          $$
            \|u\|_{W_{p,p_0}^{s,s/2}(\Omega^T)}\le c\|\bar u\|_{B_{p,p_0}^{s-2/p_0}(\Omega)},
          $$
          where constant $c$ does not depend on $\bar u$.
  \end{itemize}
\end{lemma}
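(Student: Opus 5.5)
The plan is to reduce the statement to the classical trace theorem for intersections of vector-valued spaces, in the Lions--Peetre form. The key identification is that, for $\Omega=\R^3$ (or $\Omega$ admitting a bounded extension operator preserving both spatial and temporal smoothness),
\[
W_{p,p_0}^{s,s/2}(\Omega^T)=L_{p_0}\bigl(0,T;W_p^s(\Omega)\bigr)\cap W_{p_0}^{s/2}\bigl(0,T;L_p(\Omega)\bigr)
\]
with equivalent norms. To establish this, one inclusion is immediate from Definition~\ref{d2.12}. The other I will get by separately controlling mixed derivatives $D_x^\alpha\partial_t^a u$ with $|\alpha|+2a\le[s]$, using the anisotropic interpolation Lemma~\ref{l2.13}.

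First I reduce to $\Omega=\R^3$ and $T=\infty$. I apply a Stein-type extension in $x$, which commutes with $\partial_t$ and acts boundedly on $W_p^k$ and on $L_p$, hence also on the intermediate and fractional pieces. In $t$ I use a reflection across $t=0$ and $t=T$ combined with a cutoff. Both operations are bounded in $W_{p,p_0}^{s,s/2}$.

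For part (i) I abstract. Set $X_0=L_p(\R^3)$ and $X_1=W_p^s(\R^3)$, and view $u$ as a function $t\mapsto u(t)$ in $L_{p_0}(\R_+;X_1)\cap W_{p_0}^{s/2}(\R_+;X_0)$. The trace method of real interpolation (Lions--Peetre, including the fractional-order version for $s/2\notin\N$) then gives
\[
u(t_0)\in(X_0,X_1)_{1-\frac{2}{sp_0},\,p_0},
\]
with a uniform bound in $t_0$, because translation in time is an isometry. Finally I identify $(L_p,W_p^s)_{\theta,p_0}=B_{p,p_0}^{\theta s}$ with $\theta s=s-2/p_0$; this is where $s>2/p_0$ enters, to make the trace meaningful. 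This identification is exactly the Besov norm of Definition~\ref{d2.12} with difference order $m>l-k$, and by \cite{G} that norm does not depend on the choice of $m,k$.

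For part (ii) I construct the extension explicitly:
\[
u(x,t)=\chi(t)\,\bigl(e^{t\Delta}\bar u\bigr)(x),
\]
where $\chi$ is a smooth cutoff equal to $1$ near $t=0$. Here I use the semigroup characterization of Besov spaces,
\[
\|\bar u\|_{B_{p,p_0}^{l}}\sim\|\bar u\|_{p}+\Bigl(\int_0^1\bigl(t^{m-l/2}\|\partial_t^m e^{t\Delta}\bar u\|_p\bigr)^{p_0}\frac{dt}{t}\Bigr)^{1/p_0}.
\]
Since $\partial_t=\Delta$ on the semigroup and $D_x^{2k}$ is comparable to $\Delta^k$ in $L_p$, $1<p<\infty$, every term $D_x^\alpha\partial_t^a u$ with $|\alpha|+2a\le[s]$ is reduced to the above integral with $l=s-2/p_0$. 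The Slobodetskii differences in $x$ and in $t$ are treated the same way: split $|h|<\sqrt t$ from $|h|\ge\sqrt t$ (respectively $|t-t'|<t$ from $|t-t'|\ge t$) and apply Hardy's inequality, Lemma~\ref{l2.5}.

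The main obstacle I expect is the third (time-difference) term of the norm in Definition~\ref{d2.12}. There the $L_{p_0}$ integration in time sits inside the $L_p$ integration in $x$, the reverse of the other terms. When $p\neq p_0$, Minkowski's inequality only moves the norms in one direction, so this term does not fit the abstract vector-valued framework directly. I would first try to show that this ordering gives an equivalent norm, by a Littlewood--Paley decomposition in $x$ with parabolic dyadic blocks, where on each block the time regularity is converted into spatial frequency $2^{2j}$. If that fails, I would restrict to the case needed later in Section~\ref{s7}, namely $p=p_0=3$, where Fubini resolves the issue trivially.
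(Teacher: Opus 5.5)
The paper does not prove this lemma; it only quotes Bugrov \cite{B}. So your argument has to stand on its own. It does so only where the third term of the norm in Definition~\ref{d2.12} is harmless, i.e.\ for $p=p_0$, or for $s/2\in\N$, where that term is absent.

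The step that fails is the identification $W^{s,s/2}_{p,p_0}(\Omega^T)=L_{p_0}(0,T;W^s_p)\cap W^{s/2}_{p_0}(0,T;L_p)$. In the third term the time-difference quotients are measured in $L_p(\Omega;L_{p_0})$, not in $L_{p_0}(L_p(\Omega))$. Two consequences follow:
\begin{itemize}
\item The inclusion you call immediate reduces to Minkowski's integral inequality, which goes the right way only for $p\le p_0$.
\item For your extension $\chi(t)e^{t\Delta}\bar u$, that term is of Triebel--Lizorkin type. It is controlled by $\|\bar u\|_{F^{s-2/p_0}_{p,p_0}}$ rather than by the Besov norm, and $F^{s-2/p_0}_{p,p_0}\not\supset B^{s-2/p_0}_{p,p_0}$ for $p<p_0$. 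So your splitting/Hardy computation closes only for $p\ge p_0$.
\end{itemize}
Deriving the reverse inclusion from Lemma~\ref{l2.13} would also be circular, since that lemma presupposes $u\in W^{s,s/2}_{p,p_0}$. You never actually use that inclusion, though.

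No Littlewood--Paley argument can close this gap. With the ordering of Definition~\ref{d2.12}, the statement itself breaks down in the remaining cases. Here is a counterexample to (i) for $p>p_0$. Take $2/p_0<s<2$, $l=s-2/p_0$, and
\[
u=\sum_{j\ge1}A_j\phi\bigl(2^j(x-x_j)\bigr)\chi(t/\tau_j),
\]
where:
\begin{itemize}
\item $\phi\in C_0^\infty$ is supported in the unit ball, and the centres $x_j$ are far apart;
\item $\chi$ is smooth with $\chi(0)=1$ and $\chi=0$ on $[1,\infty)$;
\item $A_j=j^{-1/p_0}2^{-j(l-3/p)}$ and $\tau_j=j^{-b}2^{-2j}$.
\end{itemize}
The first two terms of the norm are bounded by a constant times $\bigl(\sum_j j^{-1-b}\bigr)^{1/p_0}$. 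The third is bounded by a constant times $\bigl(\sum_j j^{-p/p_0+bp(s/2-1/p_0)}\bigr)^{1/p}$. Both are finite for small $b>0$. However, $\|u(\cdot,0)\|^{p_0}_{B^{l}_{p,p_0}}\gtrsim\sum_j j^{-1}=\infty$, so (i) fails for $p>p_0$. A similar count over disjoint bumps (Minkowski, then H\"older in $j$) rules out (ii) for $p<p_0$.

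What your argument does prove is the lemma for $p=p_0$ or $s/2\in\N$. It also proves the variant in which the time seminorm is taken in $L_{p_0}(0,T;L_p)$ order; there the Lions--Peetre trace method plus the heat extension is a sound, standard proof.

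Your fallback $p=p_0=3$ is too narrow even for the paper's own use. At \eqref{7.32} the trace of $W^{2,1}_{r_2,q_2}$ with $r_2\neq q_2$ is needed. That case is covered only because $s/2=1$ there.
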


We need the following imbeddings between Besov spaces

\begin{lemma}[{see \cite[Th. 4.6.1]{T}}]\label{l2.15}
  Let $\Omega\subset\R^n$ be an arbitrary domain.
  \begin{itemize}
    \item[(a)] Let $s\in\R_+$, $\varepsilon>0$, $p\in(1,\infty)$, and $1\le q_1\le q_2\le\infty$. Then
          $$
            B_{p,\infty}^{s+\varepsilon}(\Omega)\subset B_{p,1}^{s+\varepsilon}(\Omega)\subset B_{p,q_2}^s(\Omega)\subset B_{p,q_1}^s(\Omega)\subset B_{p,\infty}^{s-\varepsilon}(\Omega)\subset B_{p,1}^{s-\varepsilon}(\Omega).
          $$
    \item[(b)] Let $\infty>q\ge p>1$, $1\le r\le\infty$, $0\le t\le s<\infty$ and
          $$
            t+{n\over p}-{n\over q}\le s.
          $$
          Then $B_{p,r}^s(\Omega)\subset B_{q,r}^t(\Omega)$.
  \end{itemize}
\end{lemma}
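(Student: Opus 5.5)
Since Lemma~\ref{l2.15} is quoted from \cite[Th.~4.6.1]{T}, the plan is to recall the standard argument. I would work directly with the difference norm of Definition~\ref{d2.12}(3) and discretize the $h$-integral dyadically. Put
\[
  a_j:=\sup_{2^{-j-1}\le h\le 2^{-j}}\|\Delta_i^m(h,\Omega)\partial_{x_i}^ku\|_{L_p(\Omega)},\qquad j\in\mathbb Z .
\]
Up to constants depending only on $m,l,p_0$, the seminorm of $B^l_{p,p_0}(\Omega)$ is then the $\ell^{p_0}$ norm of the sequence $\bigl(2^{j(l-k)}a_j\bigr)_j$. The case $p_0=\infty$ is the corresponding supremum. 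The indices $j\le 0$, i.e. large $h$, are controlled by $\|u\|_{L_p}$ when $k=0$; for $k\ge 1$ one first invokes the equivalence of norms for different $m,k$ proved in \cite{G}.

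For part (a), the chain consists of two kinds of inclusions. The inclusion $B^s_{p,q_2}\subset B^s_{p,q_1}$ is the monotonicity of $\ell^q$ spaces, $\|\cdot\|_{\ell^{q_2}}\le\|\cdot\|_{\ell^{q_1}}$ for $q_1\le q_2$. Note that as printed the inclusion runs from the larger to the smaller exponent, so I would check the intended direction against \cite{T} before proving it. The inclusions involving the loss of $\varepsilon$ I would prove by Hölder's inequality on the sequence:
\[
  \sum_{j\ge0}2^{j(s-\varepsilon)}a_j
  \le\Bigl(\sup_j 2^{js}a_j\Bigr)\sum_{j\ge0}2^{-j\varepsilon}
  \le c_\varepsilon\sup_j 2^{js}a_j .
\]
This gives $B^s_{p,\infty}\subset B^{s-\varepsilon}_{p,1}$, and every other link in the chain follows from these two observations.

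Part (b) is the Sobolev-type embedding. On $\R^n$ I would use the Littlewood--Paley characterization. Nikol'skii's inequality $\|\Delta_j u\|_{L_q}\le c\,2^{jn(1/p-1/q)}\|\Delta_j u\|_{L_p}$ for frequency-localized pieces transfers the smoothness exponent $n/p-n/q$ from regularity to integrability. The condition $t+n/p-n/q\le s$ then gives the claim with the same third index $r$. To pass to $\Omega$, I would apply a bounded extension operator $B^s_{p,r}(\Omega)\to B^s_{p,r}(\R^n)$ and then restrict, using that the restriction map is bounded by the definition of the difference norm.

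The main obstacle is the phrase ``arbitrary domain''. For domains without an extension operator, and part (b) genuinely needs one, the statement must be read in Triebel's sense, namely the restriction space $B^s_{p,r}(\Omega)=\{u|_\Omega : u\in B^s_{p,r}(\R^n)\}$ with the quotient norm. With that reading (b) is immediate from the $\R^n$ case and needs no extension. For the intrinsic difference norm, however, one needs at least a Lipschitz (or $(\varepsilon,\delta)$) domain. In this paper only $\Omega=\R^3$ is used, so I would state and verify the lemma there and note this restriction explicitly.
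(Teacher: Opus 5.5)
The step that fails is your claim that ``every other link in the chain follows from these two observations.'' Three of the five printed inclusions in (a) are false: $B^{s+\varepsilon}_{p,\infty}\subset B^{s+\varepsilon}_{p,1}$, $B^s_{p,q_2}\subset B^s_{p,q_1}$ for $q_1<q_2$, and $B^{s-\varepsilon}_{p,\infty}\subset B^{s-\varepsilon}_{p,1}$. All three reverse the monotonicity in the fine index at fixed smoothness.

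For a counterexample, take $|x|^\alpha\psi(x)$, with $\psi\in C_0^\infty(\R^n)$ equal to $1$ near the origin and $\alpha>-n/p$ not an even nonnegative integer. It belongs to $B^{\alpha+n/p}_{p,\infty}(\R^n)$ but to no $B^{\alpha+n/p}_{p,q}(\R^n)$ with $q<\infty$.

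Your own sentence on the middle link is self-contradictory. The inequality $\|\cdot\|_{\ell^{q_2}}\le\|\cdot\|_{\ell^{q_1}}$ yields $B^s_{p,q_1}\subset B^s_{p,q_2}$, which is the opposite of what is printed. What your two observations actually prove is the chain of \cite[Th.~4.6.1]{T},
\[
  B^{s+\varepsilon}_{p,\infty}(\Omega)\subset B^s_{p,1}(\Omega)\subset B^s_{p,q_1}(\Omega)\subset B^s_{p,q_2}(\Omega)\subset B^s_{p,\infty}(\Omega)\subset B^{s-\varepsilon}_{p,1}(\Omega).
\]
This is also the direction the paper itself uses in Lemma~\ref{l2.16}. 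The printed links $B^{s+\varepsilon}_{p,1}\subset B^s_{p,q_2}$ and $B^s_{p,q_1}\subset B^{s-\varepsilon}_{p,\infty}$ follow from it, but the other three cannot be proved. You should declare them misprints and correct the statement, rather than ``check the intended direction'' and then claim the whole printed chain.

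A second, smaller gap is your dyadic reduction. Bounding the $h$-integral by the $\ell^{p_0}$-norm of $(2^{j(l-k)}a_j)_j$ is trivial. The reverse bound is the hard one: you must control the block supremum $a_j$ by averages of $\|\Delta^m_h\partial^k_{x_i}u\|_{L_p}$ over neighbouring blocks. This is exactly what $B^s_{p,1}\subset B^s_{p,\infty}$ and $B^s_{p,q_1}\subset B^s_{p,q_2}$ need, because for weighted $L_q(dh/h)$-norms monotonicity in $q$ fails without some quasi-monotonicity in $h$.

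On $\R^n$ the reverse bound follows by averaging over $\eta\in[h/4,h/2]$ in the identity $\Delta^m_h=\sum_{k=1}^m(-1)^k\binom{m}{k}\bigl[\tau_{kh}\Delta^m_{k\eta}-\Delta^m_{h+k\eta}\bigr]$, where $\tau_a$ is the shift by $a$ along $x_i$. For the truncated differences of Definition~\ref{d2.12} on a nonconvex $\Omega$, this identity is unavailable.

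Alternatively, use the Littlewood--Paley characterization you already need for (b). There (a) is literally $\ell^q$-monotonicity plus H\"older over dyadic blocks. It also covers $s-\varepsilon\le0$ and fine index $\infty$, which the difference norm of Definition~\ref{d2.12} (with $l>0$, $p_0<\infty$) does not.

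The paper gives no proof beyond the citation. Triebel's restriction definition is what the cited source uses, and it reduces everything to the $\R^n$ case. Under it, your treatment of (b) via Nikol'skii's inequality is the standard argument and is fine.
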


\begin{lemma}[{see \cite[Ch. 4. Th. 18.8]{BIN}}]\label{l2.16}
  Let $1\le\theta_1<\theta_2\le\infty$. Then
  $$
    \|u\|_{B_{p,\theta_2}^l(\Omega)}\le c\|u\|_{B_{p,\theta_1}^l(\Omega)},
  $$
  where $c$ does not depend on $u$.
\end{lemma}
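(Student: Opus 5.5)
The statement to prove is Lemma~\ref{l2.16}: for fixed $l$ and $p$, the Besov norm with the larger second index $\theta_2$ is controlled by the one with the smaller index $\theta_1$. Since $\|u\|_{L_p(\Omega)}$ appears in both norms, it suffices to compare the seminorm parts. Fix admissible $k$ and $m$ with $m>l-k>0$. For each direction $i$ set
\[
g_i(h)=h^{-(l-k)}\|\Delta_i^m(h,\Omega)\partial_{x_i}^ku\|_{L_p(\Omega)} .
\]
Then the $\theta$-seminorm equals $\bigl(\sum_i\|g_i\|_{L_\theta(\R_+,dh/h)}^\theta\bigr)^{1/\theta}$. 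The claim is therefore an embedding $L_{\theta_1}(dh/h)\subset L_{\theta_2}(dh/h)$ for this particular family of functions. Such an embedding is false for general functions, so it has to come from a structural property of the moduli of smoothness.

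The structural input is the quasi-monotonicity of $\omega(h):=\|\Delta_i^m(h,\Omega)\partial_{x_i}^ku\|_{L_p}$. First, $\omega(2h)\le 2^m\omega(h)$; on $\R^n$ this follows from the identity $\Delta^m(2h)=\sum_{j=0}^{m}\binom{m}{j}\Delta^m(h)\tau_{jh}$ together with the translation invariance of the $L_p$ norm. Second, $\omega(h)\le 2^m\|\partial_{x_i}^ku\|_{L_p}$ for all $h$. Using these, for $h\in[2^j,2^{j+1}]$ one compares $g_i(h)$ with an average of $g_i$ over the neighbouring dyadic interval. The best route is to pass to the equivalent form with $m$ replaced by $m'>m$, which is allowed because \cite{G} shows that all admissible $(m,k)$ give equivalent norms, and to use Marchaud-type inequalities. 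This gives
\[
\sup_{h\in[2^j,2^{j+1}]}g_i(h)\le C\Bigl(\int_{2^{j-1}}^{2^{j+2}}g_i(t)^{\theta_1}\,\frac{dt}{t}\Bigr)^{1/\theta_1}
\]
plus lower-order terms. This discretizes the seminorm into an $\ell^{\theta}$ norm of the dyadic block values $a_j$.

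Once the seminorm is discretized, the conclusion follows from the elementary inclusion $\ell^{\theta_1}\subset\ell^{\theta_2}$, that is $\|a\|_{\ell^{\theta_2}}\le\|a\|_{\ell^{\theta_1}}$ for $\theta_1<\theta_2$. When $\theta_2=\infty$ the supremum norm is used. Summing over $i$ and adding back $\|u\|_{L_p}$ gives $\|u\|_{B^l_{p,\theta_2}}\le c\|u\|_{B^l_{p,\theta_1}}$, with $c$ depending only on $l,p,m,k,\theta_1,\theta_2$.

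The main obstacle is the pointwise-by-block control of $g_i$ on an arbitrary domain $\Omega$. There the difference operators are truncated (they are set to zero when $x+jh\notin\Omega$), so the doubling identity is not exact. If the truncation breaks this step, the fallback is to treat the statement as a classical theorem on regular domains, cite \cite[Ch.~4, Th.~18.8]{BIN} directly, and carry out the argument above only for $\Omega=\R^n$ or for extension domains via an extension operator.
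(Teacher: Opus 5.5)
The paper gives no argument of its own for this lemma; it simply quotes \cite[Ch.~4, Th.~18.8]{BIN}, so your fallback is exactly what the paper does. Your self-contained argument for $\mathbb{R}^n$, however, has a gap at its central step.

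Write $\omega(h)=\|\Delta_i^m(h,\mathbb{R}^n)\partial_{x_i}^ku\|_{L_p}$. The two facts you invoke, $\omega(2h)\le 2^m\omega(h)$ and $\omega\le 2^m\|\partial_{x_i}^ku\|_{L_p}$, do not imply the block estimate. A function equal to $1$ on $\{2^rh_0:r\in\mathbb{Z}\}$ and to $\varepsilon$ elsewhere has both properties, yet its value at $h_0$ is not controlled by any average over a neighbouring interval. The underlying reason is that $\omega$ is not monotone in $h$, and doubling only links $h$ to a discrete set of steps. For instance, take $n=m=1$, $k=0$ and $u(x)=\phi(x/R)\sin(2\pi x)$: then $\omega(1)=O(R^{1/p-1})$, while $\omega(1/2)$ is of order $R^{1/p}$.

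Marchaud inequalities compare differences of different orders rather than different step sizes, so passing to $m'>m$ does not repair this. The unspecified ``lower-order terms'' are a further problem: they would have to be summed in $L_{\theta_2}(dh/h)$, which fails near $h=0$ for anything of the form $h^{-(l-k)}\|\partial_{x_i}^ku\|_{L_p}$.

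The missing ingredient is an averaging identity for $m$-th differences. Write $F=\partial_{x_i}^ku$, let $e_i$ be the $i$-th unit vector, and let $\Delta^m_\eta$ be the $m$-th difference with step $\eta$ in direction $x_i$ on $\mathbb{R}^n$. Then, as a polynomial identity in the shift operators,
\[
\Delta^m_hF(x)=\sum_{j=1}^m(-1)^j\binom{m}{j}\Bigl[\Delta^m_{j\eta}F(x+jhe_i)-\Delta^m_{h+j\eta}F(x)\Bigr]\qquad\text{for all }h,\eta>0 .
\]
Take $L_p$ norms, using translation invariance, and average over $\eta\in[h,2h]$. This gives $\omega(h)\le 2^{m+1}h^{-1}\int_h^{(2m+1)h}\omega(t)\,dt$, hence $g_i(h)\le C\int_h^{(2m+1)h}g_i(t)\,dt/t$. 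By H\"older, $\sup_{h>0}g_i(h)\le C\|g_i\|_{L_{\theta_1}(\mathbb{R}_+,dh/h)}$, which is already the case $\theta_2=\infty$. For $\theta_2<\infty$, write $\int_0^\infty g_i^{\theta_2}\,dh/h\le(\sup_h g_i)^{\theta_2-\theta_1}\int_0^\infty g_i^{\theta_1}\,dh/h$ and sum over $i$. No dyadic discretization, change of $m$, or lower-order terms are needed.

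On a general $\Omega$ with truncated differences the identity breaks down, because the steps $h+j\eta$ can leave $\Omega$ where the step $h$ does not. That is where one-sided variants of the identity and the geometric hypotheses behind \cite{BIN} come in.
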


\begin{lemma}[see {\cite[Ch. 4, Th. 18.9]{BIN}}]\label{l2.17}
  Let $l\in\N$ and $\Omega$ satisfy the $l$-horn condition. Then the following imbeddings hold
  $$\eqal{
      &\|u\|_{B_{p,2}^l(\Omega)}\le c\|u\|_{W_p^l(\Omega)}\le c\|u\|_{B_{p,p}^l(\Omega)},\quad &1\le p\le 2,\cr
      &\|u\|_{B_{p,p}^l(\Omega)}\le c\|u\|_{W_p^l(\Omega)}\le c\|u\|_{B_{p,2}^l(\Omega)},\quad &2\le p<\infty,\cr
      &\|u\|_{B_{p,\infty}^l(\Omega)}\le c\|u\|_{W_p^l(\Omega)}\le c\|u\|_{B_{p,1}^l(\Omega)},\quad &1\le p\le\infty.\cr}
  $$
\end{lemma}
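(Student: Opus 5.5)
The statement to prove is Lemma~\ref{l2.17}, the comparison between Besov and Sobolev norms for integer smoothness $l$ on a domain with the $l$-horn condition. I will not prove it from scratch. Instead I will reduce it to the one-dimensional Minkowski-type inequalities that underlie the $B^l_{p,\theta}$ versus $W^l_p$ comparison, following \cite[Ch.~4]{BIN}.

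\textbf{Step 1: integral representation.} The $l$-horn condition gives a Sobolev integral representation of $u$ through a horn-shaped averaging kernel. Using this representation, both norms can be rewritten in terms of the same finite differences. Concretely, the Besov seminorm is
$$
\Bigl(\int_0^\infty h^{-1-(l-k)\theta}\,\|\Delta^m_i(h)\partial^k_{x_i}u\|_{L_p}^\theta\,dh\Bigr)^{1/\theta},
$$
and I will take $k=l-1$, $m=2$. On the Sobolev side, $\|u\|_{W^l_p}$ is controlled by $\|u\|_{L_p}+\sum_i\|\partial_{x_i}^l u\|_{L_p}$, again by \cite{BIN} under the horn condition. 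So everything reduces to comparing, direction by direction, $\|\partial_i^l u\|_{L_p}$ with the $L_\theta(dh/h)$ norm of $h^{-1}\|\Delta_i^2(h)\partial_i^{l-1}u\|_{L_p}$.

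\textbf{Step 2: the easy embeddings ($\theta=1$ and $\theta=\infty$).} These hold for all $1\le p\le\infty$.

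For the $B^l_{p,\infty}$ bound, write $\Delta_i^2(h)g=\int_0^h\!\int_0^h \partial_i^2 g$. This gives $\|\Delta^2_i(h)\partial_i^{l-1}u\|_p\le h\|\partial_i^{l}u\|_p$, with the second derivative replaced by a single one through the first-difference form. Hence $\|u\|_{B^l_{p,\infty}}\le c\|u\|_{W^l_p}$.

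Conversely, take a smooth kernel $\omega$ and use the identity $\partial_i^l u=\sum_j 2^{j}\Delta$-type telescoping of mollifications $u_{2^{-j}}$. The $L_p$ norms of the pieces are bounded by $2^{j}\|\Delta^2_i(2^{-j})\partial^{l-1}_iu\|_p$. Summing over $j$ gives a bound by the $B^l_{p,1}$ norm.

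\textbf{Step 3: the sharp embeddings ($\theta=2$ versus $\theta=p$).} These depend on the relation between $p$ and $2$. For them I will use the Littlewood--Paley characterization. First extend $u$ to $\R^n$ with a bounded extension operator, which exists under the horn condition by \cite{BIN}. Then:
\begin{itemize}
\item $\|u\|_{W^l_p}\sim\|(\sum_j|2^{jl}\Delta_j u|^2)^{1/2}\|_p$.
\item $\|u\|_{B^l_{p,\theta}}\sim\|(2^{jl}\|\Delta_ju\|_p)\|_{\ell^\theta}$.
\end{itemize}
Minkowski's inequality in $L_{p/2}$ or $\ell^{2/p}$ gives the comparison directly:
\begin{itemize}
\item For $p\ge2$: $\|(\sum|a_j|^2)^{1/2}\|_p\le(\sum\|a_j\|_p^2)^{1/2}$, and $\|(\sum|a_j|^2)^{1/2}\|_p\ge(\sum\|a_j\|_p^p)^{1/p}$.
\item For $p\le2$: the inequalities reverse.
\end{itemize}
Finally, restrict back to $\Omega$.

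\textbf{Main obstacle.} The hardest part is justifying the Littlewood--Paley square-function equivalence together with the extension on a general horn domain. Since the paper treats this as a quoted result, I would cite \cite[Th.~18.9]{BIN} for precisely this step rather than reprove it.
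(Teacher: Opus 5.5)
\textbf{Gap: the endpoint $p=1$ of the first line.} Your Step~3 rests on the equivalence $\|u\|_{W^l_p}\sim\|(\sum_j|2^{jl}\Delta_ju|^2)^{1/2}\|_{L_p}$, which holds only for $1<p<\infty$. At $p=1$ the square function defines a strictly smaller space, the Hardy-space analogue of $W^l_1$, so your argument says nothing there.

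In fact nothing can be proved there, because the left inequality $\|u\|_{B^l_{1,2}}\le c\|u\|_{W^l_1}$ is false. Take $\Omega=\R^n$ and $l=1$, so that $k=0$ and $m=2$ in the Besov norm. Let $u_\varepsilon(x)=(\chi_{[0,1]}*\rho_\varepsilon)(x_1)\,\eta(x_2,\dots,x_n)$, with a fixed mollifier $\rho$ and a fixed $\eta\in C_0^\infty(\R^{n-1})$. Then $\|u_\varepsilon\|_{W^1_1}$ is bounded uniformly in $\varepsilon$. For $0<h<1/4$, the function $\Delta_h^2\chi_{[0,1]}$ equals $\pm1$ on four disjoint intervals of length $h$. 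Hence $\|\Delta_1^2(h)u_\varepsilon\|_{L_1}\ge 2h\|\eta\|_{L_1}$ for $4\varepsilon\le h\le1/4$, and the $B^1_{1,2}$ seminorm is at least $2\|\eta\|_{L_1}(\log(1/(16\varepsilon)))^{1/2}\to\infty$.

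So the first line is only valid for $1<p\le2$; at $p=1$ only the third line survives. The paper offers nothing beyond the citation of \cite{BIN}, and your proposal inherits this misstatement instead of detecting it.

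\textbf{Two further steps fail as written.}

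First, the reduction in Step~1 to $\|u\|_{L_p}+\sum_i\|\partial^l_{x_i}u\|_{L_p}$ is a Calder\'on--Zygmund fact, valid only for $1<p<\infty$. For $p\in\{1,\infty\}$ and $l\ge2$ it is false. By Ornstein's non-inequality, $\|\partial_1\partial_2u\|_{L_1}$ is not bounded by $\|\partial_1^2u\|_{L_1}+\|\partial_2^2u\|_{L_1}$, and rescaling $u(\lambda x)$ removes any lower-order term. Consequently, at the endpoints Step~2 controls only the pure derivatives by $\|u\|_{B^l_{p,1}}$. The third line needs the telescoping applied to every $D^\alpha u$ with $|\alpha|=l$. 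One way is approximation by entire functions of exponential type together with Bernstein's inequality; both hold for all $1\le p\le\infty$, and this is how BIN control mixed derivatives by a norm built only from directional differences.

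Second, the horn condition does not supply a bounded extension operator: a slit disc satisfies it but is not a $W^1_p$-extension domain. So ``extend to $\R^n$, apply Littlewood--Paley, restrict'' is not available in the stated generality. BIN avoid extension altogether by using integral representations over horns contained in $\Omega$.

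Finally, deferring the hard step to Th.~18.9 itself is circular. Your Littlewood--Paley route is a legitimate, shorter alternative to BIN's argument for $\Omega=\R^n$ and $1<p<\infty$, but it does not prove the lemma as stated.
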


Consider the nonstationary Stokes system in $\Omega\subset\R^3$:
$$\eqal{
    &v_t-\nu\Delta v+\nabla p=f,\cr
    &\divv v=0\cr}
$$
with the initial condition $v(0)$.

\begin{lemma}[see \cite{MS}]\label{l2.18}
  Assume that $f\in L_{q,r}(\Omega^T)$, $v(0)\in B_{q,r}^{2-2/r}(\Omega)$, $r,q\in(1,\infty)$. Then there exists a unique solution to the above system such that $v\in W_{q,r}^{2,1}(\Omega^T)$, $\nabla p\in L_{q,r}(\Omega^T)$ with the following estimate
  \begin{equation}
    \|v\|_{W_{q,r}^{2,1}(\Omega^t)}+\|\nabla p\|_{L_{q,r}(\Omega^t)}\le c(\|f\|_{L_{q,r}(\Omega^t)}+\|v(0)\|_{B_{q,r}^{2-2/r}(\Omega)}),
    \label{2.33}
  \end{equation}
  where $t\le T$.
\end{lemma}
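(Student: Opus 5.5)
Since the domain of interest here is $\Omega=\R^3$ (the Cauchy problem), I would first remove the pressure with the Helmholtz projection and then treat the resulting problem as a vector-valued heat equation. Let $P$ be the Leray projector onto divergence-free fields. On $\R^3$, $P$ is a matrix of zeroth-order Fourier multipliers, $\widehat{Pf}(\xi)=(I-\xi\otimes\xi/|\xi|^2)\hat f(\xi)$, hence bounded on $L_q(\R^3)$ for $1<q<\infty$ by the Mikhlin theorem. It commutes with $\Delta$ and with time differentiation. Writing $f=Pf+(I-P)f$ with $(I-P)f=\nabla\pi$, the system splits into two parts:
\[
  v_t-\nu\Delta v=Pf,\quad v(0)=v_0,\qquad\qquad \nabla p=(I-P)f.
\]
Here $\divv v_0=0$; this is implicit in the statement and assumed in its applications, as in Theorem~\ref{t1.2}. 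Since $\divv v_0=0$ and $\divv Pf=0$, the heat solution stays divergence free, so the pair $(v,p)$ solves the Stokes system. The pressure bound $\|\nabla p\|_{L_{q,r}(\R^3_t)}\le c\|f\|_{L_{q,r}(\R^3_t)}$ then follows immediately, applying the $L_q$-boundedness of $I-P$ for a.e.\ time and integrating in $L_r(0,t)$. Uniqueness I would get by taking differences: a solution with zero data satisfies $\divv v=0$ and $\Delta p=0$ with $\nabla p\in L_{q,r}$, hence $\nabla p=0$, and then the heat equation with zero data gives $v=0$.

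Next I split the heat part as $v=v^1+v^2$. Here $v^1$ has zero initial data and right-hand side $Pf$, while $v^2=e^{\nu t\Delta}v_0$. For $v^1$ I would invoke mixed-norm maximal regularity for the heat equation: $\|v^1_t\|_{L_{q,r}}+\|\nabla^2v^1\|_{L_{q,r}}\le c\|Pf\|_{L_{q,r}}$. The route I would take is to view $\nabla^2\int_0^t e^{\nu(t-s)\Delta}g(s)\,ds$ as a singular integral operator in time with values in $\mathcal L(L_q)$. Its $L_q(L_q)$-boundedness comes from the parabolic Mikhlin multiplier theorem in $(\tau,\xi)$. Extrapolation to $L_r(0,t;L_q)$ with $r\ne q$ then follows from the vector-valued Calderón–Zygmund (Benedek–Calderón–Panzone) theorem, since the kernel satisfies the Hörmander condition. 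The lower-order part $\|v^1\|_{L_{q,r}}$ on a finite interval $(0,t)$ comes from integrating $v^1_t$ in time. Alternatively, one can use the $L_q$-contractivity of the heat semigroup together with Young's inequality in time, which gives a constant growing with $T$; this is acceptable since $t\le T$.

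For $v^2$ I would use the real-interpolation characterization of the trace space. The space $B^{2-2/r}_{q,r}(\R^3)=(L_q,W^2_q)_{1-1/r,r}$ consists exactly of those data for which $t\mapsto e^{\nu t\Delta}v_0$ has $\partial_t$ and $\nabla^2$ in $L_r(0,\infty;L_q)$, with equivalent norms. This is essentially the inverse trace statement of Lemma~\ref{l2.14}(ii), specialised to the caloric extension. Combining the bounds for $v^1$ and $v^2$ gives \eqref{2.33}.

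I expect the main obstacle to be the mixed-norm maximal regularity step for $r\ne q$, i.e.\ justifying the operator-valued singular integral bounds, together with the exact identification of the initial trace space. On a general domain $\Omega$ (as in \cite{MS}), the Helmholtz decomposition and the localisation near $\partial\Omega$ add substantial further work. For the present paper only the $\R^3$ case is needed, and there I would simply cite \cite{MS} for the general statement.
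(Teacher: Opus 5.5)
Your argument is correct for $\Omega=\R^3$. It takes a genuinely different route from the paper, which gives no proof and simply imports the estimate from \cite{MS}.

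You reduce the Stokes system, via the Leray projector (a Fourier multiplier on $\R^3$), to the vector heat equation $v_t-\nu\Delta v=Pf$ together with $\nabla p=(I-P)f$. This is the same reduction the paper itself uses in Lemma~\ref{l7.3}, where it then falls back on \cite{MS} for the base estimate. You then get mixed-norm maximal regularity from the parabolic Mikhlin theorem plus Benedek--Calder\'on--Panzone extrapolation in time. The initial-data part comes from the semigroup characterization of $(L_q,W^2_q)_{1-1/r,r}=B^{2-2/r}_{q,r}$. That characterization is standard, but it is more specific than Lemma~\ref{l2.14}(ii), which only provides some extension.

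The citation buys generality: \cite{MS} also covers domains with boundary, where $P$ is no longer a multiplier and boundary and compatibility conditions enter. None of this is needed here, since every application of Lemma~\ref{l2.18} in Sections~7 and~8 is on $\R^3$.

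Your route buys a self-contained proof that makes the hidden hypotheses visible. The lemma as stated omits $\divv v(0)=0$, which is necessary. Your separate treatment of the lower-order term $\|v\|_{L_{q,r}}$ also shows that the constant in \eqref{2.33} genuinely depends on $T$; by parabolic scaling it cannot be uniform in $T$ for the full $W^{2,1}_{q,r}$ norm. This matters in Section~8, which takes the Stokes constant independent of the time horizon.
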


\section{Estimates for the modified stream\\ function $\psi_1$}\label{s3}

Recall that $\psi_1$ is a solution to the problem
\begin{equation}\eqal{
  &-\psi_{1,rr}-\psi_{1,zz}-{3\over r}\psi_{1,r}=\Gamma\quad &{\rm in}\ \ \bar\R^3,\cr
  &\psi_1\to 0\quad &{\rm as}\ \ r+|z|\to\infty.\cr}
  \label{3.1}
\end{equation}

\begin{lemma}\label{l3.1}
  Assume that Remark \ref{r2.8} holds. Assume that $\Gamma\in L_2(\bar\R^3)$. Then for sufficiently regular solutions to (\ref{3.1}) the following estimate holds
  \begin{equation}\eqal{
      &\intop_{\bar\R^3}(\psi_{1,rr}^2+\psi_{1,rz}^2+\psi_{1,zz}^2)dx+\intop_{\bar\R^3}{1\over r^2}\psi_{1,r}^2dx\cr
      &\quad+\intop_{\R^1}\psi_{1,z}^2\bigg|_{r=0}dz\le 27|\Gamma|_{2,\bar\R^3}^2.\cr}
    \label{3.2}
  \end{equation}

\end{lemma}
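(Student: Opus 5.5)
The plan is to square the equation (\ref{3.1}) and integrate over $\bar\R^3$ with the measure $dx=r\,dr\,dz$. Write $L\psi_1=\psi_{1,rr}+\psi_{1,zz}+\frac3r\psi_{1,r}$, so that $|\Gamma|_{2,\bar\R^3}^2=\int_{\bar\R^3}(L\psi_1)^2dx$. Expanding the square gives the diagonal terms $\psi_{1,rr}^2$, $\psi_{1,zz}^2$ and $\frac9{r^2}\psi_{1,r}^2$. It also gives the cross terms $2\psi_{1,rr}\psi_{1,zz}$, $\frac6r\psi_{1,rr}\psi_{1,r}$ and $\frac6r\psi_{1,zz}\psi_{1,r}$. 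Each cross term will be integrated by parts, in $z$ or in $r$, with the weight $r$ made explicit. Boundary terms at infinity vanish because $\psi_1\to0$ together with its derivatives for sufficiently regular solutions. Boundary terms at $r=0$ are evaluated using the Liu--Wang expansions (\ref{2.20})--(\ref{2.21}) from Remark~\ref{r2.8}: $\psi_{1,r}=2d_2r+\cdots$, so $\psi_{1,r}|_{r=0}=0$ and $\psi_{1,r}/r$ is bounded near the axis.

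The cross terms are handled as follows.
\begin{itemize}
\item[(i)] For $\int 2\psi_{1,rr}\psi_{1,zz}\,r\,drdz$, integrate by parts in $z$ and then in $r$. This produces $2\int\psi_{1,rz}^2dx$ plus $2\int\psi_{1,rz}\psi_{1,z}\,drdz$ coming from differentiating the weight $r$. The $r$-boundary term $r\psi_{1,rz}\psi_{1,z}|_{r=0}$ vanishes.
\item[(ii)] The extra term equals $\int(\psi_{1,z}^2)_{,r}drdz=-\int_{\R^1}\psi_{1,z}^2|_{r=0}dz$.
\item[(iii)] Similarly, $\int\frac6r\psi_{1,zz}\psi_{1,r}\,r\,drdz=-6\int\psi_{1,z}\psi_{1,rz}\,drdz=3\int_{\R^1}\psi_{1,z}^2|_{r=0}dz$.
\item[(iv)] Finally, $\int\frac6r\psi_{1,rr}\psi_{1,r}\,r\,drdz=3\int(\psi_{1,r}^2)_{,r}drdz=-3\int\psi_{1,r}^2|_{r=0}dz=0$ by (\ref{2.21}).
\end{itemize}

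Collecting these, I expect the identity
\[
\int_{\bar\R^3}(\psi_{1,rr}^2+2\psi_{1,rz}^2+\psi_{1,zz}^2)dx+9\int_{\bar\R^3}\frac{\psi_{1,r}^2}{r^2}dx+\int_{\R^1}\psi_{1,z}^2\Big|_{r=0}dz=|\Gamma|_{2,\bar\R^3}^2 .
\]
The axis terms combine as $-2+3=1$ times $\int_{\R^1}\psi_{1,z}^2|_{r=0}dz$ with a positive sign. This is what makes the axis trace appear on the left-hand side of (\ref{3.2}). All terms on the left are nonnegative, so (\ref{3.2}) follows even with constant $1$, and a fortiori with $27$. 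If the sign bookkeeping for the axis term turns out worse, the fallback is cruder: bound the cross terms by Young's inequality, using $|\psi_{1,r}/r|_2$ controlled via Hardy-type Lemma~\ref{l2.5}, and absorb. This is presumably where a constant like $27$ arises. The trace term would then be recovered from identity (iii) by Cauchy--Schwarz.

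The main obstacle is the rigorous justification of the boundary terms at $r=0$ and of the integrations by parts near the axis. Near the axis the weight $r$ degenerates and $\psi_{1,r}/r$ enters the estimate. I would first integrate over $r\ge\varepsilon$ and let $\varepsilon\to0$. In that limit, (\ref{2.20})--(\ref{2.21}) give that $r\psi_{1,rz}\psi_{1,z}\to0$, $\psi_{1,r}^2\to0$, and $\psi_{1,z}^2\to d_{1,z}^2$ at $r=\varepsilon$. The terms at infinity are handled by the assumed decay of sufficiently regular solutions.
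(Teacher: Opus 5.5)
Your argument is correct, but it takes a different route from the paper.

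The paper never squares the equation. Instead it proceeds in three steps:
\begin{itemize}
\item It tests (\ref{3.1}) with $\psi_{1,zz}$ and uses Young's inequality to get (\ref{3.6}). This controls $\psi_{1,rz}$, $\psi_{1,zz}$ and the axis trace by $\frac12|\Gamma|_{2,\bar\R^3}^2$.
\item It tests with $\frac1r\psi_{1,r}$ to bound $|\psi_{1,r}/r|_{2,\bar\R^3}$ by $\psi_{1,zz}$ and $\Gamma$.
\item It recovers $\psi_{1,rr}$ algebraically from the equation, $\psi_{1,rr}=-\psi_{1,zz}-\frac3r\psi_{1,r}-\Gamma$. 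This step, together with the subsequent absorption, produces the constant $27$.
\end{itemize}

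Your expansion of the square uses exactly the same boundary information. That is: vanishing of $\psi_{1,r}$ and of $r\psi_{1,rz}\psi_{1,z}$ on the axis, decay at infinity, and the trace of $\psi_{1,z}^2$ at $r=0$. But it yields an exact identity, a weighted analogue of $\|\Delta u\|_2=\|D^2u\|_2$. So nothing is lost to Young's inequality, and all second derivatives are controlled at once with constant $1$.

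The paper's multiplier route is cruder, but it consists of separate elementary energy estimates. That is the template it reuses for the third-order and localized versions (Lemmas~\ref{l3.2}, \ref{l3.4}, \ref{l3.5}). Your route would adapt to those as well, with the cutoff terms moved to the right-hand side before squaring.

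There is one bookkeeping slip. By your own item (ii), the first axis contribution is $-1$, not $-2$, times $\int_{\R^1}\psi_{1,z}^2|_{r=0}dz$. Together with (iii) the coefficient is therefore $-1+3=2$, and the identity reads
\[
\int_{\bar\R^3}(\psi_{1,rr}^2+2\psi_{1,rz}^2+\psi_{1,zz}^2)dx+9\int_{\bar\R^3}\frac{\psi_{1,r}^2}{r^2}dx+2\int_{\R^1}\psi_{1,z}^2\Big|_{r=0}dz=|\Gamma|_{2,\bar\R^3}^2 .
\]
You can cross-check this as follows. Write the operator as $\Delta\psi_1+\frac2r\psi_{1,r}$ and use $\int|\Delta\psi_1|^2=\int|D^2\psi_1|^2$, where $|D^2\psi_1|^2=\psi_{1,rr}^2+2\psi_{1,rz}^2+\psi_{1,zz}^2+\psi_{1,r}^2/r^2$. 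Then integrate the cross term by parts.

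This correction only strengthens your conclusion, and the Young/Hardy fallback you mention is unnecessary.
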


\begin{proof}
  Multiplying (\ref{3.1}) by $\psi_{1,zz}$ and integrating over $\bar\R^3$ yields
  \begin{equation}\eqal{
      &-\intop_{\bar\R^3}\psi_{1,rr}\psi_{1,zz}dx-\intop_{\bar\R^3}\psi_{1,zz}^2dx-3\intop_{\bar\R^3}{1\over r}\psi_{1,r}\psi_{1,zz}dx\cr
      &=\intop_{\bar\R^3}\Gamma\psi_{1,zz}dx.\cr}
    \label{3.3}
  \end{equation}

  Recall that $d x=r d r d z $  so  $\int_{\overline{\mathbb{R}}^3} \frac{1}{r}(\cdot) d x=\int_{\mathbb{R}_{+} \times \mathbb{R}}(\cdot) d r d z $.
  Integrating by parts with respect to $r$ in the first term implies
  $$\eqal{
      &-\intop_{\bar\R^3}(\psi_{1,r}\psi_{1,zz}r)_{,r}drdz+\intop_{\bar\R^3}\psi_{1,r}\psi_{1,zzr}dx+ \intop_{\bar\R^3}\psi_{1,r}\psi_{1,zz}drdz\cr
      &\quad-\intop_{\bar\R^3}\psi_{1,zz}^2dx-3\intop_{\bar\R^3}\psi_{1,r}\psi_{1,zz}drdz= \intop_{\bar\R^3}\Gamma\psi_{1,zz}dx.\cr}
  $$
  Recall that  $d x=r d r d z$. Hence $\int_{\overline{\mathbb{R}}^3} \frac{1}{r} f d x=\int_{\mathbb{R}_{+} \times \mathbb{R}} f d r d z $.
  Using $(\ref{3.1})_2$ yields
  \begin{equation}\eqal{
      &\intop_{\R^1}\psi_{1,r}\psi_{1,zz}r\bigg|_{r=0}dz-\intop_{\bar\R^3}\psi_{1,zr}^2dx- \intop_{\bar\R^3}\psi_{1,zz}^2dx\cr
      &\quad-2\intop_{\bar\R^3}\psi_{1,r}\psi_{1,zz}drdz=\intop_{\bar\R^3}\Gamma\psi_{1,zz}dx.\cr}
    \label{3.4}
  \end{equation}

  Remark \ref{r2.8} implies that $\psi_{1,r}|_{r=0}=0$ and using that solutions to (\ref{3.1}) are so regular that $|\psi_{1,zz}|_{2,\bar\R^3}<\infty$ we obtain that the first term in (\ref{3.4}) vanishes. Multiplying \eqref{3.4} by$-1 $ and integrating by parts with respect to $z$ in the last term $\int \psi_{1, r} \psi_{1, z z} d r d z$  (boundary terms vanish by decay as $|z|\to \infty$), we obtain
  \begin{equation}
    \intop_{\bar\R^3}(\psi_{1,rz}^2+\psi_{1,zz}^2)dx-\intop_{\bar\R^3}\partial_r\psi_{1,z}^2drdz= -\intop_{\bar\R^3}\Gamma\psi_{1,zz}dx.
    \label{3.5}
  \end{equation}
  Integrating by parts  the last term on the l.h.s. of (\ref{3.5}) and using $(\ref{3.1})_2$ we have
  $$
    -\int_{\bar\R^3}\partial_r(\psi_{1,z}^2)\,drdz=\intop_{\R^1}\psi_{1,z}^2\bigg|_{r=0}dz,
  $$
  where the boundary term at $r=\infty$ vanishes by decay. Moreover, by Young's inequality,
  \[
    -\int_{\bar\R^3}\Gamma\psi_{1,zz}\,dx
    \le \frac12\int_{\bar\R^3}\Gamma^2\,dx+\frac12\int_{\bar\R^3}\psi_{1,zz}^2\,dx.
  \]
  Hence  from \eqref{3.5} follows
  \begin{equation}
    {1\over 2}\intop_{\bar\R^3}(\psi_{1,rz}^2+\psi_{1,zz}^2)dx+\intop_{\R^1}\psi_{1,z}^2\bigg|_{r=0}dz\le{1\over 2}\intop_{\bar\R^3}\Gamma^2dx.
    \label{3.6}
  \end{equation}
  We multiply $(\ref{3.1})_1$ by ${1\over r}\psi_{1,r}$ and integrate over $\bar\R^3$ to obtain
  \begin{equation}\eqal{
    &3\intop_{\bar\R^3}\bigg|{1\over r}\psi_{1,r}\bigg|^2dx=-\intop_{\bar\R^3}\psi_{1,rr}{1\over r}\psi_{1,r}dx-\intop_{\bar\R^3}\psi_{1,zz}{1\over r}\psi_{1,r}dx\cr
    &\quad-\intop_{\bar\R^3}\Gamma{1\over r}\psi_{1,r}dx.\cr}
    \label{3.7}
  \end{equation}
  Integrating by parts in $r$ and using Remark~\ref{r2.8} we obtain
  \[
    -\int_{\bar\R^3}\psi_{1,rr}\frac1r\psi_{1,r}\,dx
    = -\frac12\int_{\R}\Big[\psi_{1,r}^2\Big]_{r=0}^{\infty}dz=0.
  \]
  Hence, applying Young's inequality to the remaining terms in \eqref{3.7} yields \eqref{3.8}.

  The first term on the r.h.s. of (\ref{3.7}) equals
  $$%\eqal{&
    -{1\over 2}\intop_{\bar\R^3}\partial_r\psi_{1,r}^2drdz=-{1\over 2}\lim_{R\to\infty}\intop_{\R^1}\psi_{1,r}^2\bigg|_{r=R}dz%\cr&\quad
    +{1\over 2}\intop_{\R^1}\psi_{1,r}^2\bigg|_{r=0}dz,%\cr}
  $$
  where the first term vanishes in view of $(\ref{3.1})_2$ and Remark \ref{r2.8} implies that the second term also vanishes.

  Applying the H\"older and Young inequalities to the last two terms on the r.h.s. of (\ref{3.7}) yields
  \begin{equation}
    2\intop_{\bar\R^3}\bigg|{1\over r}\psi_{1,r}\bigg|^2dx\le{1\over 2}\intop_{\bar\R^3}(\psi_{1,zz}^2+\Gamma^2)dx.
    \label{3.8}
  \end{equation}
  From (\ref{3.8}) we have
  \begin{equation}
    \intop_{\bar\R^3}\bigg|{1\over r}\psi_{1,r}\bigg|^2dx\le{1\over 4}\intop_{\bar\R^3}(\psi_{1,zz}^2+\Gamma^2)dx.
    \label{3.9}
  \end{equation}
  Adding (\ref{3.6}) and (\ref{3.9}) yields
  \begin{equation}
    {1\over 4}\intop_{\bar\R^3}(\psi_{1,rz}^2+\psi_{1,zz}^2)dx+\intop_{\bar\R^3}\bigg|{1\over r}\psi_{1,r}\bigg|^2dx+\intop_{\R^1}\psi_{1,z}^2\bigg|_{r=0}dz\le{3\over 4}\intop_{\bar\R^3}\Gamma^2dx.
    \label{3.10}
  \end{equation}
  Simplifying, we get
  \begin{equation}
    \intop_{\bar\R^3}(\psi_{1,rz}^2+\psi_{1,zz}^2)dx+4\intop_{\bar\R^3}\bigg|{1\over r}\psi_{1,r}\bigg|^2dx+4\intop_{\R^1}\psi_{1,z}^2\bigg|_{r=0}dz\le 3\intop_{\bar\R^3}\Gamma^2dx.
    \label{3.11}
  \end{equation}
  From $(\ref{3.1})_1$ we have
  \begin{equation}\eqal{
    &\intop_{\bar\R^3}\psi_{1,rr}^2dx=\intop_{\bar\R^3}\bigg(\psi_{1,zz}+{3\over r}\psi_{1,r}+\Gamma\bigg)^2dx\cr
    &\le 3\intop_{\bar\R^3}\psi_{1,zz}^2dx+27\intop_{\bar\R^3}\bigg|{1\over r}\psi_{1,r}\bigg|^2dx+3\intop_{\bar\R^3}|\Gamma|^2dx.\cr}
    \label{3.12}
  \end{equation}
  Multiplying \eqref{3.11} by $8$ and adding \eqref{3.12} we obtain
  \[
    8\int \psi_{1,rz}^2 + 11\int \psi_{1,zz}^2 + \int \psi_{1,rr}^2
    +59\int \Big|\frac1r\psi_{1,r}\Big|^2 + 32\int_{\R}\psi_{1,z}^2|_{r=0}
    \le 27\int \Gamma^2 .
  \]
  Dropping some positive terms  yields \eqref{3.13}.

  \begin{equation}\eqal{
    &5\intop_{\bar\R^3}(\psi_{1,rz}^2+\psi_{1,zz}^2)dx+\intop_{\bar\R^3}\psi_{1,rr}^2dx\cr
    &\quad+5\intop_{\bar\R^3}\bigg|{1\over r}\psi_{1,r}\bigg|^2dx+32\intop_{\R^1}\psi_{1,z}^2\bigg|_{r=0}dz\le 27\intop_{\bar\R^3}\Gamma^2dx.\cr}
    \label{3.13}
  \end{equation}
  From (\ref{3.13}) we derive (\ref{3.2}). This concludes the proof.
\end{proof}

\begin{lemma}\label{l3.2}
  Let Remark \ref{r2.8} hold. Let $\Gamma$, $\Gamma_{,z}\in L_2(\bar\R^3)$. Then for sufficiently regular solutions to (\ref{3.1}) the following estimates hold
  \begin{equation}
    \intop_{\bar\R^3}(\psi_{1,zzr}^2+\psi_{1,zzz}^2)dx+ 2\intop_{\R^1}\psi_{1,zz}^2\bigg|_{r=0}dz\le\intop_{\bar\R^3}\Gamma_{,z}^2dx
    \label{3.14}
  \end{equation}
  and
  \begin{equation}
    \intop_{\bar\R^3}(\psi_{1,rrz}^2+\psi_{1,rzz}^2+\psi_{1,zzz}^2)dx+\intop_{\R^1}\psi_{1,zz}^2\bigg|_{r=0}dz\le 2\intop_{\bar\R^3}|\Gamma_{,z}|^2dx.
    \label{3.15}
  \end{equation}
\end{lemma}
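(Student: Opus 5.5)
The plan is to set $w:=\psi_{1,z}$ and work with the $z$-differentiated problem. Differentiating $(\ref{3.1})_1$ in $z$ gives
\[
-w_{,rr}-w_{,zz}-\frac3r w_{,r}=\Gamma_{,z},\qquad w\to0 \ \ \text{as}\ \ r+|z|\to\infty .
\]
This is an equation of exactly the same form as (\ref{3.1}), with right-hand side $\Gamma_{,z}$. The needed boundary information near the axis comes from Remark~\ref{r2.8}. Differentiating (\ref{2.21}) in $z$ gives $w_{,r}=\psi_{1,rz}=2d_{2,z}r+\cdots$, so $w_{,r}|_{r=0}=0$ and $w_{,r}/r$ is bounded near the axis. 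Hence every boundary term at $r=0$ of the types $r\,w_{,r}w_{,zz}$ and $w_{,r}^2$ that occurs in the proof of Lemma~\ref{l3.1} still vanishes. The terms at $r=\infty$ and $|z|=\infty$ vanish by the assumed decay and regularity.

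\emph{Step 1: estimate (\ref{3.14}).} I repeat verbatim the first part of the proof of Lemma~\ref{l3.1}, with $\psi_1$ replaced by $w$. Multiply the differentiated equation by $w_{,zz}$ and integrate over $\bar\R^3$. Then integrate by parts in $r$ in the $w_{,rr}w_{,zz}$ term, and in $z$ in the term $\int w_{,r}w_{,zz}\,drdz$. Finally, use $-\int\partial_r(w_{,z}^2)\,drdz=\int_{\R}w_{,z}^2|_{r=0}dz$. This reproduces (\ref{3.6}) for $w$:
\[
\tfrac12\int_{\bar\R^3}(w_{,rz}^2+w_{,zz}^2)\,dx+\int_{\R}w_{,z}^2\big|_{r=0}dz\le\tfrac12\int_{\bar\R^3}\Gamma_{,z}^2\,dx .
\]
Multiplying by $2$ and recalling $w_{,rz}=\psi_{1,zzr}$, $w_{,zz}=\psi_{1,zzz}$, $w_{,z}=\psi_{1,zz}$ gives exactly (\ref{3.14}).

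\emph{Step 2: estimate (\ref{3.15}).} The new quantity is $w_{,rr}=\psi_{1,rrz}$. Imitating (\ref{3.7})--(\ref{3.12}) would bound it, but only with constants like $27$, not $2$. Instead I would square the equation,
\[
\int_{\bar\R^3}\Big(w_{,rr}+w_{,zz}+\frac3r w_{,r}\Big)^2dx=\int_{\bar\R^3}\Gamma_{,z}^2dx ,
\]
and expand the cross terms by integration by parts with the weight $dx=r\,drdz$.

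First, $6\int w_{,rr}w_{,r}r^{-1}dx=3\int\partial_r(w_{,r}^2)\,drdz=-3\int_{\R}w_{,r}^2|_{r=0}dz=0$.

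Second, $2\int w_{,rr}w_{,zz}\,dx$ is integrated by parts in $z$ and then in $r$. This produces $2\int w_{,rz}^2dx$ plus a term $\int\partial_r(w_{,z}^2)\,drdz=-\int_{\R} w_{,z}^2|_{r=0}dz$.

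Third, $6\int w_{,zz}w_{,r}r^{-1}dx=-6\int w_{,z}w_{,rz}\,drdz=3\int_{\R}w_{,z}^2|_{r=0}dz$.

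Collecting these, I expect an identity of the form
\[
\int_{\bar\R^3}\big(w_{,rr}^2+2w_{,rz}^2+w_{,zz}^2\big)dx+9\int_{\bar\R^3}\frac{w_{,r}^2}{r^2}dx+c\int_{\R}w_{,z}^2\big|_{r=0}dz=\int_{\bar\R^3}\Gamma_{,z}^2dx
\]
with $c\ge0$. Dropping the nonnegative terms gives (\ref{3.15}), even with constant $1$ in place of $2$ on the right. If the bookkeeping of the boundary terms instead leaves a negative multiple of $\int_{\R}w_{,z}^2|_{r=0}dz$, I would add (\ref{3.14}) to absorb it. Step 1 bounds $2\int_{\R}\psi_{1,zz}^2|_{r=0}dz$ by $\int\Gamma_{,z}^2$, so this absorption is exactly what the constant $2$ and the coefficient $1$ of the boundary term in (\ref{3.15}) allow.

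\emph{Main obstacle.} The delicate point is the sign and size of the axis boundary contributions in Step 2. These rest on the precise Liu--Wang behaviour $w_{,r}=O(r)$ and the finiteness of $w_{,z}$ on the axis from (\ref{2.20})--(\ref{2.21}). One must also justify that $\int w_{,r}^2r^{-2}dx$ and the trace $\int_{\R}w_{,z}^2|_{r=0}dz$ are finite for the regular solutions considered, so that the integrations by parts are legitimate.
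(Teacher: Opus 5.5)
Your proof is correct. Step~1 is exactly the paper's proof of \eqref{3.14}: multiply the $z$-differentiated equation by $-\psi_{1,zzz}$, integrate by parts to get $\int(\psi_{1,rzz}^2+\psi_{1,zzz}^2)\,dx+\int_{\R}\psi_{1,zz}^2|_{r=0}\,dz=-\int\Gamma_{,z}\psi_{1,zzz}\,dx$, and finish with Young.

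For \eqref{3.15} you take a different route.

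\textbf{The paper's route.} It multiplies by $\psi_{1,rrz}$. The axis trace then comes out with the wrong sign, as in \eqref{3.23}:
$\int(\psi_{1,rrz}^2+\psi_{1,rzz}^2)\,dx-\frac12\int_{\R}\psi_{1,zz}^2|_{r=0}\,dz=-\int\Gamma_{,z}\psi_{1,rrz}\,dx$.
After Young it must add \eqref{3.20} to absorb that trace. This, together with the two Young losses, produces the constant $2$.

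\textbf{Your route.} You square the equation instead, and your three cross-term computations are right. Summing them gives $c=3-1=2$, i.e.\ the exact identity
\[
\int_{\bar\R^3}\big(\psi_{1,rrz}^2+2\psi_{1,rzz}^2+\psi_{1,zzz}^2\big)dx+9\int_{\bar\R^3}\frac{\psi_{1,rz}^2}{r^2}\,dx+2\int_{\R}\psi_{1,zz}^2\Big|_{r=0}dz=\int_{\bar\R^3}\Gamma_{,z}^2\,dx .
\]
You should state $c=2$ outright; the hedged fallback through \eqref{3.14} is never needed.

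\textbf{What each approach buys.} Your route gives more than the paper's:
\begin{itemize}
\item \eqref{3.15} with constant $1$, without using \eqref{3.14} at all;
\item in the same stroke, $|\frac1r\psi_{1,rz}|_{2,\bar\R^3}\le\frac13|\Gamma_{,z}|_{2,\bar\R^3}$. This sharpens Lemma~\ref{l3.3}, which the paper derives separately from \eqref{3.25} by the triangle inequality;
\item applied with $\psi_1$ in place of $w$ (using $\psi_{1,r}|_{r=0}=0$ from \eqref{2.21}), the same identity would lower the constant $27$ in Lemma~\ref{l3.1} to $1$.
\end{itemize}
The only price is that you need $w_{,rr}$, $w_{,zz}$ and $\frac1r w_{,r}$ in $L_2(\bar\R^3)$ from the outset in order to expand the square. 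The paper needs the same integrability to handle its cross term $\frac1r\psi_{1,rz}\psi_{1,rrz}$. Both arguments rely on the same axis information from Remark~\ref{r2.8}.
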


\begin{proof}
  First we show (\ref{3.14}). Differentiate $(\ref{3.1})_1$ with respect to  $z$, multiply by $-\psi_{1,zzz}$ and integrate over $\bar\R^3$. Then we obtain
  \begin{equation}\eqal{
      &\intop_{\bar\R^3}\psi_{1,rrz}\psi_{1,zzz}dx+\intop_{\bar\R^3}\psi_{1,zzz}^2dx+3\intop_{\bar\R^3}{1\over r}\psi_{1,rz}\psi_{1,zzz}dx\cr
      &=-\intop_{\bar\R^3}\Gamma_{,z}\psi_{1,zzz}dx.\cr}
    \label{3.16}
  \end{equation}
  Integrating by parts with respect to $z$ in the first term yields
  \begin{equation}\eqal{
      &\intop_{\bar\R^3}\psi_{1,rrz}\psi_{1,zzz}dx=\intop_{\bar\R^3}(\psi_{1,rrz}\psi_{1,zz})_{,z}dx- \intop_{\bar\R^3}\psi_{1,rrzz}\psi_{1,zz}dx,\cr}
    \label{3.17}
  \end{equation}
  where the first term vanishes in view of $(\ref{3.1})_2$. Integrating by parts in the second term gives
  $$\eqal{
      &-\intop_{\bar\R^3}\psi_{1,rrzz}\psi_{1,zz}dx=-\intop_{\bar\R^3}(\psi_{1,rzz}\psi_{,zz}r)_{,r}drdz\cr
      &\quad+\intop_{\bar\R^3}\psi_{1,rzz}^2dx+\intop_{\bar\R^3}\psi_{1,rzz}\psi_{1,zz}drdz,\cr}
  $$
  where the first term on the r.h.s. vanishes because of $(\ref{3.1})_2$ and Remark \ref{r2.8} which implies that $\psi_{1,rzz}|_{r=0}=0$.

  In view of the above considerations, (\ref{3.16}) takes the form
  \begin{equation}\eqal{
      &\intop_{\bar\R^3}(\psi_{1,rzz}^2+\psi_{1,zzz}^2)dx+\intop_{\bar\R^3}\psi_{1,rzz}\psi_{1,zz}drdz+3\intop_{\bar\R^3}\psi_{1,rz}\psi_{1,zzz}drdz\cr
      &=-\intop_{\bar\R^3}\Gamma_{,z}\psi_{1,zzz}dx.\cr}
    \label{3.18}
  \end{equation}
  Integrating by parts with respect to $z$ in the last term on the l.h.s. and using $(\ref{3.1})_2$, we get
  \begin{equation}\eqal{
      &\intop_{\bar\R^3}(\psi_{1,rzz}^2+\psi_{1,zzz}^2)dx-\intop_{\bar\R^3}\partial_r\psi_{1,zz}^2drdz=-\intop_{\bar\R^3}\Gamma_{,z}\psi_{1,zzz}dx.\cr}
    \label{3.19}
  \end{equation}
  In view of $(\ref{3.1})_2$ the last term on the l.h.s. of (\ref{3.19}) equals $\intop_{\R^1}\psi_{1,zz}^2|_{r=0}dz$.

  Next, applying the H\"older and Young inequalities to the r.h.s. of (\ref{3.19}), we derive
  \begin{equation}
    {1\over 2}\intop_{\bar\R^3}(\psi_{1,rzz}^2+\psi_{1,zzz}^2)dx+\intop_{\R^1}\psi_{1,zz}^2\bigg|_{r=0}dz\le{1\over 2}\intop_{\bar\R^3}\Gamma_{,z}^2dx.
    \label{3.20}
  \end{equation}
  The above inequality implies (\ref{3.14}).

  Finally, we show (\ref{3.15}). Differentiate $(\ref{3.1})_1$ with respect to $z$, multiply by $\psi_{1,rrz}$ and integrate over $\bar\R^3$. Then we have
  \begin{equation}\eqal{
      &-\intop_{\bar\R^3}\psi_{1,rrz}^2dx-\intop_{\bar\R^3}\psi_{1,zzz}\psi_{1,rrz}dx-3\intop_{\bar\R^3}{1\over r}\psi_{1,rz}\psi_{1,rrz}dx\cr
      &=\intop_{\bar\R^3}\Gamma_{,z}\psi_{1,rrz}dx.\cr}
    \label{3.21}
  \end{equation}
  Integrating by parts with respect to $z$ in the second term and using $(\ref{3.1})_2$ we obtain that it equals to
  $$
    \intop_{\bar\R^3}\psi_{1,zz}\psi_{1,rrzz}dx.
  $$
  Next integrating by parts with respect to $r$ implies
  \begin{equation}\eqal{
      &\intop_{\bar\R^3}\psi_{1,zz}\psi_{1,rrzz}dx=\intop_{\bar\R^3}(\psi_{1,zz}\psi_{1,rzz}r)_{,r}drdz\cr
      &\quad-\intop_{\bar\R^3}\psi_{1,rzz}^2dx-\intop_{\bar\R^3}\psi_{1,zz}\psi_{1,rzz}drdz.\cr}
    \label{3.22}
  \end{equation}
  Using $(\ref{3.1})_2$ and Remark \ref{r1.2} implying that $\psi_{1,rzz}|_{r=0}=0$, the first term on the r.h.s. of (\ref{3.22}) vanishes.

  Using (\ref{3.22}) in (\ref{3.21}) gives
  \begin{equation}\eqal{
    &\intop_{\bar\R^3}(\psi_{1,rrz}^2+\psi_{1,rzz}^2)dx-{1\over 2}\intop_{\R^1}\psi_{1,zz}^2\bigg|_{r=0}dz=-\intop_{\bar\R^3}\Gamma_{,z}\psi_{1,rrz}dx,\cr}
    \label{3.23}
  \end{equation}
  where the last term on the l.h.s. of (\ref{3.21}) vanishes because $(\ref{3.1})_2$ is used and also Remark \ref{r2.8} implying that $\psi_{1,rz}|_{r=0}=0$.

  Applying the H\"older and Young inequalities to the r.h.s. of (\ref{3.23}) yields
  \begin{equation}
    {1\over 2}\intop_{\bar\R^3}(\psi_{1,rrz}^2+\psi_{1,rzz}^2)dx-{1\over 2}\intop_{\R^1}\psi_{1,zz}^2\bigg|_{r=0}dz\le{1\over 2}|\Gamma_{,z}|_{2,\bar\R^3}^2.
    \label{3.24}
  \end{equation}
  Inequalities (\ref{3.20}) and (\ref{3.24}) imply
  \begin{equation}
    {1\over 2}\intop_{\bar\R^3}(\psi_{1,rrz}^2+\psi_{1,rzz}^2+\psi_{1,zzz}^2)dx+{1\over 2}\intop_{\R^1}\psi_{1,zz}^2\bigg|_{r=0}dz\le|\Gamma_{,z}|_{2,\bar\R^3}^2.
    \label{3.25}
  \end{equation}
  Inequality (\ref{3.25}) implies (\ref{3.15}). This ends the proof.
\end{proof}

\begin{lemma}\label{l3.3}
  Let Remark \ref{r2.8} hold. For sufficiently regular solutions to (\ref{3.1}) the following inequality holds
  \begin{equation}
    \bigg|\frac{1}{r}\psi_{1,rz}\bigg|_{2,\bar\R^3}
    \le |\Gamma_{,z}|_{2,\bar\R^3}.
    \label{3.26}
  \end{equation}
\end{lemma}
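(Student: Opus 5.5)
The plan is to repeat, one derivative higher, the argument that gave \eqref{3.9} in the proof of Lemma~\ref{l3.1}. Put $w:=\psi_{1,z}$. Differentiating $(\ref{3.1})_1$ with respect to $z$ gives
\[
-w_{,rr}-w_{,zz}-\frac3r w_{,r}=\Gamma_{,z}.
\]
I will multiply this identity by $\frac1r w_{,r}=\frac1r\psi_{1,rz}$ and integrate over $\bar\R^3$ with $dx=r\,dr\,dz$. This gives
\[
3\intop_{\bar\R^3}\Big|\frac1r w_{,r}\Big|^2dx
=-\intop_{\bar\R^3}w_{,rr}w_{,r}\,drdz-\intop_{\bar\R^3}w_{,zz}w_{,r}\,drdz-\intop_{\bar\R^3}\Gamma_{,z}\frac1r w_{,r}\,dx .
\]
Before doing this, I check that all terms are finite. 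By Remark~\ref{r2.8}, $\psi_{1,rz}=2d_{2,z}r+\cdots$ near the axis, so $\frac1r\psi_{1,rz}$ is bounded there. Together with the decay $(\ref{3.1})_2$ and the regularity assumption, this makes every integral above finite.

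The next step is to treat the two boundary-type terms one at a time.

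For the first term,
\[
-\intop_{\bar\R^3} w_{,rr}w_{,r}\,drdz=-\frac12\intop_{\bar\R^3}\partial_r(w_{,r}^2)\,drdz=\frac12\intop_{\R^1}\psi_{1,rz}^2\Big|_{r=0}dz .
\]
The contribution at $r=\infty$ vanishes by decay. The trace at $r=0$ vanishes because Remark~\ref{r2.8} gives $\psi_{1,rz}|_{r=0}=0$.

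For the second term, I integrate by parts in $z$; there are no boundary terms at $|z|\to\infty$. This gives
\[
-\intop_{\bar\R^3} w_{,zz}w_{,r}\,drdz=\intop_{\bar\R^3} w_{,z}w_{,rz}\,drdz=\frac12\intop_{\bar\R^3}\partial_r(w_{,z}^2)\,drdz=-\frac12\intop_{\R^1}\psi_{1,zz}^2\Big|_{r=0}dz\le 0 .
\]
This trace term has a favourable sign, exactly as in \eqref{3.19}, so it can simply be moved to the left-hand side.

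Combining the two computations, I obtain
\[
3\Big|\frac1r\psi_{1,rz}\Big|_{2,\bar\R^3}^2+\frac12\intop_{\R^1}\psi_{1,zz}^2\Big|_{r=0}dz=-\intop_{\bar\R^3}\Gamma_{,z}\frac1r\psi_{1,rz}\,dx\le|\Gamma_{,z}|_{2,\bar\R^3}\Big|\frac1r\psi_{1,rz}\Big|_{2,\bar\R^3},
\]
where the last step is the H\"older inequality. Dropping the nonnegative trace term and dividing by $\big|\frac1r\psi_{1,rz}\big|_{2,\bar\R^3}$ (the case where this norm vanishes is trivial), I get the bound $\big|\frac1r\psi_{1,rz}\big|_{2,\bar\R^3}\le\frac13|\Gamma_{,z}|_{2,\bar\R^3}$. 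This is stronger than \eqref{3.26}.

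I expect the only delicate point to be justifying the integrations by parts near the axis. This means checking that the traces $\psi_{1,rz}^2|_{r=0}$ and $\psi_{1,zz}^2|_{r=0}$ are meaningful, and that the first one vanishes. Both facts come from the Liu--Wang expansion \eqref{2.20}--\eqref{2.21}, differentiated in $z$.
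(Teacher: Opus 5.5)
Your proof is correct, and it takes a different route from the paper's.

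\textbf{The paper's route.} The paper never tests the $z$-differentiated equation. It solves $(\ref{3.27})$ for $\frac3r\psi_{1,rz}$ and takes $L_2$ norms. It then bounds $|\psi_{1,rrz}|_{2,\bar\R^3}+|\psi_{1,zzz}|_{2,\bar\R^3}\le 2|\Gamma_{,z}|_{2,\bar\R^3}$ using the third-order estimate $(\ref{3.25})$ from Lemma~\ref{l3.2}. This produces the constant $1$.

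\textbf{Your route.} You derive a direct weighted energy identity by multiplying by $\frac1r\psi_{1,rz}$.
\begin{itemize}
\item The $r$-boundary term vanishes because $\psi_{1,rz}|_{r=0}=0$.
\item Your handling of the cross term with $\psi_{1,zzz}$ is not really a repetition of the argument for $(\ref{3.9})$; it improves on it. In $(\ref{3.7})$--$(\ref{3.8})$ the paper applies Young's inequality to the analogous term, which brings in $|\psi_{1,zz}|_{2,\bar\R^3}$. You instead integrate by parts in $z$ and then in $r$. The term becomes the nonpositive axis trace $-\frac12\int_{\R^1}\psi_{1,zz}^2|_{r=0}dz$, and you simply discard it.
\end{itemize}

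\textbf{What each buys.} Your argument is independent of Lemma~\ref{l3.2} and gives the sharper constant $\frac13$. The same device would also sharpen $(\ref{3.9})$ to $|\frac1r\psi_{1,r}|_{2,\bar\R^3}\le\frac13|\Gamma|_{2,\bar\R^3}$. The paper's route, once $(\ref{3.25})$ is available, is a two-line corollary and needs no new integration by parts near the axis.

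\textbf{Common assumptions.} Both proofs use the same Liu--Wang information from Remark~\ref{r2.8}. Both also need the finiteness of $|\psi_{1,rrz}|_{2,\bar\R^3}$ and $|\psi_{1,zzz}|_{2,\bar\R^3}$ that is implicit in ``sufficiently regular''. In your case this is what makes the split integrals absolutely convergent, e.g.
$\int|\psi_{1,zzz}\psi_{1,rz}|\,drdz\le|\psi_{1,zzz}|_{2,\bar\R^3}\,|\frac1r\psi_{1,rz}|_{2,\bar\R^3}$.
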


\begin{proof}
  Differentiate $(\ref{3.1})_1$ with respect to $z$:
  \begin{equation}
    -\psi_{1,rrz}-\psi_{1,zzz}-{3\over r}\psi_{1,rz}=\Gamma_{,z}.
    \label{3.27}
  \end{equation}
  Taking the $L^2(\bar\R^3)$ norm and using the triangle inequality gives
  \begin{equation}
    3\bigg|{1\over r}\psi_{1,rz}\bigg|_{2,\bar\R^3}\le|\psi_{1,rrz}|_{2,\bar\R^3}+|\psi_{1,zzz}|_{2,\bar\R^3}+|\Gamma_{,z}|_{2,\bar\R^3}.
    \label{3.28}
  \end{equation}
  From inequality (\ref{3.25}) we have
  \[
    |\psi_{1,rrz}|_{2,\bar\R^3}^2+|\psi_{1,zzz}|_{2,\bar\R^3}^2
    \le 2|\Gamma_{,z}|_{2,\bar\R^3}^2.
  \]
  Applying the Cauchy–Schwarz inequality to the sum of the norms yields
  \[
    |\psi_{1,rrz}|_{2,\bar\R^3}+|\psi_{1,zzz}|_{2,\bar\R^3}
    \le\sqrt{2}\,\bigl(|\psi_{1,rrz}|_{2,\bar\R^3}^2
    +|\psi_{1,zzz}|_{2,\bar\R^3}^2\bigr)^{1/2}
    \le 2|\Gamma_{,z}|_{2,\bar\R^3}.
  \]
  Substituting this estimate into the previous inequality we obtain
  \[
    3\bigg|\frac{1}{r}\psi_{1,rz}\bigg|_{2,\bar\R^3}
    \le 2|\Gamma_{,z}|_{2,\bar\R^3}+|\Gamma_{,z}|_{2,\bar\R^3}
    =3|\Gamma_{,z}|_{2,\bar\R^3},
  \]
  which directly implies (\ref{3.26}).
\end{proof}
\subsection{Local estimates for the modified stream function}\label{s3.1}

Let the number $r_0>0$ be given. Let $\zeta=\zeta(r)$ be a smooth cut-off function such that
$$
  \zeta(r)=\left\{\eqal{&1\ &r\le r_0,\cr &0\ &r\ge 2r_0.\cr}
  \right.
$$
Let
\begin{equation}
  \tilde\psi_1=\psi_1\zeta,\quad \tilde\Gamma=\Gamma\zeta.
  \label{3.29}
\end{equation}
Then (\ref{3.1}) takes the form
\begin{equation}\eqal{
  &-\tilde\psi_{1,rr}-\tilde\psi_{1,zz}-{3\over r}\tilde\psi_{1,r}=\tilde\Gamma-2\psi_{1,r}\dot\zeta-\psi_1\ddot\zeta-{3\over r}\psi_1\dot\zeta\cr
  &\tilde\psi_1=0\ \ {\rm for}\ \ r\ge 2r_0\ \ {\rm and}\ \ \tilde\psi_1\to 0\ \ {\rm as}\ \ |z|\to\infty.\cr}
  \label{3.30}
\end{equation}

\begin{lemma}\label{l3.4}
  Assume that $\tilde\Gamma\in L_2(\bar\R^3)$. Let the assumptions of Lemma \ref{l2.1} hold.\\
  Then
  \begin{equation}\eqal{
    &\intop_{\bar\R^3}(\tilde\psi_{1,rr}^2+\tilde\psi_{1,rz}^2+\tilde\psi_{1,zz}^2)dx+\bigg|{1\over r}\tilde\psi_{1,r}\bigg|_{2,\bar\R^3}^2+\intop_\R\tilde\psi_{1,z}^2\bigg|_{r=0}dz\cr
    &\le c(|\tilde\Gamma|_{2,\bar\R^3}^2+D_1^2).\cr}
    \label{3.31}
  \end{equation}
\end{lemma}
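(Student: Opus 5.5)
The plan is to apply Lemma~\ref{l3.1}'s argument to the localized problem (\ref{3.30}) rather than to (\ref{3.1}). Write (\ref{3.30}) as
\[
-\tilde\psi_{1,rr}-\tilde\psi_{1,zz}-\frac3r\tilde\psi_{1,r}=G,\qquad G:=\tilde\Gamma-2\psi_{1,r}\dot\zeta-\psi_1\ddot\zeta-\frac3r\psi_1\dot\zeta .
\]
We first check that $\tilde\psi_1$ meets the hypotheses used in the proof of Lemma~\ref{l3.1}. It inherits the Liu--Wang expansion (\ref{2.20}) near the axis, since $\zeta\equiv1$ for $r\le r_0$; in particular $\tilde\psi_{1,r}|_{r=0}=0$. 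It also vanishes for $r\ge 2r_0$ and decays as $|z|\to\infty$. Hence every boundary term in the integrations by parts of Lemma~\ref{l3.1} either vanishes or keeps its sign (the term $\int_\R\tilde\psi_{1,z}^2|_{r=0}dz$ stays on the left). Repeating that computation verbatim with $\Gamma$ replaced by $G$ gives
\[
\intop_{\bar\R^3}(\tilde\psi_{1,rr}^2+\tilde\psi_{1,rz}^2+\tilde\psi_{1,zz}^2)dx+\Big|\frac1r\tilde\psi_{1,r}\Big|_{2,\bar\R^3}^2+\intop_\R\tilde\psi_{1,z}^2\Big|_{r=0}dz\le 27|G|_{2,\bar\R^3}^2 .
\]

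It then remains to show $|G|_{2,\bar\R^3}^2\le c(|\tilde\Gamma|_{2,\bar\R^3}^2+D_1^2)$. The derivatives $\dot\zeta,\ddot\zeta$ are supported in the annulus $r_0\le r\le 2r_0$. There $1/r\le 1/r_0$, so the singular term satisfies $|\frac3r\psi_1\dot\zeta|\le c(r_0)|\psi_1|$, and
\[
|G-\tilde\Gamma|_{2,\bar\R^3}\le c(r_0)\big(|\psi_1|_{2,\bar\R^3}+|\psi_{1,r}|_{2,\bar\R^3(r_0\le r\le 2r_0)}\big).
\]
Lemma~\ref{l2.6}, through (\ref{2.14}), bounds $|\psi_1|_{2,\bar\R^3}\le cD_1$.

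The step I expect to be the main obstacle is bounding $\psi_{1,r}$ on the annulus by data. I would use $\psi_1=\psi/r$, so that $\psi_{1,r}=\psi_{,r}/r-\psi/r^2$. On $r\ge r_0$ this gives $|\psi_{1,r}|\le r_0^{-1}|\psi_{,r}|+r_0^{-1}|\psi_1|$, and (\ref{2.14}) controls $\|\psi\|_{1,\bar\R^3}$ and $|\psi_1|_{2,\bar\R^3}$ by $cD_1$. The quantity $\sup_t\|\psi\|_{1}$ is pointwise in time, which is exactly what the stationary estimate needs. If one prefers not to rely on $\psi$, an alternative is to absorb $\psi_{1,r}\dot\zeta$: integrate it by parts against the test functions used in Lemma~\ref{l3.1}, move the derivative onto $\tilde\psi_1$-terms, and absorb with Young's inequality, at the cost of again using $|\psi_1|_{2}\le cD_1$.

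Combining these bounds gives (\ref{3.31}) with $c=c(r_0)$.
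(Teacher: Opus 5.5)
Your proposal is correct and follows essentially the paper's route: the paper reruns the $\tilde\psi_{1,zz}$ and $\frac1r\tilde\psi_{1,r}$ multiplier computations of Lemma~\ref{l3.1} on (\ref{3.30}), keeps the cutoff terms on the right-hand side, and bounds them on $\supp\dot\zeta$ via (\ref{2.14}); your identity $\psi_{1,r}=\psi_{,r}/r-\psi_1/r$ for $r\ge r_0$ spells out that step. The alternative integration-by-parts route you sketch is unnecessary and weaker than stated (moving the $r$-derivative onto $\tilde\psi_{1,zz}$ produces $\tilde\psi_{1,rzz}$, so it would need $\psi_{1,z}$ on the annulus, not just $\psi_1$), so keep the first argument.
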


\begin{proof}
  Multiplying (\ref{3.30}) by $\tilde\psi_{1,zz}$ and integrating over $\bar\R^3$ implies
  \begin{equation}\eqal{
      &-\intop_{\bar\R^3}\tilde\psi_{1,rr}\tilde\psi_{1,zz}dx-\intop_{\bar\R^3}\tilde\psi_{1,zz}^2dx-3\intop_{\bar\R^3}{1\over r}\tilde\psi_{1,r}\tilde\psi_{1,zz}dx\cr
      &=\intop_{\bar\R^3}\tilde\Gamma\tilde\psi_{1,zz}dx-\intop_{\bar\R^3}(2\psi_{1,r}\dot\zeta+\psi_1\ddot\zeta+{3\over r}\psi_1\dot\zeta)\tilde\psi_{1,zz}dx.\cr}
    \label{3.32}
  \end{equation}
  Integrating by parts in the first term yields
  \begin{equation}\eqal{
      &-\intop_{\bar\R^3}(\tilde\psi_{1,r}\tilde\psi_{1,zz}r)_{,r}drdz+\intop_{\bar\R^3}\tilde\psi_{1,r}\tilde\psi_{1,zzr}dx+ \intop_{\bar\R^3}\tilde\psi_{1,r}\tilde\psi_{1,zz}drdz\cr
      &\quad-\intop_{\bar\R^3}\tilde\psi_{1,zz}^2dx-3\intop_{\bar\R^3}\tilde\psi_{1,r}\tilde\psi_{1,zz}drdz= \intop_{\bar\R^3}\tilde\Gamma\tilde\psi_{1,zz}dx\cr
      &\quad-\intop_{\bar\R^3}\bigg(2\psi_{1,r}\dot\zeta+\psi_1\ddot\zeta+{3\over r}\psi_1\dot\zeta\bigg)\tilde\psi_{1,zz}dx,\cr}
    \label{3.33}
  \end{equation}
  where the boundary term vanishes because $\tilde\psi_{1,r}|_{r=0}=0$ and $\tilde\psi_1\to 0$ as $r\to\infty$.

  Integrating by parts with respect to $z$ implies
  \begin{equation}\eqal{
      &\intop_{\bar\R^3}(\tilde\psi_{1,rz}^2+\tilde\psi_{1,zz}^2)dx-2\intop_{\bar\R^3}\tilde\psi_{1,rz}\tilde\psi_{1,z}drdz\cr
      &=-\intop_{\bar\R^3}\tilde\Gamma\tilde\psi_{1,zz}dx+\intop_{\bar\R^3}\bigg(2\psi_{1,r}\dot\zeta+\psi_1\ddot\zeta+{3\over r}\psi_1\dot\zeta\bigg)\tilde\psi_{1,zz}dx.\cr}
    \label{3.34}
  \end{equation}
  Since $-2\tilde\psi_{1,rz}\tilde\psi_{1,z}=-\partial_r(\tilde\psi_{1,z}^2)$, we have
  \[
    -2\int_{\bar\R^3}\tilde\psi_{1,rz}\tilde\psi_{1,z}\,drdz
    =-\int_{\R}\int_0^\infty \partial_r(\tilde\psi_{1,z}^2)\,dr\,dz
    =\int_{\R}\tilde\psi_{1,z}^2(0,z)\,dz,
  \]
  where the boundary term at $r=\infty$ vanishes by the compact support of $\zeta$.

  Continuing, we have
  \begin{equation}\eqal{
      &\intop_{\bar\R^3}(\tilde\psi_{1,rz}^2+\tilde\psi_{1,zz}^2)dx+\intop_\R\tilde\psi_{1,z}^2\bigg|_{r=0}dz\cr
      &\le c|\tilde\Gamma|_{2,\bar\R^3}^2+c(|\psi_{1,r}|_{2,\bar\R^3\cap\supp\dot\zeta}^2+ |\psi_1|_{2,\bar\R^3\cap\supp\dot\zeta}^2)\cr
      &\le c|\tilde\Gamma|_{2,\bar\R^3}^2+cD_1^2,\cr}
    \label{3.35}
  \end{equation}
  where (\ref{2.14}) is used.

  Multiply (\ref{3.30}) by ${1\over r}\tilde\psi_{1,r}$ and integrate over $\bar\R^3$. Then we obtain
  \begin{equation}\eqal{
    &3\intop_{\bar\R^3}\bigg|{1\over r}\tilde\psi_{1,r}\bigg|^2dx=-\intop_{\bar\R^3}\tilde\psi_{1,rr}{1\over r}\tilde\psi_{1,r}dx-\intop_{\bar\R^3}\tilde\psi_{1,zz}{1\over r}\tilde\psi_{1,r}dx\cr
    &\quad-\intop_{\bar\R^3}\tilde\Gamma{1\over r}\tilde\psi_{1,r}dx+\intop_{\bar\R^3}\bigg(2\psi_{1,r}\dot\zeta+\psi_1\ddot\zeta+{3\over r}\psi_1\dot\zeta\bigg){1\over r}\tilde\psi_{1,r}dx.\cr}
    \label{3.36}
  \end{equation}
  The first term on the r.h.s. of (\ref{3.36}) equals
  $$\eqal{
    &-{1\over 2}\intop_{\bar\R^3}\partial_r\tilde\psi_{1,r}^2drdz=-{1\over 2}\lim_{R\to\infty}\intop_{\R^1}\tilde\psi_{1,r}^2\bigg|_{r=R}dz+{1\over 2}\intop_{\R^1}\tilde\psi_{1,r}^2\bigg|_{r=0}dz=0,\cr}
  $$
  because $\tilde\psi_{1,r}|_{r=R}=0$ for $R$ sufficiently large and $\tilde\psi_{1,r}|_{r=0}=0$ in view of Remark \ref{r2.8}.

  Applying the H\"older and Young inequalities to the last three terms on the r.h.s. of (\ref{3.36}) yields
  \begin{equation}
    \intop_{\bar\R^3}\bigg|{1\over r}\tilde\psi_{1,r}\bigg|^2dx\le c\intop_{\bar\R^3}(\tilde\psi_{1,zz}^2+\tilde\Gamma^2)dx+cD_1^2,
    \label{3.37}
  \end{equation}
  where (\ref{2.14}) is used.

  Adding (\ref{3.35}) and (\ref{3.37}) yields
  \begin{equation}\eqal{
    &\intop_{\bar\R^3}(\tilde\psi_{1,rz}^2+\tilde\psi_{1,zz}^2)dx+\intop_{\bar\R^3}\bigg|{1\over r}\tilde\psi_{1,r}\bigg|^2dx+\intop_\R\tilde\psi_{1,z}^2\bigg|_{r=0}dz\cr
    &\le c(|\tilde\Gamma|_{2,\bar\R^3}^2+D_1^2).\cr}
    \label{3.38}
  \end{equation}
  From (\ref{3.30}) we have
  \begin{equation}\eqal{
      &\intop_{\bar\R^3}\tilde\psi_{1,rr}^2dx\le c\intop_{\bar\R^3}\bigg(\tilde\psi_{1,zz}^2+\bigg|{1\over r}\tilde\psi_{1,r}\bigg|^2+|\tilde\Gamma|^2+\bigg|2\psi_{1,r}\dot\zeta+\psi_1\ddot\zeta+{3\over r}\tilde\psi_1\dot\zeta\bigg|^2\bigg)dx\cr
      &\le c\bigg(|\tilde\psi_{1,zz}|_{2,\bar\R^3}^2+\bigg|{1\over r}\tilde\psi_{1,r}\bigg|_{2,\bar\R^3}^2+|\tilde\Gamma|_{2,\bar\R^3}^2+D_1^2\bigg).\cr}
    \label{3.39}
  \end{equation}
  Inequalities (\ref{3.38}) and (\ref{3.39}) imply (\ref{3.31}). This ends the proof.
\end{proof}

\begin{lemma}\label{l3.5}
  Let Remark \ref{r2.8} and Lemma \ref{l2.6} hold. Let $\tilde\Gamma,\tilde\Gamma_{,z}\in L_2(\bar\R^3)$.\\
  Then for sufficiently regular solutions to (\ref{3.30}) the following estimates hold
  \begin{equation}\eqal{
      &|\tilde\psi_{1,rzz}|_{2,\bar\R_t^3}^2+|\tilde\psi_{1,zzz}|_{2,\bar\R_t^3}^2+\intop_0^t\intop_{\R^1} \tilde\psi_{1,zz}^2\bigg|_{r=0}dzdt'\cr
      &\le c|\tilde\Gamma_{,z}|_{2,\bar\R_t^3}^2+cD_1^2\cr}
    \label{3.40}
  \end{equation}
  and
  \begin{equation}\eqal{
    &|\tilde\psi_{1,rrz}|_{2,\bar\R_t^3}^2+|\tilde\psi_{1,rzz}|_{2,\bar\R_t^3}^2+ |\tilde\psi_{1,zzz}|_{2,\bar\R_t^3}^2+{1\over 2}\intop_0^t\intop_{\R^1}\tilde\psi_{1,zz}^2\bigg|_{r=0}dzdt'\cr
    &\le c|\tilde\Gamma_{,z}|_{2,\bar\R_t^3}^2+cD_1^2.\cr}
    \label{3.41}
  \end{equation}
\end{lemma}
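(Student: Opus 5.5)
The plan is to repeat the proofs of Lemma~\ref{l3.2} and Lemma~\ref{l3.3}, now for the localized problem (\ref{3.30}). The cut-off terms on the right-hand side are treated as a source, and they are controlled by the global bounds of Lemma~\ref{l2.6}. For brevity write
\[
g:=-2\psi_{1,r}\dot\zeta-\psi_1\ddot\zeta-\frac3r\psi_1\dot\zeta ,
\]
so that the right-hand side of $(\ref{3.30})_1$ is $\tilde\Gamma+g$. Since $\dot\zeta,\ddot\zeta$ are supported in $r_0\le r\le 2r_0$, the factor $1/r$ is bounded there by $1/r_0$. Hence $g$ and $g_{,z}$ involve only $\psi_1,\psi_{1,r},\psi_{1,z},\psi_{1,rz}$ on a set away from the axis.

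\textbf{Proof of (\ref{3.40}).} Differentiate $(\ref{3.30})_1$ with respect to $z$, multiply by $-\tilde\psi_{1,zzz}$ and integrate over $\bar\R^3$.
\begin{itemize}
\item Integrating by parts exactly as in (\ref{3.16})--(\ref{3.19}) produces
\[
|\tilde\psi_{1,rzz}|_{2}^2+|\tilde\psi_{1,zzz}|_{2}^2+\intop_{\R}\tilde\psi_{1,zz}^2\Big|_{r=0}dz .
\]
\item The boundary terms at $r=0$ vanish by Remark~\ref{r2.8}, since $\tilde\psi_{1,rzz}|_{r=0}=0$ because $\zeta\equiv1$ near the axis. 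The terms at large $r$ vanish by compact support, and those at $|z|\to\infty$ by decay.
\item The right-hand side becomes $-\int(\tilde\Gamma_{,z}+g_{,z})\tilde\psi_{1,zzz}dx$. Apply Young's inequality to absorb $\frac12|\tilde\psi_{1,zzz}|_2^2$.
\end{itemize}
This leaves us to bound $|g_{,z}|_2^2$. That quantity is bounded by
\[
c(r_0)\bigl(|\psi_{1,z}|_2^2+|\psi_{1,rz}|^2_{2,\supp\dot\zeta}\bigr).
\]
Then integrate in time. The term $|\psi_{1,z}|_{2,\bar\R^3_t}^2\le cD_1^2$ by (\ref{2.15}). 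For $\psi_{1,rz}$ on $\supp\dot\zeta$ I would use
\[
\psi_{1,r}=(\psi_{,r}-\psi/r)/r .
\]
Since $r\ge r_0$ there, this gives
\[
|\psi_{1,rz}|_{2,\supp\dot\zeta}\le c(r_0)\bigl(|\psi_{,rz}|_2+|\psi_{1,z}|_2\bigr),
\]
and both pieces are controlled in $L_2(0,t)$ by $\|\psi_{,z}\|_1$ from (\ref{2.15}). This yields $cD_1^2$, as required in (\ref{3.40}).

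\textbf{Proof of (\ref{3.41}).} Multiply the $z$-differentiated equation by $\tilde\psi_{1,rrz}$, as in (\ref{3.21})--(\ref{3.24}).
\begin{itemize}
\item The term $3\int\frac1r\tilde\psi_{1,rz}\tilde\psi_{1,rrz}dx$ reduces to a boundary integral at $r=0$. That integral vanishes because $\tilde\psi_{1,rz}|_{r=0}=0$ by (\ref{2.21}).
\item The mixed term gives $|\tilde\psi_{1,rzz}|_2^2$ minus $\frac12\int\tilde\psi_{1,zz}^2|_{r=0}dz$.
\item The source $\tilde\Gamma_{,z}+g_{,z}$ is again handled by Young's inequality, with $|g_{,z}|_{2,\bar\R^3_t}^2\le cD_1^2$ as in the first part.
\end{itemize}
Adding a suitable multiple of (\ref{3.40}) to the resulting inequality absorbs the negative boundary term at $r=0$. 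This gives (\ref{3.41}).

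\textbf{Main obstacle.} I expect the delicate step to be the justification of the vanishing boundary terms at $r=0$ for the localized function. One must check that the Liu--Wang expansions survive multiplication by $\zeta$, which they do because $\zeta\equiv1$ for $r\le r_0$. One must also check that the integrals weighted by $dr\,dz$ (rather than $dx$) are finite. The control of $\psi_{1,rz}$ on the annulus through (\ref{2.15}) is the other point that needs care. If (\ref{2.15}) proves too weak there, the fallback is to use the global estimate (\ref{3.15}) from Lemma~\ref{l3.2} instead. That bound would carry $|\Gamma_{,z}|_2$ rather than $D_1$, so it is less desirable.
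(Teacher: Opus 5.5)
Your proposal is correct and follows the paper's proof essentially step by step. You differentiate (\ref{3.30}) in $z$, test with $-\tilde\psi_{1,zzz}$ and then with $\tilde\psi_{1,rrz}$, and integrate by parts as in Lemma~\ref{l3.2} using $\tilde\psi_{1,rz}|_{r=0}=0$. You absorb the cut-off terms by Young's inequality and add the two inequalities to cancel the negative axis term. Your identity $\psi_{1,rz}=(\psi_{,rz}-\psi_{1,z})/r$ on $\supp\dot\zeta$ also justifies explicitly the step where the paper only cites (\ref{2.15}) to bound the $\psi_{1,rz}$ cut-off contribution by $cD_1^2$.
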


\begin{proof}
  First we show (\ref{3.40}). Differentiate (\ref{3.30}) with respect to $z$, multiply by $-\tilde\psi_{1,zzz}$ and integrate over $\bar\R^3$. Then we obtain
  \begin{equation}\eqal{
      &\intop_{\bar\R^3}\tilde\psi_{1,rrz}\tilde\psi_{1,zzz}dx+\intop_{\bar\R^3}\tilde\psi_{1,zzz}^2dx+ 3\intop_{\bar\R^3}{1\over r}\tilde\psi_{1,rz}\tilde\psi_{1,zzz}dx\cr
      &=-\intop_{\bar\R^3}\tilde\Gamma_{,z}\tilde\psi_{1,zzz}dx+\intop_{\bar\R^3} \bigg(2\psi_{1,rz}\dot\zeta+\psi_{1,z}\ddot\zeta+{3\over r}\psi_{1,z}\dot\zeta\bigg)\tilde\psi_{1,zzz}dx.\cr}
    \label{3.42}
  \end{equation}
  Integrating by parts with respect to $z$ in the first term yields
  \begin{equation}\eqal{
      &\intop_{\bar\R^3}\tilde\psi_{1,rrz}\tilde\psi_{1,zzz}dx=\intop_{\bar\R^3}(\tilde\psi_{1,rrz}\tilde\psi_{1,zz})_{,z}dx-\intop_{\bar\R^3}\tilde\psi_{1,rrzz}\tilde\psi_{1,zz}dx,\cr}
    \label{3.43}
  \end{equation}
  where the first integral vanishes because
  $$
    \tilde\psi_1\to 0\quad {\rm as}\quad |z|\to\infty.
  $$
  Integrating by parts with respect to $r$ in the second term in (\ref{3.43}) gives
  $$\eqal{
      &-\intop_{\bar\R^3}\tilde\psi_{1,rrzz}\tilde\psi_{1,zz}dx=-\intop_{\bar\R^3}(\tilde\psi_{1,rzz}\tilde\psi_{1,zz}r)_{,r}drdz\cr
      &\quad+\intop_{\bar\R^3}\tilde\psi_{1,rzz}^2dx+\intop_{\bar\R^3}\tilde\psi_{1,rzz}\tilde\psi_{1,zz}drdz,\cr}
  $$
  where the first integral vanishes because $\tilde\psi_1$ has a compact support with respect to $r$ and Remark \ref{r2.8} implies that $\tilde\psi_{1,rzz}|_{r=0}=0$.

  In view of the above considerations, (\ref{3.42}) takes the form
  \begin{equation}\eqal{
      &\intop_{\bar\R^3}(\tilde\psi_{1,rzz}^2+\tilde\psi_{1,zzz}^2)dx+\intop_{\bar\R^3}\tilde\psi_{1,rzz}\tilde\psi_{1,zz}drdz\cr
      &\quad+3\intop_{\bar\R^3}\tilde\psi_{1,rz}\tilde\psi_{1,zzz}drdz=-\intop_{\bar\R^3}\tilde\Gamma_{,z}\tilde\psi_{1,zzz}dx\cr
      &\quad+\intop_{\bar\R^3}\bigg(2\psi_{1,rz}\dot\zeta+\psi_{1,z}\ddot\zeta+{3\over r}\psi_{1,z}\dot\zeta\bigg)\tilde\psi_{1,zzz}dx.\cr}
    \label{3.44}
  \end{equation}
  Integrating by parts with respect to $z$ in the last term on the l.h.s. of (\ref{3.44}) and using $(\ref{3.30})_2$ we get
  \begin{equation}\eqal{
      &\intop_{\bar\R^3}(\tilde\psi_{1,rzz}^2+\tilde\psi_{1,zzz}^2)dx-\intop_{\bar\R^3}\partial_r\tilde\psi_{1,zz}^2drdz\cr
      &=-\intop_{\bar\R^3}\tilde\Gamma_{,z}\tilde\psi_{1,zzz}dx+\intop_{\bar\R^3}\bigg(2\psi_{1,rz}\dot\zeta+\psi_{1,z}\ddot\zeta+{3\over r}\psi_{1,z}\dot\zeta\bigg)\tilde\psi_{1,zzz}dx.\cr}
    \label{3.45}
  \end{equation}
  In view of $(\ref{3.30})_2$ the last term on the l.h.s. of (\ref{3.45}) equals $\intop_{\R^1}\tilde\psi_{1,zz}^2|_{r=0}dz$.

  Applying the H\"older and Young inequalities to the r.h.s. terms of (\ref{3.45}) yields
  \begin{equation}\eqal{
      &\intop_{\bar\R^3}(\tilde\psi_{1,rzz}^2+\tilde\psi_{1,zzz}^2)dx+\intop_{\R^1}\tilde\psi_{1,zz}^2\bigg|_{r=0}dz\cr
      &\le c|\tilde\Gamma_{,z}|_{2,\bar\R^3}^2+c(|\psi_{1,rz}|_{2,\bar\R^3}^2+|\psi_{1,z}|_{2,\bar\R^3}^2).\cr}
    \label{3.46}
  \end{equation}
  Integrating the result with respect to time and using (\ref{2.15}) implies (\ref{3.40}).

  Finally, we show (\ref{3.41}). Differentiate $(\ref{3.30})_1$ with respect to $z$, multiply by $\tilde\psi_{1,rrz}$ and integrate over $\bar\R^3$. Then we obtain
  \begin{equation}\eqal{
      &-\intop_{\bar\R^3}\tilde\psi_{1,rrz}^2dx-\intop_{\bar\R^3}\tilde\psi_{1,zzz}\tilde\psi_{1,rrz}dx- 3\intop_{\bar\R^3}{1\over r}\tilde\psi_{1,rz}\tilde\psi_{1,rrz}dx\cr
      &=\intop_{\bar\R^3}\tilde\Gamma_{,z}\tilde\psi_{1,rrz}-\intop_{\bar\R^3}\bigg(2\psi_{1,rz}\dot\zeta+ \psi_{1,z}\ddot\zeta+{3\over r}\psi_{1,z}\dot\zeta\bigg)\tilde\psi_{1,rrz}dx.\cr}
    \label{3.47}
  \end{equation}
  Integrating by parts with respect to $z$ in the second term on the l.h.s. and using $(\ref{3.30})_2$ we derive that it is equal to
  $$
    \intop_{\bar\R^3}\tilde\psi_{1,zz}\tilde\psi_{1,rrzz}dx.
  $$
  Next, integrating by parts with respect to $r$ yields
  \begin{equation}\eqal{
      &\intop_{\bar\R^3}\tilde\psi_{1,zz}\tilde\psi_{1,rrzz}dx=\intop_{\bar\R^3}(\tilde\psi_{1,zz}\tilde\psi_{1,rzz}r)_{,r}drdz\cr
      &\quad-\intop_{\bar\R^3}\tilde\psi_{1,rzz}^2dx-\intop_{\bar\R^3}\tilde\psi_{1,zz}\tilde\psi_{1,rzz}drdz.\cr}
    \label{3.48}
  \end{equation}
  Using $(\ref{3.30})_2$ and Remark \ref{r2.8} the first term on the r.h.s. of (\ref{3.48}) vanishes.

  Exploiting (\ref{3.48}) in (\ref{3.47}) yields
  \begin{equation}\eqal{
      &\intop_{\bar\R^3}(\tilde\psi_{1,rrz}^2+\tilde\psi_{1,rzz}^2)dx+\intop_{\bar\R^3}\tilde\psi_{1,zz}\tilde\psi_{1,rzz}drdz\cr
      &\quad+3\intop_{\bar\R^3}\tilde\psi_{1,rz}\tilde\psi_{1,rrz}drdz=-\intop_{\bar\R^3}\tilde\Gamma_{,z}\tilde\psi_{1,rrz}dx\cr
      &\quad+\intop_{\bar\R^3}\bigg(2\psi_{1,rz}\dot\zeta+\psi_{1,z}\ddot\zeta+{3\over r}\psi_{1,z}\dot\zeta\bigg)\tilde\psi_{1,rrz}dx.\cr}
    \label{3.49}
  \end{equation}
  Using that $\tilde\psi_{1,rz}|_{r=0}=0$ and applying the H\"older and Young inequalities to the r.h.s. terms we obtain
  \begin{equation}\eqal{
    &\intop_{\bar\R^3}(\tilde\psi_{1,rrz}^2+\tilde\psi_{1,rzz}^2)dx-{1\over 2}\intop_{\R^1}\tilde\psi_{1,zz}^2\bigg|_{r=0}dz\cr
    &\le c(|\tilde\Gamma_{,z}|_{2,\bar\R^3}^2+|\psi_{1,rz}|_{2,\bar\R^3\cap\supp\dot\zeta}^2+ |\psi_{1,z}|_{2,\bar\R^3\cap\supp\dot\zeta}^2).\cr}
    \label{3.50}
  \end{equation}
  Integrating (\ref{3.50}) with respect to time, using (\ref{2.15}) and adding to (\ref{3.40}) imply (\ref{3.41}). This ends the proof.
\end{proof}

\begin{lemma}\label{l3.6}
  Let the assumptions of Lemmas \ref{l2.1} and \ref{l2.6} hold. For solutions to (\ref{3.30}) the following inequality
  \begin{equation}
    \bigg|{1\over r}\tilde\psi_{1,rz}\bigg|_{2,\bar\R_t^3}\le c|\tilde\Gamma_{,z}|_{2,\bar\R_t^3}+cD_1.
    \label{3.51}
  \end{equation}
  holds.
\end{lemma}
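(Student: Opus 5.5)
We follow the proof of Lemma~\ref{l3.3}, now for the localized problem (\ref{3.30}), and then integrate in time. First we differentiate $(\ref{3.30})_1$ with respect to $z$; since $\zeta$ depends only on $r$, this gives
\begin{equation*}
  -\tilde\psi_{1,rrz}-\tilde\psi_{1,zzz}-{3\over r}\tilde\psi_{1,rz}=\tilde\Gamma_{,z}-2\psi_{1,rz}\dot\zeta-\psi_{1,z}\ddot\zeta-{3\over r}\psi_{1,z}\dot\zeta .
\end{equation*}
We solve this identity for ${3\over r}\tilde\psi_{1,rz}$ and take the $L_2(\bar\R^3)$ norm. The triangle inequality then yields, for a.e. $t'$,
\begin{equation*}
  3\bigg|{1\over r}\tilde\psi_{1,rz}\bigg|_{2,\bar\R^3}\le|\tilde\psi_{1,rrz}|_{2,\bar\R^3}+|\tilde\psi_{1,zzz}|_{2,\bar\R^3}+|\tilde\Gamma_{,z}|_{2,\bar\R^3}+c\big(|\psi_{1,rz}|_{2,\bar\R^3\cap\supp\dot\zeta}+|\psi_{1,z}|_{2,\bar\R^3\cap\supp\dot\zeta}\big).
\end{equation*}
The lower-order terms involve no singular weight. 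The factor $1/r$ in the last term is harmless, because $\dot\zeta$ is supported in $r_0\le r\le 2r_0$, where $1/r\le 1/r_0$. The constant $c$ therefore depends on $r_0$ and on $\sup|\dot\zeta|$ and $\sup|\ddot\zeta|$.

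Next we square this inequality and integrate over $(0,t)$. The third-order terms $|\tilde\psi_{1,rrz}|_{2,\bar\R_t^3}^2+|\tilde\psi_{1,zzz}|_{2,\bar\R_t^3}^2$ are bounded by $c|\tilde\Gamma_{,z}|_{2,\bar\R_t^3}^2+cD_1^2$ by (\ref{3.41}) of Lemma~\ref{l3.5}. This is the step where the Liu--Wang expansion (Remark~\ref{r2.8}) enters: it guarantees that the boundary terms at $r=0$ in that lemma vanish, which is why Lemma~\ref{l3.5} is quoted rather than reproved here.

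It remains to bound the commutator terms $\int_0^t\big(|\psi_{1,rz}|^2_{2,\supp\dot\zeta}+|\psi_{1,z}|^2_{2,\supp\dot\zeta}\big)dt'$ by $cD_1^2$. We expect this to be the main point needing care, exactly as in the proof of (\ref{3.41}), where the same quantities were absorbed using (\ref{2.15}).
\begin{itemize}
  \item For $\psi_{1,z}$, estimate (\ref{2.15}) bounds $|\psi_{1,z}|_{2,\bar\R_t^3}$ directly.
  \item For $\psi_{1,rz}$ on the annulus we use $\psi_1=\psi/r$, so that $\psi_{1,rz}=\psi_{,rz}/r-\psi_{,z}/r^2$. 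Since $r\ge r_0$ on $\supp\dot\zeta$, this gives $|\psi_{1,rz}|_{2,\supp\dot\zeta}\le c(r_0)\|\psi_{,z}\|_{1,\bar\R^3}$, and the time integral of the right-hand side is controlled by (\ref{2.15}).
\end{itemize}
Hence both commutator contributions are bounded by $cD_1^2$.

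Collecting these bounds gives
\begin{equation*}
  \bigg|{1\over r}\tilde\psi_{1,rz}\bigg|_{2,\bar\R_t^3}^2\le c|\tilde\Gamma_{,z}|_{2,\bar\R_t^3}^2+cD_1^2 .
\end{equation*}
Taking square roots and using $\sqrt{a+b}\le\sqrt a+\sqrt b$ yields (\ref{3.51}).
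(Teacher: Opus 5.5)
Your proof is correct and follows essentially the same route as the paper: differentiate $(\ref{3.30})_1$ in $z$, isolate $\frac1r\tilde\psi_{1,rz}$, bound the third-order terms $\tilde\psi_{1,rrz}$, $\tilde\psi_{1,zzz}$ by (\ref{3.41}), and bound the cut-off commutator terms by $cD_1$ using (\ref{2.15}). Your version is slightly more careful than the paper's (3.52), which drops the factor $3$ and the weight $1/r$ in the commutator and never says why $\psi_{1,rz}$ on $\supp\dot\zeta$ is controlled. Your rewriting $\psi_{1,rz}=\psi_{,rz}/r-\psi_{,z}/r^2$ on $r_0\le r\le 2r_0$ supplies that justification.
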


\begin{proof}
  Differentiating $(\ref{3.30})_1$ with respect to $z$ yields
  \begin{equation}
    {1\over r}\tilde\psi_{1,rz}=-\tilde\psi_{1,rrz}-\tilde\psi_{1,zzz}- \tilde\Gamma_{,z}+(2\psi_{1,rz}\dot\zeta+\psi_{1,z}\ddot\zeta+\psi_{1,z}\dot\zeta).
    \label{3.52}
  \end{equation}
  From (\ref{3.52}) we have
  $$
    \bigg|{1\over r}\tilde\psi_{1,rz}\bigg|_{2,\bar\R_t^3}\le|\tilde\psi_{1,rrz}|_{2,\bar\R_t^3}+ |\tilde\psi_{1,zzz}|_{2,\bar\R_t^3}+|\tilde\Gamma_{,z}|_{2,\bar\R_t^3}+cD_1.
  $$
  Applying (\ref{3.41}) yields (\ref{3.51}). This ends the proof.
\end{proof}

\section{Local considerations}\label{s4}

We introduce the partition of unity $\{\zeta,\vartheta\}$. Let $r_0>0$. Then
\begin{equation}
  \zeta(r)=\left\{\eqal{&1\ &r\le r_0,\cr &0\ &r\ge 2r_0\cr}\right.
  \label{4.1}
\end{equation}
and
\begin{equation}
  \vartheta(r)=\left\{\eqal{&0\ &r\le r_0,\cr &1\ &r\ge 2r_0.\cr}\right.
  \label{4.2}
\end{equation}
Let $u$ be any function. Then $\tilde u=u\zeta$, $\hat u=u\vartheta$. We localize problem (\ref{1.19})--(\ref{1.22}). Multiply (\ref{1.19})--(\ref{1.22}) by $\zeta$. Then we have
\begin{equation}\eqal{
    &\tilde\Phi_t+v\cdot\nabla\tilde\Phi-\nu\bigg(\Delta\tilde\Phi+{2\over r}\partial_r\tilde\Phi\bigg)-(\omega_r\partial_r+\omega_z\partial_z){v_r\over r}\zeta\cr
    &=v_r\dot\zeta\Phi-\nu\bigg(2\Phi_{,r}\dot\zeta+\Phi\ddot\zeta+{2\over r}\Phi\dot\zeta\bigg)+\tilde F'_r\equiv I_1,\cr}
  \label{4.3}
\end{equation}
\begin{equation}\eqal{
  &\tilde\Gamma_t+v\cdot\nabla\tilde\Gamma-\nu\bigg(\Delta\tilde\Gamma+{2\over r}\partial_r\tilde\Gamma\bigg)+2{v_\varphi\over r}\tilde\Phi\cr
  &=v_r\dot\zeta\Gamma-\nu\bigg(2\Gamma_{,r}\dot\zeta+\Gamma\ddot\zeta+{2\over r}\Gamma\dot\zeta\bigg)+\tilde F'_\varphi\equiv I_2,\cr}
  \label{4.4}
\end{equation}
and
\begin{equation}
  \tilde\Gamma|_{t=0}=\tilde\Gamma(0),\quad \tilde\Phi|_{t=0}=\tilde\Phi(0),
  \label{4.5}
\end{equation}
where $\dot\zeta={d\over dr}\zeta$, $\ddot\zeta={d^2\over dr^2}\zeta$.

We introduce the notation
\[
  \widetilde X(t)^2=\|\widetilde\Phi\|_{V(\bar\R_t^3)}^2+\|\widetilde\Gamma\|_{V(\bar\R_t^3)}^2,
\]
\[
  \widehat X(t)^2=\|\widehat\Phi\|_{V(\bar\R_t^3)}^2+\|\widehat\Gamma\|_{V(\bar\R_t^3)}^2,
\]
\[
  \mathcal X(t)^2=\widetilde X(t)^2+\widehat X(t)^2.
\]
By the properties of the partition of unity, we have the equivalence $X(t)\le C\mathcal X(t)$ and $\mathcal X(t)\le C X(t)$. We will slightly abuse notation and write $X$ instead of $\mathcal X$ when deriving global energy bounds up to a constant.

\begin{lemma}\label{l4.1}
  Let $\tilde\Phi$, $\tilde\Gamma$ be solutions to problem (\ref{4.3})--(\ref{4.5}), let $\tilde\Phi(0),\tilde\Gamma(0)\in L_2(\bar\R^3)$ and $\tilde F'_r,\tilde F'_\varphi\in L_2(0,t;L_{6/5}(\bar\R^3))$. Let $D_2$ be defined by (\ref{2.9}), $v_\varphi\in L_\infty(\bar\R_t^3)$ and
  \begin{equation}
    I=\bigg|\intop_{\bar\R_t^3}{v_\varphi\over r}\tilde\Phi\tilde\Gamma dxdt'\bigg|<\infty.
    \label{4.6}
  \end{equation}
  Let $|\Gamma|_{L_2(0,t;L_6(\bar\R^3\cap\supp\dot\zeta))}$ be finite and $\delta_1$, $\mu$ be small. Then
  \begin{equation}
    \widetilde X(t)^2
    \leq
    C_0(\mathrm{data})H_\mu
    \left[
      |I|
      +
      \delta_1X(t)^2
      +
      C_1(\mathrm{data})(1+\delta_1^{-1})
      \right],
    \label{4.7}
  \end{equation}
  where $H_\mu = 1+\|v_\varphi\|_{L^\infty(\bar\R_t^3)}^{2\mu}$.
\end{lemma}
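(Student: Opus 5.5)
We test \eqref{4.3} with $\tilde\Phi$ and \eqref{4.4} with $\tilde\Gamma$, integrate over $\bar\R^3\times(0,t)$ and add the two identities. The transport terms $\int v\cdot\nabla\tilde\Phi\,\tilde\Phi\,dx$ and $\int v\cdot\nabla\tilde\Gamma\,\tilde\Gamma\,dx$ vanish because $\divv v=0$. For the viscous part we use
\[
  -\int(\Delta u+\tfrac2r u_{,r})u\,dx=|\nabla u|_{2}^2-\int_{\R}u^2|_{r=0}dz+\tfrac12\int_\R u^2|_{r=0}dz .
\]
Here we compute $-\int\frac2r u_{,r}u\,r\,dr\,dz=\int_\R u^2|_{r=0}dz$, which is a nonnegative axis term. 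By the Liu--Wang expansions this term is finite, since $\Phi,\Gamma$ are smooth near the axis. This gives $\frac12\frac{d}{dt}|\tilde\Phi|_2^2+\nu|\nabla\tilde\Phi|_2^2$, and the same for $\tilde\Gamma$, on the left-hand side. The coupling term $2\int\frac{v_\varphi}r\tilde\Phi\tilde\Gamma$ from \eqref{4.4} is exactly the interaction integral, and after integration in time it is bounded by $2|I|$.

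The main work is the stretching term $\int(\omega_r\partial_r+\omega_z\partial_z)\frac{v_r}{r}\,\zeta\tilde\Phi\,dx$ in \eqref{4.3}. Writing $v_r/r=-\psi_{1,z}$, it equals $-\int(\omega_r\psi_{1,rz}+\omega_z\psi_{1,zz})\zeta\tilde\Phi\,dx$. We integrate by parts in $z$ to move the derivative off $\psi_{1,z}$, and we use $\omega_r=-u_{,z}/r$ and $\omega_z=u_{,r}/r$, so that $\tilde\Phi$ enters through $u$, which satisfies $|u|\le D_2$.

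We then need to control terms of the form $\int\frac{u}{r}\,\nabla\tilde\psi_{1,z}\,\nabla\tilde\Phi$ and $\int\frac{|u_{,z}|}{r}|\psi_{1,zz}||\tilde\Phi|$. On $\supp\zeta$ we interpolate $|u|/r=|v_\varphi|\le |v_\varphi|_\infty^{\mu}\,|u|^{1-\mu}r^{-(1-\mu)}$. The negative power of $r$ is handled with the weighted Hardy-type inequality of Lemma~\ref{l2.7} and with Lemmas~\ref{l3.4}--\ref{l3.6}, which bound $\frac1r\tilde\psi_{1,rz}$ and $\tilde\psi_{1,zzz}$ by $|\tilde\Gamma_{,z}|_{2}+D_1$. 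The gradients $u_{,z},u_{,r}$ are bounded in $V$ by $D_4,D_5$ through Lemma~\ref{l2.9}. Hölder and Young then give a bound $\epsilon\widetilde X^2+C(\mathrm{data})H_\mu(\ldots)$. Cross terms that involve $\psi_1$ globally are estimated with Lemma~\ref{l3.1} and absorbed as $\delta_1 X^2$, together with the constant $C(\mathrm{data})\delta_1^{-1}$. This step is the main obstacle: the powers of $|v_\varphi|_\infty$ must stay at most $2\mu$, and every factor of $1/r$ must be matched with a Hardy-type gain.

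The cutoff terms $I_1,I_2$ are supported in $\supp\dot\zeta$, where $r\sim r_0$. For $\Phi_{,r}\dot\zeta\tilde\Phi$ and the similar terms we integrate by parts, which leaves $\int\Phi^2|\ddot\zeta|$-type terms bounded by the energy via $|\omega|_{2}\le D_1$-type bounds. The term $v_r\dot\zeta\Gamma\tilde\Gamma$ is handled by Hölder with $v_r\in L_{10/3}$ and the finite norm $|\Gamma|_{L_2(L_6(\supp\dot\zeta))}$, which is again absorbed into $\delta_1X^2+C\delta_1^{-1}$. The forces are treated by duality $L_{6/5}$--$L_6$, Sobolev embedding $|\tilde\Phi|_6\le c|\nabla\tilde\Phi|_2$, and Young. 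Finally we take the supremum in $t$, absorb the $\epsilon\widetilde X^2$ terms, and collect the factors $H_\mu$; this yields \eqref{4.7}.
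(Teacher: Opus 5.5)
\paragraph{Missing weighted combination.} Your overall plan matches the paper: test with $\tilde\Phi$ and $\tilde\Gamma$, integrate the stretching term by parts, and use the $\mu$-interpolation of $v_\varphi$, a Hardy inequality with weight $r^{-(1-\mu)}$, and Lemmas 3.4--3.6. The closing step, however, does not work as you describe it.

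After integration by parts, the stretching term is bounded (as in (4.11)) by
\[
cD_2\Bigl(1+|v_\varphi|_{\infty,\bar\R_t^3}^{\mu}\frac{r_0^\mu}{\mu D_2^\mu}\Bigr)\bigl(|\nabla\tilde\Gamma|_{2,\bar\R_t^3}+D_1\bigr)|\nabla\tilde\Phi|_{2,\bar\R_t^3}.
\]
This is a top-order bilinear term whose coefficient is not small. Young's inequality cannot turn it into $\epsilon\widetilde X^2$ plus lower-order terms, because one factor always inherits the large constant. You are left with $cD_2^2H_\mu|\nabla\tilde\Gamma|^2_{2,\bar\R_t^3}$, which cannot be absorbed if you add the two energy identities with equal weights.

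The missing idea is that the coupling runs in one direction only. The $\tilde\Gamma$-inequality (4.16) contains $\tilde\Phi$ only through $|I|$. So one first writes the $\tilde\Phi$-estimate as (4.17), with $cD_2^2H_\mu|\nabla\tilde\Gamma|^2_{2,\bar\R_t^3}$ on the right. Then one adds a multiple $K\sim D_2^2H_\mu$ of (4.16) to absorb that term. This weight is exactly why $H_\mu$ multiplies $|I|$ in (4.7); "collecting the factors $H_\mu$" does not produce it.

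Likewise, $\delta_1X^2$ does not come from "global $\psi_1$ cross terms", which would carry non-small coefficients times $\widetilde X$. It comes only from the cutoff term $v_r\dot\zeta\Gamma\tilde\Gamma$, via $\|v_r\|_{L_\infty(0,t;L_2)}\le D_1$ and $|\Gamma|_3^2\le|\Gamma|_2|\Gamma|_6$ on $\supp\dot\zeta$.

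\paragraph{Direction of the integration by parts.} Your description of this step is also off, and it matters. One does not move the derivative off $\psi_{1,z}$. Using $\omega_r=-v_{\varphi,z}$ and $\omega_z=\frac1r(rv_\varphi)_{,r}$, one integrates by parts in $z$ in the first term and in $r$ in the second, so that \emph{no} derivative remains on $v_\varphi$ (equivalently, on $u$). The third derivatives of $v_r/r=-\psi_{1,z}$ then cancel, and the axis term vanishes by the Liu--Wang expansions. What remains is
\[
-\int_{\bar\R_t^3}v_\varphi\bigl(\psi_{1,rz}\tilde\Phi_{,z}-\psi_{1,zz}\tilde\Phi_{,r}\bigr)\zeta\,dx\,dt'
\]
plus a $\dot\zeta$-term bounded by $cD_2D_1^2$.

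The second kind of term you list, $\int\frac{|u_{,z}|}{r}|\psi_{1,zz}||\tilde\Phi|\,dx$, must not be left over. Your interpolation $|u|/r\le|v_\varphi|_\infty^\mu|u|^{1-\mu}r^{-(1-\mu)}$ does not apply when the derivative sits on $u$. With only $\|u_{,z}\|_{V}\le cD_4$ and $V$-type bounds for $\psi_{1,zz}$ and $\tilde\Phi$, the term cannot be bounded by data times $X\widetilde X$. For example, the natural split via Lemma 2.7, $|r^{-1/2}f|_3\le c|\nabla f|_2$ for both $\psi_{1,zz}$ and $\tilde\Phi$, would require $u_{,z}\in L_\infty(0,t;L_3)$, which the data do not provide.

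\paragraph{Minor slip.} Your displayed viscous identity is wrong. The term $-\int\Delta u\,u\,dx=|\nabla u|_2^2$ produces no axis contribution, so the total is $|\nabla u|_2^2+\int_\R u^2|_{r=0}dz$, as your next sentence correctly states.
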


\begin{proof}
  We multiply (\ref{4.3}) by $\tilde\Phi$ and integrate over $\bar\R^3$ to get
  \begin{equation}\eqal{
      &{1\over 2}{d\over dt}|\tilde\Phi|_{2,\bar\R^3}^2+\nu|\nabla\tilde\Phi|_{2,\bar\R^3}^2- \nu\intop_{\bar\R^3}\partial_r\tilde\Phi^2drdz\cr
      &=\intop_{\bar\R^3}(\omega_r\partial_r+\omega_z\partial_z){v_r\over r}\zeta\tilde\Phi dx+\intop_{\bar\R^3}v_r\dot\zeta\Phi\tilde\Phi dx\cr
      &\quad-\intop_{\bar\R^3}\bigg(2\Phi_{,r}\dot\zeta+\Phi\ddot\zeta+{2\over r}\Phi\dot\zeta\bigg)\tilde\Phi dx+\intop_{\bar\R^3}\tilde F'_r\tilde\Phi dx.\cr}
    \label{4.8}
  \end{equation}
  The last term on the l.h.s. of (\ref{4.8} equals
  \begin{equation}
    \nu\intop_{\R^1}\tilde \Phi^2\bigg|_{r=0}dz
    \label{4.9}
  \end{equation}
  because $\tilde\Phi$ has a compact support with respect to $r$.

  Equalities (\ref{1.13}), (\ref{1.18}) and (\ref{2.18}) imply for $r<r_0$ that
  $$
    \Phi={\omega_r\over r}=-{1\over r^2}u_{,z}=-{1\over r}v_{\varphi,z}=-b_{1,z}-b_{2,z}r^2-\dots\not=0.
  $$
  Since term (\ref{4.9}) is positive it can be dropped from (\ref{4.8}).

  Integrating (\ref{4.8}) with respect to time and employing the Poincar\'e inequality yield
  \begin{equation}\eqal{
      &|\tilde\Phi(t)|_{2,\bar\R^3}^2+\nu\|\tilde\Phi\|_{1,2,\bar\R_t^3}^2\le c\intop_{\bar\R_t^3}(\omega_r\partial_r+\omega_z\partial_z){v_r\over r}\zeta\tilde\Phi dx\cr
      &\quad+c\intop_{\R_t^3}v_r\dot\zeta\Phi\tilde\Phi dx-c\intop_{\bar\R_t^3}\bigg(2\Phi_{,r}\dot\zeta+\Phi\ddot\zeta+{2\over r}\Phi\dot\zeta\bigg)\tilde\Phi dx\cr
      &\quad+c\intop_{\bar\R_t^3}\tilde F'_r\tilde\Phi dx+|\tilde\Phi(0)|_{2,\bar\R^3}^2.\cr}
    \label{4.10}
  \end{equation}
  Using (\ref{1.13}), the first integral on the r.h.s. of (\ref{4.10}) equals
  $$
    \intop_{\bar\R_t^3}\bigg[-v_{\varphi,z}\partial_r{v_r\over r}+{1\over r}\partial_r(rv_\varphi)\partial_z{v_r\over r}\bigg]\zeta\tilde\Phi rdrdzdt'\equiv J.
  $$
  Integrating by parts yields
  $$\eqal{
      J&=-\intop_{\bar\R_t^3}\bigg(v_\varphi\partial_r{v_r\over r}\zeta\tilde\Phi\bigg)_{,z}rdrdzdt'+\intop_{\bar\R_t^3}\partial_r\bigg(rv_\varphi\partial_z{v_r\over r}\zeta\tilde\Phi\bigg)drdzdt'\cr
      &\quad+\intop_{\bar\R_t^3}\bigg[v_\varphi\bigg(\partial_r{v_r\over r}\zeta\tilde\Phi\bigg)_{,z}-v_\varphi\bigg(\partial_z{v_r\over r}\zeta\tilde\Phi\bigg)_{,r}\bigg]dxdt'\cr
      &\equiv J_1+J_2+J_3,\cr}
  $$
  where $J_1$ vanishes because $v_\varphi,v_r,\tilde\Phi\to 0$ as $|z|\to\infty$ and
  $$
    J_2=\intop_{\R_t^1}rv_\varphi\partial_z{v_r\over r}\Phi\bigg|_{r=0}dzdt'=0
  $$
  because $v_\varphi\sim b_1r$, ${v_r\over r}\sim a_1$ and $\Phi\sim-b_{1,z}$ near the axis of symmetry (see (\ref{2.17}), (\ref{2.18})).

  Finally,
  $$\eqal{
      J_3&=\intop_{\bar\R_t^3}\bigg[v_\varphi\partial_r{v_r\over r}\zeta\tilde\Phi_{,z}-v_\varphi\partial_z{v_r\over r}\zeta\tilde\Phi_{,r}\bigg]dxdt'\cr
      &\quad-\intop_{\bar\R_t^3}v_\varphi\partial_z{v_r\over r}\dot\zeta\tilde\Phi dxdt'\equiv J_3^1+J_3^2.\cr}
  $$
  In view of (\ref{1.17}), we have
  $$
    J_3^1=-\intop_{\bar\R_t^3}[v_\varphi\psi_{1,rz}\zeta\tilde\Phi_{,z}-v_\varphi\psi_{1,zz}\zeta\tilde\Phi_{,r}]dxdt'.
  $$
  Continuing,
  $$\eqal{
    |J_3^1|&\le|rv_\varphi|_{\infty,\bar\R_t^3}\bigg|{\tilde\psi_{1,zr}\over r}\bigg|_{2,\bar\R_t^3}|\tilde\Phi_{,z}|_{2,\bar\R_t^3}\cr
    &\quad+|rv_\varphi|_{\infty,\bar\R_t^3}^{1-\mu}|v_\varphi|_{\infty,\bar\R_t^3}^\mu\bigg|{\tilde\psi_{1,zz}\over r^{1-\mu}}\bigg|_{2,\bar\R_t^3}|\tilde\Phi_{,r}|_{2,\bar\R_t^3}\equiv J_3^{11}.\cr}
  $$
  Using (\ref{3.51}), the Hardy inequality (\ref{2.13}) and (\ref{2.9}), we derive
  $$\eqal{
      J_3^{11}&\le cD_2(|\tilde\Gamma_{,z}|_{2,\bar\R_t^3}+D_1)|\Phi_{,z}|_{2,\bar\R_t^3}\cr
      &\quad+cD_2^{1-\mu}|v_\varphi|_{\infty,\bar\R_t^3}^\mu|\tilde\psi_{1,zzr}|_{2,\bar\R_t^3}{r_0^\mu\over\mu} |\Phi_{,r}|_{2,\bar\R_t^3}\equiv J_3^{12}.\cr}
  $$
  In view of (\ref{3.41}) we get
  $$
    J_3^{12}\le cD_2\bigg(1+|v_\varphi|_{\infty,\bar\R_t^3}^\mu{r_0^\mu\over D_2^\mu\mu}\bigg)(|\tilde\Gamma_{,z}|_{2,\bar\R_t^3}+D_1)|\nabla\tilde\Phi|_{2,\bar\R_t^3}.
  $$
  Finally, we examine $J_3^2$. From the properties of $\dot\zeta$ we get
  $$
    |J_3^2|\le c\intop_0^t\intop_{\bar\R^3\cap\supp\dot\zeta}|v_\varphi|\,|v_{r,z}|\,|v_{\varphi,z}|dxdt'\le cD_2D_1^2,
  $$
  where (\ref{2.1}) and (\ref{2.9}) were used.

  Summarizing, the first integral on the r.h.s. of (\ref{4.10}) is bounded by
  \begin{equation}
    |J|\le cD_2\bigg(1+|v_\varphi|_{\infty,\bar\R_t^3}^\mu{r_0^\mu\over\mu D_2^\mu}\bigg) (|\tilde\Gamma_{,z}|_{2,\bar\R_t^3}+D_1)|\nabla\tilde\Phi|_{2,\bar\R_t^3}+cD_2D_1^2.
    \label{4.11}
  \end{equation}
  The second integral on the r.h.s. of (\ref{4.10}) equals
  $$\eqal{
      &\intop_{\bar\R_t^3}v_r\dot\zeta{\omega_r^2\over r^2}\zeta dxdt'=\intop_{\bar\R_t^3}v_r\dot\zeta\zeta{1\over r^2}\bigg({1\over r}u_{,z}\bigg)^2dxdt'\cr
      &=-\intop_{\bar\R_t^3}\dot\zeta\zeta{1\over r^4}(v_ru_{,z})_{,z}udxdt'\equiv I_1,\cr}
  $$
  where we used that $v_r,u_{,z}\to 0$ as $|z|\to\infty$.

  Estimating, we get
  \begin{equation}\eqal{
      |I_1|&\le c(1/r_0)D_2(|v_{r,z}|_{2,\bar\R_t^3}|v_{\varphi,z}|_{2,\bar\R_t^3}+|v_r|_{2,\bar\R_t^3}|u_{,zz}|_{2,\bar\R_t^3})\cr
      &\le c(1/r_0)D_2(D_1^2+D_1D_4),\cr}
    \label{4.12}
  \end{equation}
  where (\ref{2.1}), (\ref{2.22}) were used.

  Similarly, the third term on the r.h.s. of (\ref{4.10}) is bounded by
  \begin{equation}
    c(1/r_0)D_1^2.
    \label{4.13}
  \end{equation}
  Using estimate (\ref{4.11})--(\ref{4.13}) in the r.h.s. of (\ref{4.10}) and applying the H\"older and Young inequalities in the last but one term from the r.h.s. of (\ref{4.10}), we derive
  \begin{equation}\eqal{
      \|\tilde\Phi\|_{V(\bar\R_t^3)}^2&\le cD_2\bigg(1+|v_\varphi|_{\infty,\bar\R_t^3}^\mu{r_0^\mu\over\mu D_2^\mu}\bigg)|
      \nabla\tilde\Gamma|_{2,\bar\R_t^3}|\nabla\tilde\Phi|_{2,\bar\R_t^3}\cr
      &\quad+cD_2^2\bigg(1+|v_\varphi|_{\infty,\bar\R_t^3}^{2\mu}{r_0^{2\mu}\over\mu^2D_2^{2\mu}}\bigg)D_1^2\cr
      &\quad+cD_2D_1^2+cD_1D_2D_4+cD_1^2+c|\tilde F'_r|_{6/5,2,\bar\R_t^3}^2+|\tilde\Phi(0)|_{2,\bar\R^3}^2,
      \cr}
    \label{4.14}
  \end{equation}
  where $\mu>0$ is assumed small.

  Multiplying (\ref{4.4}) by $\tilde\Gamma$ and integrating over $\bar\R_t^3$ we get
  \begin{equation}\eqal{
      &\|\tilde\Gamma\|_{V(\bar\R_t^3)}^2-\nu\intop_{\R_t^3}\partial_r\tilde\Gamma^2drdzdt'+2\intop_{\bar\R_t^3}{v_\varphi\over r}\tilde\Phi\tilde\Gamma dxdt'\cr
      &=\intop_{\bar\R_t^3}v_r\dot\zeta\Gamma\tilde\Gamma dxdt'-\nu\intop_{\bar\R_t^3}\bigg(2\Gamma_{,r}\dot\zeta+\Gamma\ddot\zeta+{2\over r}\Gamma\dot\zeta\bigg)\tilde\Gamma dxdt'\cr
      &\quad+\intop_{\bar\R_t^3}\tilde F'_\varphi\tilde\Gamma dxdt'+|\tilde\Gamma(0)|_{2,\bar\R^3}^2.\cr}
    \label{4.15}
  \end{equation}
  The second term on the l.h.s. of (\ref{4.15}) equals
  $$
    \nu\intop_0^t\intop_{\R^1}\tilde\Gamma^2\bigg|_{r=0}dzdt'
  $$
  so it can be dropped.

  The first term on the r.h.s. of (\ref{4.15}) is bounded by
  $$\eqal{
      &\intop_0^t\bigg(\varepsilon_1|\tilde\Gamma|_{6,\bar\R^3}^2+{c\over\varepsilon_1}|v_r|_{2,\bar\R^3}^2 |\Gamma|_{3,\bar\R^3\cap\supp\dot\zeta}^2\bigg)dt'\cr
      &\le\intop_0^t\bigg(\varepsilon_1|\tilde\Gamma|_{6,\bar\R^3}^2+\varepsilon'_2{c\over\varepsilon_1}D_1^2 |\Gamma|_{6,\R^3\cap\supp\dot\zeta}^2+{c\over\varepsilon_1}{C\over\varepsilon'_2}D_1^2 |\Gamma|_{2,\bar\R^3\cap\supp\dot\zeta}^2\bigg)dt'\cr
      &\le\intop_0^t(\varepsilon_1|\tilde\Gamma|_{6,\R^3}^2+\delta_1|\Gamma|_{6,\bar\R^3\cap\supp\dot\zeta}^2) dt'+C(\mathrm{data})\delta_1^{-1},\cr}
  $$
  where we set $\varepsilon_2' = \frac{\delta_1 \varepsilon_1}{c D_1^2}$.
  The second term on the r.h.s. of (\ref{4.15}) is bounded by
  $$
    \phi(1/r_0)D_1^2,
  $$
  where (\ref{1.18}) and (\ref{2.1}) are used.

  Applying the H\"older and Young inequalities to the third term on the r.h.s. of (\ref{4.15}) and using the above estimates we derive from (\ref{4.15}) the inequality
  \begin{equation}\eqal{
      &\|\tilde\Gamma\|_{V(\bar\R_t^3)}^2\le c\bigg|\intop_{\bar\R_t^3}{v_\varphi\over r}\tilde\Phi\tilde\Gamma dxdt'\bigg|+\delta_1\intop_0^t|\Gamma|_{6,\bar\R^3\cap\supp\dot\zeta}^2dt'\cr
      &\quad+C(\mathrm{data})\delta_1^{-1}+\phi(1/r_0)D_1^2+c|\tilde F'_\varphi|_{6/5,2,\bar\R_t^3}^2+|\tilde\Gamma(0)|_{2,\bar\R^3}^2.\cr}
    \label{4.16}
  \end{equation}
  From (\ref{4.14}) it follows
  \begin{equation}\eqal{
      \|\tilde\Phi\|_{V(\bar\R_t^3)}^2&\le cD_2^2\bigg(1+|v_\varphi|_{\infty,\bar\R_t^3}^{2\mu}{r_0^{2\mu}\over\mu^2D_2^{2\mu}}\bigg) |\nabla\tilde\Gamma|_{2,\bar\R_t^3}^2\cr
      &\quad+cD_2^2D_1^2\bigg(1+|v_\varphi|_{\infty,\bar\R_t^3}^{2\mu}{r_0^{2\mu}\over\mu^2D_2^{2\mu}}\bigg)+cD_6^2,\cr}
    \label{4.17}
  \end{equation}
  Adding a suitable multiple of (\ref{4.16}) to (\ref{4.17}) to absorb the $\|\nabla \widetilde\Gamma\|_{2,\bar\R_t^3}^2$ term, and rewriting the constant dependencies on the data, we arrive at the estimate~\eqref{4.7}.
\end{proof}

\begin{remark}\label{rem4.1.data}
  The constants $C_0$ and $C_1$ in~\eqref{4.7} are independent
  of $\delta_1$. They may depend on the fixed parameter $\mu>0$
  and on the data quantities occurring in the estimates above.
  The dependence on $\delta_1$ is displayed explicitly through
  the factor $1+\delta_1^{-1}$.
\end{remark}

\begin{lemma}[Algebraic reduction in the lower-ratio regime]
  \label{l4.lower.ratio}
  Let $s>2$ and let
  $f\in L^2(\bar\R^3)\cap L^s(\bar\R^3)\cap L^\infty(\bar\R^3)$.
  Set
  \[
    L_s(f)=\|f\|_{L^s(\bar\R^3)},
    \qquad
    M(f)=\|f\|_{L^\infty(\bar\R^3)}.
  \]
  If $M(f)>0$ and
  \[
    \frac{L_s(f)}{M(f)}\geq c_0,
  \]
  then
  \[
    L_s(f)
    \leq
    c_0^{-(s-2)/2}\|f\|_{L^2(\bar\R^3)}.
  \]
\end{lemma}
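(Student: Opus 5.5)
The plan is to prove this by elementary interpolation between $L^2$ and $L^\infty$, followed by an algebraic use of the ratio hypothesis. Since $s>2$, pointwise $|f|^s\le \|f\|_{L^\infty}^{s-2}|f|^2$ almost everywhere. Integrating against the reduced measure $dx=r\,dr\,dz$ gives
\[
  L_s(f)^s\le M(f)^{s-2}\,\|f\|_{L^2(\bar\R^3)}^2 .
\]

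Next I use $M(f)>0$ and the hypothesis $L_s(f)/M(f)\ge c_0$. In the form $M(f)\le c_0^{-1}L_s(f)$, and since $s-2>0$, it gives $M(f)^{s-2}\le c_0^{-(s-2)}L_s(f)^{s-2}$. Substituting yields
\[
  L_s(f)^s\le c_0^{-(s-2)}L_s(f)^{s-2}\|f\|_{L^2(\bar\R^3)}^2 .
\]
If $L_s(f)=0$ there is nothing to prove. Otherwise $L_s(f)$ is finite and positive, because $f\in L^s$. Dividing by $L_s(f)^{s-2}$ gives $L_s(f)^2\le c_0^{-(s-2)}\|f\|_{L^2}^2$. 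Taking square roots gives exactly $L_s(f)\le c_0^{-(s-2)/2}\|f\|_{L^2(\bar\R^3)}$.

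There is no real obstacle. The only points needing care are that the division is legitimate (finiteness and positivity of $L_s(f)$, handled above) and that the exponent $s-2$ is positive, so raising the ratio inequality to that power preserves its direction. This is where $s>2$ is used. The measure plays no role beyond being a positive measure, so the same argument works on $\R^3$ with $d\bar x$.
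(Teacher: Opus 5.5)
Your proof is correct and is essentially the paper's argument: bound $L_s(f)^s\le M(f)^{s-2}\|f\|_{L^2}^2$, insert $M(f)\le L_s(f)/c_0$, and divide by $L_s(f)^{s-2}$. Your separate case $L_s(f)=0$ is harmless but cannot actually occur, since the hypotheses give $L_s(f)\ge c_0M(f)>0$.
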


\begin{proof}
  By interpolation between $L^2$ and $L^\infty$,
  \[
    L_s(f)^s
    =
    \int_{\bar\R^3}|f|^{s-2}|f|^2\,dx
    \leq
    M(f)^{s-2}\|f\|_{L^2(\bar\R^3)}^2.
  \]
  Since $M(f)\leq L_s(f)/c_0$, we obtain
  \[
    L_s(f)^s
    \leq
    c_0^{-(s-2)}
    L_s(f)^{s-2}
    \|f\|_{L^2(\bar\R^3)}^2.
  \]
  Division by $L_s(f)^{s-2}$ gives the assertion.
\end{proof}

\begin{corollary}
  \label{c4.good.times}
  Let
  \[
    A_\sharp
    =
    \max\left\{
    A,\,
    c_0^{-(s-2)/2}D_1
    \right\}.
  \]
  Then
  \[
    \|v_\varphi^{\rm loc}(\tau)\|_{L_s(\bar\R^3)} \leq A_\sharp
  \]
  for almost every
  \[
    \tau\in(0,t)\setminus W_{A,c_0}^{\rm loc}.
  \]
\end{corollary}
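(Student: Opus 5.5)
The plan is a two-case split according to the decomposition $(0,T)=\mathcal R_1\mathbin{\dot\cup}\mathcal R_2\mathbin{\dot\cup}W_{A,c_0}^{\rm loc}$. For almost every $\tau\in(0,t)\setminus W_{A,c_0}^{\rm loc}$ we have $\tau\in\mathcal R_1$ or $\tau\in\mathcal R_2$, and we treat these separately.

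If $\tau\in\mathcal R_1$, then by definition \eqref{def:R1} $L_s(\tau)\le A\le A_\sharp$, and nothing remains to prove.

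If $\tau\in\mathcal R_2$, then $L_s(\tau)>A>0$. Hence $M(\tau)>0$: otherwise $v_\varphi^{\rm loc}(\tau)=0$ a.e. and $L_s(\tau)=0$. Also $L_s(\tau)/M(\tau)\ge c_0$. We apply Lemma~\ref{l4.lower.ratio} with $f=v_\varphi^{\rm loc}(\tau)=\zeta_1v_\varphi(\tau)$. Its hypotheses require $f\in L^2\cap L^s\cap L^\infty(\bar\R^3)$. Finiteness of $L^s$ and $L^\infty$ is implicit in the definitions of $L_s$ and $M$ for regular solutions. For the $L^\infty$ bound one can also note $|v_\varphi|\le |u|/r$, and regularity near the axis via \eqref{1.63} gives boundedness. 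Lemma~\ref{l4.lower.ratio} then yields
\[
L_s(\tau)\le c_0^{-(s-2)/2}\|\zeta_1v_\varphi(\tau)\|_{L^2(\bar\R^3)}.
\]

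The final step bounds the $L^2$ norm. Since $0\le\zeta_1\le1$ (we take the cutoff with values in $[0,1]$), we have $\|\zeta_1v_\varphi(\tau)\|_{L^2}\le\|v(\tau)\|_{L^2(\bar\R^3)}$. By the energy estimate \eqref{2.1} of Lemma~\ref{l2.1}, $\sup_{\tau\le t}\|v(\tau)\|_{L^2(\bar\R^3)}\le D_1$. This gives $L_s(\tau)\le c_0^{-(s-2)/2}D_1\le A_\sharp$. The "almost every" qualifier only reflects that $L_s$ and $M$ are defined pointwise in time for a.e. $\tau$; for regular solutions the bound in fact holds for every $\tau$.

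The only delicate point is bookkeeping on the definition of $D_1$. The constant in \eqref{2.1} involves $\|v(0)\|^2_{L_2}$, whereas Notation~\ref{s2.2} displays $\|v(0)\|_{L_2}$. We read $D_1$ in the sense of \eqref{2.1}, so that $\|v(\tau)\|_{L_2}\le D_1$ holds exactly; any harmless constant factor can be absorbed into $A_\sharp$ if needed. No other obstacle is expected: the statement is a direct combination of the regime definitions, Lemma~\ref{l4.lower.ratio}, and the energy bound.
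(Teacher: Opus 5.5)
Your proposal is correct and follows the paper's proof essentially verbatim. The case $\tau\in\mathcal R_1$ is trivial; for $\tau\in\mathcal R_2$ you apply Lemma~\ref{l4.lower.ratio} to $f=v_\varphi^{\rm loc}(\tau)$ and then use the energy bound $\|v_\varphi^{\rm loc}(\tau)\|_{L^2}\le\|v(\tau)\|_{L^2}\le D_1$ from Lemma~\ref{l2.1}. Your extra checks ($M(\tau)>0$ on $\mathcal R_2$ because $L_s(\tau)>A>0$, and $0\le\zeta_1\le 1$) are details the paper leaves implicit, and reading $D_1$ in the sense of \eqref{2.1} is the right interpretation.
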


\begin{proof}
  Recall that the complement of the critical wedge consists of two disjoint regimes: $\mathcal R_1$ and $\mathcal R_2$.
  For $\tau \in \mathcal R_1$, we have $L_s(\tau) \le A \le A_\sharp$ by definition.
  For $\tau \in \mathcal R_2$, we have $L_s(\tau) > A$ and $L_s(\tau)/M(\tau) \ge c_0$. By Lemma~\ref{l4.lower.ratio} with $f = v_\varphi^{\rm loc}(\tau)$, we have
  \[
    L_s(\tau) \le c_0^{-(s-2)/2}\|v_\varphi^{\rm loc}(\tau)\|_{L^2(\bar\R^3)}.
  \]
  Since $\|v_\varphi^{\rm loc}(\tau)\|_{L^2(\bar\R^3)} \le \|v(\tau)\|_{L^2(\bar\R^3)} \le D_1$ by Lemma~\ref{l2.1}, we obtain $L_s(\tau) \le c_0^{-(s-2)/2} D_1 \le A_\sharp$.
  Thus, the bound holds for almost every $\tau \in \mathcal R_1 \cup \mathcal R_2$.
\end{proof}

\begin{lemma}\label{l4.4}
  Let $s>3$ be given. Let $E \subset (0,t)$ be a measurable subset of time, and assume that there exists a constant $A$ such that
  \begin{equation}
    \|v_\varphi^{\rm loc}(\tau)\|_{L_{s}(\bar\R^3)} \le A \qquad \text{for a.e.\ } \tau \in E.
    \label{4.30}
  \end{equation}
  Assume that all parameters from Section~\ref{s2.2} are finite.

  Then the following estimate holds
  \begin{equation}\eqal{
    I_E&:=\int_E |i(\tau)|\,d\tau = \int_E \bigg|\intop_{\bar\R^3}{v_\varphi\over r}\tilde\Phi\tilde\Gamma dx\bigg| d\tau\cr
    &\le\phi({\rm data},A)\cdot\cr
    &\quad\cdot(1+|v_\varphi|_{\infty,\bar\R_t^3}^{2\delta})^{\alpha_0/2}(\tilde X(t)^{2-{1\over 2}(\alpha-{1\over 2})}+\tilde X(t)^{2-(\alpha-{1\over 2})}+1),\cr}
    \label{4.31}
  \end{equation}
  where data replaces all parameters from Section \ref{s2.2} and $\alpha={1\over 2}+\alpha_0$, $\alpha_0={(s-3)(1-b)\over 2s}$, $b\in(0,1)$.
\end{lemma}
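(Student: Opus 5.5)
We bound the time-integrated interaction $I_E$ by Hölder's inequality in space at each time, with the weight $1/r$ split between $\tilde\Phi$, $\tilde\Gamma$ and the swirl. The swirl factor will be controlled only through the quantities $|rv_\varphi|\le D_2$ (Lemma~\ref{l2.3}), $\|v_\varphi^{\rm loc}(\tau)\|_{L_s}\le A$ on $E$ (assumption \eqref{4.30}), and a small power of $|v_\varphi|_{\infty,\bar\R_t^3}$.

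\textbf{Step 1: pointwise interpolation of the swirl factor.} Near the axis, where $\zeta\neq0$ and hence $\zeta_1=1$, we write
\[
\frac{|v_\varphi|}{r}=\frac{|rv_\varphi|^{a}\,|v_\varphi^{\rm loc}|^{1-a}}{r^{1+a}} .
\]
For a.e. $\tau\in E$ this gives
\[
|i(\tau)|\le D_2^{a}\int\frac{|v_\varphi^{\rm loc}|^{1-a}}{r^{1+a}}|\tilde\Phi||\tilde\Gamma|\,dx .
\]
We then split $|v_\varphi^{\rm loc}|^{1-a}$ further as $|v_\varphi^{\rm loc}|^{1-a-\delta'}|v_\varphi|^{\delta'}$, bounding the last factor by $|v_\varphi|_{\infty,\bar\R_t^3}^{\delta'}$; this produces the factor $(1+|v_\varphi|_\infty^{2\delta})^{\alpha_0/2}$. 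The remaining factor $|v_\varphi^{\rm loc}|^{1-a-\delta'}$ is placed in $L^{s/(1-a-\delta')}$ in space, where its norm is bounded by $A^{1-a-\delta'}$. The parameter $b\in(0,1)$ in $\alpha_0$ is the interpolation weight playing the role of $a$.

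\textbf{Step 2: weighted Hölder/Hardy bound for the vorticity factors.} The remaining integral, $\int r^{-(1+a)}|\tilde\Phi||\tilde\Gamma|$ restricted to the Hölder conjugate exponent, is estimated by the weighted Sobolev inequality of Lemma~\ref{l2.7}. We put one factor as $r^{-s_1}|\tilde\Phi|^{q}$ and the other as $r^{-s_2}|\tilde\Gamma|^{q}$ with $q<6$, $s_1+s_2=1+a$ and $p=2$. This yields
\[
|\tilde\Phi|_2^{\theta}|\nabla\tilde\Phi|_2^{1-\theta}\,|\tilde\Gamma|_2^{\theta}|\nabla\tilde\Gamma|_2^{1-\theta},
\]
with the gain $\theta>0$ coming precisely from $s>3$. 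Computing the exponents from \eqref{2.16}, the total power of gradients should be $2-\alpha_0$ and the $L_2$ part $\alpha_0$ after accounting for $b$. Alternatively, one could use Lemma~\ref{l3.1}--\ref{l3.6} type bounds on $\tilde\psi_1$ to replace part of $\tilde\Gamma$. Integrating in $\tau\in E$ and using Hölder in time, with $\sup_t$ norms for the $L_2$ factors and $L_2$-in-time norms for the gradients, gives
\[
I_E\le\phi(A,D_2)\,(1+|v_\varphi|_\infty^{2\delta})^{\alpha_0/2}\,\tilde X^{2-\alpha_0}\,t^{\alpha_0/2} .
\]
The time factor is then traded via the energy bound $D_1$. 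Lower-order pieces from cutoff derivatives, and the split in which one factor is bounded by $D_1$ through Lemma~\ref{l2.1}, produce the term $\tilde X^{2-\alpha_0/2}$ and the constant $1$.

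\textbf{Main obstacle.} The main obstacle is the exponent bookkeeping in Step 2. We must choose $q$, $s_i$ and $a$ within the admissible range $0\le s_i\le 2$, $q\le 2(3-s_i)$ of Lemma~\ref{l2.7} so that the weight $r^{-(1+a)}$ is fully absorbed while the power of $\tilde X$ stays strictly below $2$. This requires the strict inequality $s>3$ to give $\alpha_0>0$. I would first try the symmetric choice $s_1=s_2=(1+a)/2$ and $q=2s/(s-1+a)$, check the Lemma~\ref{l2.7} constraints, and adjust $b$ if the weight is too singular. As a fallback, I would use the expansion $\Phi\sim -b_{1,z}$ near the axis to handle the residual weight.
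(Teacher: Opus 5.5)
There is a genuine gap: the mechanism that makes the power of $\tilde X$ strictly less than $2$ is missing, and the claimed exponent $2-\alpha_0$ is a miscount.

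Your Step~1 and the spatial part of Step~2 are what the paper does. On $\supp\zeta$ you use $\frac{|v_\varphi|}{r}\le D_2^{b}|v_\varphi^{\rm loc}|^{1-b}r^{-(1+b)}$, then Hölder in space with exponents $\frac{s}{1-b},\frac{2s}{s-1+b},\frac{2s}{s-1+b}$. Lemma~\ref{l2.7} with $p=2$, $q=\frac{2s}{s-1+b}$ and weight exponent $\frac{(1+b)s}{s-1+b}$ then gives $|r^{-(1+b)/2}\tilde\Phi|_{q,\bar\R^3}\le c|\tilde\Phi|_{2,\bar\R^3}^{\alpha_0}|\nabla\tilde\Phi|_{2,\bar\R^3}^{1-\alpha_0}$, and the same for $\tilde\Gamma$.

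The problem starts after that. The integrand $|\tilde\Phi|_2^{\alpha_0}|\tilde\Gamma|_2^{\alpha_0}|\nabla\tilde\Phi|_2^{1-\alpha_0}|\nabla\tilde\Gamma|_2^{1-\alpha_0}$ has total degree $2$. With your time-Hölder (supremum in time on the $L_2$ factors, $L_2$ in time on the gradients) you get $I_E\le cD_2^bA^{1-b}\,t^{\alpha_0}\,\tilde X(t)^2$, not $\tilde X^{2-\alpha_0}t^{\alpha_0/2}$. This is structural: $i(\tau)$ is bilinear in $(\tilde\Phi,\tilde\Gamma)$. Hölder and weighted Sobolev inequalities alone can therefore only give a bound homogeneous of degree $2$ in $\tilde X$, which cannot be absorbed in Theorem~\ref{t1.1}. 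A power of $t$ also cannot be ``traded via $D_1$''. Moreover, $i(\tau)$ contains no cutoff derivatives, and Lemma~\ref{l2.1} gives no control of $\tilde\Phi=-\zeta v_{\varphi,z}/r$ or of $\tilde\Gamma$.

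The missing ingredient is \eqref{4.33}, which is Lemma~\ref{l5.1} read through $\tilde\Phi=\tilde\omega_r/r$. The term $\nu|\tilde\omega_r/r|^2_{2,\bar\R^3_t}$ on the left of \eqref{5.5} is bounded linearly in $|\nabla\tilde\Gamma|_{2,\bar\R^3_t}$. Hence $|\tilde\Phi|_{2,\bar\R^3_t}^2\le\phi(\mathrm{data})(1+|v_\varphi|_{\infty,\bar\R^3_t}^{2\delta})(\tilde X(t)+1)$ with $\delta=\varepsilon_0$, so the space--time $L_2$ norm of $\tilde\Phi$ grows only like $\tilde X^{1/2}$.

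To use this, Hölder in time must put all four factors in $L_2$ in time, with exponents $2/\alpha_0,\,2/\alpha_0,\,2/(1-\alpha_0),\,2/(1-\alpha_0)$. Their reciprocals sum to $1$, so no power of $t$ appears, and one gets
\[
I_E\le D_2^bA^{1-b}|\tilde\Phi|_{2,\bar\R^3_t}^{\alpha_0}|\tilde\Gamma|_{2,\bar\R^3_t}^{\alpha_0}|\nabla\tilde\Phi|_{2,\bar\R^3_t}^{1-\alpha_0}|\nabla\tilde\Gamma|_{2,\bar\R^3_t}^{1-\alpha_0}.
\]
Taking the supremum in time on $\tilde\Phi$, as you propose, destroys this, because no sublinear bound is available for $\|\tilde\Phi\|_{L_\infty(0,t;L_2)}$.

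Now insert \eqref{4.33} and bound the remaining factors by $\tilde X$; for $|\tilde\Gamma|_{2,\bar\R^3_t}$ one can use the Poincaré inequality in $r$, since $\tilde\Gamma=0$ for $r\ge 2r_0$. This gives $(1+|v_\varphi|_{\infty,\bar\R^3_t}^{2\delta})^{\alpha_0/2}(\tilde X+1)^{\alpha_0/2}\tilde X^{2-\alpha_0}$. That is exactly where the factor $(1+|v_\varphi|^{2\delta}_{\infty,\bar\R^3_t})^{\alpha_0/2}$ and the exponents $2-\alpha_0/2$ and $2-\alpha_0$ in \eqref{4.31} come from. Your extra split $|v_\varphi^{\rm loc}|^{1-a-\delta'}|v_\varphi|^{\delta'}$ in Step~1 is unnecessary and does not produce that factor.
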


\begin{proof}
  Since $\tilde\Phi$ and $\tilde\Gamma$ are supported in $\supp\zeta$ and $v_\varphi^{\rm loc} = v_\varphi$ on this support, we write the integral over $E$ in the form
  $$
    I_E\le \int_E \intop_{\bar\R^3} |v_\varphi^{\rm loc}| r^br^{-{1+b\over 2}}|\tilde\Phi| r^{-{1+b\over 2}}|\tilde\Gamma| dxd\tau.
  $$
  By the H\"older inequality over space,
  $$\eqal{
    |I_E|&\le\int_E|v_\varphi^{\rm loc} r^b|_{{s\over 1-b},\bar\R^3}|r^{-{1+b\over 2}}\tilde\Phi|_{{2s\over s-1+b},\bar\R^3}|r^{-{1+b\over 2}}\tilde\Gamma|_{{2s\over s-1+b},\bar\R^3}d\tau\cr
    &\le D_2^b\int_E|v_\varphi^{\rm loc}|^{1-b}_{{s\over 1-b},\bar\R^3}|r^{-{1+b\over 2}}\tilde\Phi|_{{2s\over s-1+b},\bar\R^3}|r^{-{1+b\over 2}}\tilde\Gamma|_{{2s\over s-1+b},\bar\R^3}d\tau\cr
    &\equiv I_{E,1}.\cr}
  $$
  The case $b=1$ implies that
  $$
    I_{E,1}\le D_2\int_E |r^{-1}\tilde\Phi|_{2,\bar\R^3}|r^{-1}\tilde\Gamma|_{2,\bar\R^3}d\tau\equiv I'_{E,1}.
  $$
  Since $\tilde\Phi|_{r=0}$, $\tilde\Gamma|_{r=0}$ do not vanish, the Hardy inequalities can not be applied to estimate $I'_{E,1}$. Hence, we consider the case $b<1$.  This restriction is essential, since $\tilde\Phi$ and $\tilde\Gamma$ do not vanish on the symmetry axis and therefore the critical Hardy inequality corresponding to $d=1$ fails.
  To apply Lemma \ref{l2.7} we set
  $$
    q={2s\over s-1+b}, \quad \text{and the interpolation parameter } \tilde s={(1+b)s\over s-1+b}.
  $$
  We check the assumptions of Lemma \ref{l2.7}, $\tilde s<2$, $q\in[2,2(3-\tilde s)]$.

  The condition $\tilde s<2$ implies ${s(1+b)\over s-1+b}<2$ so $(s-2)(b-1)<0$ which holds for $s>2$, $b<1$.

  Next, $q\le 6-2\tilde s$ implies $(s-3)(1-b)\ge 0$. This holds for $s\ge 3$.

  Then Lemma \ref{l2.7} yields pointwise in time
  $$
    |r^{-{1+b\over 2}}\tilde\Phi(\tau)|_{{2s\over s-1+b},\bar\R^3}\le c|\tilde\Phi(\tau)|_{2,\bar\R^3}^{{3-\tilde s\over q}-{1\over 2}}|\nabla\tilde\Phi(\tau)|_{2,\bar\R^3}^{{3\over 2}-{3-\tilde s\over q}}\equiv J(\tau).
  $$
  We calculate
  $$
    {3-\tilde s\over q}={3-{s(1+b)\over s-1+b}\over{2s\over s-1+b}}={1\over 2}+{(s-3)(1-b)\over 2s}\equiv\alpha.
  $$
  Hence
  $$
    \alpha-{1\over 2}={(s-3)(1-b)\over 2s}\equiv\alpha_0,\quad
    {3\over 2}-\alpha=1-{(s-3)(1-b)\over 2s}\equiv 1-\alpha_0.
  $$
  Since $\alpha-1/2\not=0$, ${3\over 2}-\alpha\not=1$ we have that $s>3$.

  Thus
  $$
    J(\tau)\le c|\tilde\Phi(\tau)|_{2,\bar\R^3}^{\alpha-1/2}|\nabla\tilde\Phi(\tau)|_{2,\bar\R^3}^{{3\over 2}-\alpha}.
  $$
  Using the above estimate in $I_{E,1}$ yields
  $$
    \begin{aligned}
      I_{E,1}\leq {} & D_2^b\int_E
      |v_\varphi^{\rm loc}(\tau)|_{s,\bar\R^3}^{1-b}
      |\tilde\Phi(\tau)|_{2,\bar\R^3}^{\alpha-1/2}
      |\nabla\tilde\Phi(\tau)|_{2,\bar\R^3}^{3/2-\alpha} \\
                     & \qquad\times
      |\tilde\Gamma(\tau)|_{2,\bar\R^3}^{\alpha-1/2}
      |\nabla\tilde\Gamma(\tau)|_{2,\bar\R^3}^{3/2-\alpha}
      \,d\tau
      \equiv I_{E,2}.
    \end{aligned}
  $$
  By assumption \eqref{4.30}, for almost all $\tau \in E$ we have $|v_\varphi^{\rm loc}(\tau)|_s \le A$.
  Continuing,
  $$\eqal{
      I_{E,2}&\le D_2^b A^{1-b}\int_E (|\tilde\Phi|_{2,\bar\R^3}^{\alpha-1/2} |\tilde\Gamma|_{2,\bar\R^3}^{\alpha-1/2}\cdot\cr
      &\quad\cdot|\nabla\tilde\Phi|_{2,\bar\R^3}^{{3\over 2}-\alpha}|\nabla\tilde\Gamma|_{2,\bar\R^3}^{{3\over 2}-\alpha})d\tau\equiv I_{E,3}.\cr}
  $$
  Since the integrand is non-negative, we can enlarge the domain of integration from $E$ to $(0,t)$ and apply the H\"older inequality over time
  $$\eqal{
      I_{E,3}&\le D_2^b A^{1-b}\bigg(\int_0^t |\tilde\Phi|_{2,\bar\R^3}^{(\alpha-1/2)\lambda_1}d\tau\bigg)^{1/\lambda_1}\bigg(\int_0^t |\tilde\Gamma|_{2,\bar\R^3}^{(\alpha-1/2)\lambda_2}d\tau\bigg)^{1/\lambda_2}\cdot\cr
      &\quad\cdot\bigg(\int_0^t|\nabla\tilde\Phi|_{2,\bar\R^3}^{({3\over 2}-\alpha)\lambda_3}d\tau\bigg)^{1/\lambda_3}\bigg(\int_0^t|\nabla\tilde\Gamma|_{2,\bar\R^3}^{({3\over 2}-\alpha)\lambda_4}d\tau\bigg)^{1/\lambda_4}\equiv I_4,\cr}
  $$
  where
  $$
    1/\lambda_1+1/\lambda_2+1/\lambda_3+1/\lambda_4=1.
  $$
  Assuming $\big(\alpha-{1\over 2}\big)\lambda_1=2$, $\big(\alpha-{1\over 2}\big)\lambda_2=2$, $\big({3\over 2}-\alpha\big)\lambda_3=2$, $\big({3\over 2}-\alpha\big)\lambda_4=2$ we derive the restriction
  $$
    {\alpha-{1\over 2}\over 2}+{\alpha-{1\over 2}\over 2}+{{3\over 2}-\alpha\over 2}+{{3\over 2}-\alpha\over 2}=1\quad {\rm so}\quad \alpha-{1\over 2}+{3\over 2}-\alpha=1.
  $$
  Hence,
  \begin{equation}
    \begin{aligned}
      I_E\le I_4\le D_2^b A^{1-b} |\tilde\Phi|_{2,\bar\R_t^3}^{\alpha-1/2}|\tilde\Gamma|_{2,\bar\R_t^3}^{\alpha-1/2} |\nabla\tilde\Phi|_{2,\bar\R_t^3}^{{3\over 2}-\alpha}|\nabla\tilde\Gamma|_{2,\bar\R_t^3}^{{3\over 2}-\alpha} \\\equiv I_5.
    \end{aligned}
    \label{4.32}
  \end{equation}
  We can express (\ref{5.5}) in the form
  \begin{equation}
    |\tilde\Phi|_{2,\bar\R_t^3}^2\le\phi({\rm data})(1+|v_\varphi|_{\infty,\bar\R_t^3}^{2\delta})(\tilde X(t)+1),
    \label{4.33}
  \end{equation}
  where we set that $\delta=\varepsilon_0$.

  Using (\ref{4.33}) in $I_5$ yields
  \begin{equation}
    \begin{aligned}
      I_5 & \le \phi({\rm data}, A)
      (1+|v_\varphi|_{\infty,\bar\R_t^3}^{2\delta})^{\alpha_0/2}\cdot
      (\tilde X(t)^{{1\over 2}(\alpha-1/2)}+1)\tilde X(t)^{{5\over 2}-\alpha} \\
          & =\phi({\rm data}, A)
      (1+|v_\varphi|_{\infty,\bar\R_t^3}^{2\delta})^{\alpha_0/2}\cdot         \\
          & \quad\cdot(\tilde X(t)^{2-{1\over 2}(\alpha-{1\over 2})}+
      \tilde X(t)^{2-(\alpha-1/2)}),
    \end{aligned}
    \label{4.34}
  \end{equation}
  where $\alpha=1/2+\alpha_0$.
  Using (\ref{4.30}) in (\ref{4.34}) yields (\ref{4.31}). This concludes the proof.
\end{proof}
Corollary~\ref{c4.good.times} and Lemma~\ref{l4.4}
show that the nonlinear interaction is closable on
$(0,t)\setminus W_{A,c_0}^{\rm loc}$.
No global-in-time lower bound for the ratio of the
$L^s$ and $L^\infty$ norms is required.

Now, we consider solutions to problem (\ref{1.19})--(\ref{1.22}) in a neighborhood located at a positive distance from the axis of symmetry. Multiply (\ref{1.19})--(\ref{1.22}) by $\vartheta$. Then we get
\begin{equation}\eqal{
    &\hat\Phi_{,t}+v\cdot\nabla\hat\Phi-\nu\bigg(\Delta\hat\Phi+{2\over r}\partial_r\hat\Phi\bigg)= (\omega_r\partial_r+\omega_z\partial_z){v_r\over r}\vartheta\cr
    &\quad+v_r\dot\vartheta\Phi-\nu\bigg(2\Phi_{,r}\dot\vartheta+\Phi\ddot\vartheta+{2\over r}\Phi\dot\vartheta\bigg)+\hat F'_r,\cr}
  \label{4.35}
\end{equation}
\begin{equation}\eqal{
  &\hat\Gamma_{,t}+v\cdot\nabla\hat\Gamma-\nu\bigg(\Delta\hat\Gamma+{2\over r}\partial_r\hat\Gamma\bigg)=-2{v_r\over r}\hat\Phi\cr
  &\quad+v_r\dot\vartheta\Gamma-\nu\bigg(2\Gamma_{,r}\dot\vartheta+\Gamma\ddot\vartheta+{2\over r}\Gamma\dot\vartheta\bigg)+\hat F'_\varphi,\cr}
  \label{4.36}
\end{equation}
\begin{equation}
  \hat\Gamma|_{t=0}=\hat\Gamma(0),\quad \hat\Phi|_{t=0}=\hat\Phi(0),
  \label{4.37}
\end{equation}
where $\dot\vartheta={d\over dr}\vartheta$, $\ddot\vartheta={d^2\over dr^2}\vartheta$.

\begin{lemma}\label{l4.5}
  Let $\hat\Phi$ be a solution to problem (\ref{4.35}), (\ref{4.37}). Then
  \begin{equation}\eqal{
      &\|\hat\Phi\|_{V(\bar\R_t^3)}^2\le c[D_1^2D_2^2+D_1^2D_{10}^2(3)+D_1^2+D_{10}^2(6/5)\cr
          &\quad+|\hat F'_r|_{6/5,2,\bar\R_t^3}^2+|\hat\Phi(0)|_{2,\bar\R^3}^2].\cr}
    \label{4.38}
  \end{equation}
\end{lemma}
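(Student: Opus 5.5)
We test (\ref{4.35}) with $\hat\Phi$ and integrate over $\bar\R^3_t$, as in the proof of Lemma~\ref{l4.1}. Since $\vartheta=0$ for $r\le r_0$, no boundary term at the axis appears. The transport term $\int v\cdot\nabla\hat\Phi\,\hat\Phi\,dx$ vanishes because $\divv v=0$. The drift term $-\frac{2\nu}{r}\partial_r\hat\Phi$ gives $-\nu\int\partial_r\hat\Phi^2\,drdz$, which also vanishes, because $\hat\Phi$ vanishes near $r=0$ and decays at infinity. This yields
\[
\tfrac12|\hat\Phi(t)|_{2,\bar\R^3}^2+\nu|\nabla\hat\Phi|_{2,\bar\R^3_t}^2
\le |K|+\Big|\int_{\bar\R^3_t}\big(v_r\dot\vartheta\Phi-\nu(2\Phi_{,r}\dot\vartheta+\Phi\ddot\vartheta+\tfrac2r\Phi\dot\vartheta)\big)\hat\Phi\Big|
+\Big|\int_{\bar\R^3_t}\hat F'_r\hat\Phi\Big|+\tfrac12|\hat\Phi(0)|_{2,\bar\R^3}^2 ,
\]
where $K=\int(\omega_r\partial_r+\omega_z\partial_z)\frac{v_r}{r}\,\vartheta\hat\Phi\,dx\,dt'$.

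The key point is that on $\supp\vartheta$ we have $r\ge r_0$, so every factor $1/r$ is bounded by $1/r_0$. Hence $\Phi=\omega_r/r$ and $\omega_r$ are comparable there, and $|\omega_r|=|v_{\varphi,z}|\in L_2(\bar\R^3_t)$ with norm at most $D_1$ by Lemma~\ref{l2.1}.

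The cutoff terms are supported in $r_0\le r\le 2r_0$. The terms $\Phi\ddot\vartheta\hat\Phi$ and $\frac2r\Phi\dot\vartheta\hat\Phi$ are bounded by $c(1/r_0)|\omega_r|^2_{2,\bar\R^3_t}\le cD_1^2$. In the term $2\Phi_{,r}\dot\vartheta\hat\Phi$ we integrate by parts in $r$, which moves the derivative onto $\dot\vartheta\hat\Phi$. This gives again $cD_1^2$ plus $\varepsilon|\nabla\hat\Phi|^2_{2,\bar\R^3_t}$. For $\int v_r\dot\vartheta\Phi\hat\Phi$ we write it as $\int v_r\dot\vartheta\vartheta\,\omega_r^2/r^2$ and use $\omega_r=-u_{,z}/r$. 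Integrating by parts in $z$ as in (\ref{4.12}), we obtain a bound by $D_2$ times $|v_r|_{L_2}$, $|v_{r,z}|_{L_2}$ and $|u_{,zz}|_{L_2}$ norms. Alternatively, by Hölder, $|v_r|_{3}|\omega_r|_{2}|\hat\Phi|_6$. We would route this through $|u|_{5s/3}$ from Lemma~\ref{l2.11}; this is what produces the constants $D_1^2D_{10}^2(3)$ and $D_{10}^2(6/5)$, and $D_1^2D_2^2$ when $u$ is used in $L_\infty$ via (\ref{2.9}). The forcing term is handled by $|\hat F'_r|_{6/5,2}|\hat\Phi|_{6,2}$, the embedding $H^1\subset L_6$, and Young's inequality.

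The main obstacle is $K$. By (\ref{1.13}), $K=\int[-v_{\varphi,z}\partial_r\frac{v_r}{r}+\frac1r u_{,r}\partial_z\frac{v_r}{r}]\vartheta\hat\Phi$. As in the treatment of $J$ in Lemma~\ref{l4.1}, we integrate by parts to move the derivatives off $v_{\varphi}$ and $u$. Using $\omega_r=-u_{,z}/r$ and $\omega_z=u_{,r}/r$, this leaves terms of the form $\int u\,\nabla\frac{v_r}{r}\cdot\nabla(\vartheta\hat\Phi)/r$ plus lower-order terms with $\dot\vartheta$. Away from the axis, $\nabla(v_r/r)$ is controlled by $r_0^{-1}|\nabla v_r|+r_0^{-2}|v_r|$, which lies in $L_2(\bar\R^3_t)$ with bound $cD_1$. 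The factor $u$ is estimated either in $L_\infty$ by $D_2$, or in $L_{5s/3}$ via Lemma~\ref{l2.11}. This gives $|K|\le\varepsilon|\nabla\hat\Phi|^2_{2,\bar\R^3_t}+c(D_1^2D_2^2+D_1^2D_{10}^2(3))$. Absorbing all $\varepsilon$-terms into the left-hand side gives (\ref{4.38}).

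The delicate point is the exact bookkeeping of which $L_p$ norm of $u$ is paired with which factor, so that the constants match (\ref{4.38}). In the forcing and cutoff terms we would try Hölder with $|v_r|_{L_{10/3}}$ first.
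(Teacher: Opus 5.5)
Most of your argument matches the paper's proof. You use the same energy identity and the same integration by parts in $K$, moving the derivatives off $u$ so that $\partial_r\partial_z(v_r/r)$ cancels. You then take $u$ in $L_\infty$ or $L_5$, and bound $\hat F'_r$ by the same $L_{6/5}$--$L_6$ estimate.

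Your treatment of the viscous cutoff terms via $|\Phi|\le |v_{\varphi,z}|/r_0$ on $\supp\vartheta$ is a valid shortcut. The paper instead writes $\Phi=-u_{,z}/r^2$, integrates by parts in $z$, and pays with $|u_{,r}|_2$ and $D_{10}(6/5)$.

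One small fix in $K$: $r_0^{-2}|v_r|$ is not bounded in $L_2(\bar\R^3_t)$ by $cD_1$ uniformly in $t$. Use $|v_r|/r^2\le r_0^{-1}|v_r/r|$ and the weighted term in \eqref{2.1} instead.

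The genuine gap is the term $\int_{\bar\R^3_t}v_r\dot\vartheta\Phi\hat\Phi\,dx\,dt'$, which you never close with the constants of \eqref{4.38}.

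Your first route writes both factors as $u_{,z}^2/r^4$ and integrates by parts as in \eqref{4.12}. This produces $|u_{,zz}|_{2,\bar\R^3_t}$, hence an extra $D_1D_2D_4$. But $D_4$ contains $|u_{,z}(0)|_{2,\bar\R^3}$, which is not among the data in \eqref{4.38}.

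Your second route, the H\"older bound $|v_r|_3|\omega_r\dot\vartheta|_2|\hat\Phi|_6$, does not close at all. After absorbing $|\hat\Phi|_6\le c|\nabla\hat\Phi|_2$, you need
\[
\int_0^t|v_r|_3^2|\omega_r\dot\vartheta|_2^2\,dt'\le cD_1\int_0^t|\nabla v_r|_2|\omega_r\dot\vartheta|_2^2\,dt'.
\]
This requires more time integrability of $|\omega_r\dot\vartheta|_2$ than $L_2(0,t)$, for example $\sup_t$, which is again a $D_4$-type quantity.

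The missing step is to substitute $\Phi=-u_{,z}/r^2$ only in the non-hatted factor. Then integrate by parts in $z$ so that the derivative lands on $v_r$ and $\hat\Phi$, never on $u$:
\[
\int_{\bar\R^3_t}v_r\dot\vartheta\Phi\hat\Phi\,dx\,dt'=\int_{\bar\R^3_t}v_{r,z}\frac{\dot\vartheta}{r^2}u\hat\Phi\,dx\,dt'+\int_{\bar\R^3_t}v_r\frac{\dot\vartheta}{r^2}u\hat\Phi_{,z}\,dx\,dt'.
\]
The paper then applies H\"older with $u\in L_5$ (giving $D_{10}(3)$) and $\hat\Phi,v_r\in L_{10/3}$. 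This yields $\varepsilon\|\hat\Phi\|_{V(\bar\R^3_t)}^2+cD_1^2D_{10}^2(3)$.

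Combined with your own observation, you can instead use $|u|\le D_2$, $|\hat\Phi|_{2,\bar\R^3_t}\le cD_1$, and $|v_r\dot\vartheta|\le 2r_0|\dot\vartheta|\,|v_r/r|$. This gives $\varepsilon|\nabla\hat\Phi|_{2,\bar\R^3_t}^2+c(D_1^2D_2+D_1^2D_2^2)$, which is dominated by the right-hand side of \eqref{4.38}.
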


\begin{proof}
  Multiplying \eqref{4.35} by $\hat\Phi$, integrating over $\bar\R^3$ and then over $(0,t)$, we obtain
  \begin{equation}\eqal{
      &|\hat\Phi|_{2,\bar\R^3}^2+\nu|\nabla\hat\Phi|_{2,\bar\R_t^3}^2=\intop_{\bar\R_t^3} (\omega_r\partial_r+\omega_z\partial_z){v_r\over r}\vartheta\hat\Phi dxdt'\cr
      &\quad+\intop_{\bar\R_t^3}v_r\dot\vartheta\Phi\hat\Phi dxdt'-\nu\intop_{\bar\R_t^3}\bigg(2\Phi_{,r}\dot\vartheta+\Phi\ddot\vartheta+{2\over r}\Phi\dot\vartheta\bigg)\hat\Phi dxdt'\cr
      &\quad+\intop_{\bar\R_t^3}\hat F'_r\hat\Phi dxdt'+|\hat\Phi(0)|_{2,\bar\R^3}^2.\cr}
    \label{4.39}
  \end{equation}
  Now, we estimate the particular terms on the r.h.s. of (\ref{4.39}). Using (\ref{1.13}) the first term equals
  $$\eqal{
    &\intop_{\bar\R_t^3}\bigg(-{1\over r}u_{,z}\partial_r+{1\over r}u_{,r}\partial_z\bigg){v_r\over r}\vartheta\hat\Phi dxdt'\cr
    &=\intop_{\bar\R_t^3}\bigg[{1\over r}u\partial_z\bigg(\partial_r{v_r\over r}\vartheta\hat\Phi\bigg)-{1\over r}u\partial_r\bigg(\partial_z{v_r\over r}\vartheta\hat\Phi\bigg)\bigg]dxdt'\cr
    &=\intop_{\bar\R_t^3}\bigg[{u\over r}\partial_r{v_r\over r}\vartheta\hat\Phi_{,z}-{u\over r}\partial_z{v_r\over r}\dot\vartheta\hat\Phi-{u\over r}\partial_z{v_r\over r}\vartheta\hat\Phi_{,r}\bigg]dxdt',\cr}
  $$
  where the first and last integrals are bounded by
  $$\eqal{
      &\varepsilon|\nabla\hat\Phi|_{2,\bar\R_t^3}^2+c(1/\varepsilon)\bigg(|v_{r,r}|_{2,\bar\R_t^3}^2+ |v_{r,z}|_{2,\bar\R_t^3}^2+\bigg|{v_r\over r}\bigg|_{2,\bar\R_t^3}^2\bigg)|u|_{\infty,\bar\R_t^3}^2\cr
      &\le\varepsilon|\nabla\hat\Phi|_{2,\bar\R_t^3}+c(1/\varepsilon)D_1^2D_2^2.\cr}
  $$
  The middle term is bounded by
  $$
    \varepsilon|\hat\Phi|_{{10\over 3},\bar\R_t^3}^2+c(1/\varepsilon)|v_{r,z}|_{2,\bar\R_t^3}^2|u|_{5,\bar\R_t^3}^2,
  $$
  where the second expression is bounded by
  $$
    cD_1^2D_{10}^2(3),
  $$
  where (\ref{2.28}) was used.

  The second integral on the r.h.s. of (\ref{4.39}) equals
  $$\eqal{
      &\intop_{\bar\R_t^3}v_r\dot\vartheta\bigg(-{u_{,z}\over r^2}\bigg)\hat\Phi dxdt'=\intop_{\bar\R_t^3}v_{r,z}\bigg({1\over r^2}\dot\vartheta\bigg)u\hat\Phi dxdt\cr
      &\quad+\intop_{\bar\R_t^3}v_r\bigg({1\over r^2}\dot\vartheta\bigg)u\hat\Phi_{,z}dxdt'\equiv I_1+I_2,\cr}
  $$
  where
  $$\eqal{
      |I_1|&\le\varepsilon|\hat\Phi|_{10/3,\bar\R_t^3}^2+c(1/\varepsilon)|v_{r,z}|_{2,\bar\R_t^3}^2 |u|_{5,\bar\R_t^3}^2\cr
      &\le\varepsilon\|\Phi\|_{V(\bar\R_t^3)}^2+c(1/\varepsilon)D_1^2D_{10}^2(3)\cr}
  $$
  and
  $$
    |I_2|\le\varepsilon|\hat\Phi_{,z}|_{2,\bar\R_t^3}^2+c(1/\varepsilon)|v_r|_{10/3,\bar\R_t^3}^2 |u|_{5,\bar\R_t^3}^2,
  $$
  where the second expression is bounded by
  $$
    cD_1^2D_{10}^2(3).
  $$
  Since $\Phi=-{1\over r^2}u_{,z}$ the third term on the r.h.s. of (\ref{4.39}) equals
  $$\eqal{
      &-\intop_{\bar\R_t^3}\bigg[2\bigg({1\over r^2}u_{,z}\bigg)_{,r}\dot\vartheta+{1\over r^2}u_{,z}\ddot\vartheta+{2\over r}{1\over r^2}u_{,z}\dot\vartheta\bigg]\hat\Phi dxdt'\cr
      &=\intop_{\bar\R_t^3}\bigg[2\bigg({1\over r^2}u\bigg)_{,r}\dot\vartheta+{1\over r^2}u\ddot\vartheta+{2\over r^3}u\dot\vartheta\bigg]\hat\Phi_{,z}dxdt'\equiv J_1.\cr}
  $$
  Estimating and using properties of function $\vartheta$ we have
  $$\eqal{
      |J_1|&\le\varepsilon|\hat\Phi_{,z}|_{2,\bar\R_t^3}^2+ c(1/\varepsilon,1/r_0)(|u_{,r}|_{2,\bar\R_r^3}^2+|u|_{2,\bar\R_t^3}^2)\cr
      &\le\varepsilon|\hat\Phi_{,z}|_{2,\bar\R_t^3}^2+c(1/\varepsilon,1/r_0)\bigg(D_1^2+D_{10}^2\bigg({6\over 5}\bigg)\bigg).\cr
    }
  $$
  Finally, the last but one term on the r.h.s. of (\ref{4.39}) is bounded by
  $$\eqal{
      &\intop_0^t|\hat F'_r|_{6/5,\bar\R^3}|\hat\Phi|_{6,\bar\R^3}dt'\cr
      &\le\varepsilon\intop_0^t|\hat\Phi|_{6,\bar\R^3}^2dt'+c(1/\varepsilon)\intop_0^t|\hat F'_r|_{6/5,\bar\R^3}^2dt'\cr
      &\le\varepsilon|\nabla\hat\Phi|_{2,\bar\R^3}^2+c(1/\varepsilon)|\hat F'_r|_{6/5,2,\bar\R_t^3}^2.\cr}
  $$
  Using the above estimates in (\ref{4.39}) and assuming that $\varepsilon$ is sufficiently small, we obtain (\ref{4.38}). This concludes the proof.
\end{proof}
Lemma~\ref{l4.5} provides the required a priori estimate for the localized problem away from the symmetry axis, thereby completing the local analysis of the vorticity system.

The critical coupling term $(v_\varphi/r)\Phi$ appears only in the equation for $\Gamma$ and, due to Liu-Wang expansion \eqref{1.63} is not singular at the symmetry axis.
Since $\vartheta$ is supported away from $r=0$ and Lemma~\ref{l4.5} concerns the equation for $\hat\Phi$, no term of type $I$ arises in this estimate.

\begin{lemma}\label{l4.6}
  Let $\hat\Gamma$ be a solution to (\ref{4.36}), (\ref{4.37}). Then
  \begin{equation}\eqal{
      \|\hat\Gamma\|_{V(\bar\R_t^3)}^2&\le\delta_2D_1^2\intop_0^t|\Gamma|_{6,\bar\R^3}^2dt'+ c(D_2D_1^2\cr
      &\quad+c(1/\delta_2)D_1^2+|\hat F'_\varphi|_{6/5,2,\bar\R_t^3}^2+|\hat\Gamma(0)|_{2,\bar\R^3}^2).\cr}
    \label{4.40}
  \end{equation}
\end{lemma}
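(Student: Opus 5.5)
We follow the pattern of Lemma~\ref{l4.5}. Multiply \eqref{4.36} by $\hat\Gamma$, integrate over $\bar\R^3$ and then over $(0,t)$. Since $\vartheta$ vanishes for $r\le r_0$, the boundary term at the axis and the drift term $v\cdot\nabla\hat\Gamma$ drop out after integration by parts, because $\divv v=0$ and the solution decays at infinity. This gives
\[
|\hat\Gamma(t)|_{2,\bar\R^3}^2+\nu|\nabla\hat\Gamma|_{2,\bar\R_t^3}^2\le\Big|\intop_{\bar\R_t^3}2{v_r\over r}\hat\Phi\hat\Gamma\Big|+\Big|\intop_{\bar\R_t^3}v_r\dot\vartheta\Gamma\hat\Gamma\Big|+\nu\Big|\intop_{\bar\R_t^3}\Big(2\Gamma_{,r}\dot\vartheta+\Gamma\ddot\vartheta+{2\over r}\Gamma\dot\vartheta\Big)\hat\Gamma\Big|+\Big|\intop_{\bar\R_t^3}\hat F'_\varphi\hat\Gamma\Big|+|\hat\Gamma(0)|_{2,\bar\R^3}^2 .
\]
We then estimate the right-hand side term by term.

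\textbf{Coupling term.} On $\supp\vartheta$ we have $r\ge r_0$. We write ${v_r\over r}\hat\Phi=-{v_r\over r}{u_{,z}\over r^3}\vartheta$ and integrate by parts in $z$. This moves the derivative onto $v_r\hat\Gamma$ and produces terms bounded by $c(r_0)|u|_{\infty}\big(|v_{r,z}|_{2}|\hat\Gamma|_{2}+|v_r|_{2}|\hat\Gamma_{,z}|_{2}\big)$. The norms of $v_r$ and $v_{r,z}$ are controlled by $D_1$ from \eqref{2.1}, and $|u|_\infty\le D_2$ from \eqref{2.9}. Applying Young's inequality, the gradient part is absorbed and the rest yields $cD_2D_1^2$, plus lower-order pieces. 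Alternatively, one uses $|v_r/r|_{2,\bar\R_t^3}\le D_1$ together with $|\hat\Phi|\le c(r_0)|v_{\varphi,z}|$.

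\textbf{Commutator terms.} The term $v_r\dot\vartheta\Gamma\hat\Gamma$ is treated exactly as its $\zeta$-analogue in the proof of Lemma~\ref{l4.1}. By H\"older's inequality it is bounded by $|v_r|_{2,\bar\R^3}|\Gamma|_{3,\supp\dot\vartheta}|\hat\Gamma|_{6}$. We then interpolate $|\Gamma|_3$ between $L_2$ and $L_6$ and apply Young's inequality with a parameter. This produces $\varepsilon|\nabla\hat\Gamma|_2^2+\delta_2D_1^2\int_0^t|\Gamma|_{6,\bar\R^3}^2dt'+c(1/\delta_2)D_1^2$. Here $|\Gamma|_{2,\supp\dot\vartheta}\le c(r_0)|\omega_\varphi|_2$ is bounded via \eqref{2.1}, and $|v_r|_{2,\bar\R^3}\le D_1$ uniformly in time. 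The viscous cutoff terms involve $\Gamma$ and $\Gamma_{,r}$ only on $\supp\dot\vartheta\subset\{r_0\le r\le 2r_0\}$. We integrate by parts in $r$ to move the derivative off $\Gamma_{,r}$, and use $|\Gamma|\le r_0^{-1}|\omega_\varphi|$ there, which gives a bound of the form $\varepsilon|\nabla\hat\Gamma|_2^2+c(1/r_0)D_1^2$. The forcing term is bounded by $\varepsilon|\nabla\hat\Gamma|_2^2+c|\hat F'_\varphi|_{6/5,2}^2$ via the Sobolev embedding $L_6\subset H^1$.

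\textbf{Closing.} We choose $\varepsilon$ small and absorb all $\nabla\hat\Gamma$ terms, then take the supremum in $t$. This gives \eqref{4.40}. The main obstacle is the coupling term $-2(v_r/r)\hat\Phi$. Note that \eqref{4.36} has $v_r$ where one expects $v_\varphi$ from \eqref{1.20}; if the intended term is $(v_\varphi/r)\hat\Phi$, the same integration by parts in $z$ still works. Concretely, $\hat\Phi=-u_{,z}\vartheta/r^2$, and moving $\partial_z$ onto $v_\varphi\hat\Gamma$ gives $|u|_\infty(|v_{\varphi,z}|_2|\hat\Gamma|_2+|v_\varphi|_2|\hat\Gamma_{,z}|_2)$. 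This is again bounded by $\varepsilon|\nabla\hat\Gamma|^2+cD_2^2D_1^2$, which fits into the constant with the $D_2$-dependence allowed in \eqref{4.40}.
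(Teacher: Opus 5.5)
Your argument has the same structure as the paper's. You multiply (4.36) by $\hat\Gamma$ and treat the commutator term $v_r\dot\vartheta\Gamma\hat\Gamma$, the viscous cutoff terms and the forcing term exactly as the paper does. For the commutator this means the H\"older $2$--$3$--$6$ split, interpolation of $|\dot\vartheta\Gamma|_3$ between $L_2$ and $L_6$, and the resulting $\delta_2D_1^2\int_0^t|\Gamma|_{6,\bar\R^3}^2dt'$ term. For the viscous terms it means integrating $\Gamma_{,r}\Gamma\dot\vartheta\vartheta$ by parts in $r$.

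The only real difference is the coupling term. The paper does not integrate by parts. It writes the coupling with $u=rv_\varphi$, i.e.\ as $(v_\varphi/r)\hat\Phi$, consistent with (1.20), and uses $r\ge r_0$ on $\supp\vartheta$ to bound it directly by $cD_2|u_{,z}/r|_{2,\bar\R^3_t}|\nabla v|_{2,\bar\R^3_t}\le cD_2D_1^2$. This works because $|u|\le D_2$, $u_{,z}/r=v_{\varphi,z}$, and $\omega_\varphi=v_{r,z}-v_{z,r}$ are all controlled by (2.1) and (2.9).

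For that version your integration by parts in $z$ is unnecessary and costs something. It creates $\hat\Gamma_{,z}$, forces Young's inequality, and yields $D_2^2D_1^2$ instead of the $D_2D_1^2$ written in (4.40). So your claim that this fits the $D_2$-dependence of (4.40) is not literally right. It is harmless downstream, since $D_{15}^2$ already contains $D_1^2D_2^2$.

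On the other hand, if (4.36) is read literally with $v_r$, some such manoeuvre is needed, because $v_r$ is not known to be in $L_\infty$; the paper silently switches to $u$. Your integration by parts, which puts $u$ into $L_\infty$, is a sensible way to handle that term. In that computation, estimate the piece containing $\hat\Gamma_{,z}$ with the weighted bound $|v_r/r|_{2,\bar\R^3_t}\le cD_1$ from (2.1). The $r^{-3}$ weight on $r\ge r_0$ permits this. Pairing $\sup_t|v_r|_{2,\bar\R^3}$ with $|\hat\Gamma_{,z}|_{2,\bar\R^3}$ and integrating in time would give a $t$-dependent constant. The same remark applies to $|v_\varphi|_2$ in your $v_\varphi$ version.

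Finally, your ``alternative'' (using $|v_r/r|_{2,\bar\R^3_t}\le D_1$ together with $|\hat\Phi|\le c(r_0)|v_{\varphi,z}|$) does not close. It leaves a product of three factors each only in $L_2(\bar\R^3_t)$ (or $L_{10/3}$), with none in $L_\infty$.
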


\begin{proof}
  Multiplying (\ref{4.36}) by $\hat\Gamma$ and integrating over $\bar\R_t^3$, we obtain
  \begin{equation}\eqal{
      \|\hat\Gamma\|_{V(\bar\R_t^3)}^2&=-2\intop_{\bar\R_t^3}{v_r\over r}\hat\Phi\hat\Gamma dxdt'\cr
      &\quad+\intop_{\bar\R_t^3}v_r\dot\vartheta\Gamma\hat\Gamma dxdt'-\nu\intop_{\bar\R_t^3}\bigg(2\Gamma_{,r}\dot\vartheta+\Gamma\ddot\vartheta+{2\over r}\Gamma\dot\vartheta\bigg)\hat\Gamma dxdt'\cr
      &\quad+\intop_{\bar\R_t^3}\hat F'_\varphi\hat\Gamma dxdt'+|\hat\Gamma(0)|_{2,\bar\R^3}^2.\cr}
    \label{4.41}
  \end{equation}
  Using (\ref{1.13}) in the first term on the r.h.s. of (\ref{4.41}), we get
  $$
    2\intop_{\bar\R_t^3}u{1\over r^3}\vartheta^2u_{,z}(v_{r,z}-v_{z,r})dxdt'\equiv I_1.
  $$
  Hence,
  $$
    |I_1|\le cD_2\bigg|{1\over r}u_{,z}\bigg|_{2,\bar\R_t^3}|\nabla v|_{2,\bar\R_t^3}\le cD_2D_1^2.
  $$
  By the H\"older inequality, the second term on the r.h.s. of (\ref{4.41}) is bounded by
  $$\eqal{
      &c\intop_0^t|v_r|_{2,\bar\R^3}|\dot\vartheta\Gamma|_{3,\bar\R^3}|\hat\Gamma|_{6,\bar\R^3}dt'\cr
      &\le\varepsilon_1\intop_0^t|\hat\Gamma|_{6,\bar\R^3}^2dt'+c(1/\varepsilon_1)D_1^2\intop_0^t |\dot\vartheta\Gamma|_{3,\bar\R^3}^2dt'\equiv I_2.\cr}
  $$
  Using the interpolation
  $$
    |\dot\vartheta\Gamma|_{3,\bar\R^3}\le |\dot\vartheta\Gamma|_{6,\bar\R^3}^{1/2}|\dot\vartheta\Gamma|_{2,\bar\R^3}^{1/2}
  $$
  in $I_2$, we obtain
  $$\eqal{
      I_2&\le\varepsilon_1\intop_0^t|\hat\Gamma|_{6,\bar\R^3}^2dt'+c(1/\varepsilon_1)D_1^2 \bigg(\varepsilon_2\intop_0^t|\Gamma|_{6,\bar\R^3}^2dt'+ c(1/\varepsilon_2)\intop_0^t|\dot\vartheta\Gamma|_{2,\bar\R^3}^2dt'\bigg),\cr}
  $$
  where
  $$
    \intop_0^t|\dot\vartheta\Gamma|_{2,\bar\R^3}^2dt'\le cD_1^2.
  $$
  Next, the third term on the r.h.s. of (\ref{4.41}) equals
  $$
    -2\nu\intop_{\bar\R_t^3}\Gamma_{,r}\Gamma\dot\vartheta\vartheta dxdt'-\nu\intop_{\bar\R_t^3}\Gamma^2\bigg(\ddot\vartheta+{2\over r}\dot\vartheta\bigg)\vartheta dxdt'\equiv I_3+I_4,
  $$
  where
  $$
    I_3=\nu\intop_{\bar\R_t^3}\Gamma^2(\dot\vartheta\vartheta r)_{,r}drdzdt'.
  $$
  Hence
  $$
    |I_3|\le cD_1^2.
  $$
  Finally
  $$
    |I_4|\le cD_1^2.
  $$
  By the H\"older and Young inequalities the last but one term from the r.h.s. of (\ref{4.41}) is bounded by
  $$
    \varepsilon_3|\hat\Gamma|_{6,2,\bar\R_t^3}^2+c(1/\varepsilon_3)|\hat F'_\varphi|_{6/5,2,\bar\R_t^3}^2.
  $$
  Using the above estimates with sufficiently small $\varepsilon_1$, $\varepsilon_3$ in the r.h.s. of (\ref{4.41}), we obtain (\ref{4.40}). This concludes the proof.
\end{proof}

\begin{remark}\label{r4.7}
  Combining estimates \eqref{4.38} and \eqref{4.40} we obtain
  \begin{equation}\eqal{
      \|\hat\Phi\|_{V(\bar\R_t^3)}^2+\|\hat\Gamma\|_{V(\bar\R_t^3)}^2
      &\le \delta_2 D_1^2 \intop_0^t |\Gamma|_{6,\bar\R^3}^2\,dt' + D_{15}^2,
    }
    \label{4.42}
  \end{equation}
  where
  \begin{equation*}\eqal{
      D_{15}^2
      &:= c\Big[D_1^2D_2^2 + D_1^2D_{10}^2(3) + \frac{1}{\delta_2}D_1^2
        + D_{10}^2(6/5) + D_2D_1^2\Big]\cr
      &\quad + c\Big(|\hat F'_r|_{6/5,2,\bar\R_t^3}^2+|\hat F'_\varphi|_{6/5,2,\bar\R_t^3}^2\Big)
      + c\Big(|\hat\Phi(0)|_{2,\bar\R^3}^2+|\hat\Gamma(0)|_{2,\bar\R^3}^2\Big).
    }
    \label{4.42a}
  \end{equation*}
\end{remark}

\begin{remark}\label{r4.8}
  The local estimates derived in this section for the regions near and away from the symmetry axis are precisely the ingredients used in the proof of Theorem~\ref{t1.1} to obtain the conditional estimate~\eqref{1.34}.
\end{remark}

\section{Local estimates for $\omega_r$ and $\omega_z$}\label{s5}

To localize equations $(\ref{1.9})_{1,3}$ to a neighborhood of the axis of symmetry, we use the cut-off function $\zeta$ (see (\ref{4.1})). Let
\begin{equation}
  \tilde\omega_r=\omega_r\zeta,\quad \tilde\omega_z=\omega_z\zeta.
  \label{5.1}
\end{equation}
From (\ref{1.9}), we have
\begin{equation}\eqal{
    &\tilde\omega_{r,t}+v\cdot\nabla\tilde\omega_r-\nu\Delta\tilde\omega_r+\nu{\tilde\omega_r\over r^2}=\tilde\omega_rv_{r,r}+\tilde\omega_zv_{r,z}\cr
    &\quad+v_r\dot\zeta\omega_r-\nu(2\omega_{r,r}\dot\zeta+\omega_r\ddot\zeta)+\tilde F_r\cr
    &\equiv\tilde\omega_rv_{r,r}+\tilde\omega_zv_{r,z}+L_1+\tilde F_r,\cr}
  \label{5.2}
\end{equation}
\begin{equation}\eqal{
    &\tilde\omega_{z,t}+v\cdot\nabla\tilde\omega_z-\nu\Delta\tilde\omega_z=\tilde\omega_rv_{z,r}+\tilde\omega_zv_{z,z}\cr
    &\quad+v_r\dot\zeta\omega_z-\nu(2\omega_{z,r}\dot\zeta+\omega_z\ddot\zeta)+\tilde F_z\cr
    &\equiv\tilde\omega_rv_{z,r}+\tilde\omega_zv_{z,z}+L_2+\tilde F_z,\cr}
  \label{5.3}
\end{equation}
\begin{equation}
  \tilde \omega_r|_{t=0}=\tilde\omega_r(0),\quad \tilde\omega_z|_{t=0}=\tilde\omega_z(0).
  \label{5.4}
\end{equation}

\begin{lemma}\label{l5.1}
  Let $D_{11}=(D_1+D_4+D_5)D_2$, $D_{12}=(D_1+D_4)D_1^{1-\varepsilon_0}{r_0^{\varepsilon_0}\over\varepsilon_0}$, $D_9=D_1D_2^{2(1-\varepsilon_0)}{r_0^{2\varepsilon_0}\over\varepsilon_0^2}$, $D_8=D_1D_{11}+D_1(D_1+D_4+D_5)(D_2+1)+|\tilde F_r|_{6/5,2,\bar\R_t^3}^2+|\tilde F_z|_{6/5,2,\bar\R_t^3}^2+|\tilde\omega_r(0)|_{2,\bar\R^3}^2+|\tilde\omega_z(0)|_{2,\bar\R^3}^2$.\\
  Then
  \begin{equation}\eqal{
    &\|\tilde\omega_r\|_{V(\bar\R_t^3)}^2+\|\tilde\omega_z\|_{V(\bar\R_t^3)}^2+\nu\bigg|{\tilde\omega_r\over r}\bigg|_{2,\bar\R_t^3}^2\cr
    &\le c(D_{11}+D_{12}|v_\varphi|_{\infty,\bar\R_t^3}^{\varepsilon_0}+ D_9|v_\varphi|_{\infty,\bar\R_t^3}^{2\varepsilon_0})|\nabla\tilde\Gamma|_{2,\bar\R_t^3}\cr
    &\quad+(D_{12}D_1|v_\varphi|_{\infty,\bar\R_t^3}^{\varepsilon_0}+ D_9D_1|v_\varphi|_{\infty,\bar\R_t^3}^{2\varepsilon_0})+cD_8\cr
    &\le\phi({\rm data})(1+|v_\varphi|_{\infty,\bar\R_t^3}^{2\varepsilon_0})|\nabla\tilde\Gamma|_{2,\bar\R_t^3}+\phi({\rm data})\cdot\cr
    &\quad\cdot(1+|v_\varphi|_{\infty,\bar\R_t^3}^{2\varepsilon_0}).\cr}
    \label{5.5}
  \end{equation}
\end{lemma}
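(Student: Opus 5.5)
We obtain the estimate by testing the localized equations (\ref{5.2}) and (\ref{5.3}) against $\tilde\omega_r$ and $\tilde\omega_z$ and integrating over $\bar\R_t^3$. This is the same energy scheme as in Lemma~\ref{l4.1}.

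\textbf{Left-hand side.} Because $\tilde\omega_r$ carries the potential term $\nu\tilde\omega_r/r^2$, the left-hand side gives $\|\tilde\omega_r\|_{V}^2+\|\tilde\omega_z\|_V^2+\nu|\tilde\omega_r/r|_{2,\bar\R_t^3}^2$. The transport terms vanish after integration by parts, using $\divv v=0$ and the decay of $v$.

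\textbf{Cut-off terms and forcing.} The terms $L_1$ and $L_2$ contain $\dot\zeta$ and $\ddot\zeta$, so they live on $\supp\dot\zeta\subset\{r_0\le r\le 2r_0\}$.
\begin{itemize}
\item On this set, $\omega_r=-u_{,z}/r$ and $\omega_z=u_{,r}/r$ are controlled by Lemma~\ref{l2.9}, i.e. by $D_4$ and $D_5$.
\item The transport piece $v_r\dot\zeta\omega_r\tilde\omega_r$ is bounded by $D_2$ and $D_1$ after an integration by parts that moves the $z$-derivative off $u$.
\item The viscous pieces with $\omega_{,r}\dot\zeta$ are integrated by parts, giving bounds of order $D_1(D_1+D_4+D_5)(D_2+1)$.
\item The forcing is handled by $|\tilde F|_{6/5,2}|\tilde\omega|_{6,2}$ with Young's inequality, absorbing $\varepsilon|\nabla\tilde\omega|^2$.
\end{itemize}
Together these produce $D_8$.

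\textbf{Vortex-stretching terms.} The terms $\tilde\omega_r v_{r,r}\tilde\omega_r$, $\tilde\omega_z v_{r,z}\tilde\omega_r$, $\tilde\omega_r v_{z,r}\tilde\omega_z$ and $\tilde\omega_z v_{z,z}\tilde\omega_z$ are the core of the proof. I would substitute $\omega_r=-u_{,z}/r$, $\omega_z=u_{,r}/r$ and integrate by parts so that one derivative moves off $u$. This leaves $u$ itself, bounded by $D_2$, multiplied by second derivatives of the velocity.

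By (\ref{1.17}) these second derivatives are expressed through $\psi_1$: quantities such as $\psi_{1,zz}$, $\psi_{1,rz}/r$ and $r\psi_{1,rzz}$. Lemmas \ref{l3.5} and \ref{l3.6} bound them by $|\nabla\tilde\Gamma|_{2,\bar\R_t^3}+D_1$. The remaining factor, $u_{,r}/r$ or $u_{,z}/r$ or a gradient of $\tilde\omega$, is bounded by $D_4$, $D_5$, or absorbed. This gives the $D_{11}|\nabla\tilde\Gamma|$ term.

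\textbf{Main obstacle.} The hard terms are those where a bare $1/r$ or $1/r^2$ cannot be paired with the Hardy-type quantities of Section~3. This happens because $\psi_{1,zz}$ does not vanish on the axis, so the critical Hardy inequality fails, exactly as in Lemma~\ref{l4.4}.

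I would treat these as in $J_3^1$ of Lemma~\ref{l4.1}. Write $u=(rv_\varphi)^{1-\varepsilon_0}(rv_\varphi)^{\varepsilon_0}$ and bound $|u|^{1-\varepsilon_0}\le D_2^{1-\varepsilon_0}$ and $|v_\varphi|^{\varepsilon_0}\le|v_\varphi|_\infty^{\varepsilon_0}$. This leaves a weight $r^{-(1-\varepsilon_0)}$ on $\tilde\psi_{1,zz}$ (or on $\tilde\omega$), which the subcritical Hardy inequality (\ref{2.13}) handles with constant $r_0^{\varepsilon_0}/\varepsilon_0$. Applied once, this yields the $D_{12}|v_\varphi|_\infty^{\varepsilon_0}$ term; applied twice, when both factors need the splitting, it yields $D_9|v_\varphi|_\infty^{2\varepsilon_0}$. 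The lower-order pieces carry an extra $D_1$, which gives the non-$\nabla\tilde\Gamma$ terms.

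\textbf{Conclusion.} After absorbing the $\varepsilon$-terms, collect everything into the first inequality of (\ref{5.5}). The second inequality then follows from $|v_\varphi|^{\varepsilon_0}\le1+|v_\varphi|^{2\varepsilon_0}$ by gathering the constants into $\phi(\mathrm{data})$.
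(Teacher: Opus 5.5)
Your architecture matches the paper's: the energy identity from (\ref{5.2})--(\ref{5.3}), the substitution $\omega_r=-u_{,z}/r$, $\omega_z=u_{,r}/r$, integration by parts, the bounds of Section~\ref{s3}, and the splitting $u=u^{1-\varepsilon_0}(rv_\varphi)^{\varepsilon_0}$ with subcritical Hardy. However, the two hardest terms are not handled by what you describe, and your account of the $D_9$ term is not the actual mechanism.

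\textbf{The $D_9$ term.} When the derivative taken off $u$ lands on the other $u$-factor, you obtain $\int_{\bar\R_t^3}\frac{\tilde\psi_{1,z}}{r^2}u_{,zz}u\zeta\,dx\,dt'$ and $\int_{\bar\R_t^3}\frac{\tilde\psi_{1,z}}{r}\bigl(\frac1r u_{,r}\zeta\bigr)_{,r}u\,dx\,dt'$.
\begin{itemize}
\item The velocity factor here is the first-order quantity $\tilde\psi_{1,z}$ (from $v_{r,r}=-\psi_{1,z}-r\psi_{1,rz}$). It equals $d_{1,z}$ on the axis and need not vanish.
\item The other factors, $u_{,zz}/r$ and $(u_{,r}\zeta/r)_{,r}$, are gradients of $\tilde\omega_r$ and $\tilde\omega_z$. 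They must therefore be absorbed by Young's inequality into the dissipation on the left.
\item What remains is $\int\tilde\psi_{1,z}^2u^2r^{-2}\,dx\,dt'$. The splitting plus Hardy with $\beta=\frac12-\varepsilon_0$ reduce it to $D_2^{2(1-\varepsilon_0)}|v_\varphi|_{\infty,\bar\R_t^3}^{2\varepsilon_0}\frac{r_0^{2\varepsilon_0}}{\varepsilon_0^2}|\tilde\psi_{1,rz}|_{2,\bar\R_t^3}^2$.
\end{itemize}
This is a \emph{squared} second-order norm of $\tilde\psi_1$. With the tools you list (Lemma~\ref{l3.4}) it is bounded by $|\tilde\Gamma|_{2,\bar\R_t^3}^2+tD_1^2$, i.e.\ quadratically in the unknown. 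That destroys the linear dependence on $|\nabla\tilde\Gamma|_{2,\bar\R_t^3}$ asserted in (\ref{5.5}) and used later in (\ref{4.33}).

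The idea you are missing is the interpolation $|\tilde\psi_{1,rz}|_{2,\bar\R^3}^2\le c|\tilde\psi_{1,z}|_{2,\bar\R^3}|\nabla^2\tilde\psi_{1,z}|_{2,\bar\R^3}$. Combined with (\ref{2.15}) and (\ref{3.41}), it gives $cD_1(|\nabla\tilde\Gamma|_{2,\bar\R_t^3}+D_1)$, and this is exactly the origin of $D_9=D_1D_2^{2(1-\varepsilon_0)}r_0^{2\varepsilon_0}/\varepsilon_0^2$. The exponent $2\varepsilon_0$ comes from $u^2$ after Young's inequality, not from splitting ``both factors'' of a trilinear term.

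\textbf{The $\tilde\omega_r\tilde\omega_z$ term.} After integrating by parts in $z$, this term contains $\int\bigl(-\tilde\psi_{1,zz}+\frac3r\tilde\psi_{1,r}+\tilde\psi_{1,rr}\bigr)\frac1r u_{,rz}u\,dx\,dt'$.
\begin{itemize}
\item Not only $\tilde\psi_{1,zz}$ but also $\tilde\psi_{1,rr}$ and $\tilde\psi_{1,r}/r$ are nonzero on the axis; both tend to $2d_2$ by (\ref{1.64}). So they also need the weight $r^{-(1-\varepsilon_0)}$.
\item Subcritical Hardy applied to $\tilde\psi_{1,rr}$ would require $\tilde\psi_{1,rrr}$, which no lemma of Section~\ref{s3} controls. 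These quantities are also not among those you list.
\end{itemize}
The paper removes them with the localized elliptic equation (\ref{5.28}), replacing $\tilde\psi_{1,rr}+\frac3r\tilde\psi_{1,r}$ by $-\tilde\psi_{1,zz}-\tilde\Gamma$ plus cut-off terms. It then applies Hardy to $\tilde\psi_{1,zz}$ (via $\tilde\psi_{1,zzr}$ and (\ref{3.41})) and to $\tilde\Gamma$ (via $\tilde\Gamma_{,r}$). That is why the full gradient $|\nabla\tilde\Gamma|_{2,\bar\R_t^3}$, and not only $|\tilde\Gamma_{,z}|_{2,\bar\R_t^3}$, appears in (\ref{5.5}).
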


\begin{proof}
  Multiplying (\ref{5.2}) by $\tilde\omega_r$, (\ref{5.3}) by $\tilde\omega_z$, adding the results and integrating over $\bar\R_t^3$, we get
  \begin{equation}\eqal{
    &{1\over 2}(|\tilde\omega_r(t)|_{2,\bar\R^3}^2+|\tilde\omega_z(t)|_{2,\bar\R^3}^2)+ \nu(|\nabla\tilde\omega_r|_{2,\bar\R_t^3}^2+|\nabla\tilde\omega_z|_{2,\bar\R_t^3}^2)\cr
    &\quad+\nu\bigg|{\tilde\omega_r\over r}\bigg|_{2,\bar\R_t^3}^2=\intop_{\bar\R_t^3}[v_{r,r}\tilde\omega_r^2+v_{z,z}\tilde\omega_z^2+ (v_{r,z}+v_{z,r})\tilde\omega_r\tilde\omega_z]dxdt'\cr
    &\quad+\intop_{\bar\R_t^3}L_1\tilde\omega_rdxdt'+\intop_{\bar\R_t^3}L_2\tilde\omega_zdxdt'+\intop_{\bar\R_t^3}\tilde F_r\tilde\omega_rdxdt'\cr
    &\quad+\intop_{\bar\R_t^3}\tilde F_z\tilde\omega_zdxdt'+{1\over 2}(|\tilde\omega_r(0)|_{2,\bar\R^3}^2+|\tilde\omega_z(0)|_{2,\bar\R^3}^2).\cr}
    \label{5.6}
  \end{equation}
  To estimate the first term on the r.h.s. of (\ref{5.6}) we recall from (\ref{1.13}) the relations
  \begin{equation}
    \omega_r=-{1\over r}u_{,z},\quad \omega_z={1\over r}u_{,r}
    \label{5.7}
  \end{equation}
  and from (\ref{1.15}) the following
  \begin{equation}\eqal{
    &v_{r,r}=-\psi_{,zr},\quad &v_{z,z}=\psi_{,rz}+{\psi_{,z}\over r},\cr
    &v_{r,z}=-\psi_{,zz},\quad &v_{z,r}=\psi_{,rr}+{1\over r}\psi_{,r}-{\psi\over r^2}.\cr}
    \label{5.8}
  \end{equation}
  Using (\ref{5.7}) and (\ref{5.8}) the first term on the r.h.s. of (\ref{5.6}) equals
  \begin{equation}\eqal{
      J&=\intop_{\bar\R_t^3}\bigg[-\psi_{,zr}\bigg({1\over r}u_{,z}\bigg)^2\zeta^2+\bigg(\psi_{,rz}+{\psi_{,z}\over r}\bigg)\bigg({1\over r}u_{,r}\bigg)^2\zeta^2\cr
        &\quad-\bigg(-\psi_{,zz}+\psi_{,rr}+{1\over r}\psi_{,r}-{\psi\over r^2}\bigg)\bigg({1\over r}u_{,z}\bigg)\bigg({1\over r}u_{,r}\bigg)\zeta^2\bigg]dxdt'\cr
      &\equiv J_1+J_2+J_3,\cr}
    \label{5.9}
  \end{equation}
  where
  $$\eqal{
      J_1&=-\intop_{\bar\R_t^3}(\tilde\psi_{,zr}-\psi_{,z}\dot\zeta){1\over r}\tilde u_{,z}{1\over r}u_{,z}dxdt',\cr
      J_2&=\intop_{\bar\R_t^3}\bigg(\tilde\psi_{,rz}-\psi_{,z}\dot\zeta+{\tilde\psi_{,z}\over r}\bigg)\bigg({1\over r}u_{,r}\bigg)^2\zeta dxdt',\cr
      J_3&=-\intop_{\bar\R_t^3}\bigg(-\tilde\psi_{,zz}+\tilde\psi_{,rr}-2\psi_{,r}\dot\zeta-\psi\ddot\zeta+{1\over r}\tilde\psi_{,r}-{1\over r}\psi\dot\zeta-{\tilde\psi\over r^2}\bigg)\cdot\cr
      &\quad\cdot{1\over r}\tilde u_{,z}{1\over r}u_{,r}dxdt'.\cr}
  $$
  First, we examine $J_1$. We can write it in the form
  \begin{equation}\eqal{
      J_1&=-\intop_{\bar\R_t^3}\tilde\psi_{,zr}{1\over r}\tilde u_{,z}{1\over r}u_{,z}dxdt'+\intop_{\bar\R_t^3}\psi_{,z}\dot\zeta{1\over r}\tilde u_{,z}{1\over r}u_{,z}dxdt'\cr
      &\equiv J_1^1+J_1^2.\cr}
    \label{5.10}
  \end{equation}
  Integrating by parts with respect to $z$ in $J_1^1$ yields
  $$\eqal{
      J_1^1&=\intop_{\bar\R_t^3}\tilde\psi_{,zzr}{1\over r}\tilde u_{,z}{1\over r}udxdt'+\intop_{\bar\R_t^3}\tilde\psi_{,zr}{1\over r^2}\tilde u_{,zz}udxdt'\equiv I_1+I_2.\cr}
  $$
  Using that $\tilde\psi=r\tilde\psi_1$ in $I_1$ implies
  $$
    I_1=\intop_{\bar\R_t^3}\bigg({\tilde\psi_{1,zz}\over r}+\tilde\psi_{1,zzr}\bigg){\tilde u_{,z}\over r}udxdt'\equiv I_1^1+I_1^2,
  $$
  where
  $$\eqal{
    |I_1^1|&\le\intop_{\bar\R_t^3}\bigg|{\tilde\psi_{1,zz}\over r^{1-\varepsilon_0}}\bigg|\,\bigg|{\tilde u_{,z}\over r}\bigg|\,|v_\varphi|^{\varepsilon_0}|u|^{1-\varepsilon_0}dxdt'\cr
    &\le D_2^{1-\varepsilon_0}|v_\varphi|_{\infty,\bar\R_t^3}^{\varepsilon_0}\bigg|{\tilde u_{,z}\over r}\bigg|_{2,\bar\R_t^3}\bigg|{\tilde\psi_{1,zz}\over r^{1-\varepsilon_0}}\bigg|_{2,\bar\R_t^3},\cr}
  $$
  where (\ref{2.9}) is used. Using (\ref{2.1}),
  $$
    \bigg|{\tilde u_{,z}\over r}\bigg|_{2,\bar\R_t^3}\le|v_{\varphi,z}|_{2,\bar\R_t^3}\le D_1,
  $$
  the Hardy inequality and (\ref{3.40}) imply
  $$
    \bigg|{\tilde\psi_{1,zz}\over r^{1-\varepsilon_0}}\bigg|_{2,\bar\R_t^3}\le c{r_0^{\varepsilon_0}\over\varepsilon_0}|\tilde\psi_{1,zzr}|_{2,\bar\R_t^3}\le c{r_0^{\varepsilon_0}\over\varepsilon_0}(|\tilde\Gamma_{,z}|_{2,\bar\R_t^3}+D_1).
  $$
  In virtue of the above estimates, we have
  \begin{equation}
    |I_1^1|\le cD_1D_2^{1-\varepsilon_0}{r_0^{\varepsilon_0}\over\varepsilon_0}|v_\varphi|_{\infty,\bar\R_t^3}^{\varepsilon_0} (|\tilde\Gamma_{,z}|_{2,\bar\R_t^3}+D_1).
    \label{5.11}
  \end{equation}
  Next, (\ref{2.1}), (\ref{2.9}) and (\ref{3.40}) imply
  \begin{equation}\eqal{
    |I_1^2|&\le|u|_{\infty,\bar\R_t^3}\bigg|{u_{,z}\over r}\bigg|_{2,\bar\R_t^3}|\tilde\psi_{1,rzz}|_{2,\bar\R_t^3}\cr
    &\le cD_1D_2(|\tilde\Gamma_{,z}|_{2,\bar\R_t^3}+D_1).\cr}
    \label{5.12}
  \end{equation}
  Summarizing, (\ref{5.11}) and (\ref{5.12}) imply
  \begin{equation}\eqal{
      |I_1|&\le c\bigg[D_1D_2+D_1D_2^{1-\varepsilon_0}{r_0^{\varepsilon_0}\over\varepsilon_0} |v_\varphi|_{\infty,\bar\R_t^3}^{\varepsilon_0}\bigg][|\tilde\Gamma_{,z}|_{2,\bar\R_t^3}+D_1].\cr}
    \label{5.13}
  \end{equation}
  In view of the mapping $\tilde\psi=r\tilde\psi_1$, we have
  $$
    I_2=\intop_{\bar\R_t^3}\bigg({\tilde\psi_{1,z}\over r^2}+{\tilde\psi_{1,rz}\over r}\bigg)\tilde u_{,zz}udxdt'\equiv I_2^1+I_2^2.
  $$
  Estimates (\ref{2.9}), (\ref{2.22}) and (\ref{3.51}) imply
  $$\eqal{
    |I_2^2|&\le|u|_{\infty,\bar\R_t^3}|\tilde u_{,zz}|_{2,\bar\R_t^3}\bigg|{\tilde\psi_{1,rz}\over r}\bigg|_{2,\bar\R_t^3}\le cD_2D_4(|\tilde\Gamma_{,z}|_{2,\bar\R_t^3}+D_1).\cr}
  $$
  Hence,
  \begin{equation}
    |I_2|\le cD_2D_4(|\tilde\Gamma_{,z}|_{2,\bar\R_t^3}+D_1)+\intop_{\bar\R_t^3}{\tilde\psi_{1,z}\over r^2}\tilde u_{,zz}udxdt'.
    \label{5.14}
  \end{equation}
  The decomposition of $J_1^1$ and (\ref{5.13}), (\ref{5.14}) imply
  \begin{equation}\eqal{
      |J_1^1|&\le c\bigg[D_1D_2+D_2D_4+D_1D_2^{1-\varepsilon_0}{r_0^{\varepsilon_0}\over\varepsilon_0} |v_\varphi|_{\infty,\bar\R_t^3}^{\varepsilon_0}\bigg]\cdot\cr
      &\quad\cdot[|\tilde\Gamma_{1,z}|_{2,\bar\R_t^3}+D_1]+\bigg|\intop_{\R_t^3}{\tilde\psi_{1,z}\over r^2}u_{,zz}udxdt'\bigg|.\cr}
    \label{5.15}
  \end{equation}
  Looking for $J_1$ defined in (\ref{5.10}), we need to estimate $J_1^2$. From the properties of $\dot\zeta$, we have
  \begin{equation}\eqal{
      |J_1^2|&\le\phi(r_0)\intop_{\bar\R_t^3}|\psi_{,z}|\,|\tilde u_{,z}|\,|u|dxdt'\cr
      &\le\phi(r_0)D_2|\psi_{,z}|_{2,\bar\R_t^3}|\tilde v_{\varphi,z}|_{2,\bar\R_t^3}\le\phi(r_0)D_1^2D_2,\cr}
    \label{5.16}
  \end{equation}
  where (\ref{2.1}) and (\ref{2.15}) were used.

  Using (\ref{5.15}) and (\ref{5.16}) in (\ref{5.10}) yields
  \begin{equation}\eqal{
      |J_1|&\le c\bigg[D_1D_2+D_2D_4+D_1D_2^{1-\varepsilon_0}{r_0^{\varepsilon_0}\over\varepsilon_0} |v_\varphi|_{\infty,\bar\R_t^3}^{\varepsilon_0}\bigg]\cdot\cr
      &\quad\cdot[|\tilde\Gamma_{,z}|_{2,\bar\R_t^3}+D_1]+\phi(r_0)D_1^2D_2\cr
      &\quad+\bigg|\intop_{\bar\R_t^3}{\tilde\psi_{,z}\over r^2}\tilde u_{,zz}udxdt'\bigg|.\cr}
    \label{5.17}
  \end{equation}
  Next, we examine
  \begin{equation}\eqal{
      J_2&=\intop_{\bar\R_t^3}\bigg(\tilde\psi_{,rz}+{\tilde\psi_{,z}\over r}\bigg)\bigg({1\over r}u_{,r}\bigg)^2\zeta dxdt'\cr
      &\quad-\intop_{\bar\R_t^3}\psi_{,z}\dot\zeta\bigg({1\over r}u_{,r}\bigg)^2\zeta dxdt'\equiv J_2^1+J_2^2.\cr}
    \label{5.18}
  \end{equation}
  In view of Remark \ref{r1.2}, we have
  $$
    \bigg(\tilde\psi_{,rz}+{\tilde\psi_{,z}\over r}\bigg){1\over r}u_{,r}u\bigg|_{r=0}=0.
  $$
  Therefore, integrating by parts with respect to $r$ in $J_2^1$ yields
  \begin{equation}\eqal{
      J_2^1&=-\intop_{\bar\R_t^3}\bigg(\tilde\psi_{,rz}+{\tilde\psi_{,z}\over r}\bigg)_{,r}{1\over r}u_{,r}u\zeta drdzdt'\cr
      &\quad-\intop_{\bar\R_t^3}\bigg(\tilde\psi_{,rz}+{\tilde\psi_{,z}\over r}\bigg)\bigg({1\over r}u_{,r}\bigg)_{,r}u\zeta drdzdt'\cr
      &\quad-\intop_{\bar\R_t^3}\bigg(\tilde\psi_{,rz}+{\tilde\psi_{,z}\over r}\bigg){1\over r}u_{,r}\dot\zeta udrdzdt'\equiv K_1+K_2+K_3.\cr}
    \label{5.19}
  \end{equation}
  Consider $K_1$. Using $\tilde\psi=r\tilde\psi_1$, we get
  $$
    K_1=-\intop_{\bar\R_t^3}(3\tilde\psi_{1,rz}+r\tilde\psi_{1,rrz}){1\over r}{1\over r}u_{,r}\zeta udxdt'.
  $$
  Therefore,
  \begin{equation}\eqal{
    |K_1|&\le\bigg(3\bigg|{\tilde\psi_{1,rz}\over r}\bigg|_{2,\bar\R_t^3}+|\tilde\psi_{1,rrz}|_{2,\bar\R_t^3}\bigg)\bigg|{1\over r}u_{,r}\bigg|_{2,\bar\R_t^3}|u|_{\infty,\bar\R_t^3}\cr
    &\le cD_1D_2(|\tilde\Gamma_{,z}|_{2,\bar\R_t^3}+D_1),\cr}
    \label{5.20}
  \end{equation}
  where we used (\ref{2.1}), (\ref{2.9}), (\ref{3.41}), (\ref{3.51}) and
  $$
    \bigg|{1\over r}u_{,r}\bigg|_{2,\bar\R_t^3}\le\bigg|{v_\varphi\over r}\bigg|_{2,\bar\R_t^3}+|v_{\varphi ,r}|_{2,\bar\R_t^3}\le cD_1.
  $$
  Using that $\tilde\psi=r\tilde\psi_1$, we have
  $$\eqal{
      K_2&=\intop_{\bar\R_t^3}(2\tilde\psi_{1,z}+r\tilde\psi_{1,rz}){1\over r}\bigg({1\over r}u_{,r}\bigg)_{,r}\zeta udxdt'\cr
      &=\intop_{\bar\R_t^3}\tilde\psi_{1,rz}\bigg({1\over r}u_{,r}\bigg)_{,r}\zeta udxdt'+2\intop_{\bar\R_t^3}{\tilde\psi_{1,z}\over r}\bigg({1\over r}u_{,r}\zeta\bigg)_{,r}udxdt'\cr
      &\quad-2\intop_{\bar\R_t^3}{\tilde\psi_{1,z}\over r}{1\over r}u_{,r}\dot\zeta udxdt'\equiv K_2^1+K_2^2+K_2^3.\cr}
  $$
  First, we examine
  $$\eqal{
      K_2^1&=\intop_{\bar\R_t^3}{\tilde\psi_{1,rz}\over r}u_{,rr}\zeta udxdt'-\intop_{\bar\R_t^3}{\tilde\psi_{1,rz}\over r}{1\over r}u_{,r}\zeta udxdt'\cr
      &\equiv K_2^{11}+K_2^{12},\cr}
  $$
  where
  $$\eqal{
    |K_2^{11}|&\le\bigg|{\tilde\psi_{1,rz}\over r}\bigg|_{2,\bar\R_t^3}|u_{,rr}|_{2,\bar\R_t^3}|u|_{\infty,\bar\R_t^3}\cr
    &\le cD_2D_5(|\tilde\Gamma_{,z}|_{2,\bar\R_t^3}+D_1),\cr}
  $$
  where (\ref{2.9}), (\ref{2.23}), (\ref{3.51}) were used, and
  $$\eqal{
    |K_2^{12}|&\le\bigg|{\tilde\psi_{1,rz}\over r}\bigg|_{2,\bar\R_t^3}\bigg|{1\over r}u_{,r}\bigg|_{2,\bar\R_t^3}|u|_{\infty,\bar\R_t^3}\cr
    &\le cD_1D_5(|\tilde\Gamma_{,z}|_{2,\bar\R_t^3}+D_1),\cr}
  $$
  where (\ref{2.1}), (\ref{2.9}), (\ref{3.51}) were used.

  Next,
  $$
    |K_2^2|\le c\intop_{\bar\R_t^3}\bigg|{\tilde\psi_{1,z}\over r}\bigg({1\over r}u_{,r}\zeta\bigg)_{,r}u\bigg|dxdt'.
  $$
  Finally,
  $$
    |K_2^3|\le\phi(r_0)\intop_{\bar\R_t^3}|\tilde\psi_{1,z}|\,|u_{,r}|\,|u|dxdt'\le\phi(r_0)D_1^2D_2,
  $$
  where (\ref{2.1}), (\ref{2.9}), (\ref{2.15}) have been used.

  Summarizing,
  \begin{equation}\eqal{
    |K_2|&\le c(D_2D_5+D_1D_2)(|\tilde\Gamma_{,z}|_{2,\bar\R_t^3}+D_1)\cr
    &\quad+c\intop_{\bar\R_t^3}\bigg|{\tilde\psi_{1,z}\over r}\bigg({1\over r}u_{,r}\zeta\bigg)_{,r}u\bigg|dxdt'.\cr}
    \label{5.21}
  \end{equation}
  Finally,
  \begin{equation}
    |K_3|\le\phi(r_0)\intop_{\bar\R_t^3}(|\psi_{,rz}|+|\tilde\psi_{,z}|)|u_{,r}|\,|u|dxdt'\le \phi(r_0)D_1^2D_2,
    \label{5.22}
  \end{equation}
  where we used (\ref{2.1}), (\ref{2.9}) and (\ref{2.15}).

  Using, (\ref{5.20})--(\ref{5.22}) in (\ref{5.19}) yields
  \begin{equation}\eqal{
    |J_2^1|&\le c(D_1D_2+D_2D_5)(|\tilde\Gamma_{,z}|_{2,\bar\R_t^3}+D_1)\cr
    &\quad+c\intop_{\bar\R_t^3}\bigg|{\tilde\psi_{1,z}\over r}\bigg({1\over r}u_{,r}\zeta\bigg)_{,r}u\bigg|dxdt'.\cr}
    \label{5.23}
  \end{equation}
  In the final step we examine $J_2^2$. Using properties of $\dot\zeta$ and integrating by parts with respect to $r$, we obtain
  $$\eqal{
      J_2^2&=-\intop_{\bar\R_t^3}\psi_{,z}\dot\zeta{1\over r}u_{,r}u_{,r}\zeta drdzdt'=\intop_{\bar\R_t^3}\bigg(\psi_{,z}\dot\zeta{1\over r}u_{,r}\zeta\bigg)_{,r}udrdzdt'\cr
      &=\intop_{\bar\R_t^3}\bigg[\psi_{,rz}{1\over r}\zeta\dot\zeta u_{,r}+\psi_{,z}\bigg({1\over r}\zeta\dot\zeta\bigg)_{,r}u_{,r}+\psi_{,z}{1\over r}\zeta\dot\zeta u_{,rr}\bigg]udrdzdt'.\cr}
  $$
  Therefore,
  \begin{equation}\eqal{
      |J_2^2|&\le\phi(r_0)\|\psi_{,z}\|_{1,2,\bar\R_t^3}(|u_{,r}|_{2,\bar\R_t^3}+|u_{,rr}|_{2,\bar\R_t^3})D_2\cr
      &\le\phi(r_0)D_1D_2D_5.\cr}
    \label{5.24}
  \end{equation}
  Using (\ref{5.23}) and (\ref{5.24}) in (\ref{5.18}) implies
  \begin{equation}\eqal{
    |J_2|&\le c(D_1D_2+D_2D_5)(|\tilde\Gamma_{,z}|_{2,\bar\R_t^3}+D_1)\cr
    &\quad+c\intop_{\bar\R_t^3}\bigg|{\tilde\psi_{1,z}\over r}\bigg({1\over r}u_{,r}\zeta\bigg)_{,r}u\bigg|dxdt'.\cr}
    \label{5.25}
  \end{equation}
  Finally, we examine $J_3$. We write it in the form
  \begin{equation}\eqal{
      J_3&=-\intop_{\bar\R_t^3}\bigg(-\tilde\psi_{,zz}+\tilde\psi_{,rr}+{1\over r}\tilde\psi_{,r}-{\tilde\psi\over r^2}\bigg){1\over r}u_{,z}{1\over r}u_{,r}\zeta dxdt'\cr
      &\quad-\intop_{\bar\R_t^3}\bigg(-2\psi_{,r}\dot\zeta-\psi\ddot\zeta-{1\over r}\psi\dot\zeta\bigg){1\over r}u_{,z}{1\over r}u_{,r}\zeta dxdt'\cr
      &\equiv J_3^1+J_3^2.\cr}
    \label{5.26}
  \end{equation}
  Using that $\tilde\psi=r\tilde\psi_1$ in $J_3^1$ yields
  $$
    J_3^1=-\intop_{\bar\R_t^3}\bigg(-\tilde\psi_{1,zz}+{3\tilde\psi_{1,r}\over r}+\tilde\psi_{1,rr}\bigg){1\over r}u_{,r}u_{,z}dxdt'.
  $$
  Integrating by parts with respect to $z$ implies
  $$\eqal{
      J_3^1&=\intop_{\bar\R_t^3}\bigg(-\tilde\psi_{1,zzz}+{3\over r}\tilde\psi_{1,rz}+\tilde\psi_{1,rrz}\bigg){1\over r}u_{,r}udxdt'\cr
      &\quad+\intop_{\bar\R_t^3}\bigg(-\tilde\psi_{1,zz}+{3\over r}\tilde\psi_{1,r}+\tilde\psi_{1,rr}\bigg){1\over r}u_{,rz}udxdt'\cr
      &\equiv J_3^{11}+J_3^{12}.\cr}
  $$
  Considering (\ref{2.1}), (\ref{2.9}), (\ref{3.41}) and (\ref{3.51}), we obtain
  \begin{equation}
    |J_3^{11}|\le cD_1D_2(|\tilde\Gamma_{,z}|_{2,\bar\R_t^3}+D_1).
    \label{5.27}
  \end{equation}
  To estimate $J_3^{12}$ we need equation (\ref{1.23}).

  Localizing it to a neighborhood of the axis of symmetry (we multiply (\ref{1.23}) by $\zeta$), we have
  \begin{equation}
    -\tilde\psi_{1,rr}-{3\over r}\tilde\psi_{1,r}-\tilde\psi_{1,zz}+2\psi_{1,r}\dot\zeta+\psi_1\ddot\zeta+{3\over r}\psi_1\dot\zeta=\tilde\Gamma.
    \label{5.28}
  \end{equation}
  Using (\ref{5.28}) in $J_3^{12}$ yields
  $$
    J_3^{12}=\intop_{\bar\R_t^3}\bigg(-2\tilde\psi_{1,zz}+2\psi_{1,r}\dot\zeta+\psi_1\ddot\zeta+{3\over r}\psi_1\dot\zeta-\tilde\Gamma\bigg){1\over r}u_{,rz}udxdt'.
  $$
  Now, we estimate the particular terms in $J_3^{12}$.

  First, we examine
  $$\eqal{
    &\bigg|\intop_{\bar\R_t^3}\tilde\psi_{1,zz}{1\over r}u_{,rz}udxdt'\bigg|=\bigg|\intop_{\bar\R_t^3} {\tilde\psi_{1,zz}\over r^{1-\varepsilon_0}}u_{,rz}{u\over r^{\varepsilon_0}}dxdt'\bigg|\cr
    &\le D_2^{1-\varepsilon_0}|v_\varphi|_{\infty,\bar\R_t^3}^{\varepsilon_0}\bigg|{\tilde\psi_{1,zz}\over r^{1-\varepsilon_0}}\bigg|_{2,\bar\R_t^3}|u_{,rz}|_{2,\bar\R_t^3}\cr
    &\le D_2^{1-\varepsilon_0}|v_\varphi|_{\infty,\bar\R_t^3}^{\varepsilon_0}D_4 {r_0^{\varepsilon_0}\over\varepsilon_0}|\tilde\psi_{1,zzr}|_{2,\bar\R_t^3}\cr
    &\le D_2^{1-\varepsilon_0}|v_\varphi|_{\infty,\bar\R_t^3}^{\varepsilon_0}D_4 {r_0^{\varepsilon_0}\over\varepsilon_0}(|\tilde\Gamma_{,z}|_{2,\bar\R_t^3}+D_1),\cr}
  $$
  where we used the Hardy inequality and (\ref{2.22}) in the second inequality and (\ref{3.41}) in the last.

  Next, we examine
  $$\eqal{
    &\bigg|\intop_{\bar\R_t^3}\tilde\Gamma{1\over r}u_{,rz}udxdt'\bigg|=\bigg|\intop_{\bar\R_t^3}{\tilde\Gamma\over r^{1-\varepsilon_0}}u_{,rz}{u\over r^{\varepsilon_0}}dxdt'\bigg|\cr
    &\le D_2^{1-\varepsilon_0}|v_\varphi|_{\infty,\bar\R_t^3}^{\varepsilon_0}D_4 {r_0^{\varepsilon_0}\over\varepsilon_0}|\tilde\Gamma_{,r}|_{2,\bar\R_t^3},\cr}
  $$
  where we used (\ref{2.9}), (\ref{2.22}) and the Hardy inequality (\ref{2.13}).

  Finally, we estimate the last part of $J_3^{12}$,
  $$\eqal{
      &\intop_{\bar\R_t^3}\bigg(2\psi_{1,r}\dot\zeta+\psi_1\ddot\zeta+{3\over r}\psi_1\dot\zeta\bigg){1\over r}u_{,rz}udxdt'\cr
      &=\intop_{\bar\R_t^3}\bigg(2{\psi_{,r}\dot\zeta\over r}+{2\psi\over r^2}\dot\zeta+\psi_1\ddot\zeta+{3\over r}\psi_1\dot\zeta\bigg){1\over r}u_{,rz}udxdt'.\cr}
  $$
  By the properties of $\dot\zeta$ and (\ref{2.9}), (\ref{2.15}) and (\ref{2.22}) the above expression is bounded by
  $$
    \phi(r_0)D_1D_2D_4.
  $$
  Summarizing,
  \begin{equation}
    |J_3^{12}|\le cD_2^{1-\varepsilon_0}D_4|v_\varphi|_{\infty,\bar\R_t^3}^{\varepsilon_0} {r_0^{\varepsilon_0}\over\varepsilon_0}(|\nabla\tilde\Gamma|_{2,\bar\R_t^3}+D_1)+cD_1D_2D_4.
    \label{5.29}
  \end{equation}
  Estimates (\ref{5.27}), (\ref{5.29}) and the form of $J_3^1$ yield
  \begin{equation}
    |J_3^1|\le c\bigg(D_1D_2+D_2^{1-\varepsilon_0}|v_\varphi|_{\infty,\bar\R_t^3}^{\varepsilon_0}D_4 {r_0^{\varepsilon_0}\over\varepsilon_0}\bigg)(|\nabla\tilde\Gamma|_{2,\bar\R_t^3}+D_1).
    \label{5.30}
  \end{equation}
  Integrating by parts with respect to $z$ in $J_3^2$ yields
  $$\eqal{
      J_3^2&=\intop_{\bar\R_t^3}\bigg(-2\psi_{,rz}\dot\zeta-\psi_{,z}\ddot\zeta-{1\over r}\psi_{,z}\dot\zeta\bigg){1\over r}u_{,r}{1\over r}u\zeta dxdt'\cr
      &\quad+\intop_{\bar\R_t^3}\bigg(-2\psi_{,r}\dot\zeta-\psi\ddot\zeta-{1\over r}\psi\ddot\zeta\bigg){1\over r}u_{,rz}{1\over r}u\zeta dxdt'.\cr}
  $$
  Applying properties of $\dot\zeta$ and (\ref{2.9}), (\ref{2.15}), (\ref{2.24}), we obtain
  \begin{equation}
    |J_3^2|\le cD_1D_2(D_1+D_4).
    \label{5.31}
  \end{equation}
  Using estimates (\ref{5.30}) and (\ref{5.31}) in (\ref{5.26}) implies
  \begin{equation}\eqal{
    |J_3|&\le c(D_1D_2+D_2^{1-\varepsilon_0}|v_\varphi|_{\infty,\bar\R_t^3}^{\varepsilon_0}D_4 {r_0^{\varepsilon_0}\over\varepsilon_0}\bigg)\cdot\cr
    &\quad\cdot(|\nabla\tilde\Gamma|_{2,\bar\R_t^3}+D_1).\cr}
    \label{5.32}
  \end{equation}
  Using estimates (\ref{5.17}), (\ref{5.25}) and (\ref{5.32}), we have
  \begin{equation}\eqal{
      |J|&\le c\bigg[D_1D_2+D_2D_4+D_2D_5\cr
        &\quad+(D_1+D_4)D_2^{1-\varepsilon_0}{r_0^{\varepsilon_0}\over\varepsilon_0} |v_\varphi|_{\infty,\bar\R_t^3}^{\varepsilon_0}\bigg]\cdot(|\nabla\tilde\Gamma|_{2,\bar\R_t^3}+D_1)\cr
      &\quad+c\bigg|\intop_{\bar\R_t^3}{\tilde\psi_{1,z}\over r^2}u_{,zz}u\zeta dxdt'\bigg|+c\bigg|\intop_{\bar\R_t^3}{\tilde\psi_{1,z}\over r}\bigg({1\over r}u_{,r}\zeta\bigg)_{,r}udxdt'\bigg|.\cr}
    \label{5.33}
  \end{equation}
  Using (\ref{1.13}) and applying an appropriate integration by parts, the second and the third terms on the r.h.s. of (\ref{5.6}) are bounded by
  \begin{equation}
    cD_1(D_1+D_4+D_5)(D_2+1).
    \label{5.34}
  \end{equation}
  Applying the H\"older and Young inequalities the fourth and the fifth terms on the r.h.s. of (\ref{5.6}) are estimated by
  \begin{equation}
    {\nu\over 4}(|\tilde\omega_r|_{6,\bar\R_t^3}^2+|\tilde\omega_z|_{6,\bar\R_t^3}^2)+c(1/\nu)(|\tilde F_r|_{6/5,2,\bar\R_t^3}^2+|\tilde F_z|_{6/5,2,\bar\R_t^3}^2).
    \label{5.35}
  \end{equation}
  Using estimates (\ref{5.33}), (\ref{5.34}) and (\ref{5.35}) in (\ref{5.6}) yields
  \begin{equation}\eqal{
    &\|\tilde\omega_r\|_{V(\bar\R_t^3)}^2+\|\tilde\omega_z\|_{V(\bar\R_t^3)}^2+\nu\bigg|{\tilde\omega_r\over r}\bigg|_{2,\bar\R_t^3}^2\cr
    &\le c\bigg|\intop_{\bar\R_t^3}{\tilde\psi_{1,z}\over r}{u_{,zz}\over r}\zeta udxdt'\bigg|+c\bigg|\intop_{\bar\R_t^3}{\tilde\psi_{1,z}\over r}\bigg({1\over r}u_{,r}\zeta\bigg)_{,r}udxdt'\bigg|\cr
    &\quad+c(D_{11}+D_{12}|v_\varphi|_{\infty,\bar\R_t^3}^{\varepsilon_0}|\nabla\tilde\Gamma|_{2,\bar\R_t^3}+ cD_{12}D_1|v_\varphi|_{\infty,\bar\R_t^3}^{\varepsilon_0}+cD_8,\cr}
    \label{5.36}
  \end{equation}
  where
  \begin{equation}\eqal{
      &D_{11}=D_1D_2+D_2D_4+D_2D_5,\cr
      &D_{12}=(D_1+D_4)D_1^{1-\varepsilon_0}{r_0^{\varepsilon_0}\over\varepsilon_0},\cr
      &D_8=D_{11}D_1+D_1(D_1+D_4+D_5)(D_2+1)\cr
      &\quad+|\tilde F_r|_{6/5,2,\bar\R_t^3}^2+|\tilde F_z|_{6/5,2,\bar\R_t^3}^2+|\tilde\omega_r(0)|_{2,\bar\R^3}^2+|\tilde\omega_z(0)|_{2,\bar\R^3}^2.\cr}
    \label{5.37}
  \end{equation}
  Recalling (\ref{1.13}), we have that $\omega_r=-{1\over r}u_{,z}$, $\omega_z={1\over r}u_{,r}$.

  Then the l.h.s. of (\ref{5.36}) reads
  \begin{equation}\eqal{&\bigg|{1\over r}u_{,z}(t)\zeta\bigg|_{2,\bar\R^3}^2+\bigg|{1\over r}u_{,r}(t)\zeta\bigg|_{2,\bar\R^3}^2+\nu\bigg(\bigg|\bigg({1\over r}u_{,z}\zeta\bigg)_{,z}\bigg|_{2,\bar\R_t^3}^2\cr
    &\quad+\bigg|\bigg({1\over r}u_{,r}\zeta\bigg)_{,z}\bigg|_{2,\bar\R_t^3}^2+\bigg|\bigg({1\over r}u_{,z}\zeta\bigg)_{,r}\bigg|_{2,\bar\R_t^3}^2+\bigg|\bigg({1\over r}u_{,r}\zeta\bigg)_{,r}\bigg|_{2,\bar\R_t^3}^2\bigg)\cr
    &\quad+\nu\bigg|{\tilde\omega_r\over r}\bigg|_{2,\bar\R_t^3}^2.\cr}
    \label{5.38}
  \end{equation}
  Now, we estimate the first and the second terms on the r.h.s. of (\ref{5.36}). By the H\"older and Young inequalities the first term on the r.h.s. of (\ref{5.36}) (denoted by $L_1$) is bounded by
  $$
    L_1\le{\nu\over 4}\bigg|\bigg({1\over r}u_{,z}\zeta\bigg)_{,z}\bigg|_{2,\bar\R_t^3}^2+{c\over\nu}\intop_{\bar\R_t^3}{\tilde\psi_{1,z}^2\over r^2}u^2dxdt'\equiv L_1^1.
  $$
  The second term in $L_1^1$ is bounded by
  $$
    \intop_{\bar\R_t^3}{\tilde\psi_{1,z}^2\over r^{2(1-\varepsilon_0)}}{u^2\over r^{2\varepsilon_0}}dxdt'\le|u|_{\infty,\R_t^3}^{2(1-\varepsilon_0)} |v_\varphi|_{\infty,\bar\R_t^3}^{2\varepsilon_0}\intop_{\bar\R_t^3}{\tilde\psi_{1,z}^2\over r^{2(1-\varepsilon_0)}}dxdt',
  $$
  where $\varepsilon_0$ is assumed to be small. Next, we examine the integral
  $$
    \intop_{\bar\R^3}{\tilde\psi_{1,z}^2\over r^{2(1-\varepsilon_0)}}rdrdz=\intop_{\R^3}r^{-2(1-\varepsilon_0)+1}\tilde\psi_{1,z}^2drdz\equiv M_1.
  $$
  We apply Lemma \ref{l2.5} with $\beta={1\over 2}-\varepsilon_0$.

  Then (\ref{2.13}) yields
  $$
    M_1\le{1\over\varepsilon_0^2}\intop_{\bar\R^3}r^{2\varepsilon_0}\tilde\psi_{1,zr}^2dx\le {r_0^{2\varepsilon_0}\over\varepsilon_0^2}\intop_{\bar\R^3}\tilde\psi_{1,zr}^2dx\equiv M_2.
  $$
  We use the interpolation inequality (see \cite[Ch. 3, Sect. 15]{BIN})
  $$
    \intop_{\bar\R^3}\tilde\psi_{1,rz}^2dx\le c\bigg(\intop_{\R^3}|\nabla^2\tilde\psi_{1,z}|^2dx\bigg)^\theta \bigg(\intop_{\bar\R^3}\tilde\psi_{1,z}^2dx\bigg)^{1-\theta},
  $$
  where $\theta$ satisfies
  $$
    {3\over 2}-1=(1-\theta){3\over 2}+\theta\bigg({3\over 2}-2\bigg),\quad {\rm so}\ \ \theta={1\over 2}
  $$
  and $\nabla^2=(\partial_r^2,\partial_r\partial_z,\partial_z^2)$. Using (\ref{2.15}), we get
  $$
    \intop_{\R_t^3}\tilde\psi_{1,rz}^2dxdt'\le c|\nabla^2\tilde\psi_{1,z}|_{2,\bar\R_t^3}|\tilde\psi_{1,z}|_{2,\bar\R_t^3}\le cD_1|\nabla^2\tilde\psi_{1,z}|_{2,\bar\R_t^3}.
  $$
  Summarizing, we derive
  $$\eqal{
    L_1&\le{\nu\over 4}\bigg|\bigg({1\over r}u_{,z}\zeta\bigg)_{,z}\bigg|_{2,\bar\R_t^3}^2+{c\over\nu} |u|_{\infty,\bar\R_t^3}^{2(1-\varepsilon_0)}|v_\varphi|_{\infty,\bar\R_t^3}^{2\varepsilon_0} {r_0^{2\varepsilon_0}\over\varepsilon_0^2}\cdot\cr
    &\quad\cdot D_1|\nabla^2\tilde\psi_{1,z}|_{2,\bar\R_t^3}\cr
    &\le{\nu\over 4}\bigg|\bigg({1\over r}u_{,z}\zeta\bigg)_{,z}\bigg|_{2,\bar\R_t^3}^2+{c\over\nu}D_1D_2^{2(1-\varepsilon_0)} {r_0^{2\varepsilon_0}\over\varepsilon_0^2}|v_\varphi|_{\infty,\bar\R_t^3}^{2\varepsilon_0}\cdot\cr
    &\quad\cdot(|\nabla\tilde\Gamma|_{2,\bar\R_t^3}+D_1),\cr}
  $$
  where (\ref{3.41}) was used.

  Similarly, the second term on the r.h.s. of (\ref{5.36}) (denoted by $L_2$) is bounded by
  $$%\eqal{
    L_2\le{\nu\over 4}\bigg|\bigg({1\over r}u_{,r}\zeta\bigg)_{,r}\bigg|_{2,\bar\R_t^3}^2+{c\over\nu}D_1D_2^{2(1-\varepsilon_0)} {r_0^{2\varepsilon_0}\over\varepsilon_0^2}|v_\varphi|_{\infty,\bar\R_t^3}^{2\varepsilon_0}\cdot%\cr&\quad
    (|\nabla\tilde\Gamma|_{2,\bar\R_t^3}+D_1).%\cr}
  $$
  Using estimates of $L_1$ and $L_2$ in (\ref{5.36}), we obtain
  \begin{equation}\eqal{
    &\|\tilde\omega_r\|_{V(\bar\R_t^3)}^2+\|\tilde\omega_z\|_{V(\bar\R_t^3)}^2+\nu|{\tilde\omega_r\over r}\bigg|_{2,\bar\R_t^3}^2\cr
    &\le cD_9|v_\varphi|_{\infty,\bar\R_t^3}^{2\varepsilon_0}|\nabla\tilde\Gamma|_{2,\bar\R_t^3}+ cD_9D_1|v_\varphi|_{\infty,\bar\R_t^3}^{2\varepsilon_0}\cr
    &\quad+c(D_{11}+D_{12}|v_\varphi|_{\infty,\bar\R_t^3}^{\varepsilon_0})|\nabla\tilde\Gamma|_{2,\bar\R_t^3}+ cD_{12}D_1|v_\varphi|_{\infty,\bar\R_t^3}^{\varepsilon_0}+cD_8,\cr}
    \label{5.39}
  \end{equation}
  where
  \begin{equation}
    D_9=D_1D_2^{2(1-\varepsilon_0)}{r_0^{2\varepsilon_0}\over\varepsilon_0^2}
    \label{5.40}
  \end{equation}
  The above inequality implies (\ref{5.5}) and concludes the proof.
\end{proof}

\begin{remark}\label{r5.2}
  We simplify (\ref{5.5}) in the following way
  \begin{equation}\eqal{
    &\|\tilde\omega_r\|_{V(\bar\R_t^3)}^2+\|\tilde\omega_z\|_{V(\bar\R_t^3)}^2+\nu\bigg|{\tilde\omega_r\over r}\bigg|_{2,\bar\R_t^3}^2\cr
    &\le c(D_{11}+D_{12}^2+(1+D_9)|v_\varphi|_{\infty,\bar\R_t^3}^{2\varepsilon_0}) |\nabla\tilde\Gamma|_{2,\bar\R_t^3}\cr
    &\quad+c(D_1^2D_{12}^2+(1+D_1D_9)|v_\varphi|_{\infty,\bar\R_t^3}^{2\varepsilon_0})+cD_8\cr
    &\le cD_{13}|v_\varphi|_{\infty,\bar\R_t^3}^{2\varepsilon_0}|\nabla\tilde\Gamma|_{2,\bar\R_t^3}+cD_{14} |v_\varphi|_{\infty,\bar\R_t^3}^{2\varepsilon_0}+cD_8.\cr}
    \label{5.41}
  \end{equation}
\end{remark}

\section{Estimates for $v_\varphi$}\label{s6}

The aim of this section is to derive $L_\infty$ and localized $L_s$ estimates
for the angular component $v_\varphi$, which will be used to close the global
a priori bounds. From (\ref{1.7}) the following problem for $v_\varphi$ reads
\begin{equation}\eqal{
    &v_{\varphi,t}+v\cdot\nabla v_\varphi+{v_r\over r}v_\varphi-\nu\Delta v_\varphi+\nu{v_\varphi\over r^2}=f_\varphi,\cr
    &v_\varphi|_{t=0}=v_\varphi(0).\cr}
  \label{6.1}
\end{equation}
Throughout Sections 4 and 6 we choose $s>4-2\delta$, which in particular ensures $s>3$ as required in Lemma~\ref{l4.4}.

\begin{lemma}\label{l6.1}
  Let $D_2$ be defined in (\ref{2.9}), $v_\varphi(0)\in L_\infty(\bar\R^3)$,\break $f_\varphi/r\in L_1(0,t;L_\infty(\bar\R^3))$. Then
  \begin{equation}\eqal{
    |v_\varphi|_{\infty,\bar\R_t^3}^2&\le c{D_2^2D_1^{1/2}\over\nu}X^{3/2}+2D_2\bigg|{f_\varphi\over r}\bigg|_{\infty,1,\bar\R_t^3}+|v_\varphi(0)|_{\infty,\bar\R^3}^2\cr
    &\equiv c{D_2^2D_1^{1/2}\over\nu}X^{3/2}+D_{16}^2.\cr}
    \label{6.2}
  \end{equation}
\end{lemma}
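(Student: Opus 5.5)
I would prove Lemma~\ref{l6.1} by an $L_s$-energy argument on $(\ref{6.1})$, letting $s\to\infty$, and then bound the stretching term through the swirl $u=rv_\varphi$ and the stream function.

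\textbf{Step 1: the $L_s$ identity.} Multiply $(\ref{6.1})_1$ by $v_\varphi|v_\varphi|^{s-2}$ and integrate over $\bar\R^3$. The transport term vanishes because $\divv v=0$. The diffusion term $-\nu\Delta v_\varphi+\nu v_\varphi/r^2$ gives nonnegative contributions. The boundary term at $r=0$ vanishes since $v_\varphi\sim b_1r$ by (\ref{2.18}). This leaves
$$
{1\over s}{d\over dt}|v_\varphi|_{s,\bar\R^3}^s\le\intop_{\bar\R^3}\Big|{v_r\over r}\Big|\,|v_\varphi|^s\,dx+\intop_{\bar\R^3}|f_\varphi|\,|v_\varphi|^{s-1}dx .
$$

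\textbf{Step 2: pass to $L_\infty$.} A cleaner route is to work with $w=v_\varphi^2$ (or to use the maximum principle directly). Since $-{v_r\over r}v_\varphi^2=\psi_{1,z}v_\varphi^2$ by (\ref{1.17}), one gets
$$
{d\over dt}|v_\varphi|_\infty^2\le 2|\psi_{1,z}|_\infty|v_\varphi|_\infty^2+2\Big|{f_\varphi\over r}\Big|_\infty|u|_\infty .
$$
Here $f_\varphi v_\varphi=(f_\varphi/r)u$ and $|u|_\infty\le D_2$ by (\ref{2.9}). Integrating this last term in time produces the constant $2D_2|f_\varphi/r|_{\infty,1,\bar\R_t^3}$, and the initial datum contributes $|v_\varphi(0)|_\infty^2$. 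Together these reproduce exactly $D_{16}^2$.

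\textbf{Step 3: the stretching term (main obstacle).} The remaining task is to control
$$
\intop_0^t\Big|{v_r\over r}v_\varphi^2\Big|\,dt' .
$$
Rather than letting it appear as a factor multiplying $|v_\varphi|_\infty^2$, which would force a Gronwall argument and exponential bounds, I would use $v_\varphi^2=u\cdot v_\varphi/r$. This lets one factor of $D_2$ come out, and the weight is then moved onto $\psi_1$ via $v_r/r=-\psi_{1,z}$. The required estimate would be
$$
\intop_0^t|\psi_{1,z}|_\infty\,dt'\le cD_1^{1/2}X^{3/2}/\nu ,
$$
obtained in three moves:
\begin{itemize}
\item use Sobolev/Agmon interpolation in the reduced variables,
$$
|\psi_{1,z}|_\infty\le c\|\psi_{1,z}\|_{L_2}^{1/4}\|\nabla^2\psi_{1,z}\|_{L_2}^{3/4}\ \hbox{-type};
$$
\item bound $\|\nabla^2\psi_{1,z}\|_2\le c|\Gamma_{,z}|_2$ by Lemmas~\ref{l3.1}--\ref{l3.2};
\item bound $|\psi_{1,z}|_{2,\bar\R_t^3}\le cD_1$ by (\ref{2.15}).
\end{itemize}
Hölder in time then yields a power $X^{3/2}$, with one remaining $v_\varphi$ factor absorbed into $D_2$. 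That produces the term $cD_2^2D_1^{1/2}X^{3/2}/\nu$.

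The delicate point is the exponent bookkeeping, so that exactly $X^{3/2}$ and $D_1^{1/2}$ result. A related concern is whether this works with the maximum principle, where the term is genuinely linear in $|v_\varphi|_\infty^2$. If it does not close that way, I would instead write the stretching term as $u^2\psi_{1,z}/r^2$, bound it by $D_2^2|\psi_{1,z}/r^2|$, and control that quantity via the Hardy inequality (Lemma~\ref{l2.5}) in terms of $\psi_{1,rrz}$ and $\Gamma_{,z}$.
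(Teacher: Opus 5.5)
There is a genuine gap, and it is created in Step~1. You start from the same $L_s$ identity as the paper, but you then throw away the zeroth-order dissipative term $\nu\int_{\bar\R^3}|v_\varphi|^s r^{-2}\,dx$. In Step~2 you likewise drop the corresponding term $2\nu v_\varphi^2/r^2$ from the equation for $v_\varphi^2$. Without this term Step~3 cannot close, for three reasons.

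\begin{itemize}
\item Writing $v_\varphi^2=u\,v_\varphi/r$ extracts only one factor $D_2$. The leftover factor $v_\varphi/r$ is not controlled by any data, and at a maximum point that approaches the axis it can be large. Note that $D_2$ bounds $rv_\varphi$, not $v_\varphi$ or $v_\varphi/r$.
\item Your fallback $D_2^2|\psi_{1,z}|/r^2$ also fails. By (\ref{1.64}), $\psi_{1,z}|_{r=0}=d_{1,z}$ need not vanish, so $\psi_{1,z}/r^2$ is neither bounded nor integrable against $r\,dr\,dz$ near the axis. Moreover, Lemma~\ref{l2.5} in the relevant range requires vanishing at $r=0$.
\item The target estimate itself has the wrong shape. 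With $|g|_\infty\le c|g|_2^{1/4}|D^2g|_2^{3/4}$ and H\"older in time, a first power gives only $\int_0^t|\psi_{1,z}|_\infty\,dt'\le c\,t^{1/2}D_1^{1/4}X^{3/4}$. In any case, a term $|\psi_{1,z}|_\infty|v_\varphi|_\infty^2$ forces Gronwall and an exponential bound, not (\ref{6.2}).
\end{itemize}

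The missing idea is to keep the good term and apply Young's inequality against it. Then the weight $r^{-2}$ is paid by the dissipation, and $r^2v_\varphi^2=u^2\le D_2^2$:
\[
\psi_{1,z}|v_\varphi|^s=\frac{|v_\varphi|^{s/2}}{r}\,r\psi_{1,z}|v_\varphi|^{s/2}\le\frac{\nu}{2}\frac{|v_\varphi|^s}{r^2}+\frac{D_2^2}{2\nu}\psi_{1,z}^2|v_\varphi|^{s-2}.
\]
After H\"older, the stretching term is bounded by $\frac{D_2^2}{2\nu}|\psi_{1,z}|_{s,\bar\R^3}^2|v_\varphi|_{s,\bar\R^3}^{s-2}$. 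The force term is bounded by $D_2|f_\varphi/r|_{s/2,\bar\R^3}|v_\varphi|_{s,\bar\R^3}^{s-2}$. Dividing by $|v_\varphi|_{s,\bar\R^3}^{s-2}$ gives
\[
\frac{d}{dt}|v_\varphi|_{s,\bar\R^3}^2\le\frac{D_2^2}{\nu}|\psi_{1,z}|_{s,\bar\R^3}^2+2D_2\Big|\frac{f_\varphi}{r}\Big|_{s/2,\bar\R^3}.
\]
The right-hand side no longer contains $v_\varphi$, so no Gronwall argument is needed. Now let $s\to\infty$ and use $|\psi_{1,z}|_\infty^2\le c|\psi_{1,z}|_2^{1/2}|D^2\psi_{1,z}|_2^{3/2}$. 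H\"older in time with exponents $4$ and $4/3$, together with (\ref{2.15}) and Lemma~\ref{l3.2}, then gives $\int_0^t|\psi_{1,z}|_\infty^2\,dt'\le cD_1^{1/2}X^{3/2}$. This is where the square on $\psi_{1,z}$, the factor $1/\nu$, and the exponents $1/2$ and $3/2$ in (\ref{6.2}) come from.

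Your pointwise route via $v_\varphi^2$ would also work, formally, if you kept $2\nu v_\varphi^2/r^2$ and applied the same Young inequality at the maximum point.
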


\begin{proof}
  Multiplying $(\ref{1.7})_2$ by $v_\varphi|v_\varphi|^{s-2}$ and integrating over $\bar\R^3$ yield
  \begin{equation}\eqal{
    &{1\over s}{d\over dt}|v_\varphi|_{s,\bar\R^3}^s+{4\nu(s-1)\over s^2}|\nabla|v_\varphi|^{s/2}|_{2,\bar\R^3}^2+\nu\intop_{\bar\R^3}{|v_\varphi|^s\over r^2}dx\cr
    &=\intop_{\bar\R^3}\psi_{1,z}|v_\varphi|^sdx+\intop_{\bar\R^3}f_\varphi v_\varphi|v_\varphi|^{s-2}dx.\cr}
    \label{6.3}
  \end{equation}
  The first term on the r.h.s. of (\ref{6.3}) is bounded by
  $$\eqal{
    &{\nu\over 2}\intop_{\bar\R^3}{|v_\varphi|^s\over r^2}dx+{D_2^2\over 2\nu}\intop_{\bar\R^3}\psi_{1,z}^2|v_\varphi|^{s-2}dx\cr
    &\le{\nu\over 2}\intop_{\bar\R^3}{|v_\varphi|^s\over r^2}dx+{D_2^2\over 2\nu}|\psi_{1,z}|_{s,\bar\R^3}^2|v_\varphi|_{s,\bar\R^3}^{s-2}.\cr}
  $$
  The second integral on the r.h.s. of (\ref{6.3}) we estimate by
  $$\eqal{
    &\intop_{\bar\R^3}|f_\varphi|\,|v_\varphi|^{s-1}dx=\intop_{\bar\R^3}\bigg|{f_\varphi\over r}\bigg|r|v_\varphi|^{s-1}dx\le D_2\intop_{\bar\R^3}\bigg|{f_\varphi\over r}\bigg|\,|v_\varphi|^{s-2}dx\cr
    &\le D_2\bigg|{f_\varphi\over r}\bigg|_{s/2,\bar\R^3}|v_\varphi|_{s,\bar\R^3}^{s-2}.\cr}
  $$
  In view of the above estimates, inequality (\ref{6.3}) takes the form
  $$\eqal{
    &{1\over s}{d\over dt}|v_\varphi|_{s,\bar\R^3}^s\le{D_2^2\over 2\nu}|\psi_{1,z}|_{s,\bar\R^3}^2 |v_\varphi|_{s,\bar\R^3}^{s-2}+D_2\bigg|{f_\varphi\over r}\bigg|_{s/2,\bar\R^3}|v_\varphi|_{s,\bar\R^3}^{s-2}.\cr}
  $$
  Simplifying, we get
  $$
    {d\over dt}|v_\varphi|_{s,\bar\R^3}^2\le{D_2^2\over\nu}|\psi_{1,z}|_{s,\bar\R^3}^2+2D_2\bigg|{f_\varphi\over r}\bigg|_{s/2,\bar\R^3}.
  $$
  Integrating the inequality with respect to time and passing with $s$ to $\infty$, we obtain
  \begin{equation}\eqal{
    |v_\varphi(t)|_{\infty,\bar\R^3}^2&\le{D_2^2\over\nu}\intop_0^t|\psi_{1,z}|_{\infty,\bar\R^3}^2dt'+ 2D_2\intop_0^t\bigg|{f_\varphi\over r}\bigg|_{\infty,\bar\R^3}dt'\cr
    &\quad+|v_\varphi(0)|_{\infty,\bar\R^3}^2.\cr}
    \label{6.4}
  \end{equation}
  Using the interpolation inequality (\ref{2.32}) in the form
  $$
    |g|_{\infty,\bar\R^3}\le c|g|_{2,\bar\R^3}^{1/4}|D^2g|_{2,\bar\R^3}^{3/4},
  $$
  we obtain
  $$\eqal{
    &|v_\varphi(t)|_{\infty,\bar\R^3}^2\le c{D_2^2\over\nu}\intop_0^t|\psi_{1,z}|_{2,\bar\R^3}^{1/2}|D^2\psi_{1,z}|_{2,\bar\R^3}^{3/2}dt'\cr
    &\quad+2D_2\intop_0^t\bigg|{f_\varphi\over r}\bigg|_{\infty,\bar\R^3}dt'+|v_\varphi(0)|_{\infty,\bar\R^3}^2\cr
    &\le c{D_2^2D_1^{1/2}\over\nu}X^{3/2}+2D_2\intop_0^t\bigg|{f_\varphi\over r}\bigg|_{\infty,\bar\R^3}dt'+|v_\varphi(0)|_{\infty,\bar\R^3}^2.\cr}
  $$
  To derive local estimates for $v_\varphi$ we introduce a smooth function $\zeta_1(r)$ such that
  \begin{equation}
    \zeta_1(r)=\begin{cases}1 & r\le 2r_0\cr
             0 & r\ge 4r_0\cr
    \end{cases}
    \label{6.5}
  \end{equation}
  Then the function
  \begin{equation}
    \tilde v'_\varphi=v_\varphi\zeta_1
    \label{6.6}
  \end{equation}
  is a solution to the problem
  \begin{equation}\eqal{
      &\tilde v'_{\varphi,t}+v\cdot\nabla\tilde v'_\varphi+{v_r\over r}\tilde v'_\varphi-\nu\Delta\tilde v'_\varphi+\nu{\tilde v'_\varphi\over r^2}\cr
      &=\tilde f'_\varphi+v_rv_\varphi\dot\zeta_1-\nu\bigg(2v_{\varphi,r}\dot\zeta_1+v_\varphi\bigg(\ddot\zeta_1+{1\over r}\dot\zeta_1\bigg)\bigg),\cr
      &\tilde v'_\varphi|_{t=0}=\tilde v'_\varphi(0),\cr}
    \label{6.7}
  \end{equation}
  where $\tilde f'_\varphi=f_\varphi\zeta_1$.
\end{proof}

\begin{lemma} \label{l6.2}
  Let $s>4-2\delta$ be given. Assume that there exists a positive constant $c_0$ such that
  \begin{equation}
    \operatorname*{ess\,inf}_{0<\tau<t} \frac{\|\tilde v'_\varphi(\tau)\|_{L_s(\bar\R^3)}}{\|\tilde v'_\varphi(\tau)\|_{L_\infty(\bar\R^3)}} \ge c_0.
    \label{6.8}
  \end{equation}
  Then
  \begin{equation}
    |\tilde v'_\varphi|_{s,\infty,\bar\R_t^3}^{4-2\delta}\le{D_{17}\over c_0^{s-4+2\delta}}X+{D_{18}\over c_0^{s-4+2\delta}}+D_{19},
    \label{6.9}
  \end{equation}
  where $D_{17}=c{r_0^{2\delta}\over\delta^2}D_2^{4-2\delta}$, $D_{18}=(4-2\delta)[|\tilde f'_\varphi|_{10/(1+6\delta),\bar\R_t^3}D_1^{3-2\delta}+D_1^2]$, $D_{19}=|\tilde v'_\varphi(0)|_{s,\bar\R^3}^{4-2\delta}$.
\end{lemma}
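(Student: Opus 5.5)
The plan is an $L_s$ energy estimate for the localized swirl equation (6.7), closed by the ratio hypothesis (6.8) and the $X$-dependence of the stream function. I multiply (6.7) by $\tilde v'_\varphi|\tilde v'_\varphi|^{s-2}$ and integrate over $\bar\R^3$. This gives
\[
{1\over s}{d\over dt}|\tilde v'_\varphi|_{s,\bar\R^3}^s+{4\nu(s-1)\over s^2}|\nabla|\tilde v'_\varphi|^{s/2}|_{2,\bar\R^3}^2+\nu\intop_{\bar\R^3}{|\tilde v'_\varphi|^s\over r^2}dx
\]
on the left. On the right, after using $v_r/r=-\psi_{1,z}$ and the divergence-free structure of the convection term, appear $\intop\psi_{1,z}|\tilde v'_\varphi|^s dx$, the forcing $\intop\tilde f'_\varphi\tilde v'_\varphi|\tilde v'_\varphi|^{s-2}dx$, and the cut-off terms with $\dot\zeta_1$, $\ddot\zeta_1$ supported in $2r_0\le r\le 4r_0$.

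The cut-off terms live away from the axis. There $|v_\varphi|\le D_2/(2r_0)$ by (2.9), and the factors $v_r$, $v_{\varphi,r}$ are controlled in $L_2$ by (2.1). Integrating by parts in the $v_{\varphi,r}$ term, these contribute $c(r_0)D_1^2$-type quantities. This is the source of the $D_1^2$ in $D_{18}$.

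The forcing term is handled by H\"older in space-time with exponent $10/(1+6\delta)$ on $\tilde f'_\varphi$. The remaining power of $\tilde v'_\varphi$ is interpolated between $L_{10/3}$ (energy, bounded by $D_1$) and $L_\infty$. This produces $|\tilde f'_\varphi|_{10/(1+6\delta)}D_1^{3-2\delta}$ times powers of $|\tilde v'_\varphi|_{s,\infty}$ after the ratio reduction below.

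The main term is $\intop\psi_{1,z}|\tilde v'_\varphi|^s dx$. I write $|\tilde v'_\varphi|^s=|\tilde v'_\varphi|^{s-4+2\delta}|\tilde v'_\varphi|^{4-2\delta}$ and bound the first factor by $|\tilde v'_\varphi|_\infty^{s-4+2\delta}\le c_0^{-(s-4+2\delta)}|\tilde v'_\varphi|_s^{s-4+2\delta}$ using (6.8); this is where $s>4-2\delta$ is needed. In the remaining factor I split $|\tilde v'_\varphi|^{4-2\delta}=|u\zeta_1|^{2-2\delta}|\tilde v'_\varphi|^{2}r^{-(2-2\delta)}$. Then $|u|\le D_2$ gives $D_2^{2-2\delta}$. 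A weighted Hardy inequality, Lemma 2.5 with $\beta$ close to $1/2$, trades the $r^{-2+2\delta}$ weight on $\psi_{1,z}$ for one radial derivative, at the cost $r_0^{2\delta}/\delta^2$. The remaining $\tilde v'_\varphi$ factors are bounded by the absorbing term $\nu\intop|\tilde v'_\varphi|^s/r^2$, by $D_2$ again, and by the stream-function bounds (3.41), (3.51) with the energy estimate (2.15). These give $|\tilde\psi_{1,rz}|_{2,\bar\R_t^3}$, $|\tilde\psi_{1,z}|_{2,\bar\R_t^3}\lesssim X+D_1$, with the data factor $D_{17}=cr_0^{2\delta}\delta^{-2}D_2^{4-2\delta}$.

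After dividing by $|\tilde v'_\varphi|_s^{s-4+2\delta}$, the differential inequality becomes one for $|\tilde v'_\varphi|_{s}^{4-2\delta}$. Integrating in time and taking the supremum gives (6.9), with $D_{19}$ coming from the initial datum.

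The hard step is the exponent bookkeeping in the $\psi_{1,z}$ term. The powers of $\tilde v'_\varphi$ must be split so that exactly $4-2\delta$ of them survive outside the ratio reduction, and the weight must stay just subcritical so the Hardy constant is $r_0^{2\delta}\delta^{-2}$ rather than divergent. Since $\tilde\psi_1$ does not vanish on the axis, the critical Hardy inequality is unavailable, as the paper notes after Lemma 4.4. If the naive split fails, I would first try moving the weight onto $|\tilde v'_\varphi|^s/r^2$ via Young's inequality, so that $\psi_{1,z}$ appears only in $L_2$ in time with at most one derivative.
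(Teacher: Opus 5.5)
Your skeleton matches the paper's: the $L_s$ identity for (6.7), moving $s-4+2\delta$ powers into $L_\infty$ and converting them with (6.8), the bound $|r\tilde v'_\varphi|\le D_2$, the subcritical Hardy inequality with constant $r_0^{2\delta}/\delta^2$, the forcing term via $|\tilde v'_\varphi|_{10/3,\bar\R^3_t}\le cD_1$ with exponent $10/(1+6\delta)$, and the $\dot\zeta_1$-terms giving $D_1^2$. \textbf{However, the main term, as you primarily describe it, does not close.}

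Once $|\tilde v'_\varphi|_\infty^{s-4+2\delta}$ has been pulled out, you are left with
$\int|\psi_{1,z}||\tilde v'_\varphi|^{4-2\delta}dx\le D_2^{2-2\delta}\int|\psi_{1,z}||\tilde v'_\varphi|^2r^{-2+2\delta}dx$.
This is linear in $\psi_{1,z}$ and quadratic in $\tilde v'_\varphi$. Any Cauchy--Schwarz split then needs control of something like $\int_0^t\!\int|\tilde v'_\varphi|^4r^{-2+2\delta}dxdt'$. Estimate (2.1) does not provide this; it gives only $\int\!\!\int v_\varphi^2r^{-2}$. You could spend further $L_\infty$ powers of $\tilde v'_\varphi$, but that destroys the exponent $4-2\delta$; further factors $D_2/r$ make the weight too singular. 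The absorbing term $\nu\int|\tilde v'_\varphi|^sr^{-2}dx$ cannot help either, since after the extraction it is the smaller quantity.

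The order must be reversed. This is your fallback and exactly the paper's route. First use Young: $\psi_{1,z}|\tilde v'_\varphi|^s\le\nu|\tilde v'_\varphi|^sr^{-2}+\frac1{4\nu}r^2\psi_{1,z}^2|\tilde v'_\varphi|^s$. Then, on each time slice, $r^2|\tilde v'_\varphi|^s\le D_2^{4-2\delta}r^{-2+2\delta}|\tilde v'_\varphi|_\infty^{s-4+2\delta}$. Note the factor is $D_2^{4-2\delta}$, matching $D_{17}$, not $D_2^{2-2\delta}$.

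The same care is needed in the $\dot\zeta_1$-terms. There too you must extract $|\tilde v'_\varphi|_\infty^{s-4+2\delta}$ first and bound only the remaining powers by $D_2/(2r_0)$. Otherwise, dividing by $|\tilde v'_\varphi|_s^{s-4+2\delta}$ leaves a term that is unbounded when $|\tilde v'_\varphi|_s$ is small.

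\textbf{Second, your bounds give the wrong dependence on $X$.} With Young applied first, $\psi_{1,z}$ enters squared. Your bounds $|\tilde\psi_{1,rz}|_{2,\bar\R^3_t},|\tilde\psi_{1,z}|_{2,\bar\R^3_t}\lesssim X+D_1$ would then give $\frac{r_0^{2\delta}}{\delta^2}(X+D_1)^2$. That is quadratic in $X$, not the linear term $D_{17}X$ in (6.9).

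The missing step is an interpolation, the same one used in Section 5:
$|\psi_{1,rz}|^2_{2,\bar\R^3}\le c|\psi_{1,z}|_{2,\bar\R^3}|\nabla^2\psi_{1,z}|_{2,\bar\R^3}$.
Follow it with Cauchy--Schwarz in time, using $|\psi_{1,z}|_{2,\bar\R^3_t}\le cD_1$ from (2.15) and $|\nabla^2\psi_{1,z}|_{2,\bar\R^3_t}\lesssim|\Gamma_{,z}|_{2,\bar\R^3_t}\le X$ from (3.15). This gives $\int_0^t\!\int\psi_{1,z}^2r^{-2+2\delta}dxdt'\le c\frac{r_0^{2\delta}}{\delta^2}D_1X$, which is the paper's (6.14) and produces the linear form of (6.9).
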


\begin{proof}
  Multiply (\ref{6.7}) by $\tilde v'_\varphi|\tilde v'_\varphi|^{s-2}$ and integrate over $\bar\R^3$ to obtain
  \begin{equation}\eqal{
    &{1\over s}{d\over dt}|\tilde v'_\varphi|_{s,\bar\R^3}^s+{4\nu(s-1)\over s^2}|\nabla|\tilde v'_\varphi|^{s/2}|_{2,\bar\R^3}^2+\nu\intop_{\bar\R^3}{|\tilde v'_\varphi|^s\over r^2}dx\cr
    &=\intop_{\bar\R^3}\psi_{1,z}|\tilde v'_\varphi|^sdx+\intop_{\bar\R^3}v_rv_\varphi\tilde v'_\varphi|\tilde v'_\varphi|^{s-2}\dot\zeta_1dx\cr
    &\quad-2\nu\intop_{\bar\R^3}v_{\varphi,r}\dot\zeta_1\tilde v'_\varphi|\tilde v'_\varphi|^{s-2}dx-\nu\intop_{\bar\R^3}v_\varphi\bigg(\ddot\zeta_1+{1\over r}\dot\zeta_1\bigg)\tilde v'_\varphi|\tilde v'_\varphi|^{s-2}dx\cr
    &\quad+\intop_{\bar\R^3}\tilde f'_\varphi\tilde v'_\varphi|\tilde v'_\varphi|^{s-2}dx.\cr}
    \label{6.10}
  \end{equation}
  We estimate the first term on the r.h.s. of (\ref{6.10}) by
  $$\eqal{
    &\nu\intop_{\bar\R^3}{|\tilde v'_\varphi|^s\over r^2}dx+{1\over 4\nu}\intop_{\bar\R^3\cap\supp\zeta_1}r^2|\psi_{1,z}|^2|\tilde v'_\varphi|^sdx\cr
    &\le\nu\intop_{\bar\R^3}{|\tilde v'_\varphi|^s\over r^2}dx+{D_2^{4-2\delta}\over 4\nu}\intop_{\bar\R^3\cap\supp\zeta_1}{\psi_{1,z}^2\over r^{2-2\delta}}|\tilde v'_\varphi|^{s-4+2\delta}dx,\cr}
  $$
  where $\delta>0$ is small and Lemma \ref{l2.3} is used.

  The last term on the r.h.s. of (\ref{6.10}) is bounded by
  $$
    \intop_{\bar\R^3}|\tilde f'_\varphi|\,|\tilde v'_\varphi|^{s-1}dx\le|\tilde v'_\varphi|_{\infty,\bar\R^3}^{s-4+2\delta}\intop_{\bar\R^3}|\tilde f'_\varphi|\,|\tilde v'_\varphi|^{3-2\delta}dx.
  $$
  We assume $s\ge 4-2\delta$ so that $s-4+2\delta\ge 0$ and the factor
  $|\tilde v'_\varphi|_{\infty}^{\,s-4+2\delta}$ can be extracted.

  The second term on the r.h.s. of (\ref{6.10}) is estimated by
  $$
    \intop_{\bar\R^3}|v_r|\,|v_\varphi|\,|\tilde v'_\varphi|^{s-1}|\dot\zeta_1|dx\le|\tilde v'_\varphi|_{\infty,\bar\R^3}^{s-4+2\delta}\intop_{\bar\R^3}|v_r|\,|v_\varphi|\,|\tilde v'_\varphi|^{3-2\delta}|\dot\zeta_1|dx.
  $$
  Using the properties of $\dot\zeta_1$ we can estimate the above expression by
  $$
    c|\tilde v'_\varphi|_{\infty,\bar\R^3}^{s-4+2\delta}D_2^{3-2\delta}\intop_{\bar\R^3}(|v_r|^2+|v_\varphi|^2)dx.
  $$
  The third term on the r.h.s. of (\ref{6.10}) is bounded by
  $$
    c|\tilde v'_\varphi|_{\infty,\bar\R^3}^{s-4+2\delta}D_2^{2-2\delta}\intop_{\bar\R^3}(|v_{\varphi,r}|^2+|v_\varphi|^2)dx.
  $$
  Similarly, the fourth term on the r.h.s. of (\ref{6.10}) is estimated by
  $$
    c|\tilde v'_\varphi|_{\infty,\bar\R^3}^{s-4+2\delta}D_2^{2-2\delta}\intop_{\bar\R^3}|v_\varphi|^2dx.
  $$
  Using the above estimates in (\ref{6.10}) yields
  \begin{equation}\eqal{
      &{1\over s}{d\over dt}|\tilde v'_\varphi|_{s,\bar\R^3}^s\le{D_2^{4-2\delta}\over 4\nu}|\tilde v'_\varphi|_{\infty,\bar\R^3}^{s-4+2\delta}\intop_{\bar\R^3\cap\supp\zeta_1}{|\psi_{1,z}|^2\over r^{2-2\delta}}dx\cr
      &\quad+|\tilde v'_\varphi|_{\infty,\bar\R^3}^{s-4+2\delta}\intop_{\bar\R^3}|\tilde f'_\varphi|\,|\tilde v'_\varphi|^{3-2\delta}dx\cr
      &\quad+c|\tilde v'_\varphi|_{\infty,\bar\R^3}^{s-4+2\delta}\intop_{\bar\R^3}(|v_r|^2+|v_\varphi|^2+|v_{\varphi,r}|^2)dx.\cr}
    \label{6.11}
  \end{equation}
  Now, we examine the expression from the l.h.s. of (\ref{6.11}),
  $$\eqal{
    &{1\over s}{d\over dt}|\tilde v'_\varphi|_{s,\bar\R^3}^s=|\tilde v'_\varphi|_{s,\bar\R^3}^{s-1}{d\over dt}|\tilde v'_\varphi|_{s,\bar\R^3}\cr
    &=|\tilde v'_\varphi|_{s,\bar\R^3}^{s-1-\alpha}|\tilde v'_\varphi|_{s,\bar\R^3}^\alpha{d\over dt}|\tilde v'_\varphi|_{s,\bar\R^3}\cr
    &={1\over\alpha+1}|\tilde v'_\varphi|_{s,\bar\R^3}^{s-1-\alpha}{d\over dt}|\tilde v'_\varphi|_{s,\bar\R^3}^{\alpha+1}.\cr}
  $$
  Setting $\alpha+1=4-2\delta$ we obtain from (\ref{6.11}) the inequality
  \begin{equation}\eqal{
      &{1\over 4-2\delta}\bigg({|\tilde v'_\varphi|_{s,\bar\R^3}\over|\tilde v'_\varphi|_{\infty,\bar\R^3}}\bigg)^{s-4+2\delta}{d\over dt}|\tilde v'_\varphi|_{s,\bar\R^3}^{4-2\delta}\cr
      &\le{D_2^{4-2\delta}\over 4\nu}\intop_{\bar\R^3\cap\supp\zeta_1}{\psi_{1,z}^2\over r^{2-2\delta}}dx+\intop_{\bar\R^3}|\tilde f'_\varphi|\,|\tilde v'_\varphi|^{3-2\delta}dx\cr
      &\quad+c\intop_{\bar\R^3}(|v_r|^2+|v_\varphi|^2+|v_{\varphi,r}|^2)dx.\cr}
    \label{6.12}
  \end{equation}
  In view of (\ref{6.8}) we have
  \begin{equation}\eqal{
      &{d\over dt}|\tilde v'_\varphi|_{s,\bar\R^3}^{4-2\delta}\le{4-2\delta\over 4\nu}{D_2^{4-2\delta}\over c_0^{s-4+2\delta}}\intop_{\bar\R^3\cap\supp\zeta_1}{\psi_{1,z}^2\over r^{2-2\delta}}dx\cr
      &\quad+{4-2\delta\over c_0^{s-4+2\delta}}\intop_{\bar\R^3}|\tilde f'_\varphi|\,|\tilde v'_\varphi|^{3-2\delta}dx\cr
      &\quad+{c(4-2\delta)\over c_0^{s-4+2\delta}}\intop_{\bar\R^3}(|v_r|^2+|v_\varphi|^2+|v_{\varphi,r}|^2)dx.\cr}
    \label{6.13}
  \end{equation}
  Using the Hardy inequality (\ref{2.13}), interpolation inequality (\ref{2.32}) and (\ref{2.14}), we have
  \begin{equation}\eqal{
      &\intop_{\bar\R_t^3\cap\supp\zeta_1}{\psi_{1,z}^2\over r^{2-2\delta}}dxdt'\le c{r_0^{2\delta}\over\delta^2}|\psi_{1,zr}|_{2,\bar\R_t^3}^2\cr
      &\le c{r_0^{2\delta}\over\delta^2}D_1X.\cr}
    \label{6.14}
  \end{equation}
  Moreover,
  \begin{equation}
    \intop_{\bar\R_t^3}|\tilde f'_\varphi|\,|\tilde v'_\varphi|^{3-2\delta}dxdt'\le|\tilde f'_\varphi|_{10/(1+6\delta),\bar\R_t^3}D_1^{3-2\delta}.
    \label{6.15}
  \end{equation}
  Integrating (\ref{6.13}) with respect to time and using (\ref{6.14}) and (\ref{6.15}), we obtain
  \begin{equation}\eqal{
      &|\tilde v'_\varphi(t)|_{s,\bar\R^3}^{4-2\delta}\le c{4-2\delta\over 4\nu}{D_2^{4-2\delta}\over c_0^{s-4+2\delta}}{r_0^{2\delta}\over\delta^2}D_1X\cr
      &\quad+{4-2\delta\over c_0^{s-4+2\delta}}|\tilde f'_\varphi|_{10/(1+6\delta),\bar\R_t^3}D_1^{3-2\delta}\cr
      &\quad+{c(4-2\delta)\over c_0^{s-4+2\delta}}D_1^2+|\tilde v'_\varphi(0)|_{s,\bar\R^3}^{4-2\delta}.\cr}
    \label{6.16}
  \end{equation}
  This inequality implies (\ref{6.9}) and concludes the proof.
\end{proof}

\section{Conditional higher-order estimates}\label{s7}

The purpose of this section is to propagate the conditional bound $\mathcal{K}(t)$ derived in Theorem~\ref{t1.1} through the subsequent bootstrap steps to obtain higher-order Sobolev estimates for regular solutions.

\begin{lemma}[Parabolic multiplication algebra]\label{l:parabolic-algebra}
  Let $0<t<\infty$. Then
  \[
    W^{2,1}_3(\mathbb R^3_t)
    \cdot
    W^{2,1}_3(\mathbb R^3_t)
    \hookrightarrow
    W^{2,1}_3(\mathbb R^3_t),
  \]
  and
  \[
    \|fg\|_{W^{2,1}_3(\mathbb R^3_t)}
    \le C_t
    \|f\|_{W^{2,1}_3(\mathbb R^3_t)}
    \|g\|_{W^{2,1}_3(\mathbb R^3_t)}.
  \]
  The constant $C_t$ is independent of the viscosity $\nu$.
\end{lemma}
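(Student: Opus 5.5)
The plan is to verify the product estimate term by term from the definition of the norm in Definition~\ref{d2.12}. For $k=2$, $p=p_0=3$ the norm $\|u\|_{W^{2,1}_3(\mathbb R^3_t)}$ is the sum of the $L_3(\mathbb R^3_t)$ norms of $u$, $\nabla u$, $\nabla^2u$ and $\partial_tu$. By the Leibniz rule we must control, in $L_3(\mathbb R^3_t)$,
\[
fg,\quad f\nabla g+g\nabla f,\quad f\nabla^2g+2\nabla f\otimes\nabla g+g\nabla^2 f,\quad f\partial_tg+g\partial_tf .
\]
Every term falls into one of two types. The first type, ``zeroth order factor times a top-order factor'', is bounded by $\|f\|_{L_\infty(\mathbb R^3_t)}\|g\|_{W^{2,1}_3}$ (and symmetrically). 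The second type is the mixed term $\nabla f\otimes\nabla g$. We bound it by H\"older as $\|\nabla f\|_{L_6(\mathbb R^3_t)}\|\nabla g\|_{L_6(\mathbb R^3_t)}$. It therefore suffices to prove the two embeddings
\[
W^{2,1}_3(\mathbb R^3_t)\hookrightarrow L_\infty(\mathbb R^3_t),\qquad \nabla: W^{2,1}_3(\mathbb R^3_t)\to L_6(\mathbb R^3_t),
\]
each with a constant depending only on $t$.

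For the gradient embedding I would apply Lemma~\ref{l2.13} with $s=2$, $p=p_0=3$, $q=q_0=6$, $|\alpha|=1$, $a=0$, $\sigma=0$. This gives
\[
\varkappa=\tfrac35\cdot\tfrac53\cdot\ldots
\]
more precisely $\varkappa=\tfrac33+\tfrac23-\tfrac36-\tfrac26+1=\tfrac{11}{6}<2$. Fixing $\varepsilon=1/2$ in that lemma then yields $\|\nabla f\|_{L_6(\mathbb R^3_t)}\le C\|f\|_{W^{2,1}_3(\mathbb R^3_t)}$.

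For the $L_\infty$ bound, the formal count with $q=q_0=\infty$ gives $\varkappa=\tfrac53<2$, which has room to spare. However, $q=\infty$ is not formally covered by Lemma~\ref{l2.13}. So I would instead take $q=q_0$ large and $\sigma>0$ small with $\tfrac53-\tfrac5q+\sigma<2$. This gives $f\in W^{\sigma,\sigma/2}_{q}(\mathbb R^3_t)$ with $\sigma q>5$, and then the Sobolev--Slobodetskii embedding into (H\"older) continuous functions in the parabolic dimension $5$ yields $\|f\|_{L_\infty}\le C\|f\|_{W^{2,1}_3}$. Alternatively one can quote the corresponding BIN embedding directly.

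The only genuine issue, and the step I expect to be the main obstacle, is that these embedding constants are applied on the finite cylinder $\mathbb R^3\times(0,t)$. I would handle this with a bounded extension operator in time, for instance reflection across $t'=0$ and $t'=t$ followed by a cutoff. This operator maps $W^{2,1}_3(\mathbb R^3_t)$ into $W^{2,1}_3(\mathbb R^3\times\mathbb R)$ with norm depending only on $t$, and the whole-space embeddings are then applied there; this is the origin of the dependence $C_t$. Finally, $\nu$ enters neither the norms nor the embeddings, so $C_t$ is independent of $\nu$. Collecting the terms gives $\|fg\|_{W^{2,1}_3}\le C_t\|f\|_{W^{2,1}_3}\|g\|_{W^{2,1}_3}$.
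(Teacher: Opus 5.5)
Your proof is correct, but it takes a different route from the paper.

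\textbf{The paper's route.} The paper does no term-by-term estimate. It quotes the multiplication theorem of K\"ohne--Saal \cite{KS} for vector-valued anisotropic spaces, with block dimensions $(3,1)$, weights $(1,2)$, smoothness $\sigma=2$ and $p=3$. It checks that the anisotropic Sobolev index $\frac12(2-\frac53)=\frac16$ is positive, identifies $H^{2,(1,2)}_3=W^{2,1}_3$, and restricts to the strip by a time extension.

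\textbf{Your route.} You use the Leibniz rule, which is available because the order is an integer. This reduces everything to two embeddings, $W^{2,1}_3\hookrightarrow L_\infty$ and $\nabla W^{2,1}_3\subset L_6$, both taken from Lemma~\ref{l2.13}. Your count $\varkappa=\frac{11}{6}<2$ for the gradient is right. For the $L_\infty$ bound, the window $5/q<\sigma<\frac13+5/q$ is nonempty, so $\sigma q>5$ is attainable.

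\textbf{Comparison.} Positivity of the K\"ohne--Saal index is the same condition $2-\frac53>0$ that gives your $L_\infty$ embedding, so both proofs rest on the same numerology. Your version uses only tools already stated in Section~2. It also makes the $\nu$-independence visible, and shows that the $t$-dependence enters only through the extension operator. That dependence cannot be avoided: time-independent $f=g=\phi$ give $\|fg\|_{W^{2,1}_3(\mathbb R^3_t)}\sim t^{1/3}$ against $\|f\|\,\|g\|\sim t^{2/3}$, so $C_t\gtrsim t^{-1/3}$ as $t\to0$. The paper's citation is shorter and sits in a framework that also covers fractional orders, where no Leibniz rule is available.

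\textbf{Two small fixes.} Since $W^{2,1}_3$ is defined as a completion of smooth functions, state that you prove the estimate for smooth $f,g$ and extend it by density. Also delete the stray line $\varkappa=\tfrac35\cdot\tfrac53\cdot\ldots$.
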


\begin{proof}
  We apply \cite[Theorem~2.5(b), Remark~2.6(a),(b)]{KS}
  with the block dimensions
  \[
    \boldsymbol n=(3,1),
  \]
  the parabolic anisotropy
  \[
    \boldsymbol\omega=(1,2),
  \]
  the anisotropic smoothness order $\sigma=2$, and $p=3$.
  Then
  \[
    \dot\omega=\operatorname{lcm}(1,2)=2,
    \qquad
    \boldsymbol\omega\cdot\boldsymbol n=5,
  \]
  and the anisotropic Sobolev index equals
  \[
    \frac1{\dot\omega}
    \left(
    \sigma-\frac{\boldsymbol\omega\cdot\boldsymbol n}{p}
    \right)
    =
    \frac12\left(2-\frac53\right)
    =
    \frac16>0.
  \]
  Moreover, with equivalence of norms,
  \[
    H^{2,(1,2)}_3
    =
    W^{2,(1,2)}_3
    =
    W^{2,1}_3.
  \]
  The result on the finite strip
  $\mathbb R^3\times(0,t)$ follows by the standard bounded
  extension operator in the time variable.
\end{proof}

\begin{lemma}[Mixed-derivative embedding]\label{l:mixed-derivative}
  Let $m\in\mathbb N_0$ and $0<t<\infty$. Then
  \[
    W^1_3(0,t;W^m_3(\mathbb R^3))
    \cap
    L_3(0,t;W^{m+2}_3(\mathbb R^3))
    \hookrightarrow
    W^{1/2}_3(0,t;W^{m+1}_3(\mathbb R^3)).
  \]
  Moreover,
  \[
    \|u\|_{W^{1/2}_3(0,t;W^{m+1}_3)}
    \le C_t
    \left(
    \|u\|_{W^1_3(0,t;W^m_3)}
    +
    \|u\|_{L_3(0,t;W^{m+2}_3)}
    \right).
  \]
\end{lemma}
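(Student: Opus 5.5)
The plan is to prove the embedding by vector-valued interpolation in time, applied to the family of Banach spaces $W^k_3(\mathbb R^3)$, $k=m,m+1,m+2$. First I reduce to the whole line. Using a bounded extension operator $E$ in the time variable (reflection across $t'=0$ and $t'=t$, then a smooth cutoff), I map
\[
W^1_3(0,t;W^m_3)\cap L_3(0,t;W^{m+2}_3)
\]
boundedly into the corresponding space on $\mathbb R$. The operator norm depends only on $t$, and $E$ restricts back to the identity. So it suffices to prove
\[
W^1_3(\mathbb R;W^m_3)\cap L_3(\mathbb R;W^{m+2}_3)\hookrightarrow W^{1/2}_3(\mathbb R;W^{m+1}_3).
\]

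Next I apply the Bessel potential $J^{m}=(1-\Delta_x)^{m/2}$ in space. It commutes with $\partial_t$ and is an isomorphism $W^{k}_3\to W^{k-m}_3$, since $1<3<\infty$ and we work on the whole space. This reduces the claim to the case $m=0$:
\[
W^1_3(\mathbb R;L_3)\cap L_3(\mathbb R;W^2_3)\hookrightarrow W^{1/2}_3(\mathbb R;W^1_3).
\]
This is the classical mixed-derivative theorem for the parabolic scale. I would prove it by a Fourier multiplier argument in $(\tau,\xi)$. For $u$ in the intersection, set $g=(1+i\tau+|\xi|^2)\hat u$, which lies in $L_3$ with norm bounded by the data norm. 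It then suffices to show that
\[
m(\tau,\xi)=\frac{(1+|\tau|^2)^{1/4}(1+|\xi|^2)^{1/2}}{1+i\tau+|\xi|^2}
\]
is an $L_3(\mathbb R^4)$ Fourier multiplier. That holds by the Marcinkiewicz/Lizorkin multiplier theorem, because $m$ is smooth off the axes and satisfies the parabolic-homogeneous derivative bounds $|\tau|^{j}|\xi|^{|\beta|}|\partial_\tau^j\partial_\xi^\beta m|\le C$. A point to handle is that the Slobodetskii space $W^{1/2}_3$ in time differs from the Bessel-potential space $H^{1/2}_3$. I would land in $H^{1/2+\varepsilon'}$-type control, or more precisely use the standard inclusion $F^{1/2}_{3,2}\subset B^{1/2}_{3,3}=W^{1/2}_3$, valid since $2\le 3$.

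An alternative route, which I would keep as a fallback, uses real interpolation directly. The intersection embeds into the trace-type space, and
\[
(W^1_3(L_3),L_3(W^2_3))_{1/2,3}\subset W^{1/2}_3(\mathbb R;(L_3,W^2_3)_{1/2,3}) = W^{1/2}_3(\mathbb R;B^1_{3,3}).
\]
This follows from the mixed-derivative theorem of Sobolevskii / Grisvard for the commuting sectorial operators $\partial_t$ and $1-\Delta$. I then need $B^1_{3,3}$ to be replaced by $W^1_3$. Since $B^1_{3,3}\not\subset W^1_3$ in general, I would instead use the complex interpolation/operator-sum version, which gives $H^{1/2}_3(\mathbb R;H^1_3)$, and then the inclusion $H^{1/2}_3\subset B^{1/2}_{3,3}$ in time.

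The main obstacle is precisely this bookkeeping of fine indices, Bessel versus Besov, in both variables simultaneously. The cleanest path is the operator-theoretic mixed-derivative theorem. Here $\partial_t$ and $1-\Delta$ both have bounded $H^\infty$-calculus on $L_3(\mathbb R;L_3)$ with angles summing to less than $\pi$. This yields
\[
\|(\partial_t)^{1/2}(1-\Delta)^{1/2}u\|_{L_3}\le C\|(\partial_t+1-\Delta)u\|_{L_3},
\]
and hence $u\in H^{1/2}_3(\mathbb R;H^1_3)$. Combining this with $H^{1/2}_3\hookrightarrow B^{1/2}_{3,3}$, which uses $p=3\ge2$, gives the stated bound. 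Restricting back to $(0,t)$ and undoing $J^m$ then finishes the argument, with $C_t$ coming only from the extension operator.
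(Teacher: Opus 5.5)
Your proof is correct, but it goes through the Bessel-potential scale, whereas the paper stays in the Besov scale in time throughout.

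After the same time extension, the paper applies the Besov mixed-derivative theorem \cite[Theorem~3.12]{ALV} with $X_0=W^m_3$, $X_1=W^{m+2}_3$ and $X_\theta=W^{m+1}_3$. The only hypothesis that theorem places on the intermediate space is the interpolation inequality $\|u\|_{W^{m+1}_3}\le C\|u\|_{W^m_3}^{1/2}\|u\|_{W^{m+2}_3}^{1/2}$; for Besov norms in time this inequality can be used dyadic block by dyadic block. It is combined with the endpoint embeddings $W^1_3(0,t;X_0)\hookrightarrow B^1_{3,3}(0,t;X_0)$ and $L_3(0,t;X_1)\hookrightarrow B^0_{3,3}(0,t;X_1)$. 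The conclusion lands directly in $B^{1/2}_{3,3}(0,t;W^{m+1}_3)=W^{1/2}_3(0,t;W^{m+1}_3)$. This is how the paper avoids the obstruction you point out in your fallback: real interpolation of the spatial spaces only yields $B^{m+1}_{3,3}\not\subset W^{m+1}_3$.

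Both arguments use $p=3\ge 2$ exactly once. The paper uses it at the endpoints; you use it at the end, when passing from $H^{1/2}_3$ to $B^{1/2}_{3,3}$ in time.

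Your route is more elementary. It needs only the Marcinkiewicz multiplier theorem on $\mathbb R^4$ (or the operator-sum theorem), the isomorphism $J^m$, and scalar embeddings. It also proves the stronger intermediate fact $u\in H^{1/2}_3(\mathbb R;H^{m+1}_3)$. The paper's route, in exchange, uses nothing about $\mathbb R^3$ beyond the interpolation inequality. It would therefore apply verbatim to any couple $X_1\hookrightarrow X_0$ with an intermediate space satisfying such an inequality.

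Two small points to tidy up.
\begin{itemize}
\item Drop the aside about ``$H^{1/2+\varepsilon'}$-type control''. It is not needed, and read literally it is false: the symbol $(1+|\tau|)^{1/2+\varepsilon'}(1+|\xi|^2)^{1/2}/(1+|\tau|+|\xi|^2)$ is unbounded along $|\tau|\sim|\xi|^2$.
\item The vector-valued inclusion $H^{1/2}_3(\mathbb R;W^1_3)\hookrightarrow B^{1/2}_{3,3}(\mathbb R;W^1_3)$ is not automatic for general target spaces, so it should be justified. Here the time and space exponents are both $3$, so Fubini reduces it, separately for $u$ and $\nabla_x u$, to the scalar inclusion $F^{1/2}_{3,2}(\mathbb R)\subset B^{1/2}_{3,3}(\mathbb R)$ for a.e.\ $x$. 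For the $\nabla_x u$ part, use that $\nabla_x J_x^{-1}$ is bounded on $L_3$.
\end{itemize}
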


\begin{proof}
  After applying a bounded extension operator in the time variable,
  we use \cite[Theorem~3.12]{ALV} with
  \[
    \mathcal A=B,\qquad
    p_0=p_1=p=3,\qquad
    q_0=q_1=q=3,
  \]
  \[
    s_0=1,\qquad s_1=0,\qquad
    \theta=\frac12,
  \]
  and
  \[
    X_0=W^m_3(\mathbb R^3),\qquad
    X_1=W^{m+2}_3(\mathbb R^3),\qquad
    X_\theta=W^{m+1}_3(\mathbb R^3).
  \]
  The interpolation inequality
  \[
    \|u\|_{W^{m+1}_3}
    \le C
    \|u\|_{W^m_3}^{1/2}
    \|u\|_{W^{m+2}_3}^{1/2}
  \]
  verifies condition~(3.22) of \cite{ALV}. Using
  \[
    W^1_3(0,t;X_0)\hookrightarrow B^1_{3,3}(0,t;X_0),
    \qquad
    L_3(0,t;X_1)\hookrightarrow B^0_{3,3}(0,t;X_1),
  \]
  Theorem~3.12 gives
  \[
    u\in B^{1/2}_{3,3}(0,t;X_\theta)
    =
    W^{1/2}_3(0,t;X_\theta).
  \]
\end{proof}
\begin{lemma}\label{l7.1}
  Assume that $f\in W_3^{1,1/2}(\mathbb R_t^3)$, $v(0)\in B^{7/3}_{3,3}(\mathbb R^3)$, and $\operatorname{div}v(0)=0$. Then there exists a local solution to problem (\ref{1.1}) such that $v\in W_3^{3,3/2}(\mathbb R_t^3)$, $\nabla p\in W_3^{1,1/2}(\mathbb R_t^3)$ and the estimate holds
  \begin{equation}\eqal{
      &\|v\|_{W_3^{3,3/2}(\mathbb R_t^3)}+\|\nabla p\|_{W_3^{1,1/2}(\mathbb R_t^3)}\cr
      &\le C_{\nu,T_0}(\|f\|_{W_3^{1,1/2}(\mathbb R_t^3)}+\|v(0)\|_{B^{7/3}_{3,3}(\mathbb R^3)}),\cr}
    \label{7.1}
  \end{equation}
  where $0<t\le T_0$ and
  \[
    T_0=T_0\left(
    \nu,
    \|f\|_{W^{1,1/2}_3},
    \|v(0)\|_{B^{7/3}_{3,3}}
    \right)>0
  \]
  is chosen sufficiently small.
\end{lemma}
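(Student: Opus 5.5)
We construct the local solution by a fixed-point argument in the space $\mathcal M_t:=\{v\in W^{3,3/2}_3(\R^3_t):\ \divv v=0,\ v|_{t=0}=v(0)\}$. The first step is to build the reference Stokes solution $(v_0,p_0)$ with data $f$ and $v(0)$ at the higher regularity level. For this we differentiate the Stokes system once in space. Lemma~\ref{l2.18} with $q=r=3$ then gives $\nabla v_0\in W^{2,1}_3$, since $\nabla v(0)\in B^{4/3}_{3,3}=B^{2-2/3}_{3,3}$ and $\nabla f\in L_3$. The time-fractional part $f\in W^{1/2}_3(0,t;L_3)$ is handled either by the $W^{3,3/2}_3$ maximal regularity of the Stokes operator on $\R^3$ (obtained via the Fourier multiplier / Helmholtz projection, which commutes with derivatives on the whole space) or by interpolating between the $W^{2,1}_3$ and $W^{4,2}_3$ levels. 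This yields
\[
\|v_0\|_{W^{3,3/2}_3}+\|\nabla p_0\|_{W^{1,1/2}_3}\le c\bigl(\|f\|_{W^{1,1/2}_3}+\|v(0)\|_{B^{7/3}_{3,3}}\bigr).
\]
The compatibility condition $\divv v(0)=0$ is used here.

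The second step is the map $\mathcal T$. For $w\in\mathcal M_t$ we let $v=\mathcal T(w)$ solve the Stokes problem with right-hand side $f-w\cdot\nabla w$ and initial datum $v(0)$, so that $v-v_0$ solves the Stokes system with force $-w\cdot\nabla w$ and zero initial data. The main task is to estimate $\|w\cdot\nabla w\|_{W^{1,1/2}_3(\R^3_t)}$. The spatial derivative part $\nabla(w\cdot\nabla w)=\nabla w\cdot\nabla w+w\cdot\nabla^2w$ is controlled using $\nabla w\in W^{2,1}_3$. By Lemma~\ref{l:parabolic-algebra}, and because $W^{2,1}_3\hookrightarrow L_\infty$ (anisotropic index $1/6>0$), both products lie in $L_3$. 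The half time derivative is controlled using Lemma~\ref{l:mixed-derivative} with $m=0$: $\nabla w\in W^{1/2}_3(0,t;W^1_3)$, and $W^1_3\hookrightarrow L_p$ for all $p<\infty$. Combining this with the $L_\infty$ bound on $w$, and using a Leibniz-type estimate for the Slobodetskii seminorm in time (split the difference quotient of the product into two terms), gives $w\cdot\nabla w\in W^{1/2}_3(0,t;L_3)$. Altogether we obtain
\[
\|w\cdot\nabla w\|_{W^{1,1/2}_3(\R^3_t)}\le c\,t^{\kappa}\|w\|_{W^{3,3/2}_3(\R^3_t)}^2
\]
for some $\kappa>0$. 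The factor $t^\kappa$ comes from a Hölder step in time, since one factor is bounded in a better space, e.g.\ $L_\infty$ in time. To avoid constants in the embeddings that blow up as $t\to0$, we always measure $w-v_0$, which has zero trace at $t=0$, so that extension by zero in time is bounded uniformly in $t$.

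The third step is the contraction itself. On the ball $\|w-v_0\|_{W^{3,3/2}_3}\le \rho$, with $\rho$ comparable to the data norm, the same bilinear estimate applied to $w_1\cdot\nabla w_1-w_2\cdot\nabla w_2=(w_1-w_2)\cdot\nabla w_1+w_2\cdot\nabla(w_1-w_2)$ gives
\[
\|\mathcal T w_1-\mathcal T w_2\|\le c\,T_0^\kappa(\|v_0\|+\rho)\|w_1-w_2\|.
\]
Choosing $T_0=T_0(\nu,\|f\|,\|v(0)\|)$ small makes $\mathcal T$ a contraction of the ball into itself. The fixed point is the solution, and (\ref{7.1}) follows from the Stokes estimate plus the ball radius. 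The pressure bound follows from the $\nabla p$ part of the Stokes estimate.

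The step I expect to be the main obstacle is the fractional-in-time product estimate in step two, with the explicit smallness factor $t^\kappa$ that is uniform as $t\to0$. I would first reduce to zero-initial-trace functions, extend them by zero to $(-\infty,t)$, and then apply Lemmas~\ref{l:parabolic-algebra} and \ref{l:mixed-derivative} on a fixed interval $(0,1)$. The $t^\kappa$ gain would come from Hölder's inequality in time on the lower-order factor.
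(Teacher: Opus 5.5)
Your route is genuinely different from the paper's. Your first step is the paper's Lemma~\ref{l7.3}; use the differentiate-once/Helmholtz option there, since interpolating between $W^{2,1}_3$ and $W^{4,2}_3$ does not land exactly in the mixed space $W^{3,3/2}_3$ at $p=3$. After that, the paper never iterates at the $W^{3,3/2}_3$ level. It solves for the zero-trace remainder $z=v-w$ by contraction in $W^{2,1}_3(\R^3_{T_0})$, bounding $(u\cdot\nabla)u=\divv(u\otimes u)$ in $L_3$ through the algebra property of Lemma~\ref{l:parabolic-algebra}. It then upgrades in one linear step: for $v\in W^{2,1}_3$, Lemma~\ref{l:parabolic-algebra} and Lemma~\ref{l:mixed-derivative} with $m=0$ give $\divv(v\otimes v)\in W^{1,1/2}_3$, and Lemma~\ref{l7.3} yields $z\in W^{3,3/2}_3$. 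With that ordering the top-order product bound is used once, on a fixed function, with no smallness and no fractional Leibniz rule, and the fixed point lives in the weaker class $W^{2,1}_3$. Your ordering gives (\ref{7.1}) directly from the ball radius with no bootstrap, but it makes the $W^{1,1/2}_3$ product estimate with a small factor the crux. The divergence form would also shorten your step two: $w\cdot\nabla w\in W^{1,1/2}_3$ already follows from $w\in W^{2,1}_3$.

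One correction is needed. The displayed bound $\|w\cdot\nabla w\|_{W^{1,1/2}_3(\R^3_t)}\le c\,t^\kappa\|w\|^2_{W^{3,3/2}_3(\R^3_t)}$ is false on $\mathcal M_t$ with $c$ independent of $t$. Take $w(\cdot,s)\equiv v(0)$, smooth and divergence-free with $v(0)\cdot\nabla v(0)\neq0$. The right side is $O(t^{\kappa+2/3})$, while the left side is at least $t^{1/3}\|v(0)\cdot\nabla v(0)\|_{W^1_3}$. Measuring $w-v_0$ only covers terms containing a zero-trace factor; it does not cover $v_0\cdot\nabla v_0$.

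The repair is to write $w=v_0+z$ and measure $v_0$ in $W^{3,3/2}_3(\R^3_T)$ on a fixed reference interval. The trace theorem gives $v_0\cdot\nabla v_0\in L_\infty(0,T;W^1_3)$, and $\partial_t(v_0\cdot\nabla v_0)\in L_3(\R^3_T)$. Combined with $[g]_{W^{1/2}_3(0,t;L_3)}\le Ct^{1/2}\|\partial_tg\|_{L_3(\R^3_t)}$, this yields $\|v_0\cdot\nabla v_0\|_{W^{1,1/2}_3(\R^3_t)}\le C_Tt^{1/3}\|v_0\|^2_{W^{3,3/2}_3(\R^3_T)}$.

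Also do the zero extension of $z$ at the level where Lemmas~\ref{l:parabolic-algebra} and \ref{l:mixed-derivative} act: $z,\nabla z$ in $W^{2,1}_3$ and $\nabla^2 z$ in $W^{1,1/2}_3$. Do not extend in $W^{3,3/2}_3$ itself. There it fails, because $\partial_t(\mathcal Tw-v_0)(0)=-\mathbb P(v(0)\cdot\nabla v(0))\neq0$ in general, and a jump in $\partial_t z$ is not in $W^{1/2}_3$ in time.

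With $\|v_0\|$ read on $(0,T)$, your contraction estimate and the choice of $T_0$ then go through. The paper's $W^{2,1}_3$ contraction has the same short-time issue, hidden in the untracked $T_0$-dependence of $C_{\nu,T_0}$; this splitting is what makes either version rigorous.
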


\begin{proof}
  Let $(w,\pi)$ be the solution of the linear Stokes problem
  \[
    w_{,t}-\nu\Delta w+\nabla\pi=f,
    \qquad \operatorname{div}w=0,
    \qquad w|_{t=0}=v(0).
  \]
  By the higher-order Stokes estimate proved in Lemma~\ref{l7.3} below,
  $w\in W^{3,3/2}_3(\mathbb R^3_{T_0})$. In particular,
  \[
    \|w\|_{W^{2,1}_3(\mathbb R^3_{T_0})}\longrightarrow0
    \qquad\text{as }T_0\downarrow0.
  \]

  We solve for the zero-trace remainder $z=v-w$. For a divergence-free
  $z\in W^{2,1}_3(\mathbb R^3_{T_0})$ with $z|_{t=0}=0$, define
  $\mathcal Tz=Z$, where $(Z,Q)$ solves
  \[
    Z_{,t}-\nu\Delta Z+\nabla Q
    =-(w+z)\cdot\nabla(w+z),
    \qquad \operatorname{div}Z=0,
    \qquad Z|_{t=0}=0.
  \]
  By Lemma~\ref{l:parabolic-algebra},
  $W^{2,1}_3(\mathbb R^3_{T_0})$ is a Banach algebra. Since $\operatorname{div}u=0$, we have
  \begin{equation*}\begin{aligned}
      \|(u\cdot\nabla)u\|_{L_3(\mathbb R^3_{T_0})}
       & =
      \|\operatorname{div}(u\otimes u)\|_{L_3(\mathbb R^3_{T_0})} \\
       & \le
      C\|u\otimes u\|_{W^{2,1}_3(\mathbb R^3_{T_0})}              \\
       & \le
      C_{T_0}\|u\|_{W^{2,1}_3(\mathbb R^3_{T_0})}^2.
    \end{aligned}\end{equation*}
  Hence the Stokes estimate~\eqref{2.33} gives
  \[
    \|\mathcal Tz\|_{W^{2,1}_3}
    \leq C_{\nu,T_0}
    \|w+z\|_{W^{2,1}_3}^{2}.
  \]
  Moreover,
  \[
    \|\mathcal Tz_1-\mathcal Tz_2\|_{W^{2,1}_3}
    \leq C_{\nu,T_0}
    \bigl(
    \|w+z_1\|_{W^{2,1}_3}
    +\|w+z_2\|_{W^{2,1}_3}
    \bigr)
    \|z_1-z_2\|_{W^{2,1}_3}.
  \]
  Choosing $T_0=T_0(\nu,\|f\|_{W^{1,1/2}_3},
    \|v(0)\|_{B^{7/3}_{3,3}})$ sufficiently small, and then a ball of
  sufficiently small radius in the zero-trace space, makes $\mathcal T$
  invariant and strictly contractive. Thus $z$ exists uniquely in
  $W^{2,1}_3(\mathbb R^3_{T_0})$, and $v=w+z$ is the unique local solution.

  Finally, Lemma~\ref{l:parabolic-algebra} together with the mixed-derivative
  embedding from Lemma~\ref{l:mixed-derivative} implies
  \[
    v\cdot\nabla v
    =\operatorname{div}(v\otimes v)
    \in W^{1,1/2}_3(\mathbb R^3_{T_0}).
  \]
  Applying Lemma~\ref{l7.3} to the equation for $z$ gives
  $z\in W^{3,3/2}_3$ and $\nabla q\in W^{1,1/2}_3$.
  Choosing $T_0$ so small that $\|v\|_{W^{2,1}_3(\mathbb R^3_{T_0})}\le 1$, we obtain
  \[
    \|z\|_{W^{3,3/2}_3}
    +\|\nabla q\|_{W^{1,1/2}_3}
    \le C_{\nu,T_0}
    \|v\cdot\nabla v\|_{W^{1,1/2}_3}
    \le C_{\nu,T_0}
    \|v\|_{W^{2,1}_3}^2,
  \]
  which together with the estimate of the linear part yields~\eqref{7.1}.
\end{proof}

\begin{lemma}[Higher-order Stokes estimate]\label{l7.3}
  Assume that $g\in W^{1,1/2}_3(\mathbb R^3_t)$ and $U_0\in B^{7/3}_{3,3}(\mathbb R^3)$, which agrees with $W^{7/3}_3(\mathbb R^3)$ with equivalence of norms, with $\operatorname{div}U_0=0$. Then the solution of the Stokes system
  \[
    \begin{cases}
      U_{,t}-\nu\Delta U+\nabla P=g, \\
      \operatorname{div}U=0,         \\
      U|_{t=0}=U_0
    \end{cases}
  \]
  satisfies
  \begin{equation}\label{7.high.stokes}
    \|U\|_{W^{3,3/2}_3(\mathbb R^3_t)}
    +\|\nabla P\|_{W^{1,1/2}_3(\mathbb R^3_t)}
    \leq C_{\nu,t}
    \left(
    \|g\|_{W^{1,1/2}_3(\mathbb R^3_t)}
    +\|U_0\|_{B^{7/3}_{3,3}(\mathbb R^3)}
    \right).
  \end{equation}
\end{lemma}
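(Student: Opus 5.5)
We plan to prove Lemma~\ref{l7.3} by differentiating the Stokes system in space and reducing the higher-order estimate to the basic maximal-regularity estimate \eqref{2.33} of Lemma~\ref{l2.18} with $q=r=3$, combined with an interpolation/lifting argument for the fractional time derivative of order $1/2$. First we note that $B^{7/3}_{3,3}(\R^3)=W^{7/3}_3(\R^3)$ since $7/3\notin\N$, and that the trace space of $W^{3,3/2}_3(\R^3_t)$ at $t=0$ is exactly $B^{3-2/3}_{3,3}=B^{7/3}_{3,3}$ (Lemma~\ref{l2.14}), so the data class is the natural one. Applying Lemma~\ref{l2.18} directly gives $U\in W^{2,1}_3$, $\nabla P\in L_3$ with the bound by $\|g\|_{L_3}+\|U_0\|_{B^{4/3}_{3,3}}$, which is controlled by the right-hand side of \eqref{7.high.stokes}.

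Next, since the system has constant coefficients on all of $\R^3$, the spatial derivatives $\partial_{x_i}U$, $\partial_{x_i}P$ solve the same Stokes system with forcing $\partial_{x_i}g\in L_3(\R^3_t)$ and divergence-free initial datum $\partial_{x_i}U_0\in B^{4/3}_{3,3}(\R^3)$ (lifting property of Besov spaces). Lemma~\ref{l2.18} then yields $\nabla U\in W^{2,1}_3$ and $\nabla^2P\in L_3$, i.e. $U\in L_3(0,t;W^3_3)\cap W^1_3(0,t;W^1_3)$ with the desired bound. Equivalently, one may argue via the Leray projector $\mathbb P$: $U$ solves the heat equation $U_t-\nu\Delta U=\mathbb Pg$, $\nabla P=(I-\mathbb P)g$, and $\mathbb P$ is bounded on $L_3$, on $W^1_3$ and (acting in $x$ only) on the time-fractional spaces, which immediately gives $\nabla P\in W^{1,1/2}_3$ with norm $\le C\|g\|_{W^{1,1/2}_3}$.

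The remaining and main step is the half time derivative: we need $U\in W^{3/2}_3(0,t;L_3)$ (and the corresponding Slobodetskii seminorm of $\partial_tU$ of order $1/2$), which does not follow from spatial differentiation alone. Here we use the projected heat-equation formulation: $\partial_tU=\nu\Delta U+\mathbb Pg$, so it suffices to show $\partial_tU\in W^{1/2}_3(0,t;L_3)$. For $\mathbb Pg$ this is part of the hypothesis $g\in W^{1,1/2}_3$. For $\Delta U$, we use the mixed-derivative embedding of Lemma~\ref{l:mixed-derivative} with $m=0$ applied to $\nabla U\in W^1_3(0,t;L_3)\cap L_3(0,t;W^2_3)$, which gives $\nabla U\in W^{1/2}_3(0,t;W^1_3)$, hence $\Delta U\in W^{1/2}_3(0,t;L_3)$. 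To avoid difficulties with the zero-trace/nonzero-initial-data issue in time-fractional norms on $(0,t)$, we first split $U=U^1+U^2$, where $U^1$ is the caloric extension of $U_0$ (controlled by Lemma~\ref{l2.14}(ii) in $W^{3,3/2}_3$ by $\|U_0\|_{B^{7/3}_{3,3}}$, and divergence-free since $\mathrm{div}\,U_0=0$), and $U^2$ has zero initial trace, so the bounded extension by zero in time to $\R$ is available; on $\R^3\times\R$ the estimate becomes a Fourier-multiplier statement for the parabolic symbol $(i\tau+\nu|\xi|^2)^{-1}$ with weights $(|\xi|^2+|\tau|)^{3/2}$, which is an anisotropic Mikhlin multiplier on $L_3$. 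We expect this step, i.e. justifying the $W^{1/2}_3$-in-time regularity of $\partial_t U$ with constants depending on $\nu,t$, to be the main technical obstacle; summing the three contributions yields \eqref{7.high.stokes}.
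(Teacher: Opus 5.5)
Your argument is essentially the paper's. Both use the Leray projection to the heat equation $U_{,t}-\nu\Delta U=\mathbb P g$ with $\nabla P=(I-\mathbb P)g$, and Maremonti--Solonnikov applied first to $U$ and then to $D_jU$. Both then use the mixed-derivative lemma to get $\nabla^2U\in W^{1/2}_3(0,t;L_3)$; your $m=0$ applied to $\nabla U$ is equivalent to the paper's $m=1$ applied to $U$. Finally, $U_{,t}=\nu\Delta U+\mathbb Pg\in W^{1/2}_3(0,t;L_3)$ is read off from the equation.

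The extra splitting $U=U^1+U^2$ at the end is unnecessary. The mixed-derivative lemma already works on $(0,t)$ through a bounded reflection extension in time, with no trace condition.

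As written, that splitting step is also inaccurate. Zero extension of $U^2$ is not bounded in $W^{3,3/2}_3$, because the second time trace $\partial_tU^2(0)=\mathbb Pg(0)$ need not vanish. Equivalently, zero extension of $g$ is not bounded in $W^{1/2}_3$ in time, since $1/2>1/3$. A multiplier argument would instead have to extend $g$ by a bounded extension operator and correct the initial datum with a second caloric extension.
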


\begin{proof}
  Let $\mathbb P$ denote the Helmholtz--Leray projection in
  $\mathbb R^3$. Since $\operatorname{div}U_0=0$, the velocity
  satisfies
  \[
    U_{,t}-\nu\Delta U=\mathbb P g,
    \qquad U|_{t=0}=U_0.
  \]
  Moreover,
  \[
    \nabla P=(I-\mathbb P)g.
  \]
  The Helmholtz projection and $I-\mathbb P$ are bounded in the
  spaces occurring below, because they are given by spatial
  Riesz transforms.

  Applying the maximal-regularity estimate of
  Maremonti and Solonnikov \cite{MS} with $q=r=3$ gives
  \[
    \|U\|_{W^{2,1}_3(\mathbb R^3_t)}
    \leq C_{\nu,t}
    \left(
    \|g\|_{L_3(\mathbb R^3_t)}
    +\|U_0\|_{B^{4/3}_{3,3}(\mathbb R^3)}
    \right).
  \]
  For each $j=1,2,3$, the derivative $D_jU$ solves the Stokes
  problem with right-hand side $D_jg$ and initial datum
  $D_jU_0$. Since
  \[
    D_jg\in L_3(\mathbb R^3_t),
    \qquad
    D_jU_0\in B^{4/3}_{3,3}(\mathbb R^3),
  \]
  a second application of the same estimate yields
  \[
    U\in L_3(0,t;W^3_3(\mathbb R^3)),
    \qquad
    U_{,t}\in L_3(0,t;W^1_3(\mathbb R^3)).
  \]
  Since
  \[
    U\in W^1_3(0,t;W^1_3(\mathbb R^3))
    \cap
    L_3(0,t;W^3_3(\mathbb R^3)),
  \]
  Lemma~\ref{l:mixed-derivative}, applied with $m=1$, yields
  \[
    U\in W^{1/2}_3(0,t;W^2_3(\mathbb R^3)).
  \]
  Consequently,
  \[
    \Delta U\in W^{1/2}_3(0,t;L_3(\mathbb R^3)).
  \]
  Since $\mathbb P g\in W^{1/2}_3(0,t;L_3(\mathbb R^3))$, the equation implies
  \[
    U_{,t}\in W^{1/2}_3(0,t;L_3(\mathbb R^3)).
  \]
  Together with the preceding spatial estimates this gives
  \[
    U\in W^{3,3/2}_3(\mathbb R^3_t).
  \]
  Finally,
  \[
    \|\nabla P\|_{W^{1,1/2}_3(\mathbb R^3_t)}
    \leq C\|g\|_{W^{1,1/2}_3(\mathbb R^3_t)}.
  \]
  Combining these estimates proves \eqref{7.high.stokes}.
\end{proof}

\begin{lemma}\label{l7.2}
  Assume that all quantities listed in Notation 2.2 are finite for any $t<\infty$. Let $K_1(t)$ be the conditional bound from~\eqref{1.55b}. We have from Theorem~\ref{t1.1} that
  \begin{equation}
    \|\Phi\|_{V(\bar\R_t^3)}+\|\Gamma\|_{V(\bar\R_t^3)}\le c\mathcal K(t) \le K_1(t).
    \label{7.2}
  \end{equation}
  Assume that
  $$
    f\in W_3^{1,1/2}(\mathbb R_t^3),\quad v(0)\in B^{7/3}_{3,3}(\mathbb R^3),\quad \operatorname{div}v(0)=0.
  $$
  Then
  \begin{equation}\eqal{
      &\|v\|_{W_3^{3,3/2}(\mathbb R_t^3)}+\|\nabla p\|_{W_3^{1,1/2}(\mathbb R_t^3)}\cr
      &\le\phi(K_1(t),\|f\|_{W_3^{1,1/2}(\mathbb R_t^3)},\|v(0)\|_{B^{7/3}_{3,3}(\mathbb R^3)}).\cr}
    \label{7.3}
  \end{equation}
\end{lemma}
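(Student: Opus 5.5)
The plan is a bootstrap in two stages: first obtain $v\in W^{2,1}_{3}$ from the conditional bounds $K_1(t)$ and $D_1$, then lift to $W^{3,3/2}_3$ with Lemma~\ref{l7.3}. As announced in the proof of Theorem~\ref{t1.2}, I split $v=w+z$. Here $(w,\pi)$ solves the linear Stokes problem with force $f$ and initial datum $v(0)$. Lemma~\ref{l7.3} then gives
\[
\|w\|_{W^{3,3/2}_3(\mathbb R^3_t)}+\|\nabla\pi\|_{W^{1,1/2}_3(\mathbb R^3_t)}\le C_{\nu,t}\bigl(\|f\|_{W^{1,1/2}_3(\mathbb R^3_t)}+\|v(0)\|_{B^{7/3}_{3,3}(\mathbb R^3)}\bigr).
\]
The remainder $z$ solves the Stokes system with right-hand side $-v\cdot\nabla v$ and zero initial datum. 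All difficulty sits in $z$.

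\emph{Step 1: $W^{2,1}_{3}$ for $z$.} I would first control $v\cdot\nabla v$ in a low Lebesgue space using only \eqref{1.55b} and \eqref{1.56}. Hölder gives $\|v\cdot\nabla v\|_{L_{3/2,2}}\le \|v\|_{L_{6,\infty}}\|\nabla v\|_{L_{2,2}}\le K_1D_1$. Lemma~\ref{l2.18} with $(q,r)=(3/2,2)$ then yields $z\in W^{2,1}_{3/2,2}$, with $w$ handled as above. This is the step where exponents are lost, so I raise integrability iteratively. From $z\in W^{2,1}_{3/2,2}$, Lemma~\ref{l2.13} (anisotropic interpolation/embedding) gives $\nabla v\in L_{q_1,r_1}$ for larger $q_1$. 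The product $v\cdot\nabla v$, with $v\in L_{6,\infty}$ fixed, then lies in a better $L_{q',r'}$, and Lemma~\ref{l2.18} applies again. A finite number of steps, each a Hölder inequality plus the Sobolev-type gain $3/q$ per application, should reach $v\cdot\nabla v\in L_3(\mathbb R^3_t)$. Hence $z\in W^{2,1}_3$ with a bound $\phi(K_1,\text{data})$.

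\emph{Step 2: lift to $W^{3,3/2}_3$.} Once $v\in W^{2,1}_3(\mathbb R^3_t)$ with a bound $\phi(K_1,\|f\|,\|v(0)\|)$, write $v\cdot\nabla v=\operatorname{div}(v\otimes v)$ as in the proof of Lemma~\ref{l7.1}. Lemma~\ref{l:parabolic-algebra} gives $v\otimes v\in W^{2,1}_3$, hence $\nabla(v\otimes v)\in L_3(0,t;W^1_3)$. For the time half-derivative I use Lemma~\ref{l:mixed-derivative} with $m=0$: $v\otimes v\in W^1_3(0,t;L_3)\cap L_3(0,t;W^2_3)$ implies $v\otimes v\in W^{1/2}_3(0,t;W^1_3)$, so $\operatorname{div}(v\otimes v)\in W^{1/2}_3(0,t;L_3)$. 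Together these give $v\cdot\nabla v\in W^{1,1/2}_3$ with norm $\le C_t\|v\|^2_{W^{2,1}_3}$. Lemma~\ref{l7.3} applied to $z$ with zero datum then bounds $z$ in $W^{3,3/2}_3$ and $\nabla q$ in $W^{1,1/2}_3$. Adding the $w$ estimate yields \eqref{7.3}.

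\emph{Main obstacle.} I expect the difficulty to be the first bootstrap, specifically the exponent bookkeeping that carries Lemma~\ref{l2.18} from $L_{3/2,2}$ up to $L_3$ using only $L_{6,\infty}$ control of $v$. The estimate is critical, with no smallness available, so each stage must gain strictly. If the pure Hölder iteration stalls, I would instead interpolate $\nabla v$ between $L_{2}$ (from \eqref{1.56}) and the current $W^{2,1}$ bound and absorb via Young's inequality on short time subintervals. This is legitimate because $v$ is assumed regular and the constants depend only on $K_1$. Gluing finitely many subintervals, whose number is controlled by $K_1$ and data, gives the global-in-$(0,t)$ bound.
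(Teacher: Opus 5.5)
Your Steps 1 and 2 follow the second half of the paper's proof closely. You use the same splitting $v=w+z$ with $w$ controlled by Lemma~\ref{l7.3}, and the same starting bound $\|v\cdot\nabla v\|_{L_{3/2,2}}\le K_1D_1$ fed into Lemma~\ref{l2.18}. You then iterate Lemmas~\ref{l2.13} and~\ref{l2.18}, and the final lift through Lemma~\ref{l:parabolic-algebra}, Lemma~\ref{l:mixed-derivative} with $m=0$, and Lemma~\ref{l7.3} is identical to the paper's. Two differences are worth recording.

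\emph{The $L_{6,\infty}$ bound.} You take $\|v\|_{L_\infty(0,t;L_6)}\le K_1(t)$ from \eqref{1.55b} as an input. The paper spends the first half of its proof deriving the $(v_r,v_z)$ part of that bound, \eqref{7.20}, from the vorticity bound \eqref{7.2}:
\begin{itemize}
\item near the axis, via the $H^2$ estimate \eqref{3.31} for $\tilde\psi_1$ and the representation \eqref{7.8};
\item away from the axis, via the div--curl estimate \eqref{7.13} combined with the energy estimate \eqref{7.17} for $\hat\omega_\varphi$.
\end{itemize}
The lemma as worded lets you skip this. However, \eqref{1.55} in the proof of Theorem~\ref{t1.2} is obtained precisely by citing \eqref{7.20}. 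If your proof replaced the paper's, your starting bound would rest on an estimate proved nowhere else, so a self-contained version should include that derivation.

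\emph{The bootstrap.} Your iteration keeps $v\in L_{6,\infty}$ fixed throughout. At its third stage the paper instead uses the trace theorem to place $z$ in $L_\infty(0,t;L_Q)$ for large $Q$, and passes through a product in $L_{p_*}$ with $p_*>10/3$. Your version does close, and in fewer steps. The reason is that $L_{6,\infty}$ is subcritical ($3/6+2/\infty=1/2<1$), not critical as you feared. Each Stokes step therefore lowers the parabolic index $3/q+2/q_0$ of $\nabla v$ by nearly $1/2$:
\begin{itemize}
\item you start from $\nabla v\in L_2$, which has index $5/2$;
\item the first step gives $\nabla z\in L_{q_1}$ with $q_1<5/2$;
\item the second gives $\nabla z\in L_{q_2}$ with $3<q_2<10/3$;
\item the third gives $\nabla z\in L_{6,q_0}$ with $q_0$ close to $4$, and the constraints $q\ge r_2$, $q_0\ge q_2$ of Lemma~\ref{l2.13} hold.
\end{itemize}
Hence $v\cdot\nabla v\in L_{3,q_0}\subset L_3(\mathbb R^3_t)$ on the finite interval, and Lemma~\ref{l2.18} gives $z\in W^{2,1}_3$. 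Your fallback with short subintervals and Young's inequality is therefore unnecessary.
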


\begin{proof}
  From (\ref{7.2}) we have
  \begin{equation}
    \|\Gamma\|_{V(\bar\R_t^3)}\le K_1(t).
    \label{7.4}
  \end{equation}
  Let $\tilde\Gamma=\Gamma\zeta$ be defined in (\ref{3.29}) and $\zeta$ is introduced in the beginning of Section \ref{s4}.

  Hence,
  \begin{equation}
    \|\tilde\Gamma\|_{V(\bar\R_t^3)}\le\|\Gamma\|_{V(\bar\R_t^3)}\le K_1(t).
    \label{7.5}
  \end{equation}
  From (\ref{2.14}) and (\ref{3.31}), we obtain
  \begin{equation}
    \|\tilde\psi_1\|_{2,\infty,\bar\R_t^3}\le c K_1(t).
    \label{7.6}
  \end{equation}
  From (\ref{1.17}) the following relations hold
  \begin{equation}
    v_r=-r\psi_{1,z},\quad v_z=r\psi_{1,r}+2\psi_1.
    \label{7.7}
  \end{equation}
  Localizing (\ref{7.7}) to a neighborhood of the axis of symmetry yields
  \begin{equation}
    \tilde v_r=-r\tilde\psi_{1,z},\quad \tilde v_z=r\tilde\psi_{1,r}-r\psi_1\dot\zeta+2\tilde\psi_1.
    \label{7.8}
  \end{equation}
  Hence, (\ref{7.6}) and (\ref{2.14}) imply
  \begin{equation}
    \|\tilde v_r\|_{1,2,\infty,\bar\R_t^3}+\|\tilde v_z\|_{1,2,\infty,\bar\R_t^3}\le c K_1(t).
    \label{7.9}
  \end{equation}
  Next, we have to find an estimate for $v_r$, $v_z$ restricted to a neighborhood located in a positive distance from the axis of symmetry. From $(\ref{1.7})_4$ and $(\ref{1.13})_2$ we have
  \begin{equation}\eqal{
    &v_{r,z}-v_{z,r}=\omega_\varphi,\cr
    &v_{r,r}+v_{z,z}=-{v_r\over r}\cr}
    \label{7.10}
  \end{equation}
  Using the function $\vartheta$ introduced in the beginning of Section \ref{s4}, we have
  \begin{equation}\eqal{
    &\hat v_{r,z}-\hat v_{z,r}=\hat\omega_\varphi-v_z\dot\vartheta,\cr
    &\hat v_{r,r}+\hat v_{z,z}=-{\hat v_r\over r}+v_r\dot\vartheta.\cr}
    \label{7.11}
  \end{equation}
  From (\ref{7.11}), we obtain
  \begin{equation}\eqal{
      &\hat v_{r,rr}+\hat v_{r,zz}=(\hat\omega_\varphi-v_z\dot\vartheta)_{,z}+\bigg(-{\hat v_r\over r}+v_r\dot\vartheta\bigg)_{,r},\cr
      &\hat v_{z,rr}+\hat v_{z,zz}=-(\hat\omega_\varphi-v_z\dot\vartheta)_{,r}+\bigg(-{\hat v_r\over r}+v_r\dot\vartheta\bigg)_{,z}.\cr}
    \label{7.12}
  \end{equation}
  Multiplying (\ref{7.12}) by $\hat v_r$ and $\hat v_z$, respectively, integrating over $\bar\R^3$ and integrating by parts, we obtain
  \begin{equation}\eqal{
      &|\hat v_{r,r}|_{2,\bar\R^3}^2+|\hat v_{r,z}|_{2,\bar\R^3}^2+|\hat v_{z,r}|_{2,\bar\R^3}^2+|\hat v_{z,z}|_{2,\bar\R^3}^2\cr
      &\le c\bigg(|\hat\omega_\varphi-v_z\dot\vartheta|_{2,\bar\R^3}^2+\bigg|-{\hat v_r\over r}+v_r\dot\vartheta\bigg|_{2,\bar\R^3}^2\bigg)\cr
      &\le c|\hat\omega_\varphi|_{2,\bar\R^3}^2+c|\hat v'|_{2,\bar\R^3}^2,\cr}
    \label{7.13}
  \end{equation}
  where $v'=(v_r,v_z)$.

  To estimate $|\hat\omega_\varphi|_{2,\bar\R^3}$ we multiply $(\ref{1.9})_2$ by $\vartheta$. Then we get
  \begin{equation}\eqal{
    &\hat\omega_{\varphi,t}+v\cdot\nabla\hat\omega_\varphi-v_r\dot\vartheta\omega_\varphi-{v_r\over r}\hat\omega_\varphi-\nu\Delta\hat\omega_\varphi\cr
    &\quad+\nu\bigg(2\omega_{\varphi,r}\dot\vartheta+\omega_\varphi\ddot\vartheta+{1\over r}\omega_\varphi\dot\vartheta\bigg)+\nu{\hat\omega_\varphi\over r^2}={2\over r}v_\varphi v_{\varphi,z}\vartheta+\hat F_\varphi.\cr}
    \label{7.14}
  \end{equation}
  Multiply (\ref{7.14}) by $\hat\omega_\varphi$ and integrate over $\bar\R^3$ to obtain
  \begin{equation}\eqal{
      &{1\over 2}{d\over dt}|\hat\omega_\varphi|_{2,\bar\R^3}^2-\intop_{\bar\R^3}v_r\dot\vartheta\omega_\varphi\widehat\omega_\varphi dx-\intop_{\bar\R^3}{v_r\over r}\hat\omega_\varphi^2dx\cr
      &\quad+\nu|\nabla\hat\omega_\varphi|_{2,\R^3}^2+\nu\intop_{\bar\R^3} \bigg(2\omega_{\varphi,r}\dot\vartheta+\omega_\varphi\ddot\vartheta+{1\over r}\omega_\varphi\dot\vartheta\bigg)\hat\omega_\varphi dx\cr
      &\quad+\nu\intop_{\bar\R^3}{\hat\omega_\varphi^2\over r^2}dx=\intop_{\bar\R^3}{2\over r}v_\varphi v_{\varphi,z}\vartheta\hat\omega_\varphi dx+\intop_{\bar\R^3}\hat F_\varphi\hat\omega_\varphi dx.\cr}
    \label{7.15}
  \end{equation}
  The second term on the l.h.s. is bounded by
  $$
    \varepsilon|\hat\omega_\varphi|_{6,\bar\R^3}^2+c(1/\varepsilon) |v_r|_{2,\bar\R^3}^2|\omega_\varphi\dot\vartheta|_{3,\bar\R^3}^2,
  $$
  the third by
  $$
    \varepsilon|\hat\omega_\varphi|_{6,\bar\R^3}^2+c(1/\varepsilon) |v_r|_{2,\bar\R^3}^2|\hat\omega_\varphi|_{3,\bar\R^3}^2.
  $$
  We write the fifth term in the form
  $$
    \nu\bigg(-\intop_{\bar\R^3}\omega_\varphi^2(\vartheta\dot\vartheta r)_{,r}drdz+\intop_{\bar\R^3}\omega_\varphi\bigg(\ddot\vartheta+{1\over r}\dot\vartheta\bigg)\hat\omega_\varphi dx\bigg)
  $$
  so it is bounded by
  $$
    c|\omega_\varphi|_{2,\bar\R^3}^2.
  $$
  Finally, the first term on the r.h.s. equals
  $$
    -\intop_{\bar\R^3}{1\over r}v_\varphi^2\vartheta\hat\omega_{\varphi,z}dx
  $$
  so it is bounded by
  $$
    \varepsilon|\hat\omega_{\varphi,z}|_{2,\bar\R^3}^2+c(1/\varepsilon)\intop_{\bar\R^3}v_\varphi^4 dx.
  $$
  Using the above estimates in (\ref{7.15}), integrating the result with respect to time, using (\ref{2.1}) and (\ref{2.12}), we obtain for small $\varepsilon$
  \begin{equation}\eqal{
      &|\hat\omega_\varphi(t)|_{2,\bar\R^3}^2+\nu|\nabla\hat\omega_\varphi|_{2,\bar\R_t^3}^2+\nu\intop_{\bar\R_t^3} {\hat\omega_\varphi^2\over r^2}dxdt'\cr
      &\le cD_1^2|\omega_\varphi\dot\vartheta|_{3,2,\bar\R_t^3}^2+cD_1^2|\hat\omega_\varphi|_{3,2,\bar\R_t^3}^2+ c(D_1^2+D_1^2D_2^2)\cr
      &\quad+c|\hat F_\varphi|_{6/5,2,\bar\R_t^3}^2+|\hat\omega_\varphi(0)|_{2,\bar\R^3}^2.\cr}
    \label{7.16}
  \end{equation}
  Using the interpolations
  $$\eqal{
      &|\hat\omega_\varphi|_{3,2,\bar\R_t^3}^2\le\varepsilon|\hat\omega_\varphi|_{6,2,\bar\R_t^3}^2+c(1/\varepsilon) |\hat\omega_\varphi|_{2,\bar\R_t^3}^2,\cr
      &|\omega_\varphi\dot\vartheta|_{3,2,\bar\R_t^3}^2\le\varepsilon|\omega_\varphi\dot\vartheta|_{6,2,\bar\R_t^3}^2+ c(1/\varepsilon)|\omega_\varphi|_{2,\bar\R_t^3}^2,\cr}
  $$
  where in the second case we use local considerations (see \cite[Ch. 4, Sect. 10]{LSU}) we finally obtain
  \begin{equation}\eqal{
      \|\hat\omega_\varphi\|_{V(\bar\R_t^3)}^2&\le c(D_1^2+D_1^2D_2^2+|\hat F_\varphi|_{6/5,2,\bar\R_t^3}^2+|\hat\omega_\varphi(0)|_{2,\bar\R^3}^2)\cr
      &\le c K_1(t).\cr}
    \label{7.17}
  \end{equation}
  Then (\ref{7.13}) yields
  \begin{equation}
    \|\hat v_r\|_{1,2,\infty,\bar\R_t^3}+\|\hat v_z\|_{1,2,\infty,\bar\R_t^3}\le c K_1(t).
    \label{7.18}
  \end{equation}
  Using the properties of the partition of unity (\ref{7.9}) and (\ref{7.18}) imply
  \begin{equation}
    \|v'\|_{1,2,\infty,\bar\R_t^3}\le c K_1(t).
    \label{7.19}
  \end{equation}
  From (\ref{7.19}) have
  \begin{equation}
    |v'|_{6,\infty,\bar\R_t^3}\le c K_1(t).
    \label{7.20}
  \end{equation}
  To increase the regularity without imposing lower-integrability assumptions on $f$ or on the initial datum, we split the solution into a linear and a nonlinear part. Let $(w,\pi)$ solve
  \begin{equation}\label{7.21}
    \begin{cases}
      w_{,t}-\nu\Delta w+\nabla\pi=f, \\
      \operatorname{div}w=0,          \\
      w|_{t=0}=v(0).
    \end{cases}
  \end{equation}
  By Lemma~\ref{l7.3},
  \begin{equation}\label{7.22}
    \begin{aligned}
       & \|w\|_{W^{3,3/2}_3(\mathbb R^3_t)}
      +\|\nabla\pi\|_{W^{1,1/2}_3(\mathbb R^3_t)} \\
       & \qquad\leq
      C_{\nu,t}
      \left(
      \|f\|_{W^{1,1/2}_3(\mathbb R^3_t)}
      +\|v(0)\|_{B^{7/3}_{3,3}(\mathbb R^3)}
      \right)
      =:D_L(t).
    \end{aligned}
  \end{equation}
  Set
  \[
    z=v-w,\qquad q=p-\pi.
  \]
  Then
  \begin{equation}\label{7.23}
    \begin{cases}
      z_{,t}-\nu\Delta z+\nabla q=-v\cdot\nabla v, \\
      \operatorname{div}z=0,                       \\
      z|_{t=0}=0.
    \end{cases}
  \end{equation}
  All lower-order norms of $w$ used below are bounded by $C D_L(t)$ through the anisotropic Sobolev embeddings.

  From~\eqref{7.20}, \eqref{1.55b}, and~\eqref{1.56},
  \begin{equation}\label{7.24}
    \|v\cdot\nabla v\|_{L_2(0,t;L_{3/2}(\mathbb R^3))}
    \leq C K_1(t)D_1.
  \end{equation}
  Applying Lemma~\ref{l2.18} to~\eqref{7.23} gives
  \begin{equation}\label{7.25}
    \|z\|_{W^{2,1}_{3/2,2}(\mathbb R^3_t)}
    +\|\nabla q\|_{L_{3/2,2}(\mathbb R^3_t)}
    \leq C K_1(t)D_1=:d_2.
  \end{equation}

  We now choose the exponents with margins sufficient for the
  final product estimate. First fix
  \[
    \frac{16}{7}<q_1<\frac52.
  \]
  By Lemma~\ref{l2.13},
  \begin{equation}\label{7.26}
    \|\nabla z\|_{L_{q_1}(\mathbb R_t^3)}
    \leq C\|z\|_{W^{2,1}_{3/2,2}(\mathbb R_t^3)}.
  \end{equation}
  Together with~\eqref{7.22}, this yields
  \[
    \|\nabla v\|_{L_{q_1}(\mathbb R_t^3)}
    \leq C(d_2+D_L(t)).
  \]
  Define
  \[
    \frac1{r_1}=\frac16+\frac1{q_1}.
  \]
  Then
  \begin{equation}\label{7.27}
    \|v\cdot\nabla v\|_{L_{r_1,q_1}(\mathbb R_t^3)}
    \leq C K_1(t)(d_2+D_L(t)).
  \end{equation}
  A second application of Lemma~\ref{l2.18} gives
  \begin{equation}\label{7.28}
    \|z\|_{W^{2,1}_{r_1,q_1}(\mathbb R_t^3)}
    +\|\nabla q\|_{L_{r_1,q_1}(\mathbb R_t^3)}
    \leq C K_1(t)(d_2+D_L(t))=:d_3.
  \end{equation}

  Choose next
  \begin{equation}\label{7.exp.q2}
    \frac{80}{27}<q_2<
    \min\left\{
    \frac{10}{3},
    \left(\frac1{q_1}-\frac1{10}\right)^{-1}
    \right\}.
  \end{equation}
  The strict inequalities in~\eqref{7.exp.q2} satisfy the conditions required in Lemma~\ref{l2.13}. Hence
  \begin{equation}\label{7.29}
    \|\nabla z\|_{L_{q_2}(\mathbb R_t^3)}
    \leq C\|z\|_{W^{2,1}_{r_1,q_1}(\mathbb R_t^3)}.
  \end{equation}
  Set
  \[
    \frac1{r_2}=\frac16+\frac1{q_2}.
  \]
  Using again~\eqref{1.55b},
  \begin{equation}\label{7.30}
    \|v\cdot\nabla v\|_{L_{r_2,q_2}(\mathbb R_t^3)}
    \leq C K_1(t)(d_3+D_L(t)).
  \end{equation}
  Therefore
  \begin{equation}\label{7.31}
    \|z\|_{W^{2,1}_{r_2,q_2}(\mathbb R_t^3)}
    +\|\nabla q\|_{L_{r_2,q_2}(\mathbb R_t^3)}
    \leq C K_1(t)(d_3+D_L(t))=:d_4.
  \end{equation}

  The trace theorem and the spatial Besov embedding imply
  \begin{equation}\label{7.32}
    \|z\|_{L_\infty(0,t;B^{2-2/q_2}_{r_2,q_2}(\mathbb R^3))}
    \leq C d_4
  \end{equation}
  and
  \begin{equation}\label{7.33}
    \|z\|_{L_\infty(0,t;L_Q(\mathbb R^3))}
    \leq C d_4,
    \qquad
    Q<Q_{\max},
    \qquad
    \frac1{Q_{\max}}=\frac{5}{3q_2}-\frac12.
  \end{equation}
  The same bound for $w$ follows from~\eqref{7.22}, so~\eqref{7.33} also holds for $v$, with $d_4$ replaced by $d_4+D_L(t)$.

  Since $q_2>80/27$, one can select
  \begin{equation}\label{7.exp.p3}
    \frac{10}{3}<p_3<\left(\frac1{q_2}-\frac1{10}\right)^{-1}.
  \end{equation}
  Then Lemma~\ref{l2.13} yields
  \begin{equation}\label{7.34}
    \|\nabla z\|_{L_{p_3}(\mathbb R_t^3)}
    \leq C\|z\|_{W^{2,1}_{r_2,q_2}(\mathbb R_t^3)}.
  \end{equation}
  Moreover, since
  \[
    \frac1{Q_{\max}}
    +\left(\frac1{q_2}-\frac1{10}\right)
    =
    \frac8{3q_2}-\frac35
    <\frac3{10},
  \]
  we may choose $Q<Q_{\max}$ and $p_3$ sufficiently close to
  their respective upper endpoints so that
  \begin{equation}\label{7.exp.pstar}
    \frac1{p_*}=\frac1Q+\frac1{p_3}<\frac3{10}.
  \end{equation}
  Consequently, $p_*>10/3$. Since the time interval is finite, \eqref{7.33} also gives
  \[
    \|v\|_{L_Q(\mathbb R^3_t)}
    \le t^{1/Q}
    \|v\|_{L_\infty(0,t;L_Q(\mathbb R^3))}
    \le C_t\bigl(d_4+D_L(t)\bigr).
  \]
  Hence, by Hölder's inequality in space-time,
  \begin{equation}\label{7.35}
    \|v\cdot\nabla v\|_{L_{p_*}(\mathbb R^3_t)}
    \le
    \|v\|_{L_Q(\mathbb R^3_t)}
    \|\nabla v\|_{L_{p_3}(\mathbb R^3_t)}
    \leq C_t(d_4+D_L(t))^2.
  \end{equation}
  Applying Lemma~\ref{l2.18} to~\eqref{7.23}, we obtain
  \begin{equation}\label{7.36}
    \|z\|_{W^{2,1}_{p_*}(\mathbb R_t^3)}
    +\|\nabla q\|_{L_{p_*}(\mathbb R_t^3)}
    \leq C(d_4+D_L(t))^2=:d_5.
  \end{equation}
  Since $p_*>10/3$, the parabolic embeddings give $z\in L_\infty(\mathbb R_t^3)$ and
  \[
    \nabla z\in L_{q_*}(\mathbb R_t^3)
    \quad\text{for some }q_*>3.
  \]
  Combining this with~\eqref{7.22} and the energy bound~\eqref{1.56}, interpolation yields
  \begin{equation}\label{7.37}
    \|v\|_{L_\infty(\mathbb R^3_t)}
    +\|\nabla v\|_{L_3(\mathbb R^3_t)}
    \leq \phi(K_1(t),D_L(t),d_5,\mathrm{data}).
  \end{equation}
  Thus $v\cdot\nabla v\in L_3(\mathbb R^3_t)$, and another application of Lemma~\ref{l2.18} to~\eqref{7.23} gives
  \begin{equation}\label{7.38}
    \|z\|_{W^{2,1}_3(\mathbb R^3_t)}
    +\|\nabla q\|_{L_3(\mathbb R^3_t)}
    \leq \phi(K_1(t),D_L(t),d_5,\mathrm{data}).
  \end{equation}
  Hence $v=w+z\in W^{2,1}_3(\mathbb R^3_t)$. Set $F=v\otimes v$. By Lemma~\ref{l:parabolic-algebra},
  \[
    F\in W^{2,1}_3(\mathbb R^3_t)
  \]
  and
  \[
    \|F\|_{W^{2,1}_3(\mathbb R^3_t)}
    \le C_t
    \|v\|_{W^{2,1}_3(\mathbb R^3_t)}^2.
  \]
  In particular,
  \[
    F\in W^1_3(0,t;L_3(\mathbb R^3))
    \cap
    L_3(0,t;W^2_3(\mathbb R^3)).
  \]
  Lemma~\ref{l:mixed-derivative}, applied with $m=0$, implies
  \[
    F\in W^{1/2}_3(0,t;W^1_3(\mathbb R^3)).
  \]
  Therefore,
  \[
    \operatorname{div}F
    \in
    W^{1/2}_3(0,t;L_3(\mathbb R^3))
    \cap
    L_3(0,t;W^1_3(\mathbb R^3)).
  \]
  Thus
  \[
    v\cdot\nabla v
    =
    \operatorname{div}(v\otimes v)
    \in W^{1,1/2}_3(\mathbb R^3_t),
  \]
  and
  \begin{equation}\label{7.39}
    \|v\cdot\nabla v\|_{W^{1,1/2}_3(\mathbb R^3_t)}
    \leq C_t\|v\|_{W^{2,1}_3(\mathbb R^3_t)}^2
    \leq \phi(K_1(t),D_L(t),d_5,\mathrm{data}).
  \end{equation}
  Finally, applying Lemma~\ref{l7.3} to~\eqref{7.23} with zero initial datum and then adding the linear part~\eqref{7.22}, we obtain
  \begin{equation}\label{7.40}
    \|v\|_{W^{3,3/2}_3(\mathbb R^3_t)}
    +\|\nabla p\|_{W^{1,1/2}_3(\mathbb R^3_t)}
    \leq
    \phi\left(
    K_1(t),
    \|f\|_{W^{1,1/2}_3(\mathbb R^3_t)},
    \|v(0)\|_{B^{7/3}_{3,3}(\mathbb R^3)}
    \right).
  \end{equation}
  This is~\eqref{7.3} and concludes the proof.
\end{proof}

\section{Remarks on global small-data solutions to problem~\eqref{1.1}}
Fix $p>5/2$. To construct a small-data solution, we use the successive approximations
\begin{equation}\label{8.1}
  \begin{array}{ll}
    \partial_t v_{n+1}-\nu\Delta v_{n+1}+\nabla p_{n+1}
    =-v_n\cdot\nabla v_n+f
     & \text{in }\mathbb R_t^3, \\
    \operatorname{div}v_{n+1}=0
     & \text{in }\mathbb R_t^3, \\
    v_{n+1}|_{t=0}=v(0)
     & \text{in }\mathbb R^3,
  \end{array}
\end{equation}
with
\begin{equation}\label{8.2}
  v_0=0.
\end{equation}
Let $C_S$ denote the Stokes maximal-regularity constant, chosen independently of the finite time horizon $t$, and let $c_*$ denote the embedding constant in
\begin{equation}\label{8.4}
  \begin{aligned}
    \|v_n\|_{L_{p\lambda_1}(\mathbb R_t^3)}
     & \leq c_*\|v_n\|_{W_p^{2,1}(\mathbb R_t^3)}, \\
    \|\nabla v_n\|_{L_{p\lambda_2}(\mathbb R_t^3)}
     & \leq c_*\|v_n\|_{W_p^{2,1}(\mathbb R_t^3)},
  \end{aligned}
  \qquad
  \frac1{\lambda_1}+\frac1{\lambda_2}=1.
\end{equation}
Then
\begin{equation}\label{8.3}
  \begin{aligned}
     & \|v_{n+1}\|_{W_p^{2,1}(\mathbb R_t^3)}
    +\|\nabla p_{n+1}\|_{L_p(\mathbb R_t^3)}  \\
     & \qquad\leq
    C_S\left(
    c_*^2\|v_n\|_{W_p^{2,1}(\mathbb R_t^3)}^2
    +\|f\|_{L_p(\mathbb R_t^3)}
    +\|v(0)\|_{W_p^{2-2/p}(\mathbb R^3)}
    \right).
  \end{aligned}
\end{equation}
For a global statement, set
\begin{equation}\label{8.5}
  G_\infty
  =
  \|f\|_{L_p(\mathbb R^3\times(0,\infty))}
  +\|v(0)\|_{W_p^{2-2/p}(\mathbb R^3)}.
\end{equation}
Since the corresponding finite-time quantity is bounded by $G_\infty$, all estimates below are uniform in $t$.

Assume inductively that
\begin{equation}\label{8.6}
  \|v_n\|_{W_p^{2,1}(\mathbb R_t^3)}\leq M.
\end{equation}
From~\eqref{8.3}, this is preserved provided
\begin{equation}\label{8.7}
  C_S\left(c_*^2M^2+G_\infty\right)\leq M.
\end{equation}
Choosing $c_*M=G_\infty^{1/2}$, we obtain the smallness condition
\begin{equation}\label{8.8}
  2C_Sc_*G_\infty^{1/2}\leq1.
\end{equation}
Because $v_0=0$, this yields a uniform bound for all iterates.

Let
\begin{equation}\label{8.9}
  V_n=v_n-v_{n-1},
  \qquad
  P_n=p_n-p_{n-1}.
\end{equation}
Then
\begin{equation}\label{8.10}
  \begin{aligned}
    \partial_tV_{n+1}-\nu\Delta V_{n+1}+\nabla P_{n+1}
                              & =-V_n\cdot\nabla v_n-v_{n-1}\cdot\nabla V_n, \\
    \operatorname{div}V_{n+1} & =0,                                          \\
    V_{n+1}|_{t=0}            & =0.
  \end{aligned}
\end{equation}
Using~\eqref{8.4} and the Stokes estimate,
\begin{equation}\label{8.11}
  \begin{aligned}
    \|V_{n+1}\|_{W_p^{2,1}(\mathbb R_t^3)}
     & \leq
    C_Sc_*^2
    \left(
    \|v_n\|_{W_p^{2,1}(\mathbb R_t^3)}
    +\|v_{n-1}\|_{W_p^{2,1}(\mathbb R_t^3)}
    \right)
    \|V_n\|_{W_p^{2,1}(\mathbb R_t^3)} \\
     & \leq
    2C_Sc_*^2M
    \|V_n\|_{W_p^{2,1}(\mathbb R_t^3)} \\
     & =
    2C_Sc_*G_\infty^{1/2}
    \|V_n\|_{W_p^{2,1}(\mathbb R_t^3)}.
  \end{aligned}
\end{equation}
Hence the sequence is contractive if
\begin{equation}\label{8.12}
  2C_Sc_*G_\infty^{1/2}<1.
\end{equation}
Under~\eqref{8.8} and~\eqref{8.12}, the limit satisfies, for every finite $t$,
\begin{equation}\label{8.13}
  v\in W_p^{2,1}(\mathbb R_t^3),
  \qquad
  \nabla p\in L_p(\mathbb R_t^3),
\end{equation}
with a bound uniform in $t$. Therefore the solution extends globally in time.

Since $p>5/2$, the parabolic Sobolev embedding gives
$W_p^{2,1}(\mathbb R_t^3)\hookrightarrow L_\infty(\mathbb R_t^3)$. In particular,
\begin{equation}\label{8.14}
  \|v_\varphi\|_{L_\infty(\mathbb R_t^3)}
  \leq
  \|v_\varphi\|_{W_p^{2,1}(\mathbb R_t^3)}
  \leq
  C\frac{G_\infty^{1/2}}{c_*}.
\end{equation}
The assumptions~\eqref{8.8} and~\eqref{8.12} constitute a global small-data condition for the full velocity problem. The small $L^\infty$ bound for the swirl component in~\eqref{8.14} is a consequence of this full-data condition, not an independent small-swirl hypothesis. This result is separate from the conditional critical-wedge estimate of Theorem~\ref{t1.1}.

\begin{thebibliography}{99}

\bibitem[BIN]{BIN} Besov, O.V.; Il'in, V.P.; Nikolskii, S.M.: Integral Representations of Functions and Imbedding Theorems, Nauka, Moscow 1975 (in Russian); English transl. vol. I. Scripta Series in Mathematics. V.H. Winston, New York (1978).
\bibitem[B]{B} Bugrov, Ya.S.: Function spaces with mixed norm, Izv. AN SSSR, Ser. Mat. 35 (1971), 1137--1158 (in Russian); English transl. Math. USSR -- Izv., 5 (1971), 1145--1167.
\bibitem[CKN]{CKN} Caffarelli, L.; Kohn, R.V.; Nirenberg, L.: Partial regularity of suitable weak solutions of the Navier-Stokes equations, Comm. Pure Appl. Math. 35 (1982), 771--831.
\bibitem[CFZ]{CFZ} Chen, H.; Fang, D.; Zhang, T.: Regularity of 3d axisymmetric Navier-Stokes equations, Disc. Cont. Dyn. Syst. 37 (4) (2017), 1923--1939.
\bibitem[G]{G} Golovkin, K.K.: On equivalent norms for fractional spaces, Trudy Mat. Inst. Steklov 66 (1962), 364--383 (in Russian); English transl.: Amer. Math. Soc. Transl. 81 (2) (1969), 257--280.
\bibitem[L]{L} Ladyzhenskaya, O.A.: Unique global solvability of the three-dimensional Cauchy problem for the Navier-Stokes equations in the presence of axial symmetry, Zap. Nauchn. Sem Leningrad, Otdel. Mat. Inst. Steklov (LOMI), 7: 155--177, 1968; English transl., Sem. Math. V.A. Steklov Math. Inst. Leningrad, 7: 70--79, 1970.
\bibitem[L1]{L1} Ladyzhenskaya, O.A.: The Mathematical Theory of Viscous Incompressible Flow, Nauka, Moscow 1970 (in Russian).
\bibitem[LSU]{LSU} Ladyzhenskaya, O.A.; Solonnikov, V.A.; Uraltseva, N.N.: Linear and quasilinear equations of parabolic type, Nauka, Moscow 1967 (in Russian).
\bibitem[UY]{UY} Ukhovskii, M.R.; Yudovich, V.I.: Axially symmetric flows of ideal and viscous fluids filling the whole space, Prikl. Mat. Mekh. 32 (1968), 59--69.
\bibitem[LW]{LW} Liu, J.G.; Wang, W.C.: Characterization and regularity for axisymmetric solenoidal vector fields with application to Navier-Stokes equations, SIAM J. Math. Anal. 41 (2009), 1825--1850.
\bibitem[KP]{KP} Kreml, O.; Pokorny, M.: A regularity criterion for the angular velocity component in axisymmetric Navier-Stokes equations, Electronic J. Diff. Eq. vol. 2007 (2007), No. 08, pp. 1--10.
\bibitem[MS]{MS}
Maremonti, P.; Solonnikov, V.A.:
Estimates for solutions of the nonstationary Stokes problem
in anisotropic Sobolev spaces with mixed norm,
J. Math. Sci. 87 (1997), no. 5, 3859--3877;
translated from Zap. Nauchn. Sem. POMI 222 (1994), 124--151.
\bibitem[NZ]{NZ} Nowakowski, B.; Zaj\c{a}czkowski, W.M.: On weighted estimates for the stream function of axially symmetric solutions to the Navier-Stokes equations in a bounded cylinder, doi:10.48550/arXiv.2210.15729. Appl. Math. 50.2 (2023), 123--148, doi: 10.4064/am2488-1-2024.
\bibitem[NZ1]{NZ1} Nowakowski, B.; Zaj\c{a}czkowski, W.M.: Global regular axially symmetric solutions to the Navier-Stokes equations with small swirl, J. Math. Fluid. Mech. (2023), 25:73.
\bibitem[NP1]{NP1} Neustupa, J.; Pokorny, M.: An interior regularity criterion for an axially symmetric suitable weak solutions to the Navier-Stokes equations, J. Math. Fluid Mech. 2 (2000), 381--399.
\bibitem[NP2]{NP2} Neustupa, J.; Pokorny, M.: Axisymmetric flow of Navier-Stokes fluid in the whole space with non-zero angular velocity component, Math. Bohemica 126 (2001), 469--481.
\bibitem[OP]{OP} O\.za\'nski, W.S.; Palasek, S.: Quantitative control of solutions to the axisymmetric Navier-Stokes equations in terms of the weak $L^3$ norm, Ann. PDE 9:15 (2023), 1--52.
\bibitem[KS]{KS}
Köhne, M.; Saal, J.:
Multiplication in vector-valued anisotropic function spaces
and applications to non-linear partial differential equations,
Math. Nachr. 295 (2022), no.~9, 1709--1754.
\bibitem[ALV]{ALV}
Agresti, A.; Lindemulder, N.; Veraar, M.:
On the trace embedding and its applications to evolution equations,
Math. Nachr. 296 (2023), no.~4, 1319--1350.
\bibitem[T]{T} Triebel, H.: Interpolation Theory, Functions Spaces, Differential Operators, North-Holand Amsterdam (1978).
\bibitem[Z1]{Z1} Zaj\c{a}czkowski, W.M.: Global regular axially symmetric solutions to the Navier-Stokes equations. Part 1, Mathematics 2023, 11 (23), 4731, https://doi.org/10.3390/math11234731; also available at arXiv.2304.00856.
\bibitem[Z2]{Z2} Zaj\c{a}czkowski, W.M.: Global regular axially symmetric solutions to the Navier-Stokes equations. Part 2, Mathematics 2024, 12 (2), 263, https://doi.org/10.3390/math12020263.
\bibitem[OZ]{OZ} O\.za\'nski, W.S.; Zaj\c{a}czkowski, W.M.: On the regularity of axially-symmetric solutions to the incompressible Navier-Stokes equations in a cylinder, arXiv:2405.16670v1.
\bibitem[GZ1]{GZ1}
Grygierzec, W.J.; Zaj\c{a}czkowski, W.M.:
\emph{On global regular axially-symmetric solution to the Navier--Stokes equations in a cylinder},
arXiv:2511.05098 [math.AP], 2025.
\bibitem[GZ2]{GZ2}
Grygierzec, W.J.; Zaj\c{a}czkowski, W.M.:
\emph{A regularity criterion for the angular component of velocity in the norm
  $L_\infty(0,T;L_p(\Omega))$, ${3\over p}<1$, in axisymmetric Navier--Stokes equations in a cylinder},
arXiv:2507.14964 [math.AP], 2025.

\bibitem[GZ3]{GZ3}
Grygierzec, W.J.; Zaj\c{a}czkowski, W.M.:
\emph{A regularity criterion for the angular component of velocity in the norm
$L_q(0,T;L_p(\Omega))$, ${3\over p}+{2\over q}<1$, $q<\infty$, in axisymmetric Navier--Stokes equations in a cylinder},
J. Math. Fluid Mech. \textbf{28}, Article number 50 (2026). https://doi.org/10.1007/s00021-026-01022-9

\end {thebibliography}
\end{document}